\documentclass[10pt,reqno]{amsart}

\usepackage[T1]{fontenc}

\usepackage{mathtools}
\usepackage{amsmath,amsfonts,amsbsy,amsgen,amscd,mathrsfs,amssymb,amsthm}
\usepackage{enumerate}
\usepackage{bm}

\usepackage{stmaryrd}
\SetSymbolFont{stmry}{bold}{U}{stmry}{m}{n} 

\usepackage{tikz}
\usetikzlibrary{decorations.pathreplacing}

\usepackage[colorlinks=true,citecolor=blue,linkcolor=blue]{hyperref}


\newtheorem{thm}{Theorem}[section]
\newtheorem{lem}[thm]{Lemma}

\newtheorem{prop}[thm]{Proposition}
\newtheorem{cor}[thm]{Corollary}

\theoremstyle{definition}

\newtheorem{defn}[thm]{Definition}

\newtheorem{model}[thm]{Model}

\theoremstyle{remark}

\newtheorem{example}[thm]{Example}

\newtheorem{rem}[thm]{Remark}
\newtheorem*{rem*}{Remark}

\numberwithin{equation}{section} 
\numberwithin{figure}{section}
\numberwithin{table}{section}

\newcommand{\ntr}{\mathop{\mathrm{tr}}}
\newcommand{\tr}{\mathop{\mathrm{Tr}}}
\newcommand{\geent}{\mathbin{\ge_{\mathrm{e}}}}
\newcommand{\leent}{\mathbin{\le_{\mathrm{e}}}}

\newcommand{\EE}{\mathbf{E}}

\begin{document}

\title[Extremal random matrices with independent entries]{
\scalebox{.9}[1.0]{Extremal random 
matrices with independent entries}
\scalebox{.9}[1.0]{and matrix superconcentration 
inequalities}}

\author{Tatiana Brailovskaya}
\address{Department of Mathematics, Duke University, 120 Science Dr,
Durham, NC 27710, USA}
\email{tatiana.brailovskaya@duke.edu}

\author{Ramon van Handel}
\address{Fine Hall 207, Princeton University, Princeton, NJ 08544, USA}
\email{rvan@math.princeton.edu}

\begin{abstract}
We prove nonasymptotic matrix concentration inequalities for the spectral 
norm of (sub)gaussian random matrices with centered independent 
entries that capture fluctuations at the Tracy-Widom scale. This 
considerably improves previous bounds in this setting due to Bandeira and 
Van Handel, and establishes the best possible tail behavior for random 
matrices with an arbitrary variance pattern. These bounds arise from an 
extremum problem for nonhomogeneous random matrices: among all variance 
patterns with a given sparsity parameter, the moments of the random matrix 
are maximized by block-diagonal matrices with i.i.d.\ entries in each 
block. As part of the proof, we obtain sharp bounds on large
moments of Gaussian Wishart matrices.
\end{abstract}

\subjclass[2010]{60B20; 
                 60E15; 
                 46L53; 
                 46L54; 
                 15B52} 

\keywords{Random matrices, spectral norm, matrix concentration 
inequalities}

\maketitle

\thispagestyle{empty}

\section{Introduction}
\label{sec:intro}

Let $X$ be an $n\times m$ matrix whose entries are independent, centered 
Gaussian variables with an arbitrary variance pattern $X_{ij}\sim 
N(0,b_{ij}^2)$. This paper is concerned with bounding the spectral norm 
$\|X\|$ of such matrices.

In the special case that $b_{ij}=1$ for all $i,j$, we recover one of the 
most classical models of random matrix theory: then
$\|X\|^2=\|X^*X\|$ is the norm of a Wishart matrix with unit covariance.
In this case, classical methods of random matrix theory yield the exact 
asymptotic behavior \cite{Joh01}
\begin{equation}
\label{eq:wishart}
	\lim_{\substack{n,m\to\infty \\ m=cn}}
	\mathbf{P}\big[\|X\| > \sqrt{n}+\sqrt{m} + \tfrac{1}{2}
	(\tfrac{1}{\sqrt{n}}+\tfrac{1}{\sqrt{m}})^{1/3} s\big]
	= 1-F_1(s),
\end{equation}
where $F_1(s)$ is the distribution function of the Tracy-Widom law 
of order one. From this expression, we immediately read off that
in the above asymptotic regime $\|X\|=(1+o(1))(\sqrt{n}+\sqrt{m})$
with fluctuations of order $n^{-1/6}\vee m^{-1/6}$. It is 
important to note that the scale of the fluctuations is much smaller than 
what is predicted by general concentration of measure principles 
\cite{Led01}, which yield fluctuations of order $O(1)$ in the present 
setting. The presence of such unexpectedly small fluctuations is sometimes 
called the superconcentration phenomenon \cite{Cha14}. 

The Wishart model is amenable to explicit computations due to its simple 
structure and large degree of symmetry. However, there is little hope to 
perform explicit computations for an arbitrary variance pattern 
$(b_{ij})$, as such models can exhibit a wide variety of different 
structures and behaviors that are specific to the given pattern (for 
example, this class includes random band matrices and sparse matrices with 
an arbitrary deterministic sparsity pattern as special cases). 
Nonetheless, given their importance in many applications, it is of 
considerable interest to obtain bounds for random matrices with an 
arbitrary variance pattern.

One of the main results in this direction was obtained some years ago by 
Bandeira and the second author. To describe this result, define the 
parameters
\begin{equation}
\label{eq:sigmarect}
	\sigma_1^2 := 
	\max_{j\le m} \sum_{i\le n} b_{ij}^2,\qquad\quad
	\sigma_2^2 := 
	\max_{i\le n} \sum_{j\le m} b_{ij}^2,\qquad\quad
	\sigma_*^2 := \max_{\substack{i\le n\\j\le m}} b_{ij}^2.
\end{equation}
The following is shown in \cite[Corollary 3.11]{BvH16}.

\begin{thm}[\cite{BvH16}]
\label{thm:bvh}
Let $n\le m$, and let $X$ be the $n\times m$ random matrix whose entries 
$X_{ij}\sim N(0,b_{ij}^2)$ are independent. Then we have
\begin{equation*}
	\mathbf{P}\big[
	\|X\| > 
	(1+\varepsilon)(\sigma_1+\sigma_2) + \sigma_* t
	\big]
	\le
	n\,e^{-C_\varepsilon t^2}
\end{equation*}
for all $t\ge 0$ and $0<\varepsilon\le\frac{1}{2}$, where the
constant
$C_\varepsilon$ depends only on $\varepsilon$.
\end{thm}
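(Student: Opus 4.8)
The plan is to deduce the tail bound from a bound on $\mathbf{E}\|X\|$ via Gaussian concentration, and to obtain the latter by the moment method. Write $X_{ij}=b_{ij}g_{ij}$ with $(g_{ij})$ i.i.d.\ standard Gaussian. Since $\|X(g)-X(g')\|^2\le\sum_{ij}b_{ij}^2(g_{ij}-g'_{ij})^2\le\sigma_*^2\|g-g'\|^2$, the map $g\mapsto\|X\|$ is $\sigma_*$-Lipschitz, so the Gaussian concentration inequality gives $\mathbf{P}[\|X\|>\mathbf{E}\|X\|+\sigma_* s]\le e^{-s^2/2}$ for all $s\ge 0$. Hence it suffices to prove the expectation bound $\mathbf{E}\|X\|\le(1+\varepsilon)(\sigma_1+\sigma_2)+C'_\varepsilon\,\sigma_*\sqrt{\log n}$. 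Indeed, for $t\ge 2C'_\varepsilon\sqrt{\log n}$ we may take $s=t-C'_\varepsilon\sqrt{\log n}\ge t/2$ and conclude $\mathbf{P}[\|X\|>(1+\varepsilon)(\sigma_1+\sigma_2)+\sigma_* t]\le e^{-t^2/8}$; while for $t< 2C'_\varepsilon\sqrt{\log n}$ the asserted bound $n\,e^{-C_\varepsilon t^2}$ already exceeds $1$ once $C_\varepsilon$ is chosen small enough in terms of $\varepsilon$, and is therefore vacuous. So the entire content is the expectation bound.

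For the expectation bound I would pass to trace moments of the $n\times n$ Wishart matrix $XX^*$. By Jensen and $\|X\|^{2p}=\|XX^*\|^{p}\le\mathrm{Tr}\,(XX^*)^p$,
\[
  \mathbf{E}\|X\|\le\big(\mathbf{E}\,\mathrm{Tr}\,(XX^*)^p\big)^{1/2p},
\]
so it is enough to establish the moment estimate
\[
  \mathbf{E}\,\mathrm{Tr}\,(XX^*)^p\ \le\ n\,\big((\sigma_1+\sigma_2)+C\sqrt{p}\,\sigma_*\big)^{2p}
\]
for an absolute constant $C$ and every integer $p\ge1$: choosing $p=\lceil c_\varepsilon\log n\rceil$ with $c_\varepsilon$ large makes $n^{1/2p}\le 1+\varepsilon$ and $n^{1/2p}C\sqrt{p}\,\sigma_*\le C'_\varepsilon\,\sigma_*\sqrt{\log n}$, which gives the required expectation bound. (The use of the \emph{smaller} Wishart matrix $XX^*$ is what produces the factor $n$, and not $m$, in the tail.) Everything now rests on the displayed moment estimate, which is the combinatorial heart of the argument.

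To prove it, expand $\mathbf{E}\,\mathrm{Tr}\,(XX^*)^p$ by Wick's formula. Writing $\mathrm{Tr}\,(XX^*)^p=\sum X_{i_1j_1}X_{i_2j_1}X_{i_2j_2}X_{i_3j_2}\cdots X_{i_pj_p}X_{i_1j_p}$, its expectation is a sum over closed walks $w$ of length $2p$ on the complete bipartite graph with parts $[n]$ and $[m]$, each weighted by the number of Gaussian pairings of the $2p$ steps consistent with $w$, times the product of the variances $b_{ij}^2$ over the $p$ Wick pairs. Only walks whose every edge has even multiplicity contribute, and one groups the sum by the isomorphism type of the underlying connected multigraph. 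The dominant shapes are those in which the walk traverses a plane tree exactly twice ($p$ distinct edges, $p+1$ vertices): for such a shape, summing the variance weights by iteratively peeling leaves costs a factor $\le\sigma_1^2$ per leaf in $[n]$, $\le\sigma_2^2$ per leaf in $[m]$, and $\le n$ for the one unsummed root in $[n]$, so a shape with $k+1$ vertices in $[n]$ contributes at most $n\,\sigma_1^{2k}\sigma_2^{2(p-k)}$; the number of such shapes is a Narayana number $N_{p,k}$, and $\sum_{k}N_{p,k}\,\sigma_1^{2k}\sigma_2^{2(p-k)}\le(\sigma_1+\sigma_2)^{2p}$ — this is the relevant Marchenko--Pastur moment, and it is precisely what forces the sharp edge location $\sigma_1+\sigma_2$. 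Every other shape has strictly fewer than $p+1$ vertices, hence forfeits free index-sums; but, as the example of a diagonal $X$ shows, a degenerate walk can be realized by a huge number of Gaussian pairings (up to $(2p-1)!!$), and the counterweight is that each edge coincidence beyond the tree extracts a factor $b_{ij}^2\le\sigma_*^2$. A careful count of walks by their vertex deficiency, in the spirit of Füredi--Komlós but keeping track of the variance pattern, shows that the walks at deficiency $\ell$ contribute, relative to the tree-weight normalization, at most a $(Cp\,\sigma_*^2)^{\ell}$-type factor times $(\sigma_1+\sigma_2)^{2(p-\ell)}$; summing over $\ell\ge0$ collapses to $n\,\big((\sigma_1+\sigma_2)^2+Cp\,\sigma_*^2\big)^{p}\le n\,\big((\sigma_1+\sigma_2)+\sqrt{Cp}\,\sigma_*\big)^{2p}$, as desired.

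I expect the last point to be the main obstacle: because the coefficient of $\sigma_1+\sigma_2$ must come out exactly $1+\varepsilon$ rather than an absolute constant, no slack is permitted in the leading ``tree'' combinatorics, and because the variance pattern is arbitrary there is no limiting spectral law to fall back on — every bound must be routed through the three scalars $\sigma_1,\sigma_2,\sigma_*$ via row- and column-sum inequalities, and the number of non-tree walks at each deficiency level must be matched \emph{sharply}, and uniformly in $p$, against the $\sigma_*$-gain (anything coarser than $\asymp p$ per unit of deficiency would cost a spurious $\sqrt{p}$ and ruin the Tracy--Widom-type scaling). A cleaner route to the same moment estimate, which sidesteps some of this bookkeeping, is to compare $\mathbf{E}\,\mathrm{Tr}\,(XX^*)^p$ to the corresponding moment of an operator-valued semicircular family with the same covariance — whose operator norm equals $\sigma_1+\sigma_2$ for this bipartite covariance structure — thereby reducing matters to controlling the discrepancy between the classical and free moments, which is what produces the $\sigma_*\sqrt{\log n}$ correction.
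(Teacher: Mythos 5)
Your strategy is precisely that of the cited [BvH16] proof: reduce to a bound on $\mathbf{E}\|X\|$ via Gaussian concentration for the $\sigma_*$-Lipschitz map $g\mapsto\|X(g)\|$, and obtain the expectation bound from the trace moment $\mathbf{E}\,\mathrm{Tr}\,(XX^*)^p$ with $p\asymp\log n$, expanded as a sum over closed even walks and grouped by the shape of the underlying multigraph. All of your surrounding reductions are correct --- the Lipschitz estimate, the Markov/Jensen step through the smaller-dimensional Wishart matrix $XX^*$ (which is indeed what produces the factor $n$ rather than $m$), the choice $p=\lceil c_\varepsilon\log n\rceil$, and the case split in $t$.

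The genuine gap is that the combinatorial heart of the argument --- the moment estimate
$\mathbf{E}\,\mathrm{Tr}\,(XX^*)^p\le n\bigl((\sigma_1+\sigma_2)+C\sqrt{p}\,\sigma_*\bigr)^{2p}$ ---
is asserted, not proven. Everything that makes the theorem nontrivial lives in the paragraph you yourself flag as ``the main obstacle'': one must show that walks at vertex deficiency $\ell$ contribute a factor of at most order $(Cp\,\sigma_*^2)^\ell$ relative to the tree contribution, \emph{uniformly in $p$ and in the variance pattern}, with no slack at $\ell=0$. Sketching this with ``a careful count\ldots in the spirit of F\"uredi--Koml\'os'' and ``summing over $\ell$ collapses to\ldots'' is not a proof; the counting of non-tree shapes weighted by row/column sums is the entire content of [BvH16], and nothing in your write-up verifies that the $\ell$-th deficiency class does not overwhelm the $\sigma_*^{2\ell}$ gain. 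Note also that [BvH16] packages this estimate differently: rather than proving the displayed inequality directly, it shows (for $\sigma_*=1$) that the $2p$-moment of $X$ is dominated by that of an i.i.d.\ Gaussian matrix of dimension $\lceil\sigma_1^2+p\rceil\times\lceil\sigma_2^2+p\rceil$, and then invokes a standard bound for the homogeneous model; this is equivalent in spirit to your estimate but organizes the deficiency bookkeeping into a reusable comparison statement. Your alternative ``cleaner route'' via operator-valued semicircular families is a different method altogether (it is the intrinsic-freeness approach of [BBV23], not [BvH16]), and as the paper notes it produces a weaker second-order term, so it would not yield the stated bound without further work.
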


Theorem \ref{thm:bvh} implies that
\begin{equation}
\label{eq:bvhrectmed}
	\|X\|\le 
	(1+\varepsilon)(\sigma_1+\sigma_2)
	+C_\varepsilon'\,\sigma_*\sqrt{\log n}
	\quad\text{w.h.p.}
\end{equation}
In the Wishart case $b_{ij}=1$ for all $i,j$, this
yields $\|X\|\le (1+o(1))(\sqrt{n}+\sqrt{m})$ which captures 
the exact leading order behavior in \eqref{eq:wishart}.
However, the order of the fluctuations in
Theorem \ref{thm:bvh} is \emph{much} larger than in \eqref{eq:wishart}:
the second term in \eqref{eq:bvhrectmed} diverges
as $n,m\to\infty$, while the second order term in \eqref{eq:wishart} is
$o(1)$. In particular, Theorem \ref{thm:bvh} fails to recover any form of 
superconcentration.

On the other hand, the large second term in \eqref{eq:bvhrectmed} is 
unavoidable in general for sparse matrices, as the following classical 
example illustrates.

\begin{example}
\label{ex:band}
Let $n=m$ and $b_{ij}=1_{|i-j|\le k}$, that is, $X$ is a random band 
matrix with bandwidth $2k+1$. Then we have 
$$
	\sigma_1+\sigma_2=2\sqrt{2k+1},\qquad\quad
	\|X\|\ge \max_i|X_{ii}|=
	(1+o(1))\sqrt{2\log n}
$$
with high probability as $n\to\infty$.
It follows that the second term in \eqref{eq:bvhrectmed} dominates the
behavior of $\|X\|$ in the sparse regime $k\ll\log n$.
In fact, \eqref{eq:bvhrectmed} optimally captures the phase transition 
for the leading order behavior of the norm of random band matrices (cf.\
\cite[Corollary 4.4]{BvH16} in the self-adjoint case).
\end{example}

Such examples suggest that the large scale of the fluctuations in Theorem 
\ref{thm:bvh} is a necessary feature of any bound for random matrices with 
an arbitrary variance pattern. Surprisingly, this expectation turns out to 
be incorrect. The main results of this paper will yield a considerable 
improvement on Theorem \ref{thm:bvh}, which simultaneously captures the 
fluctuations at Tracy-Widom scale of \eqref{eq:wishart} and sharpens the 
phase transition between sparse and dense matrices that is implicit in 
\eqref{eq:bvhrectmed}.

\subsection{Main results}

While the main results of this paper will be proved both for 
non-self-adjoint and for self-adjoint random matrices, we focus the 
presentation in the introduction on the non-self-adjoint case for 
concreteness. We further consider a slightly more general setting
than the above Gaussian model.

\begin{model}
\label{mod:rectintro}
$X$ is an $n\times m$ matrix with $X_{ij}=b_{ij}\xi_{ij}$,
where $b_{ij}\ge 0$ are arbitrary scalars and
$\xi_{ij}$ are independent symmetrically distributed
real random variables with $\mathbf{E}[\xi_{ij}^{2p}]\le
\mathbf{E}[g^{2p}]$ for all $i,j$
and $p\in\mathbb{N}$ (here $g\sim N(0,1)$).
\end{model}

In the setting of Model \ref{mod:rectintro}, we always define
$\sigma_1,\sigma_2,\sigma_*$ as in \eqref{eq:sigmarect}. 

\subsubsection{Small deviations}
\label{sec:introsm}

Our main result in this setting is the following.

\begin{thm}[Small deviations]
\label{thm:smrect}
Let $X$ be as in Model \ref{mod:rectintro} and
$\sigma_1\le\sigma_2$. Then
$$
	\mathbf{P}\big[\|X\| > \sigma_1 + \sigma_2 + 
	\sigma_*^{4/3}
	\sigma_1^{-1/3} t\big]
	\le \frac{n\sigma_*^2}{C\sigma_1^2}\, e^{-Ct^{3/2}}
$$
for all $0\le t\le \frac{\sigma_1^{1/3}\sigma_2}{\sigma_*^{4/3}}$,
where $C$ is a universal constant.
\end{thm}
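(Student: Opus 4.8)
The plan is to prove Theorem~\ref{thm:smrect} by the moment method, exploiting the extremal structure promised in the abstract: among all variance patterns with a fixed value of the sparsity parameter $\sigma_*^2/\sigma_1^2$, the even moments $\mathbf{E}[\tr (XX^*)^p]$ are maximized (up to universal constants) by a block-diagonal matrix whose blocks are i.i.d.\ Gaussian Wishart matrices. Concretely, I would first reduce to the Gaussian case: since $\xi_{ij}$ are symmetric with $\mathbf{E}[\xi_{ij}^{2p}]\le\mathbf{E}[g^{2p}]$, a term-by-term comparison in the trace moment expansion $\mathbf{E}[\tr(XX^*)^p]=\sum b_{i_1 j_1}^2\cdots b_{i_p j_p}^2\,\mathbf{E}[\xi\cdots]$ shows it suffices to bound $\mathbf{E}[\tr(XX^*)^p]$ when all $\xi_{ij}\sim N(0,1)$. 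Then I would establish the combinatorial comparison lemma: for Gaussian $X$,
\[
	\mathbf{E}[\tr(XX^*)^p] \le n \cdot \frac{\sigma_*^{2}}{\sigma_1^2}\cdot
	\mathbf{E}\big[\tr (WW^*)^p\big]\cdot(\text{correction}),
\]
where $W$ is an $N_1\times N_2$ matrix with i.i.d.\ $N(0,\sigma_*^2)$ entries and $N_1\asymp\sigma_1^2/\sigma_*^2$, $N_2\asymp\sigma_2^2/\sigma_*^2$ are the ``effective dimensions''; the factor $n\sigma_*^2/\sigma_1^2$ counts the number of diagonal blocks. The natural tool here is a graph/walk expansion of the moment as a sum over closed paths, where one bounds the contribution of each path topology by its value on the i.i.d.\ block model — this is exactly the kind of pattern-domination argument where the regularity of the i.i.d.\ pattern makes it extremal.

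Next I would invoke the sharp Wishart moment bound advertised at the end of the abstract: for $W$ with i.i.d.\ $N(0,1)$ entries of size $N_1\le N_2$ and any $p\lesssim N_1^{1/3}N_2^{2/3}$ (the Tracy-Widom range),
\[
	\mathbf{E}[\tr(WW^*)^p]^{1/2p} \le (\sqrt{N_1}+\sqrt{N_2})
	\Big(1 + C\,\frac{p^{2/3}}{N_1^{1/3}N_2^{2/3}}\cdot\frac{1}{(\sqrt{N_1}+\sqrt{N_2})/\sqrt{N_2}}\Big),
\]
i.e.\ the correction to the edge of the spectrum is of relative order $p^{2/3}/(N_1^{1/3}N_2^{2/3})$, matching the $n^{-1/6}$ Tracy-Widom fluctuation scale when $p\asymp N_1^{1/3}$. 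Rescaling by $\sigma_*$ and substituting $N_i\asymp\sigma_i^2/\sigma_*^2$ gives
\[
	\mathbf{E}[\tr(XX^*)^p]^{1/2p} \lesssim
	\Big(\frac{n\sigma_*^2}{\sigma_1^2}\Big)^{1/2p}
	\Big(\sigma_1+\sigma_2 + C\,\sigma_*^{4/3}\sigma_1^{-1/3} p^{2/3}\Big),
\]
using $\sigma_1\le\sigma_2$ so that $\sqrt{N_1}+\sqrt{N_2}\asymp \sigma_2/\sigma_*$ and the denominator $(\sqrt{N_1}+\sqrt{N_2})/\sqrt{N_2}\asymp 1$ absorbs into constants, while the additive edge correction becomes $\sigma_*\cdot p^{2/3}/N_1^{1/3}=\sigma_*^{4/3}\sigma_1^{-1/3}p^{2/3}$. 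Finally, Markov's inequality applied to $\|X\|^{2p}\le\tr(XX^*)^p$ with the optimal choice $p\asymp t^{3/2}$ (so that $\sigma_*^{4/3}\sigma_1^{-1/3}p^{2/3}\asymp\sigma_*^{4/3}\sigma_1^{-1/3}t$) converts the moment bound into the stated tail $\frac{n\sigma_*^2}{C\sigma_1^2}e^{-Ct^{3/2}}$; the constraint $p\lesssim N_1^{1/3}N_2^{2/3}=\sigma_1^{2/3}\sigma_2^{4/3}/\sigma_*^2$ translates precisely into the stated range $t\le \sigma_1^{1/3}\sigma_2/\sigma_*^{4/3}$.

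The main obstacle is the combinatorial comparison step: showing that the i.i.d.\ block pattern dominates every other variance pattern at the level of individual moments, with no loss in constants that would spoil the Tracy-Widom scale. Naively bounding each $b_{ij}^2\le\sigma_*^2$ and each row/column sum by $\sigma_i^2$ loses too much, because it inflates the effective dimensions and hence the edge correction. The right approach is a careful path-counting argument that, for each equivalence class of closed walks of a given shape, compares the sum over all label assignments allowed by $(b_{ij})$ against the analogous sum for the block model — I expect one must organize the walks by the structure of their ``repeated edges'' (as in Füredi–Komlós / Soshnikov type arguments) and verify that the two constraints $\sum_i b_{ij}^2\le\sigma_1^2$, $\sum_j b_{ij}^2\le\sigma_2^2$ together with $b_{ij}^2\le\sigma_*^2$ are exactly saturated by the block-diagonal i.i.d.\ pattern. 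A secondary difficulty is proving the sharp Wishart moment bound itself in the full Tracy-Widom range $p\lesssim N_1^{1/3}N_2^{2/3}$, which requires controlling subleading terms in the genus expansion beyond the leading planar (Catalan/Narayana) count; I would expect this to be handled separately as a self-contained proposition, possibly via the recursion for Wishart moments or a direct refinement of the trace-method estimates, and then used as a black box here.
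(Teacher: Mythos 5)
Your high-level architecture is the correct one and coincides with the paper's: reduce to Gaussian entries by term-by-term moment domination, compare the nonhomogeneous matrix with an i.i.d.\ Gaussian matrix of effective dimension $\lceil\sigma_1^2/\sigma_*^2\rceil\times\lceil\sigma_2^2/\sigma_*^2\rceil$ (equivalently, with the block-diagonal model), feed in a sharp Wishart moment bound, and conclude by Markov's inequality with an optimized $p$. Your guess that the Wishart estimate comes from a moment recursion is also right (the paper uses the Haagerup--Thorbj{\o}rnsen and Cunden et al.\ recursions in section \ref{sec:wishart}), though the bound you write down has the wrong exponents: the relative correction to $\mathbf{E}[\ntr(YY^*)^p]^{1/2p}$ beyond $\sqrt{d_1}+\sqrt{d_2}$ is of order $p^2d_1^{-1/2}d_2^{-3/2}$ (Theorem \ref{thm:rwish}), not $p^{2/3}d_1^{-1/3}d_2^{-2/3}$, and correspondingly the optimal choice is $p\asymp d_1^{1/4}d_2^{3/4}\sqrt{\varepsilon}$ with $\varepsilon\asymp t\,\sigma_*^{4/3}\sigma_1^{-1/3}\sigma_2^{-1}$, not $p\asymp t^{3/2}$; the range of $t$ comes from the constraint $\varepsilon\le 1$.

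The genuine gap is the comparison lemma, which you correctly single out as the main obstacle but for which the method you propose would not close the argument. Organizing the trace expansion by equivalence classes of closed walks and bounding each shape by its value on the i.i.d.\ pattern is precisely the device of \cite{BvH16}, and as discussed in section \ref{sec:introext} that scheme only yields a comparison matrix of dimension $\lceil\sigma_1^2+p\rceil\times\lceil\sigma_2^2+p\rceil$: the $+p$ inflation of the effective dimensions is intrinsic to the walk-counting bookkeeping and is exactly what destroys the Tracy--Widom scale of the second-order term. The paper's Theorem \ref{thm:rectmom} obtains the exact, $p$-independent comparison by an entirely different mechanism: the Gaussian moment is written as a sum over \emph{pairings} via the Wick formula, one passes to the upper bound $D(\pi)=\max_r(E(\pi))_{rr}$ (replacing the normalized trace by the maximal diagonal entry is essential to avoid unnormalized traces that would produce dimension-dependent factors), and one then iteratively eliminates crossings, distinguishing three types of crossings handled by different identities (Lemmas \ref{lem:rect1cr}--\ref{lem:rect3cr}). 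The point that makes this work without any loss is that every inequality used becomes an equality when $b_{ij}\equiv 1$, so the combinatorial coefficients never need to be evaluated. Without a substitute for this step your proof does not go through, since the entire improvement over Theorem \ref{thm:bvh} hinges on it.
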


\begin{rem}
The assumption $\sigma_1\le\sigma_2$ entails no loss of generality, as in
the opposite case $\sigma_1>\sigma_2$ we may simply apply Theorem 
\ref{thm:smrect} to the adjoint matrix $X^*$.
\end{rem}

In the Wishart case $b_{ij}=1$ for all $i,j$ with $n\le m$, Theorem 
\ref{thm:smrect} yields
$$
	\mathbf{P}\big[\|X\| > \sqrt{n} + \sqrt{m} +
	n^{-1/6}t\big]
	\le C^{-1}e^{-Ct^{3/2}}
$$
for $0\le t\le n^{1/6}\sqrt{m}$. This captures precisely the 
fluctuations and the upper tail of the Tracy-Widom asymptotics 
\eqref{eq:wishart}, up to a universal constant. 
To the best of our knowledge, the nonasymptotic bound 
is new even in this very special case (a slightly weaker bound in
this setting may be found in \cite[Theorem 2]{LR10}).

For arbitrary variance patterns with $\sigma_1\le\sigma_2$,
Theorem \ref{thm:smrect} yields
\begin{equation}
\label{eq:smrectmed}
	\|X\| \le \sigma_1+\sigma_2 +
	C'\,\frac{\sigma_*^{4/3}}{\sigma_1^{1/3}}
	\log^{2/3}\bigg(
	\frac{n\sigma_*^2}{\sigma_1^2}
	\bigg)
	\quad\text{w.h.p.}
\end{equation}
provided $\sigma_1^{1/4}\sigma_2^{3/4}\gtrsim \sigma_*\sqrt{\log n}$.
In particular, as soon as $\sigma_1\gg \sigma_*\sqrt{\log n}$,
\eqref{eq:smrectmed} improves drastically on 
\eqref{eq:bvhrectmed}. We will explain in 
section \ref{sec:introext} that this bound is essentially the best
possible even for nonhomogeneous random matrices.

\subsubsection{Large deviations}
\label{sec:introlg}

Theorem \ref{thm:smrect} controls small deviations, that is, deviations up 
to the order of the mean. In contrast, the large deviations of $\|X\|$ are 
controlled by its Gaussian tail behavior \cite[Corollary 3.2]{LT91}, and 
we cannot expect a qualitative improvement over Theorem \ref{thm:bvh} in 
this setting.

Nonetheless, the basic principle behind Theorem \ref{thm:smrect} gives 
rise to a quantitative improvement: a variant of Theorem \ref{thm:bvh} 
with optimal constants.

\begin{thm}[Large deviations]
\label{thm:lgrect}
Let $X$ be as in Model \ref{mod:rectintro} and
$n\le m$. Then
$$
	\mathbf{P}\big[\|X\| > \sigma_1 + \sigma_2 + \sigma_* (1+t)\big]
	\le 2n\,e^{-t^2/2}
$$
for all $t\ge 0$.
\end{thm}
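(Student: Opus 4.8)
The plan is to prove Theorem \ref{thm:lgrect} by combining a moment bound for $\|X\|$ with Chebyshev's inequality applied to high moments, exactly in the spirit of the proof of Theorem \ref{thm:bvh} but keeping track of constants carefully. Concretely, I would first establish a bound of the form
\begin{equation*}
	\big(\mathbf{E}\,\|X\|^{2p}\big)^{1/2p}
	\le \sigma_1 + \sigma_2 + C\sqrt{p}\,\sigma_*
\end{equation*}
with an explicit universal constant $C$, valid for all integers $p\ge 1$. The idea is to pass to the symmetrized $(n+m)\times(n+m)$ self-adjoint dilation $\tilde X=\begin{bmatrix}0 & X\\ X^* & 0\end{bmatrix}$, whose nonzero eigenvalues are $\pm$ the singular values of $X$, so that $\|X\|^{2p}\le \tr \tilde X^{2p}$; then expand the trace into a sum over closed walks of length $2p$ in the bipartite graph on $[n]\sqcup[m]$ and bound $\mathbf{E}[\xi^{\text{walk}}]$ using the Gaussian domination hypothesis $\mathbf{E}[\xi_{ij}^{2p}]\le \mathbf{E}[g^{2p}]$ from Model \ref{mod:rectintro}. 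This reduces the problem to a purely Gaussian (in fact Wishart-type) moment estimate, which is precisely what the abstract promises in the form of ``sharp bounds on large moments of Gaussian Wishart matrices''; I would invoke that estimate to get the clean form above with the right leading term $\sigma_1+\sigma_2$ rather than $\sigma_1+\sigma_2+O(\sqrt{\log n})$.

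Given the moment bound, the tail estimate is routine optimization. By Markov's inequality, for any $u>0$ and any integer $p$,
\begin{equation*}
	\mathbf{P}\big[\|X\|>u\big]
	\le u^{-2p}\,\mathbf{E}\,\|X\|^{2p}
	\le \Big(\frac{\sigma_1+\sigma_2+C\sqrt{p}\,\sigma_*}{u}\Big)^{2p}.
\end{equation*}
Setting $u=\sigma_1+\sigma_2+\sigma_*(1+t)$ and choosing $p\approx (1+t)^2/(4C^2)$ (rounded to an integer, with the constant in the moment bound tuned so that $C\sqrt p\le (1+t)/2$, forcing the bracket below $e^{-1/2}$ per factor after accounting for the $1$ that absorbs rounding), one gets a bound of the shape $e^{-c t^2}$; the content of the theorem is that the arithmetic can be arranged so that $c=\tfrac12$ and the prefactor is exactly $2n$. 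The factor $n$ (as opposed to $n+m$) comes from using $\|X\|^{2p}\le \tr(X^*X)^p$ directly on the $n\times n$ matrix $X^*X$ rather than the full dilation, which has only $n$ diagonal entries; the extra factor $2$ absorbs the integer rounding of $p$ and the small-$t$ regime where $p$ would be less than $1$ (there the bound $2ne^{-t^2/2}\ge 2n e^{-1/2}\ge 1$ once $t\le 1$, say, handles it trivially since probabilities are at most $1$, though one should check the crossover cleanly).

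The main obstacle is getting the constants genuinely sharp in the Wishart moment bound: a crude walk-counting argument gives $(\mathbf{E}\tr(X^*X)^p)^{1/2p}\le \sigma_1+\sigma_2+C\sqrt p\,\sigma_*$ only with a suboptimal $C$, and a naive choice of $C$ would leak into a worse exponent than $t^2/2$ or a larger prefactor. The fix is to invoke the sharp Gaussian Wishart moment estimate referenced in the abstract — presumably proved earlier in the paper — which pins down the sub-leading term with the correct coefficient; the non-Gaussian case then follows for free from the comparison $\mathbf{E}[\xi_{ij}^{2p}]\le\mathbf{E}[g^{2p}]$, since every monomial appearing in the walk expansion of $\mathbf{E}\tr(X^*X)^p$ has even exponents in each $\xi_{ij}$ (walks in the bipartite graph traverse each edge an even number of times), so the $\xi$-expectation is termwise dominated by the Gaussian one. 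A secondary bookkeeping point is handling the regime where the natural optimizing $p$ is not a positive integer, which I would dispatch by the trivial bound for small $t$ as indicated above and by rounding $p$ up for larger $t$, paying the price with the stated factor of $2$.
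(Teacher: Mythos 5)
There is a genuine gap, and it is precisely the one the paper flags at the start of section \ref{sec:hermlg}. The moment bound your argument hinges on, $(\mathbf{E}\|X\|^{2p})^{1/2p}\le \sigma_1+\sigma_2+C\sqrt{p}\,\sigma_*$, cannot be obtained from the sharp Wishart moment estimates of section \ref{sec:wishart} (Theorems \ref{thm:cwish} and \ref{thm:rwish}): those bounds carry the factor $\bigl(1+\tfrac{8p^2}{d_1^{1/2}d_2^{3/2}}\bigr)^{p}$, so their $2p$-th root has a sub-leading term that grows \emph{linearly} in $p$ for large $p$, not like $\sqrt{p}$. Optimizing Markov's inequality against such moments yields only the subexponential tail $e^{-ct^{3/2}}$ of Theorem \ref{thm:smrect}, and only in a bounded range of $t$ --- never a subgaussian tail valid for all $t\ge 0$. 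For the large-deviation regime the paper therefore abandons polynomial moments altogether: it proves the mgf bound $\mathbf{E}[\ntr e^{t(YY^*)^{1/2}}]\le e^{(\sqrt{d_1}+\sqrt{d_2})t+t^2/2}$ (Lemma \ref{lem:wishld}, via $\mathbf{E}\|Y\|\le\sqrt{d_1}+\sqrt{d_2}$ and Gaussian concentration), transfers it to $X$ by applying the extremum principle (Theorem \ref{thm:rectmom}) termwise to the even Taylor coefficients of $\cosh$, and then runs the Chernoff bound over the continuous parameter $t$. That continuous optimization is also the only way to land on the exact constant $\tfrac12$ in the exponent and the prefactor $2n$; optimizing over integer $p$, as you propose, necessarily loses a constant that no amount of bookkeeping recovers.

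A secondary gap: the reduction from the nonhomogeneous matrix to an i.i.d.\ Wishart matrix is not delivered by the termwise domination $\mathbf{E}[\xi_{ij}^{2p}]\le\mathbf{E}[g^{2p}]$ --- that step only Gaussianizes the entries (as in Lemma \ref{lem:gcomp}). Passing from an arbitrary variance pattern $(b_{ij})$ to the homogeneous $\lceil\sigma_1^2\rceil\times\lceil\sigma_2^2\rceil$ matrix with leading term exactly $\sigma_1+\sigma_2$ (no $(1+\varepsilon)$ loss and no $p$-dependent dimension, which is what naive walk counting \`a la \cite{BvH16} produces) is the content of the extremum principle, Theorem \ref{thm:rectmom}; your sketch presupposes this without identifying it as the essential input. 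Finally, the ``$1$'' in $\sigma_*(1+t)$ arises from the elementary estimate $\sqrt{\sigma_1^2+\sigma_*^2}+\sqrt{\sigma_2^2+\sigma_*^2}\le\sigma_1+\sigma_2+\sigma_*$ absorbing the ceilings $d_i=\lceil\sigma_i^2\rceil$, not from integer rounding of $p$.
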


Theorem \ref{thm:lgrect} implies that
\begin{equation}
\label{eq:lgrectmed}
	\|X\| \le \sigma_1+\sigma_2 + (1+o(1))\,\sigma_*\sqrt{2\log n}
	\quad\text{w.h.p.}
\end{equation}
as $n\to\infty$. The significance of this result is that in many examples,
the second term in \eqref{eq:lgrectmed} matches the trivial 
lower bound $\|X\|\ge \max_{i,j}|X_{ij}|$; this is the case, 
for example, for the random band matrix of Example \ref{ex:band}.
In such situations, \eqref{eq:lgrectmed} captures the exact leading order 
behavior $\|X\|=(1+o(1))\sigma_*\sqrt{2\log n}$
in the sparse regime
$\sigma_1+\sigma_2\ll\sigma_*\sqrt{\log n}$, while \eqref{eq:bvhrectmed} 
necessarily loses a universal constant.

\subsubsection{Extremum principle}
\label{sec:introext}

The proofs of Theorems \ref{thm:smrect} and \ref{thm:lgrect} are based on 
a more fundamental principle that is of independent interest.

To explain this principle, let us begin by recalling the idea behind the 
proof of Theorem \ref{thm:bvh}. Rather than bound the norm of the 
nonhomogeneous model $X$ directly, the approach of \cite{BvH16} compares 
$X$ with an i.i.d.\ matrix of smaller dimension, whose norm can be 
controlled by any classical method for homogeneous random matrices. In 
particular, the main technical device in \cite{BvH16} shows that when 
normalized so that $\sigma_*=1$, the $2p$-moment of $X$ can be bounded by 
the $2p$-moment of a matrix with i.i.d.\ entries of dimension $\lceil 
\sigma_1^2+p\rceil\times\lceil\sigma_2^2+p\rceil$. This suffices to 
capture the leading order behavior of $\|X\|$, but the dependence of the 
comparison matrix on $p$ precludes any accurate control of the 
fluctuations.

The basis for the results of this paper may be viewed as an 
\emph{optimal} comparison principle of this kind. In the following, we 
denote by $\tr M$ the (unnormalized) trace and by $\ntr M := 
\frac{1}{n}\tr M$ the normalized trace of an $n\times n$ matrix $M$.

\begin{thm}[Extremum principle]
\label{thm:rectext}
Let $X$ be as in Model \ref{mod:rectintro} with
$\sigma_*=1$. Then
$$
	\mathbf{E}[\ntr (XX^*)^{p}] \le \mathbf{E}[\ntr (YY^*)^{p}]
	\quad\text{for all }p\in\mathbb{N},
$$
where $Y$ is the
$\lceil\sigma_1^2\rceil \times \lceil\sigma_2^2\rceil$ matrix with
independent entries $Y_{ij}\sim N(0,1)$.
\end{thm}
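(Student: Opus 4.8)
The plan is to expand $\mathbf{E}[\tr(XX^*)^p]$ as a sum over closed walks and compare it term-by-term with the analogous expansion for the Gaussian matrix $Y$. Writing $(XX^*)^p = X X^* X X^* \cdots$, we have
\[
	\mathbf{E}[\tr(XX^*)^p] = \sum_{\mathbf{i},\mathbf{j}}
	\mathbf{E}\Big[\prod_{\ell} X_{i_\ell j_\ell} \overline{X_{i_{\ell+1} j_\ell}}\Big],
\]
where $\mathbf{i}=(i_1,\dots,i_p)$ ranges over $[n]^p$ and $\mathbf{j}=(j_1,\dots,j_p)$ over $[m]^p$, with cyclic conventions. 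Each such index pair determines a closed walk on the bipartite graph with vertex sets (the ``row'' side) $[n]$ and (the ``column'' side) $[m]$, traversing $2p$ edges. Because the $\xi_{ij}$ are independent and symmetric, the expectation vanishes unless every edge $\{i,j\}$ is traversed an even number of times; and when it is nonzero, the factorization over distinct edges gives $\prod_{e=\{i,j\}} b_{ij}^{2m_e}\,\mathbf{E}[\xi_{ij}^{2m_e}]$ where $m_e\ge 1$ is half the multiplicity of edge $e$. Using the moment hypothesis $\mathbf{E}[\xi_{ij}^{2m_e}]\le\mathbf{E}[g^{2m_e}]$ and $\sigma_*=1$ (so $b_{ij}\le 1$, whence $b_{ij}^{2m_e}\le b_{ij}^2$), every nonvanishing term is bounded by $\prod_{e}\big(b_{i(e)j(e)}^2\big)\cdot\mathbf{E}[g^{2m_e}]$ — i.e., bounded by what one gets by replacing all $X_{ij}$ with Gaussians and contracting each edge's weight down to a single factor of $b_{ij}^2$.

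After this reduction the quantity to bound is, for each fixed walk shape, a sum of the form $\sum b_{i_1 j_1}^2 b_{i_2 j_2}^2\cdots$ over the distinct edges of the walk, where the sum is over all labelings of the walk's vertices by elements of $[n]$ and $[m]$ consistent with its combinatorial structure. I would organize the walks by their underlying graph (a connected bipartite multigraph), and for each such graph perform the vertex-labeling sum one vertex at a time, peeling off leaves and then handling cycles. The key estimate is: summing $b_{ij}^2$ over $i$ (resp.\ over $j$) picks up at most a factor $\sigma_1^2$ (resp.\ $\sigma_2^2$) by the definition \eqref{eq:sigmarect}; and crucially, once a vertex has been ``used up'' its contribution is at most $\sigma_1^2$ or $\sigma_2^2$ regardless of how many edges were incident to it, because after peeling we are always summing a single surviving $b^2$ factor against that index. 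This is exactly the mechanism by which a nonhomogeneous matrix with parameters $\sigma_1,\sigma_2,\sigma_*=1$ is dominated by a Gaussian matrix of dimensions $\sigma_1^2\times\sigma_2^2$: in the i.i.d.\ $N(0,1)$ model $Y$ of dimension $\lceil\sigma_1^2\rceil\times\lceil\sigma_2^2\rceil$, the corresponding walk sum over the same graph counts vertex-labelings and equals (number of row-vertices power $\lceil\sigma_1^2\rceil$)$\times$(number of column-vertices power $\lceil\sigma_2^2\rceil$) times the same Gaussian edge-moments, and $\lceil\sigma_1^2\rceil\ge\sigma_1^2$, $\lceil\sigma_2^2\rceil\ge\sigma_2^2$.

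To make this bijective comparison clean I would show that for \emph{every} connected bipartite multigraph $G$ arising from a $2p$-edge closed walk, and every assignment of walk-multiplicities,
\[
	\sum_{\text{labelings of }G\text{ into }[n]\times[m]}\ \prod_{e\in E(G)} b_{i(e)j(e)}^2
	\ \le\
	(\sigma_1^2)^{\#\{\text{row vertices of }G\}}\,(\sigma_2^2)^{\#\{\text{column vertices of }G\}},
\]
by induction on $|E(G)|$: pick a vertex $v$ of $G$; if $v$ has degree one along some edge $e$ we sum out its label first, getting a factor $\le\sigma_1^2$ or $\sigma_2^2$ and reducing to $G-v$; if $G$ has no leaf (it is a union of cycles glued at vertices), pick any vertex $v$ and any incident edge $e=\{v,w\}$, sum out the label of $v$ against that single factor $b_{vw}^2$ to get $\le\sigma_1^2$ (or $\sigma_2^2$), and bound the remaining $b^2$ factors at their other incident edges by $1$ (using $\sigma_*=1$), which again strictly reduces the edge count. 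Summing the resulting per-graph bounds against the Gaussian moment weights reproduces exactly $\mathbf{E}[\tr(YY^*)^p]$, and dividing by $n\le\lceil\sigma_1^2\rceil$... — actually one must be slightly careful here: $\ntr$ divides $\mathbf{E}[\tr(XX^*)^p]$ by $n$ while $\mathbf{E}[\ntr(YY^*)^p]$ divides by $\lceil\sigma_1^2\rceil$, so one also needs $n\ge\lceil\sigma_1^2\rceil$? No — the inequality goes the right way only if $n\le\lceil\sigma_1^2\rceil$ is \emph{not} assumed; instead I expect the correct bookkeeping is that the walk sum for $X$ is at most $n\cdot(\text{the }G\text{-bound summed})$ is wrong too. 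The right statement is that $\mathbf{E}[\tr(XX^*)^p]\le\mathbf{E}[\tr(YY^*)^p]$ requires comparing $n^{(\#\text{row})}$-type counts, and since a labeling of row-vertices into $[n]$ with $n$ possibly larger than $\sigma_1^2$ would hurt — so the per-graph bound above (which replaces $n$ by $\sigma_1^2$ via the variance constraint rather than by the ambient dimension) is exactly what saves us, and then $\ntr$ on both sides divides by the number of row indices actually available, giving the stated inequality after noting $\#\{\text{row vertices}\}\le p$ in every term.

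\textbf{Main obstacle.} The delicate point — and the step I expect to require the most care — is the cycle/no-leaf case of the induction: when we sum out a vertex lying on a cycle we can only extract one $\sigma^2$ factor from one incident edge and must discard the other incident $b^2\le 1$; verifying that the resulting accounting still matches the Gaussian side exactly (rather than merely up to a constant) is where the hypothesis $\sigma_*=1$ and the integrality ceilings $\lceil\sigma_i^2\rceil$ must be used in precisely the right way, and it is presumably also where the sharp (as opposed to lossy, à la \cite{BvH16}) nature of the comparison is won.
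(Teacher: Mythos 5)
Your proposal follows the walk-counting strategy of Bandeira--Van Handel \cite{BvH16}: expand $\mathbf{E}[\tr(XX^*)^p]$ over closed walks, organize by the underlying bipartite multigraph, and peel off vertex labels one at a time. This is \emph{not} the approach the paper takes, and---more importantly---the paper explicitly explains that this approach cannot yield the sharp statement of Theorem~\ref{thm:rectext}. The introduction notes that the walk-counting method of \cite{BvH16} compares $X$ to an i.i.d.\ matrix of dimension $\lceil\sigma_1^2+p\rceil\times\lceil\sigma_2^2+p\rceil$, not $\lceil\sigma_1^2\rceil\times\lceil\sigma_2^2\rceil$, and that the $p$-dependence of the comparison matrix is precisely what prevents any control of Tracy--Widom-scale fluctuations.

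The gap is exactly the ``main obstacle'' you flag but do not resolve, and it is fatal rather than technical. In the leafless (cycle) step of your induction, you sum out one vertex label against a single incident $b^2$ factor and discard the remaining incident factors using $b^2\le 1$; but each such discard is a genuine loss, and a $2p$-step closed walk can require $\Theta(p)$ of them. Concretely, your claimed per-graph bound
$\sum_{\text{labelings}}\prod_e b_{i(e)j(e)}^2 \le (\sigma_1^2)^{\#R}(\sigma_2^2)^{\#C}$
already fails for the simplest graph, a double edge: taking $b_{ij}=\id_{i=j}$ on an $n\times n$ matrix gives $\sum_{ij} b_{ij}^2=n$ while $\sigma_1^2\sigma_2^2=1$. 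At the end of a peeling sequence you are left with at least one isolated vertex, whose label sum produces a factor of the ambient dimension $n$ or $m$; and even after dividing by $n$ for the normalized trace, a careful accounting of all the cycle-vertex discards leads back to the $+p$ slack in the dimensions. Your comparison on the $Y$ side also does not match term by term: the number of labelings of a graph with $|R|$ row vertices into $[d_1]$ is a falling factorial $d_1(d_1-1)\cdots(d_1-|R|+1)$, not $d_1^{|R|}$, and it can vanish when $|R|>d_1$, so the per-graph inequality you need is not what the Gaussian moments actually produce. The self-correcting, contradictory passage at the end of your write-up (``so one also needs $n\ge\lceil\sigma_1^2\rceil$? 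No --- \ldots is wrong too'') is symptomatic of this: there is no consistent bookkeeping that makes the peeling argument sharp.

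The paper's actual proof (section~\ref{sec:pfrectext}) is genuinely different. It reduces to Gaussian entries, uses the Wick formula to expand $\mathbf{E}[\ntr(XX^*)^p]$ as a sum over \emph{pairings} $\pi\in\mathrm{P}_2([2p])$ rather than over walk shapes, and controls each pairing term via the maximal diagonal entry $D(\pi)=\max_r(E(\pi))_{rr}$ (Lemma~\ref{lem:dpi}), not the normalized trace. The heart of the argument is a crossing-elimination procedure: noncrossing pairings contribute exactly $\sigma_1^{2a}\sigma_2^{2b}$ (Lemma~\ref{lem:rectnc}), and each crossing is removed via Lemmas~\ref{lem:rect1cr}--\ref{lem:rect3cr}, picking up a factor $\sigma_*^4=1$ (or $\sigma_*^6$, or $\sigma^2\sigma_*^2\,D(\cdot)D(\cdot)$) with \emph{equality} when $b_{ij}\equiv 1$. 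Because every inequality in the reduction becomes an equality for the i.i.d.\ matrix, the resulting coefficients $\tilde\varkappa_p(k,l)$ are the same for $X$ and $Y$, and the comparison is exact; there is no dimension slack and no $+p$. The use of $D(\pi)$ rather than $\ntr E(\pi)$ is essential for the type-3 crossing step (Lemma~\ref{lem:rect3cr}), where an unnormalized trace would otherwise introduce a dimension-dependent factor---exactly the kind of loss that dooms the walk-peeling approach. You would need to abandon the walk expansion and adopt this Wick/pairing framework to prove the theorem as stated.
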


Note that the inequality in Theorem \ref{thm:rectext} holds with equality 
when $X$ has i.i.d.\ entries. Thus, in contrast to the comparison 
principle of \cite{BvH16}, Theorem \ref{thm:rectext} may be viewed as an 
extremum principle for random matrices. In terms of normalized moments, 
this principle may be expressed as follows.

\begin{cor}
\label{cor:rexn}
Fix $d_1,d_2\in\mathbb{N}$.
Among all $X$ as in Model \ref{mod:rectintro} with $\sigma_*^2=1$,
$\sigma_1^2\le d_1$, $\sigma_2^2\le d_2$, and arbitrary dimensions
$n\times m$, the normalized moments
$\mathbf{E}[\ntr (XX^*)^{p}]$ are maximized for all $p$ by the  
matrix of dimension $d_1\times d_2$ with i.i.d.\ $N(0,1)$
entries.
\end{cor}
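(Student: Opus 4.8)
The plan is to deduce the corollary from Theorem~\ref{thm:rectext} together with a monotonicity property of the normalized moments of square Gaussian Wishart matrices. First observe that the $d_1\times d_2$ matrix $Z$ with i.i.d.\ $N(0,1)$ entries lies in the feasible class: it has $\sigma_*^2=1$, $\sigma_1^2=d_1\le d_1$, $\sigma_2^2=d_2\le d_2$, and dimensions $d_1\times d_2$. Hence it suffices to show $\mathbf{E}[\ntr(XX^*)^p]\le\mathbf{E}[\ntr(ZZ^*)^p]$ for every feasible $X$. Given such an $X$, Theorem~\ref{thm:rectext} gives $\mathbf{E}[\ntr(XX^*)^p]\le\mathbf{E}[\ntr(YY^*)^p]$, where $Y$ is the $\lceil\sigma_1^2\rceil\times\lceil\sigma_2^2\rceil$ matrix with i.i.d.\ $N(0,1)$ entries; and since $d_1,d_2\in\mathbb{N}$ with $\sigma_1^2\le d_1$ and $\sigma_2^2\le d_2$, we have $\lceil\sigma_1^2\rceil\le d_1$ and $\lceil\sigma_2^2\rceil\le d_2$. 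Thus the corollary reduces to the claim that, writing $G_{a,b}$ for the $a\times b$ matrix with i.i.d.\ $N(0,1)$ entries, the quantity $\mathbf{E}[\ntr(G_{a,b}G_{a,b}^*)^p]$ is non-decreasing in each of $a,b\in\mathbb{N}$.

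To prove this, I would expand the unnormalized moment by Wick's formula. Writing $\tr(G_{a,b}G_{a,b}^*)^p$ as a sum over closed length-$2p$ walks of products of $2p$ entries and applying $\mathbf{E}[G_{ij}G_{kl}]=\delta_{ik}\delta_{jl}$ to each of the $(2p-1)!!$ pairings yields the identity
\[
	\mathbf{E}[\tr(G_{a,b}G_{a,b}^*)^p]=\sum_{P} a^{k(P)}\,b^{l(P)},
\]
where $P$ ranges over the perfect matchings of the $2p$ factors, $k(P)\ge 1$ is the number of classes the matching induces among the $p$ row indices and $l(P)\ge 1$ the number among the $p$ column indices. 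Dividing by $a$, it follows that $\mathbf{E}[\ntr(G_{a,b}G_{a,b}^*)^p]=\sum_P a^{k(P)-1}b^{l(P)}$ is a polynomial in $(a,b)$ with non-negative coefficients, hence non-decreasing in $a$ and in $b$ on $[0,\infty)^2$. Applying this at $(a,b)=(\lceil\sigma_1^2\rceil,\lceil\sigma_2^2\rceil)$ and at $(a,b)=(d_1,d_2)$ gives $\mathbf{E}[\ntr(YY^*)^p]\le\mathbf{E}[\ntr(ZZ^*)^p]$, completing the proof; the maximum is attained by $Z$ since Theorem~\ref{thm:rectext} holds with equality for matrices with i.i.d.\ entries.

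The one point requiring care is the monotonicity in the \emph{row} dimension $a$. Monotonicity in the column dimension $b$ needs no computation at all: for $b'\le b$, couple $G_{a,b'}$ to the first $b'$ columns of $G_{a,b}$, so that $G_{a,b}G_{a,b}^*\succeq G_{a,b'}G_{a,b'}^*$ in the positive semidefinite order; then $\tr(\cdot)^p$ is monotone on the p.s.d.\ cone (by Weyl's inequality for eigenvalues) and the normalization $\tfrac1a$ is untouched. In the row direction, however, the factor $\tfrac1a$ directly opposes the growth of the unnormalized moment, so one genuinely must use the structure of the expansion above — equivalently its falling-factorial form $\mathbf{E}[\tr(G_{a,b}G_{a,b}^*)^p]=\sum_{\pi}(a)_{|\pi|}\,c_\pi(b)$ with $c_\pi(b)\ge 0$ and $|\pi|\ge 1$, in which dividing by $a$ replaces $(a)_{|\pi|}=a(a-1)\cdots(a-|\pi|+1)$ by the still non-decreasing $(a-1)\cdots(a-|\pi|+1)$. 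This is elementary, but it is the step I would present most carefully; alternatively one may simply invoke the explicit Wishart moment formula developed later in the paper.
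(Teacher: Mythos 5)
Your argument is correct, but it takes a different route than the paper. The paper does not deduce Corollary~\ref{cor:rexn} from the introduction's Theorem~\ref{thm:rectext}: it deduces it directly from Theorem~\ref{thm:rectmom} in Section~\ref{sec:rect}, which is stated for \emph{any} integers $d_1\ge\sigma_1^2$, $d_2\ge\sigma_2^2$ (not only the ceilings). With that more general formulation in hand, the corollary is literally a restatement, and no extra lemma is needed. You, starting only from the ceiling version, correctly identified that something more is required --- namely that $(a,b)\mapsto\mathbf{E}[\ntr(G_{a,b}G_{a,b}^*)^p]$ is coordinatewise non-decreasing on $\mathbb{N}^2$ --- and you supply a valid proof of that: the Wick expansion $\mathbf{E}[\tr(G_{a,b}G_{a,b}^*)^p]=\sum_{P}a^{k(P)}b^{l(P)}$ with $k(P),l(P)\ge1$ makes $a^{-1}\mathbf{E}[\tr(G_{a,b}G_{a,b}^*)^p]$ a polynomial with nonnegative coefficients in $(a,b)$, so monotonicity is immediate. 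Your further observation that column-monotonicity also follows from the PSD ordering $G_{a,b'}G_{a,b'}^*\preceq G_{a,b}G_{a,b}^*$ and Weyl monotonicity, but row-monotonicity does not because of the $\frac{1}{a}$ normalization, is accurate and shows you are attending to the right subtlety. The net effect is a slightly longer but self-contained proof that only uses the publicly stated Theorem~\ref{thm:rectext}; the paper's route is shorter because the extremum principle was proved in the stronger parametrization from the start.

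One minor remark: the parenthetical recasting into a falling-factorial form $\sum_\pi (a)_{|\pi|}\,c_\pi(b)$ is not needed and is stated a bit loosely (it is not literally a re-indexing of the same sum, and the nonnegativity of the $c_\pi(b)$ would require its own short argument). Your primary monomial expansion is both correct and cleaner, so I would simply drop the falling-factorial aside.
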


In terms of unnormalized moments, we obtain the
following.

\begin{cor}
\label{cor:rexu}
Fix $n,m,d_1,d_2\in\mathbb{N}$ with $\frac{m}{d_2}\ge
\frac{n}{d_1}\in\mathbb{N}$. Among all
$X$ as in Model~\ref{mod:rectintro} with $\sigma_*^2=1$,
$\sigma_1^2\le d_1$, $\sigma_2^2\le d_2$, and fixed dimensions $n\times
m$, the unnormalized moments $\mathbf{E}[\tr (XX^*)^{p}]$ are
maximized
for all $p$ by the block-diagonal matrix whose blocks have dimension
$d_1\times d_2$ and i.i.d.\ $N(0,1)$ entries
(Figure \ref{fig:blockd}).
\end{cor}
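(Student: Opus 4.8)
The plan is to deduce this directly from Corollary~\ref{cor:rexn} together with the trivial identity $\tr(XX^*)^p = n\,\ntr(XX^*)^p$, which, at fixed dimension $n\times m$, reduces the control of unnormalized moments to that of normalized moments. Write $k := n/d_1 \in \mathbb{N}$. Since $m/d_2 \ge k$, we have $kd_2 \le m$, so there is room inside an $n\times m$ array for $k$ consecutive diagonal blocks of size $d_1\times d_2$ (these span exactly $kd_1 = n$ rows and $kd_2 \le m$ columns). Let $X_\star$ be the $n\times m$ matrix whose first $kd_2$ columns carry the block-diagonal matrix with blocks $G^{(1)},\dots,G^{(k)}$, where the $G^{(l)}$ are independent $d_1\times d_2$ matrices with i.i.d.\ $N(0,1)$ entries, and whose remaining $m-kd_2$ columns vanish. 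One checks at once that $X_\star$ is an instance of Model~\ref{mod:rectintro} (entries $b_{ij}\xi_{ij}$ with $b_{ij}\in\{0,1\}$ and $\xi_{ij}\sim N(0,1)$), with $\sigma_*^2 = 1$, $\sigma_1^2 = d_1$, and $\sigma_2^2 = d_2$; in particular $X_\star$ lies in the admissible class of the statement.

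Next I would compute the moments of $X_\star$. The vanishing columns do not affect $X_\star X_\star^*$, so $X_\star X_\star^*$ is the $n\times n$ block-diagonal matrix with diagonal blocks $G^{(l)}(G^{(l)})^*$, whence $\tr(X_\star X_\star^*)^p = \sum_{l=1}^k \tr\big(G^{(l)}(G^{(l)})^*\big)^p$. Taking expectations and using that the blocks are identically distributed,
$$
	\mathbf{E}[\tr(X_\star X_\star^*)^p]
	= k\,\mathbf{E}[\tr(GG^*)^p]
	= \frac{n}{d_1}\,\mathbf{E}[\tr(GG^*)^p]
	= n\,\mathbf{E}[\ntr(GG^*)^p],
$$
where $G$ denotes a single $d_1\times d_2$ matrix with i.i.d.\ $N(0,1)$ entries and $\ntr$ is the trace normalized by $d_1$.

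Finally, for an arbitrary admissible $X$ (as in Model~\ref{mod:rectintro}, of dimension $n\times m$, with $\sigma_*^2=1$, $\sigma_1^2\le d_1$, $\sigma_2^2\le d_2$), the identity $\tr(XX^*)^p = n\,\ntr(XX^*)^p$ together with Corollary~\ref{cor:rexn} — whose extremizer for the normalized moments is precisely the $d_1\times d_2$ i.i.d.\ matrix $G$ — yields
$$
	\mathbf{E}[\tr(XX^*)^p]
	= n\,\mathbf{E}[\ntr(XX^*)^p]
	\le n\,\mathbf{E}[\ntr(GG^*)^p]
	= \mathbf{E}[\tr(X_\star X_\star^*)^p].
$$
Since $X_\star$ is itself admissible and attains this bound, it is a maximizer, which is the assertion. (Alternatively one may argue from Theorem~\ref{thm:rectext} directly, at the cost of separately establishing that the normalized Gaussian Wishart moments $\mathbf{E}[\ntr(YY^*)^p]$ are nondecreasing in each of the two dimensions of $Y$.)

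I do not expect a substantial obstacle: the corollary is essentially a repackaging of Corollary~\ref{cor:rexn}, and the only points needing care are bookkeeping — checking that $X_\star$ meets the variance constraints with equality, and, above all, tracking the normalization so as to see that replicating the $d_1\times d_2$ extremizer $k=n/d_1$ times produces a matrix whose \emph{unnormalized} $2p$-moment equals exactly $n$ times the \emph{normalized} $2p$-moment of $G$. The hypotheses $n/d_1\in\mathbb{N}$ and $m/d_2\ge n/d_1$ enter (and are needed) precisely to fit these $k$ copies of a $d_1\times d_2$ block into an $n\times m$ array.
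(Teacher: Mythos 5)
Your proposal is correct and follows essentially the same route as the paper, which also deduces the corollary by observing that $\mathbf{E}[\tr (XX^*)^p]\le \frac{n}{d_1}\,\mathbf{E}[\tr(YY^*)^p]=\mathbf{E}[\tr(\tilde Y\tilde Y^*)^p]$ is just Theorem~\ref{thm:rectmom} rewritten for unnormalized traces, the block-diagonal matrix being $n/d_1$ independent copies of the $d_1\times d_2$ extremizer. Your additional bookkeeping (verifying admissibility of $X_\star$ and tracking the two normalizations) is accurate and merely spells out what the paper leaves implicit.
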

\begin{figure}
\begin{tikzpicture}
\draw[thick] (-.1,0) -- (-.2,0) -- (-.2,2) -- (-.1,2);
\draw[thick] (5.9,0) -- (6,0) -- (6,2) -- (5.9,2);
\draw[thick,fill=gray!15] (0,2) rectangle (1.25,1.5);
\draw[thick,fill=gray!15] (1.25,1.5) rectangle (2.5,1);
\draw[thick,fill=gray!15] (3.75,0.5) rectangle (5,0);
\draw[fill=black] (2.8125,0.875) circle (0.02);
\draw[fill=black] (3.125,0.75) circle (0.02);
\draw[fill=black] (3.4375,0.625) circle (0.02);

\draw[thick,decorate,decoration=brace] (-.5,0) -- (-.5,2);
\draw[thick,decorate,decoration=brace] (0,2.3) -- (5.8,2.3);

\draw (-.8,1) node {$\scriptstyle n$};
\draw (2.9,2.6) node {$\scriptstyle m$};
\draw (2.45,1.25) node[right] {$\scriptstyle d_1$};
\draw (1.875,1.45) node[above] {$\scriptstyle d_2$};

\end{tikzpicture}
\caption{An extremal block-diagonal matrix.\label{fig:blockd}}
\end{figure}
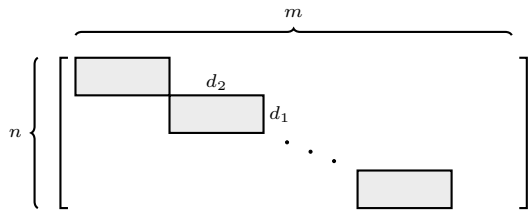

In particular, that block-diagonal matrices have the largest moments 
explains the form of Theorem \ref{thm:smrect}. Indeed, as the norm of a 
block-diagonal matrix is the maximum of the norms of its blocks, 
\eqref{eq:wishart} suggests that its distribution is approximately the 
maximum of $\frac{n}{d_1}$ independent Tracy-Widom variables. The tail 
probabilities of this distribution are precisely of the form that is 
captured by the tail bound of Theorem \ref{thm:smrect}. In particular, the 
bound \eqref{eq:smrectmed} is essentially the best possible in that it 
yields the correct behavior of block-diagonal matrices.

Theorem \ref{thm:rectext} will be proved in section \ref{sec:rect} below. 
With this result in hand, the proof of Theorems \ref{thm:smrect} and 
\ref{thm:lgrect} will reduce to the proof of analogous tail bounds for 
Wishart matrices. The latter is made possible by a beautiful method, 
pioneered by Ledoux \cite{Led04,Led07,Led09}, for deriving Tracy-Widom 
type tail bounds from sharp moment estimates.\footnote{
	Tail bounds for Wigner and Wishart matrices have also been
	proved by methods that are not based on moments, such as in 
	\cite{Aub05,LR10}. However, as Theorem \ref{thm:rectext} only
	provides a comparison principle for moments and not for tail
	probabilities, the moment approach is essential here.}
While such 
estimates were 
obtained by Ledoux for GUE \cite{Led04} and GOE \cite{Led09} matrices, 
sharp moment bounds for Wishart matrices do not appear to be known in the 
literature. Such bounds are obtained here in section \ref{sec:wishart}. 
The Wishart model is considerably more delicate than the GUE 
and GOE models due to the fact that we require bounds that hold uniformly 
for all dimensions $n,m$, not just in the asymptotic regime 
\eqref{eq:wishart} where $m=cn$ are proportional.

Let us emphasize that, despite the apparent similarity between 
Theorem~\ref{thm:rectext} and the comparison principles of \cite{BvH16}, 
the proofs of these results are completely different. The method of 
\cite{BvH16} uses a classical combinatorial expression for the moments as 
a sum over equivalence classes of even closed walks, and estimates each 
term separately. In contrast, the proof of Theorem \ref{thm:rectext} 
expresses the moments of a Gaussian random matrix as a sum over pairings 
by the Wick formula, and then uses an iterative procedure to reduce each 
term to a noncrossing pairing. This new approach turns out to provide a 
much more efficient mechanism for controlling the moments.

\subsubsection{Self-adjoint models and outline}

While we have focused the discussion in the introduction on 
non-self-adjoint models for concreteness, analogous results hold also for 
self-adjoint models. Beside that self-adjoint random matrices arise 
frequently in applications, the proofs of the self-adjoint analogues of 
our main results turn out to be simpler than those in the 
non-self-adjoint case. We will therefore develop these results
in detail before we return to the non-self-adjoint case.
\medskip
\begin{enumerate}[$\bullet$]
\itemsep\medskipamount
\item
In section \ref{sec:cplx} we consider self-adjoint random matrices with 
independent complex Gaussian entries.  The extremum principle admits a 
particularly simple proof in this setting that avoids almost all the 
complications that arise in the real case.
\item
We then consider in section \ref{sec:real} self-adjoint random matrices 
with independent real entries, that is, the self-adjoint analogue of Model 
\ref{mod:rectintro}. The proof of the the extremum principle is much more 
involved in this case, and shares the same difficulties as the proof of 
the extremum principle for non-self-adjoint models. However, for both 
complex and real self-adjoint models, we can directly invoke the moment 
estimates of Ledoux  \cite{Led04,Led07,Led09} to obtain tail bounds.
\item 
Finally, the non-self-adjoint case is developed in section 
\ref{sec:rect}, while section \ref{sec:wishart} developes the sharp moment 
estimates that are needed in this case.
\end{enumerate}

\subsection{Discussion and open questions}

\subsubsection{Intrinsic freeness}
\label{sec:free}

To date, two main approaches have been developed for obtaining sharp 
nonasymptotic bounds for the spectral norm of nonhomogeneous random 
matrices $X$ that are of a fundamentally different nature:
\medskip
\begin{enumerate}[1.]
\itemsep\medskipamount
\item The extremum principles of \cite{BvH16} and of this paper
compare the spectral statistics of $X$ with those of an i.i.d.\ random 
matrix $Y$ of smaller dimension.
\item The intrinsic freeness principle of \cite{BBV23} compares the
spectral statistics of $X$ with those of a deterministic
operator $X_{\rm free}$ that arises in free probability theory.
\end{enumerate}
\medskip
The theory of \cite{BBV23} is in fact much more general than 
the setting of this paper, in that it captures both random matrices with 
dependent entries and more general spectral statistics. However, the 
following special case may 
be directly compared to the bounds of this paper: if $X$ is an $n\times m$ 
random matrix with independent Gaussian entries $X_{ij}\sim 
N(0,b_{ij}^2)$ and $n\le m$, then \cite[Corollary 2.2]{BBV23} yields
\begin{equation}
\label{eq:freemed}
	\|X\| \le (1+\varepsilon)\|X_{\rm free}\| 
	+ C_\varepsilon\,\sigma_*(\log m)^{3/2}\quad\text{w.h.p.}
\end{equation}
for every $\varepsilon>0$ 
(this may be shown as in \cite[Lemma 3.1]{BBV23}). As 
$$
	\|X_{\rm free}\| \le \sigma_1+\sigma_2
$$
by \cite[Lemma 2.5]{BBV23}, the intrinsic freeness principle readily 
yields a
slightly weaker form of \eqref{eq:bvhrectmed}. Note that the second term 
in \eqref{eq:freemed} is even larger than that of \eqref{eq:bvhrectmed}, 
while the results of this paper yield a much smaller
second-order term.

On the other hand, there are many situations where the intrinsic freeness
principle yields strictly better results than those of this paper: 
whenever
$$
	\sigma_*(\log m)^{3/2}
	\ll\|X_{\rm free}\|\le (1-\delta)(\sigma_1+\sigma_2)
$$
for some $\delta>0$,
the bound of \eqref{eq:freemed} is strictly better to \emph{leading} order,
which renders any improvement to the second-order term negligible.

In view of the above discussion, the bounds of this paper are of 
particular interest when $\|X_{\rm free}\|=\sigma_1+\sigma_2$.  The 
following lemma explains when this is the case.

\begin{lem}
\label{lem:xfree}
$\|X_{\rm free}\|=\sigma_1+\sigma_2$ when
$\sum_i b_{ij}^2 = \sigma_1^2$ and $\sum_j b_{ij}^2=\sigma_2^2$ for all 
$i,j$.
\end{lem}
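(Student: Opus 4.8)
The plan is to read both inequalities off Lehner's variational formula for the norm of an operator‑valued semicircular element. Passing to the self‑adjoint dilation $\widetilde X_{\rm free}=\left(\begin{smallmatrix}0&X_{\rm free}\\ X_{\rm free}^*&0\end{smallmatrix}\right)$, which has norm $\|X_{\rm free}\|$ and is a semicircular element with $(n+m)\times(n+m)$ matrix coefficients $b_{ij}a_{ij}$, where $a_{ij}:=E_{i,n+j}+E_{n+j,i}$, Lehner's formula gives
\[
  \|X_{\rm free}\|=\inf_{b>0}\lambda_{\max}\big(b^{-1}+\Phi(b)\big),\qquad
  \Phi(b):=\sum_{i\le n,\,j\le m}b_{ij}^2\,a_{ij}\,b\,a_{ij},
\]
the infimum running over positive definite $b\in M_{n+m}(\mathbb{C})$. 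Writing $P_1,P_2$ for the coordinate projections onto the first $n$ and last $m$ coordinates, a one‑line computation gives $a_{ij}(\alpha P_1+\gamma P_2)a_{ij}=\gamma\, e_ie_i^*+\alpha\, e_{n+j}e_{n+j}^*$, so under the hypotheses $\sum_i b_{ij}^2=\sigma_1^2$, $\sum_j b_{ij}^2=\sigma_2^2$ one obtains the crucial identity $\Phi(\alpha P_1+\gamma P_2)=\gamma\sigma_2^2\,P_1+\alpha\sigma_1^2\,P_2$, i.e.\ $\Phi$ preserves the block‑scalar matrices and acts on them by swapping weights.

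The upper bound is then immediate (and is already contained in \cite[Lemma 2.5]{BBV23}): taking $b^\star:=\sigma_1^{-1}P_1+\sigma_2^{-1}P_2$ yields $b^\star{}^{-1}+\Phi(b^\star)=(\sigma_1 P_1+\sigma_2 P_2)+(\sigma_2 P_1+\sigma_1 P_2)=(\sigma_1+\sigma_2)\id$. For the lower bound — the actual content of the lemma — I would test against the single density matrix $\rho:=\tfrac1{2n}P_1+\tfrac1{2m}P_2$, which satisfies $\tr\rho=1$. Summing $\sum_{ij}b_{ij}^2$ in two ways shows that the hypotheses force $n\sigma_2^2=m\sigma_1^2$, and with this the identity above gives $\Phi(\rho)=\tfrac{\sigma_2^2}{2m}P_1+\tfrac{\sigma_1^2}{2n}P_2=\kappa\,\id$, a scalar. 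Since $\Phi$ is self‑adjoint for the trace pairing, for every $b>0$
\[
  \lambda_{\max}\big(b^{-1}+\Phi(b)\big)\ \ge\ \tr\!\big(\rho\,(b^{-1}+\Phi(b))\big)=\tr(\rho\,b^{-1})+\kappa\,\tr(b).
\]
Splitting into the two diagonal blocks $b_{11},b_{22}$ of $b$, bounding $(b^{-1})_{kk}\ge (b_{kk})^{-1}$ by the Schur complement and then applying the scalar bound $\tfrac1{2n}\mu^{-1}+\kappa\mu\ge 2\sqrt{\kappa/2n}$ to each eigenvalue $\mu$ of $b_{11}$, the first block contributes at least $n\cdot 2\sqrt{\kappa/2n}=\sigma_1$ (using $\kappa=\sigma_1^2/2n$) and, symmetrically, the second contributes at least $\sigma_2$ (using $\kappa=\sigma_2^2/2m$). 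Hence $\lambda_{\max}(b^{-1}+\Phi(b))\ge\sigma_1+\sigma_2$ for all $b$, and taking the infimum gives $\|X_{\rm free}\|=\sigma_1+\sigma_2$.

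There is no single hard estimate here; the real work is guessing the right certificates. Both the minimizer $b^\star$ and the dual $\rho$ must be block‑scalar, which is precisely what the homogeneity hypotheses permit, and the Schur‑complement/AM–GM chain must be arranged so that the identity $n\sigma_2^2=m\sigma_1^2$ makes the two block contributions come out to exactly $\sigma_1$ and $\sigma_2$ rather than something smaller. (The same conclusion can be reached more conceptually: under the hypotheses the operator‑valued Dyson equation $G=(z-\Phi(G))^{-1}$ admits a solution of the form $G(z)=g_1(z)P_1+g_2(z)P_2$ with $g_1=(z-\sigma_2^2 g_2)^{-1}$ and $g_2=(z-\sigma_1^2 g_1)^{-1}$; this is the Dyson system of a Marchenko–Pastur‑type law whose edge for $X_{\rm free}X_{\rm free}^*$ is $(\sigma_1+\sigma_2)^2$. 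I would include this only as a remark.)
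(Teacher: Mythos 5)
Your proof is correct, and it is essentially a self-contained rederivation of what the paper obtains by citation. The paper disposes of the lemma in two lines: the upper bound $\|X_{\rm free}\|\le\sigma_1+\sigma_2$ is \cite[Lemma 2.5]{BBV23}, and the lower bound follows by plugging $w_i=\tfrac1{2n}$, $v_j=\tfrac1{2m}$ into the ready-made variational lower bound of \cite[Lemma 3.2 and Remark 2.6]{BBV23} and using $\sum_{i,j}b_{ij}^2=m\sigma_1^2=n\sigma_2^2$. You instead go back to Lehner's formula $\|X_{\rm free}\|=\inf_{b>0}\lambda_{\max}(b^{-1}+\Phi(b))$ for the self-adjoint dilation, verify that $\Phi$ swaps the block-scalar weights under the homogeneity hypotheses, exhibit the primal certificate $b^\star=\sigma_1^{-1}P_1+\sigma_2^{-1}P_2$ making the objective equal to $(\sigma_1+\sigma_2)\id$, and bound $\lambda_{\max}$ from below by pairing against the density $\rho=\tfrac1{2n}P_1+\tfrac1{2m}P_2$, using the self-adjointness of $\Phi$, the Schur-complement inequality $(b^{-1})_{kk}\geent (b_{kk})^{-1}$, and AM--GM on eigenvalues. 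This is precisely the dual-certificate mechanism that underlies the cited lower bound of \cite{BBV23}; the paper gains brevity by citing it, while you gain transparency about why the uniform weights are the right ones (they are exactly the weights for which $\Phi(\rho)$ is scalar, which is where $n\sigma_2^2=m\sigma_1^2$ enters). All the computations check out, including the block action $a_{ij}(\alpha P_1+\gamma P_2)a_{ij}=\gamma e_ie_i^*+\alpha e_{n+j}e_{n+j}^*$ and the resulting $\kappa=\sigma_1^2/(2n)=\sigma_2^2/(2m)$. One cosmetic remark: the displayed lower bound in the paper's proof has a typographical slip (the second summand should read $2\sum_j\sqrt{v_j\sum_i b_{ij}^2 w_i}$); your derivation sidesteps this entirely.
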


\begin{proof}
We can apply \cite[Lemma 3.2 and Remark 2.6]{BBV23} to obtain
$$
	\|X_{\rm free}\| \ge
	2\sum_i\sqrt{w_i\sum_j b_{ij}^2 v_j} +
	2\sum_j\sqrt{w_i\sum_i b_{ij}^2 v_j}
$$
for every $w\in\mathbb{R}^n_+$, $v\in\mathbb{R}^m_+$ with
$\sum_i w_i + \sum_j v_j=1$. The conclusion follows by choosing
$w_i=\frac{1}{2n}$, $v_j=\frac{1}{2m}$, and noting that 
$\sum_{i,j}b_{ij}^2 = m\sigma_1^2=n\sigma_2^2$.
\end{proof}

Variance patterns with constant row and column sums as in Lemma 
\ref{lem:xfree} arise naturally in applications, for example, in the study 
of sparse random matrices whose sparsity pattern is biregular. At the 
same time, the results of this paper are stronger than those obtained by 
the intrinsic freeness principle for very sparse matrices (so that they 
capture phase transitions as in Example \ref{ex:band}), and provide the 
strongest easily computable norm bounds for arbitrary variance patterns.

One may wonder whether it might in fact be possible to achieve the best of 
both worlds: could the tail bound of Theorem \ref{thm:smrect} with 
fluctuations at Tracy-Widom scale remain valid if the leading term 
$\sigma_1+\sigma_2$ is replaced by $\|X_{\rm free}\|$? The following 
example shows that such a bound cannot exist.

\begin{example}
\label{ex:bbp}
Let $n=m$ and $b_{ij}^2=n^{-1}(1 + \delta^2 1_{i=1})$, known as the 
spiked 
covariance model (originally due to Baik, Ben Arous, and P\'ech\'e in 
the complex case).
Then the leading order behavior of $\|X\|$ exhibits the following
phase transition:
$$
	\|X\| = (1+o(1))\|X_{\rm free}\| =
	(1+o(1))\begin{cases}
	2 & \text{if }\delta\le 1,\\
	\delta+\frac{1}{\delta} & \text{if }\delta>1
	\end{cases}
	\qquad\text{w.h.p.}
$$
as $n\to\infty$ (cf.\ \cite{BBV23,BCSV23} and \cite[\S 2.1]{Pau07}).
Moreover, it is shown in \cite[Theorem 3]{Pau07} that when $\delta>1$, the 
fluctuations of $\|X\|$ are of order $\sigma_*\sim n^{-1/2}$. This shows
that we cannot replace $\sigma_1+\sigma_2$ by $\|X_{\rm free}\|$ in
Theorem \ref{thm:smrect}, as that would imply a much smaller bound on the 
fluctuations of order $\sigma_*^{4/3}$.
\end{example}

Example \ref{ex:bbp} illustrates in a particularly clear manner that the 
extremum and intrinsic freeness principles capture fundamentally different 
mathematical phenomena. This may appear somewhat surprising, given that 
both the proofs in this paper and those of \cite{BBV23} are based on the 
control of crossings in the Wick formula (in the latter case, this idea is 
due to \cite{Tro18}). However, these results exploit crossings in very 
different ways: here we show that the contribution of each crossing to the 
nonhomogeneous model is bounded by that in the i.i.d.\ model, while the 
intrinsic freeness principle aims to show that crossings are negligible 
altogether.

\subsubsection{Superconcentration}

The concentration of measure phenomenon \cite{Led01} is a powerful tool 
for bounding the fluctuations of random structures. While general 
concentration inequalities often yield optimal bounds, there are also 
situations where the scale of the flucutations is much smaller than is 
predicted by general principles. This phenomenon was called 
superconcentration in \cite{Cha14}.

When applied to the random matrix models of this paper, classical 
concentration inequalities ensure that the scale of the fluctuations of 
$\|X\|$ is at most of order $\sigma_*$, cf.\ \cite[Corollary 4.14]{BBV23}. 
In contrast, the fluctuation term in Theorem~\ref{thm:smrect}, of order 
$\sigma_*^{4/3}\sigma_1^{-1/3}$, is often \emph{much} smaller than 
$\sigma_*$. We may therefore think of Theorem~\ref{thm:smrect} as a 
``matrix superconcentration inequality''. Indeed, this bound accurately 
captures the well known superconcentration property of the upper tail of 
the norm of Wishart matrices, and extends it to a much larger class of 
models.\footnote{%
After the completion of this paper, we learned that a weaker
superconcentration property was previously obtained by entirely different 
methods in \cite[Theorem 3.4]{EK11} for self-adjoint random matrices 
satisfying the analogue of Lemma \ref{lem:xfree}. Theorem \ref{thm:symmsm} 
below yields a much more precise tail bound in the self-adjoint setting 
that is essentially optimal by the extremum principle.}

This interpretation must be treated with care, however, as the 
second-order term in Theorem \ref{thm:smrect} can only provide information 
on the fluctuations of $\|X\|$ when the first-order term is captured 
correctly. In particular, Theorem \ref{thm:smrect} can only establish 
genuine superconcentration for variance patterns that satisfy conditions 
as in Lemma \ref{lem:xfree}. This is necessarily the case: Example 
\ref{ex:bbp} illustrates that general variance patterns may not 
exhibit any superconcentration at all.

More generally, it is not expected that the specific setting of Lemma 
\ref{lem:xfree} is necessary for superconcentration. General principles 
explained in \cite{Cha14} suggest that the spectral norm should exhibit 
superconcentration provided there are many singular values near the 
maximal one (this fails in Example \ref{ex:bbp}, where the largest 
singular value is isolated from the bulk). Furthermore, even in the 
setting of Lemma~\ref{lem:xfree}, our results yield an upper bound 
that is sharp for block-diagonal matrices, but other such models may 
exhibit even smaller fluctuations (e.g., \cite{Sod10}). A precise 
understanding of when and how much superconcentration arises for 
nonhomogeneous random matrices remains out of reach of any known method.

\subsubsection{Universality}

The setting of Model \ref{mod:rectintro} requires the entries of $X$ to 
be symmetrically distributed and have all their moments dominated by those 
of the Gaussian distribution. However, the classical Tracy-Widom 
asymptotics \eqref{eq:wishart} remain valid in a much more general setting 
\cite{PY14}: it suffices that the entries of $X$ have zero mean, unit 
variance, and soft control of the higher moments. 

It is of considerable interest to understand whether nonasymptotic bounds 
can also be achieved in this much more general setting. An extension of 
Theorem \ref{thm:bvh} along these lines was proved in \cite[\S 
4.3]{LVY18}: it was shown there that for any random matrix $X$ whose 
entries are independent and have zero mean, the statement of Theorem 
\ref{thm:bvh} remains valid if we replace the parameters 
\eqref{eq:sigmarect} by
$$
	\sigma_1^2 \leftarrow
	\max_j \sum_i \mathrm{Var}(X_{ij}),\qquad
	\sigma_2^2 \leftarrow 
	\max_i \sum_j \mathrm{Var}(X_{ij}),\qquad
	\sigma_*^2 \leftarrow \max_{i,j}
	\|X_{ij}\|_\infty^2.
$$
However, just as Theorem \ref{thm:bvh}, this result fails to capture 
fluctuations at the Tracy-Widom scale. It is natural to 
conjecture the validity of an analogous extension of Theorem 
\ref{thm:smrect} and of the other main results of this paper. Such results 
would be of particular interest in applications, e.g., to random graphs 
\cite{BBK20}.

Unfortunately, it is not clear whether the methods of this paper can be 
adapted to achieve such results (nor do they follow from the very general 
universality principles in \cite{BV23}, whose error terms are far larger 
than the fluctuations in Theorem \ref{thm:smrect}). While much less 
precise, the method used in \cite{BvH16,LVY18} does not use any special 
properties of the Gaussian distribution and is therefore readily adapted 
to more general models. In contrast, the proof of the sharp extremum 
principle of Theorem \ref{thm:rectext} is based on the Wick formula, and 
is therefore inherently Gaussian in nature. For this reason, an analogous
extension of our main results remains an open problem.

\subsection{Notation}

The following notation will be used throughout this paper. 

We denote by $N(0,\sigma^2)$ or by $N_{\mathbb{R}}(0,\sigma^2)$ the 
distribution of a real Gaussian random variable with mean $0$ and 
variance $\sigma^2$. The distribution of a complex random variable, whose 
real and imaginary parts are independent with distribution 
$N_\mathbb{R}(0,\frac{\sigma^2}{2})$ (i.e., a complex Gaussian variable), 
is denoted as $N_{\mathbb{C}}(0,\sigma^2)$.

For a matrix $M$, its adjoint is denoted as $M^*$; in particular, $a^*$ 
denotes the complex conjugate of $a\in\mathbb{C}$. We always denote by 
$\|M\|$ the spectral norm (i.e., the largest singular value) of $M$. 
Recall that for a square matrix $M$, we denote by $\tr M$ and $\ntr M$ the 
unnormalized and normalized trace, respectively.

We write $[n]:=\{1,\ldots,n\}$ for $n\in\mathbb{N}$. We denote by
$\mathrm{P}_2([n])$ the set of all pairings of $[n]$ (that is, partitions 
each of whose elements has size two). Recall that given any
$\pi\in\mathrm{P}_2([n])$, two pairs $\{i,k\},\{j,l\}\in\pi$ such that
$i<j<k<l$ are said to form a crossing. A pairing that contains no 
crossing is said to be noncrossing, and we denote by $\mathrm{NC}_2([n])$ 
the set of all noncrossing pairings of $[n]$.

Finally, we will write
$x\lesssim y$ to indicate that $x\le Cy$ for a universal constant $C$.

\section{The Hermitian case}
\label{sec:cplx}

The aim of this section is to investigate Hermitian random matrices with 
independent complex Gaussian entries. That is, we consider the 
following model.

\begin{model}
\label{mod:herm}
$X$ is an $n\times n$ Hermitian matrix whose entries $X_{ij}=(X_{ji})^*$ 
are independent for $i\ge j$ with $X_{ij}\sim N_{\mathbb{C}}(0,b_{ij}^2)$ 
for $i>j$ and $X_{ii}\sim N_{\mathbb{R}}(0,b_{ii}^2)$, where 
$b_{ij}=b_{ji}\ge 0$ are arbitrary nonnegative scalars.
\end{model}

This setting admits particularly simple proofs, which will guide the more 
involved arguments required for real random matrices in the following 
sections.

Define the parameters
\begin{equation}
\label{eq:sigmaparm}
	\sigma^2 := \max_{i\le n}\sum_{j\le n} b_{ij}^2,\qquad\qquad
	\sigma_*^2 := \max_{i,j\le n} b_{ij}^2.
\end{equation}
Then we have the following extremum principle and tail bounds.

\begin{thm}[Extremum principle]
\label{thm:hermmom}
Define $X$ as in Model \ref{mod:herm}, and assume that
$\sigma_*^2=1$ and that $\sigma^2\le d\in\mathbb{N}$. Then we have
$$
	\mathbf{E}[\ntr X^{2p}] \le \mathbf{E}[\ntr Y^{2p}]
$$
for all $p\in\mathbb{N}$, where $Y$ is the $d\times d$ Hermitian matrix
whose entries $Y_{ij}=(Y_{ji})^*$ are independent for $i\ge j$ with
$Y_{ij}\sim N_{\mathbb{C}}(0,1)$ for $i>j$
and $Y_{ii}\sim N_{\mathbb{R}}(0,1)$.
\end{thm}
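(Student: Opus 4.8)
The goal is an extremum principle: the $2p$-th moment of the normalized trace of $X^{2p}$ is maximized, among all admissible variance patterns with $\sigma_*^2=1$ and $\sigma^2\le d$, by the $d\times d$ GUE-type matrix $Y$. The natural tool is the Wick formula: for a Gaussian Hermitian matrix, $\mathbf{E}[\tr X^{2p}]$ expands as a sum over pairings $\pi\in\mathrm{P}_2([2p])$ of the $2p$ "$X$-factors," where each pairing contributes a product of covariances $\prod_{\{a,b\}\in\pi} \mathbf{E}[X_{i_a i_{a+1}} X_{i_b i_{b+1}}]$ summed over all index assignments $i_1,\dots,i_{2p}\in[n]$ around the cycle. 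For the Hermitian complex Gaussian model, a pair $\{a,b\}$ forces $(i_b,i_{b+1})=(i_{a+1},i_a)$ and contributes $b_{i_a i_{a+1}}^2$, while for $Y$ it contributes exactly $1$. So the statement becomes: for every $\pi$,
\[
\frac{1}{n}\sum_{i:\,[2p]\to[n]} \prod_{\{a,b\}\in\pi} b_{i_a i_{a+1}}^2 \,\mathbf{1}[\text{$\pi$-constraints}]
\;\le\;
\frac{1}{d}\sum_{j:\,[2p]\to[d]} \prod_{\{a,b\}\in\pi}\mathbf{1}[\text{$\pi$-constraints}],
\]
i.e., a term-by-term (per-pairing) domination. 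The right side counts index assignments consistent with $\pi$, which (after quotienting by the cycle structure of $\pi$) is $d^{\#(\text{loops of }\pi)}/d$; this is where $\sigma^2\le d$ must enter.

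**The iterative reduction.** Following the strategy flagged in the introduction, I would prove the per-pairing bound by an induction that reduces a general pairing $\pi$ to a noncrossing one. The key move: pick a crossing in $\pi$, i.e.\ pairs $\{a,c\},\{b,e\}$ with $a<b<c<e$, and perform a "switching" that replaces these two pairs by an uncrossed configuration, showing that the value of the index sum does not decrease under this operation. Concretely, one reorganizes the closed-walk structure so that the number of free index loops does not go down while the total number of $b^2$-factors is preserved; the constraint $\sum_j b_{ij}^2\le\sigma^2\le d$ is used to bound a sum over a "merged" index by $d$ times the surviving expression, exactly as in the passage from $\sum_i b_{ij}^2$ to a factor of $d$. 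Iterating until no crossings remain, one is left with a noncrossing pairing, for which the sum telescopes: a noncrossing pairing on $[2p]$ has $p+1$ loops, and the bound $\frac1n \sum_i \prod b^2 \le \frac1n\cdot n\cdot d^{p} \cdot(\text{stuff})$ matches $\frac1d\cdot d^{p+1}=d^p$, giving equality with the $Y$-computation (which is the Catalan-number count $C_p d^p$ when summed over all $\mathrm{NC}_2([2p])$).

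**The main obstacle.** The hard part is making the "uncrossing step" rigorous while tracking three quantities simultaneously: the number of index loops (which controls the power of $n$, resp.\ $d$), the collection of edges carrying $b^2$-weights, and the combinatorial admissibility. A naive uncrossing can merge two index loops into one (losing a factor of $n$) but should then free up a summation that can be bounded by $\sigma^2\le d$; the bookkeeping that these two effects exactly compensate—and that one never loses more than one gains—is the crux. I would set this up by working not with pairings abstractly but with the associated graph on the vertex set $[2p]$ (the cycle plus the pairing edges), quotient by the pairing, and phrase the uncrossing as a local surgery on this quotient graph, proving that Euler-characteristic/genus decreases. This is essentially the standard genus-expansion picture for $\mathbf{E}[\tr X^{2p}]$, but here adapted to carry the inhomogeneous weights $b_{ij}^2$ and to yield an honest inequality rather than an asymptotic. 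I expect the complex Hermitian case to be cleaner than the real case precisely because each Wick pair imposes the rigid constraint $(i_b,i_{b+1})=(i_{a+1},i_a)$ with no "parallel" alternative, so there is a single type of uncrossing move to analyze.
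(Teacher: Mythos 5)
Your high-level plan — Wick expansion, reduce to noncrossing pairings by a crossing-elimination step, and compare with the homogeneous model — is the same framework the paper uses. But the key lemma is missing, and the mechanism you propose in its place (an ``uncrossing surgery'' tracking loops, Euler characteristic, and genus) is not what the paper does and, as stated, has a direction problem.

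Concretely, the paper's crossing step is an explicit trace inequality (Lemma~\ref{lem:cplxcr}): given a crossing $\{1,l\},\{k,m\}\in\pi$, one bounds the corresponding doubly-summed term by $\sigma_*^4$ times a \emph{contracted} trace in which both Wick pairs are deleted, leaving a pairing $\pi'$ on $[2p-4]$; this is proved by an elementary computation that reduces the crossed trace to $\sigma_*^4\,\ntr M_3M_2M_1M_4$. Iterating this produces $\ell(\pi)$ crossing eliminations and yields the per-pairing bound $\sigma^{2p-4\ell(\pi)}\sigma_*^{4\ell(\pi)}$, and the same algorithm applied to $Y$ gives equality $d^{p-2\ell(\pi)}$ because every inequality used (Lemmas~\ref{lem:cplxnc} and~\ref{lem:cplxcr}) is tight when $b_{ij}\equiv 1$. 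No genus or loop count is ever tracked; the paper's remark after the proof explicitly notes this and says it is irrelevant.

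Your proposed move is different: you modify $\pi$ to an uncrossed pairing on the \emph{same} index set and argue that the $X$-side sum does not decrease. This cannot directly yield the needed per-pairing domination, because the $Y$-side also increases under uncrossing (noncrossing pairings contribute $d^p$, crossing ones contribute $d^{p-2\ell(\pi)}<d^p$). So bounding the crossed $X$-term by the noncrossing $X$-term, and then by $d^p$, over-counts on the $Y$ side by roughly a factor $|\mathrm P_2([2p])|/C_p$. To repair this you would need to show that each uncrossing loses exactly a factor $d^2$ on the $Y$ side and that the corresponding $X$-side factor is controlled by $\sigma_*^4\le 1$ — which is precisely the content of the paper's crossing inequality, just phrased as a pair-removal rather than a pair-rewiring. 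That specific inequality, and the observation that the identical reduction applied to $Y$ is an exact identity, are the two ingredients your write-up lacks.
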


\begin{thm}[Small deviations]
\label{thm:hermsm}
For $X$ as in Model \ref{mod:herm}, we have
$$
	\mathbf{P}\big[\|X\|>
	2\sigma + 4\sigma_*^{4/3}\sigma^{-1/3} t
	\big] 
	\le
	\frac{en\sigma_*^2}{\sigma^2}
	e^{-t^{3/2}}
$$
for every $0\le t\le \frac{\sigma^{4/3}}{\sigma_*^{4/3}}$.
\end{thm}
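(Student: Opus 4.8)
The plan is to derive the tail bound of Theorem~\ref{thm:hermsm} from the extremum principle of Theorem~\ref{thm:hermmom} together with Ledoux's method for converting sharp moment estimates into Tracy-Widom-type tail bounds. First I would normalize so that $\sigma_*^2 = 1$; this is harmless since $\|X\|$ and the claimed bound both scale linearly in $\sigma_*$, and the parameter $t$ is dimensionless. With this normalization, Theorem~\ref{thm:hermmom} gives
$$
	\EE[\ntr X^{2p}] \le \EE[\ntr Y^{2p}]
$$
for all $p$, where $Y$ is a $d\times d$ GUE-type matrix with $d = \lceil \sigma^2 \rceil$. Passing to unnormalized traces, $\EE[\tr X^{2p}] \le \frac{n}{d}\,\EE[\tr Y^{2p}]$, so $\PP[\|X\| > s] \le \frac{n}{d}\,\EE[\tr Y^{2p}]\, s^{-2p}$ for every $p$ by Markov's inequality.

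The heart of the argument is then a sharp bound on $\EE[\tr Y^{2p}]$ for the $d\times d$ complex Gaussian Wigner matrix $Y$, uniform in $d$ and $p$, capturing the edge at $2\sqrt{d}$ together with the correct subleading correction. This is precisely the content of Ledoux's GUE moment estimates \cite{Led04,Led07}: one has a bound of the shape
$$
	\EE[\tr Y^{2p}] \le C\, d^{p+1} 4^p \exp\!\big(c\, p^3/d^2\big)
$$
(up to adjusting the polynomial prefactor), valid in the regime $p \lesssim d$ that is relevant here. I would quote this directly rather than reprove it, since the excerpt explicitly allows invoking the moment estimates of Ledoux in the complex and real self-adjoint cases. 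Plugging this into the Markov bound with $s = 2\sigma + 4\sigma^{-1/3} t$ (recall $\sigma_* = 1$), writing $d \asymp \sigma^2$, and optimizing over the integer $p$, one chooses $p$ of order $\sigma^{2/3} t^{1/2}$: the leading factor $d^p 4^p / s^{2p}$ contributes roughly $\exp(-c' p t / \sigma^{2/3})$ while the exponential correction $\exp(c p^3/\sigma^4)$ contributes $\exp(c'' p^3/\sigma^4)$, and balancing these two against the choice of $p$ yields the rate $e^{-t^{3/2}}$ after tracking constants. The prefactor $\frac{n}{d} \asymp \frac{en\sigma_*^2}{\sigma^2}$ is exactly the ratio appearing in the statement, and the constraint $t \le \sigma^{4/3}/\sigma_*^{4/3}$ is what guarantees the optimal $p$ stays in the range $p \lesssim d$ where the moment estimate and the approximation $\lceil \sigma^2\rceil \asymp \sigma^2$ are valid (it also ensures the deviation stays below the scale of the mean, so that a $t^{3/2}$ rate rather than a Gaussian $t^2$ rate is the right one).

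The main obstacle is bookkeeping the constants carefully enough to land the clean bound with the explicit $e^{-t^{3/2}}$ rate and the explicit prefactor $\frac{en\sigma_*^2}{\sigma^2}$, rather than a bound with unspecified universal constants. Two technical points require care: first, the rounding $d = \lceil \sigma^2 \rceil$ versus $\sigma^2$ must be absorbed without degrading the leading term $2\sigma$ into $2\sqrt{d} = 2\sqrt{\lceil\sigma^2\rceil}$ — this is why the statement has a generous fluctuation coefficient ($4\sigma_*^{4/3}\sigma^{-1/3}$) and why one needs $\sqrt{\lceil\sigma^2\rceil} \le \sigma + \tfrac{1}{2\sigma} \le \sigma + (\text{small multiple of the fluctuation scale})$ in the relevant range of $t$; second, the Markov/optimization step only delivers the bound for the discrete set of $p \in \mathbb{N}$, so one must check that rounding the optimal real $p^\star \asymp \sigma^{2/3}t^{1/2}$ to the nearest integer costs only a bounded multiplicative factor in the exponent, which holds as long as $p^\star \ge 1$, i.e.\ for $t$ bounded below; for very small $t$ the stated prefactor $\tfrac{en\sigma_*^2}{\sigma^2} \ge \tfrac{en\sigma_*^2}{\sigma^2} e^{-t^{3/2}}$ already exceeds $1$ when $\sigma^2 \le en$ (which always holds), making the bound trivial there. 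Assembling these pieces gives the claimed inequality for all $0 \le t \le \sigma^{4/3}/\sigma_*^{4/3}$.
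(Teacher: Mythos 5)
Your overall route is exactly the paper's: normalize $\sigma_*=1$, invoke Theorem~\ref{thm:hermmom} with $d=\lceil\sigma^2\rceil$, apply Markov together with Ledoux's GUE moment bound (Lemma~\ref{lem:cplxled}, which is your $\EE[\tr Y^{2p}]\lesssim d^{p+1}4^p e^{cp^3/d^2}$ up to the polynomial prefactor), optimize over integer $p$, and dispose of small $t$ by noting the right-hand side exceeds $1$ since $\sigma^2\le n\sigma_*^2$. The paper runs this through the intermediate Proposition~\ref{prop:pchermsm} with $\varepsilon=t\sigma_*^{4/3}\sigma^{-4/3}$ and then converts $2\sqrt{\sigma^2+\sigma_*^2}$ into $2\sigma$ plus a piece absorbed into the fluctuation term, which is the bookkeeping you describe.

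However, the quantitative core of your optimization contains an arithmetic slip that, taken literally, breaks the argument. With $s=2\sigma+4\sigma^{-1/3}t=2\sigma(1+2t\sigma^{-4/3})$, the leading factor is $(2\sqrt d/s)^{2p}\approx\exp(-4pt\sigma^{-4/3})$, not $\exp(-c'pt\sigma^{-2/3})$ as you write. Balancing $pt\sigma^{-4/3}$ against the correction $p^3/\sigma^4$ gives the optimal choice $p\asymp\sigma^{4/3}t^{1/2}$ (this is the paper's $p=\lceil d\sqrt{2\varepsilon}\rceil$ with $\varepsilon=t\sigma^{-4/3}$), and then the exponent is $pt\sigma^{-4/3}\asymp t^{3/2}$ as desired. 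With your stated choice $p\asymp\sigma^{2/3}t^{1/2}$, the Markov bound would only yield $\exp(-ct^{3/2}\sigma^{-2/3})$, which is far too weak for large $\sigma$. Your two errors happen to cancel in the final rate, but the intermediate claim about both the leading-factor exponent and the optimal $p$ must be corrected; once they are, the proof goes through as in the paper. (A minor further point: the constraint $t\le\sigma^{4/3}/\sigma_*^{4/3}$ is not needed to keep $p$ in a validity range for Lemma~\ref{lem:cplxled}, which holds for all $p$; its role is to ensure $\varepsilon\le1$ so that $(1+\varepsilon)^{-2p}\le4^{-\varepsilon p}$ and the cubic correction remains subdominant.)
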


\begin{thm}[Large deviations]
\label{thm:hermlg}
For $X$ as in Model \ref{mod:herm}, we have
$$
	\mathbf{P}\big[
	\|X\|> 2\sigma + \sigma_*(1+t)\big] 
	\le
	2n\, e^{-t^2/2}
$$
for every $t\ge 0$.
\end{thm}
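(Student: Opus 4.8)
The plan is to deduce Theorem~\ref{thm:hermlg} from the extremum principle (Theorem~\ref{thm:hermmom}) by reducing to a tail bound for the GUE model and then applying a classical moment-based argument. First I would normalize so that $\sigma_*^2 = 1$ and set $d := \lceil \sigma^2 \rceil$, so that Theorem~\ref{thm:hermmom} gives $\mathbf{E}[\tr X^{2p}] \le \frac{n}{d}\,\mathbf{E}[\tr Y^{2p}]$ for every $p\in\mathbb{N}$, where $Y$ is the $d\times d$ GUE matrix (with the normalization of Model~\ref{mod:herm}, so $\|Y\| \approx 2\sqrt d$). The point of passing to $Y$ is that sharp moment bounds for GUE are available from Ledoux's work \cite{Led04,Led07}: specifically, one has an estimate of the form $\mathbf{E}[\tr Y^{2p}] \le d\,(2\sqrt d)^{2p}\, e^{c p^3/d^2}$ (or the cleaner bound valid in the range $p \lesssim d$), which is exactly the input needed to control the upper tail at the $O(1)$ scale that matters here. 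Since we only want the crude Gaussian-tail statement of Theorem~\ref{thm:hermlg} rather than Tracy--Widom fluctuations, I expect a relatively soft version of these moment bounds to suffice.

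The core of the argument is then a Markov/Chebyshev step on high moments. Writing $R := 2\sqrt d + 1 + t$ for the claimed threshold (note $2\sqrt d = 2\lceil\sigma^2\rceil^{1/2} \ge 2\sigma$, so it suffices to prove the bound with $2\sigma$ replaced by $2\sqrt d$, and then absorb the discrepancy), we have
\begin{equation*}
	\mathbf{P}[\|X\| > R] \le \frac{\mathbf{E}[\tr X^{2p}]}{R^{2p}}
	\le \frac{n}{d}\cdot\frac{\mathbf{E}[\tr Y^{2p}]}{R^{2p}}
	\le n\,\Big(\frac{2\sqrt d + c'p/\sqrt d}{2\sqrt d + 1 + t}\Big)^{2p}
\end{equation*}
for a suitable range of $p$, using the moment bound for $Y$ in the second-to-last step. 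The strategy is to optimize over $p$: choosing $p \asymp \sqrt d\,(1+t)$ makes the numerator roughly $2\sqrt d + c''(1+t)$, and one arrives at a bound of the shape $n\exp(-c\,p\,(1+t)/\sqrt d) \asymp n\exp(-c\,(1+t)^2)$. Tracking the constants carefully — this is where the ``optimal constant'' claim lives — one wants the exponent to come out as exactly $t^2/2$, which should follow from the sharp form of Ledoux's GUE moment estimate together with the elementary inequality $\log(1 + x) \le x$ used in the right direction. The factor of $2$ in front of $ne^{-t^2/2}$ presumably comes from the rounding $2\sqrt{\lceil\sigma^2\rceil}$ versus $2\sigma$ and from the ``$1$'' shift in the threshold, both of which buy a little slack.

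The main obstacle I anticipate is getting the constant in the exponent to be exactly $1/2$ (rather than some unspecified universal constant), which requires the moment estimate for $Y$ to be sharp not just in its exponential rate but in the leading-order coefficient, and requires the $p$-optimization to be done without wasteful inequalities; in particular one must be careful that the range of validity of the sharp GUE moment bound (roughly $p = O(d)$, i.e. $t = O(\sqrt d)$) covers all $t\ge 0$ once combined with the trivial observation that for $t$ very large the bound $2ne^{-t^2/2}$ is implied by a more robust estimate (e.g. the Gaussian concentration bound, or Theorem~\ref{thm:bvh}-type reasoning, or simply a union bound over a net). A secondary technical point is that the extremum principle is stated for the normalized trace, so one must be slightly careful in converting between $\mathbf{E}[\ntr X^{2p}]$ and the unnormalized $\mathbf{E}[\tr X^{2p}]$ across matrices of different dimension $n$ versus $d$; this accounts for the $n/d$ factor above and ultimately only the $n$ survives (since $d\ge 1$), giving the stated $n$ in front.
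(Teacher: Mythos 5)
Your plan hits a wall precisely at the point you flag as a "secondary concern": the moment estimate of Lemma~\ref{lem:cplxled} (the Ledoux bound $\EE[\ntr Y^{2p}] \lesssim p^{-3/2}(4d + p^2/d)^p$) encodes a \emph{subexponential} tail, not a subgaussian one. Indeed $\EE[\ntr Y^{2p}]^{1/2p} = O(p/\sqrt d)$ as $p\to\infty$, so no choice of $p$ in a Markov argument can produce $e^{-t^2/2}$ once $t \gtrsim \sqrt d$; the best a moment-Markov optimization yields in that regime is $e^{-c\sqrt d\, t}$. The fallbacks you mention don't rescue the exact constant $1/2$ either: Theorem~\ref{thm:bvh} carries an unspecified $C_\varepsilon$, and a net/union-bound argument loses constants. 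The paper explicitly points this out in the paragraph opening section~\ref{sec:hermlg}, and this is why the large-deviation proof is \emph{not} a moment-Markov argument at all.

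The paper's actual route is an MGF comparison. Since the odd moments of both $X$ and $Y$ vanish (by symmetry of the entry distributions), one can Taylor-expand $e^{tX}$ and apply Theorem~\ref{thm:hermmom} term by term to get $\EE[\ntr e^{tX}] \le \EE[\ntr e^{tY}]$ for $t\ge 0$. The key external input is then not a moment bound but the Haagerup--Thorbj{\o}rnsen estimate (Lemma~\ref{lem:ht}) $\EE[\ntr e^{tY}] \le e^{2\sqrt d\, t + t^2/2}$, which carries the correct subgaussian behavior. Combined with $e^{t\|X\|} \le n\,\ntr e^{tX} + n\,\ntr e^{-tX}$ (whence the factor $2n$), a Chernoff bound and optimization over $t$ give $\mathbf{P}[\|X\| > 2\sqrt{\sigma^2+1} + \varepsilon] \le 2n\,e^{-\varepsilon^2/2}$, and the stated form follows from $\sqrt{\sigma^2+\sigma_*^2} \le \sigma + \sigma_*/2$. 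You should replace the moment-Markov step in your sketch with this MGF comparison; the termwise use of the extremum principle inside the exponential series is the missing idea.
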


The remainder of this section is devoted to the proofs of these results.

\subsection{Extremum principle}

In this subsection, we will use the following notation.
Let $(g_{ij})_{i\ge j}$ be i.i.d.\ $N_{\mathbb{C}}(0,1)$ random variables, 
and define
$$
	U_{ij} := b_{ij} e_i e_j^*,\qquad\qquad
	U_{ii} := \tfrac{1}{\sqrt{2}}b_{ii} e_i e_i^*
$$
for $i>j$. Then we can represent the random matrix $X$ of
Model \ref{mod:herm} as
$$
	X = \sum_{i\ge j} ( g_{ij} U_{ij} + g_{ij}^* U_{ij}^* ).
$$
We can compute the moments of $X$ as follows.

\begin{lem}[Wick formula]
\label{lem:cplxwick}
For any $p\in\mathbb{N}$, we have
$$
	\EE[\ntr X^{2p}] =
	\sum_{\pi\in \mathrm{P}_2([2p])}
	\sum_{(\boldsymbol{i},\boldsymbol{j},\boldsymbol{\varepsilon})\sim\pi}
	\ntr U_{i_1 j_1}^{\varepsilon_1}\cdots 
	U_{i_{2p}j_{2p}}^{\varepsilon_{2p}}.
$$
Here $\boldsymbol{i}=(i_1,\ldots,i_{2p})\in [n]^{2p}$, 
$\boldsymbol{j}=(j_1,\ldots,j_{2p})\in [n]^{2p}$, 
$\boldsymbol{\varepsilon}=(\varepsilon_1,\ldots,\varepsilon_{2p})\in
\{1,*\}^{2p}$, and 
$(\boldsymbol{i},\boldsymbol{j},\boldsymbol{\varepsilon})\sim\pi$ denotes
$i_k\ge j_k$ and
$i_k=i_l,j_k=j_l,\varepsilon_k\ne\varepsilon_l$ for all 
$\{k,l\}\in\pi$.
\end{lem}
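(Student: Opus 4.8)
The plan is to expand $\tr X^{2p}$ directly using the representation $X = \sum_{i\ge j}(g_{ij}U_{ij} + g_{ij}^*U_{ij}^*)$ and then apply the complex Gaussian Wick (Isserlis) formula to the resulting sum of monomials. First I would write
$$
\tr X^{2p} = \sum \tr\big( h_{1}V_{1} \cdots h_{2p}V_{2p}\big),
$$
where each factor $h_k V_k$ ranges over the $2\binom{n+1}{2}$-many terms in the representation of $X$: that is, each index $k$ carries a triple $(i_k,j_k,\varepsilon_k)$ with $i_k\ge j_k$, the matrix factor is $V_k = U_{i_kj_k}^{\varepsilon_k}$, and the scalar is $h_k = g_{i_kj_k}$ if $\varepsilon_k=1$ and $h_k = g_{i_kj_k}^*$ if $\varepsilon_k=*$. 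Pulling the (deterministic) matrix product out of the expectation, we get $\EE[\tr X^{2p}] = \sum \EE[h_1\cdots h_{2p}]\,\tr(V_1\cdots V_{2p})$, so everything reduces to computing the scalar moments $\EE[h_1\cdots h_{2p}]$.

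Next I would invoke the Wick formula for the complex Gaussian family $(g_{ij})_{i\ge j}$: since these are i.i.d.\ $N_{\mathbb{C}}(0,1)$, we have $\EE[g_{ij}]=\EE[g_{ij}^2]=0$, $\EE[|g_{ij}|^2]=1$, and by independence the only nonzero mixed moments pair a $g_{ab}$ with a $g_{ab}^*$ having the \emph{same} index. Concretely, for a product of $2p$ such factors, $\EE[h_1\cdots h_{2p}] = \sum_{\pi\in\mathrm{P}_2([2p])} \prod_{\{k,l\}\in\pi}\EE[h_kh_l]$, and $\EE[h_kh_l]$ equals $1$ exactly when $\{i_k,j_k\}=\{i_l,j_l\}$, $i_k\ge j_k$, $i_l\ge j_l$ (forcing $i_k=i_l$, $j_k=j_l$), and one of $h_k,h_l$ is a $g$ while the other is a $g^*$ (i.e. $\varepsilon_k\ne\varepsilon_l$), and $0$ otherwise. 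This is precisely the condition written as $(\boldsymbol i,\boldsymbol j,\boldsymbol\varepsilon)\sim\pi$ in the statement. Substituting and interchanging the (finite) sums over index configurations and over pairings yields
$$
\EE[\tr X^{2p}] = \sum_{\pi\in\mathrm{P}_2([2p])}\ \sum_{(\boldsymbol i,\boldsymbol j,\boldsymbol\varepsilon)\sim\pi} \tr\big(U_{i_1j_1}^{\varepsilon_1}\cdots U_{i_{2p}j_{2p}}^{\varepsilon_{2p}}\big),
$$
and dividing by $n$ gives the claimed identity for $\ntr$.

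There is essentially no deep obstacle here; the main point requiring care is bookkeeping the diagonal terms correctly. The representation uses $U_{ii}=\tfrac{1}{\sqrt2}b_{ii}e_ie_i^*$ together with the \emph{real} Gaussian $g_{ii}$ (with the convention $g_{ii}^* = g_{ii}$, so the $\varepsilon_{ii}=1$ and $\varepsilon_{ii}=*$ terms coincide and the summand $g_{ii}U_{ii}+g_{ii}^*U_{ii}^* = \sqrt2\,b_{ii}g_{ii}e_ie_i^*$ reproduces $X_{ii}\sim N_{\mathbb R}(0,b_{ii}^2)$). One should check that the Wick pairing rule and the factor $\tfrac1{\sqrt2}$ are consistent: a diagonal pair $\{k,l\}$ contributes $\EE[g_{ii}^2]\cdot\tfrac12 b_{ii}^2 = \tfrac12 b_{ii}^2$ per orientation choice, and summing over the two allowed $(\varepsilon_k,\varepsilon_l)$ choices restores the full $b_{ii}^2$ — matching what the off-diagonal convention gives. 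I would therefore either adopt the convention that $\varepsilon_k\ne\varepsilon_l$ is vacuous on the diagonal and absorb the combinatorial factor into $U_{ii}$ (as the paper does), or treat the diagonal as a degenerate special case; both routes give the stated formula, and I would simply remark that the $\tfrac1{\sqrt2}$ normalization was chosen precisely so that the single clean condition $(\boldsymbol i,\boldsymbol j,\boldsymbol\varepsilon)\sim\pi$ covers all cases uniformly.
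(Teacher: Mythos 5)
Your first two paragraphs are correct and match the paper's proof essentially verbatim: expand $\tr X^{2p}$ over the $2\binom{n+1}{2}$ summands in the representation $X=\sum_{i\ge j}(g_{ij}U_{ij}+g_{ij}^*U_{ij}^*)$, pull the deterministic matrix products out of the expectation, and apply the complex Wick formula to the scalar moments $\EE[g_{i_1j_1}^{\varepsilon_1}\cdots g_{i_{2p}j_{2p}}^{\varepsilon_{2p}}]$.

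Your final paragraph, however, misreads the paper's setup on the diagonal. The paper declares \emph{all} $(g_{ij})_{i\ge j}$, including the diagonal ones, to be i.i.d.\ $N_{\mathbb{C}}(0,1)$ — the diagonal $g_{ii}$ is \emph{complex}, not real, and $g_{ii}^*\ne g_{ii}$. This is precisely what makes the Wick rule uniform: $\EE[g_{ii}^{\varepsilon_k}g_{ii}^{\varepsilon_l}]=1_{\varepsilon_k\ne\varepsilon_l}$ on the diagonal just as off it, so the single condition $\varepsilon_k\ne\varepsilon_l$ in the lemma is neither vacuous nor a convention — it genuinely filters pairings on the diagonal. (It must: in the $1\times 1$ example $X=X_{11}\sim N_{\mathbb{R}}(0,b_{11}^2)$, the pairing $\pi=\{\{1,2\}\}$ contributes $\ntr U_{11}^2=\tfrac12 b_{11}^2$ per admissible $(\varepsilon_1,\varepsilon_2)$; summing over the two choices with $\varepsilon_1\ne\varepsilon_2$ gives $b_{11}^2$, whereas admitting all four choices would give $2b_{11}^2$.) The factor $\tfrac1{\sqrt2}$ in $U_{ii}$ is chosen so that $g_{ii}U_{ii}+g_{ii}^*U_{ii}^*=2\,\mathrm{Re}(g_{ii})\,\tfrac{b_{ii}}{\sqrt2}e_ie_i^*\sim N_{\mathbb{R}}(0,b_{ii}^2)$, using $\mathrm{Re}(g_{ii})\sim N_{\mathbb{R}}(0,\tfrac12)$. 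Your alternative route — real $g_{ii}$ with $\EE[g_{ii}^2]=1$ — does not represent the model: $\sqrt2\,b_{ii}g_{ii}$ would then have variance $2b_{ii}^2$, and the real Wick rule would pair diagonal entries regardless of $\varepsilon$, producing a different formula. So ``both routes give the stated formula'' is not correct; only the complex-diagonal route does, and the lemma holds as stated without any special-casing. The rest of your argument stands unchanged once this is corrected.
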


\begin{proof}
Clearly
$$
	\EE[\ntr X^{2p}] =
	\sum_{i_1\ge j_1,\ldots,i_{2p}\ge j_{2p}}
	\sum_{\varepsilon_1,\ldots,\varepsilon_{2p}\in\{1,*\}}
	\mathbf{E}[g_{i_1j_1}^{\varepsilon_1}\cdots
	g_{i_{2p}j_{2p}}^{\varepsilon_{2p}}]
	\ntr U_{i_1 j_1}^{\varepsilon_1}\cdots
        U_{i_{2p}j_{2p}}^{\varepsilon_{2p}}.
$$
The conclusion follows as
$$
	\mathbf{E}[g_{i_1j_1}^{\varepsilon_1}\cdots
        g_{i_{2p}j_{2p}}^{\varepsilon_{2p}}] =
	\sum_{\pi\in \mathrm{P}_2([2p])}
	\prod_{\{k,l\}\in\pi}
	1_{i_k=i_l,j_k=j_l,\varepsilon_k\ne\varepsilon_l}
$$
by the classical Wick formula \cite[Theorem 22.3 and Remark 22.5]{NS06}.
\end{proof}

Free probability theory suggests \cite[p.\ 367]{NS06} that the expression 
in Lemma \ref{lem:cplxwick} should be dominated by noncrossing pairings. 
The idea behind the proof of Theorem \ref{thm:hermmom} is to consider 
separately the effect of noncrossing and crossing pairs on the terms in 
the sum in Lemma \ref{lem:cplxwick}. We first consider noncrossing 
pairings.

\begin{lem}
\label{lem:cplxnc}
For any $p\in\mathbb{N}$ and noncrossing 
pairing $\pi\in\mathrm{NC}_2([2p])$, we have
$$
	\sum_{(\boldsymbol{i},\boldsymbol{j},\boldsymbol{\varepsilon})\sim\pi}
	\ntr U_{i_1 j_1}^{\varepsilon_1}\cdots 
	U_{i_{2p}j_{2p}}^{\varepsilon_{2p}}
	\le
	\sigma^{2p}.
$$
Moreover, equality holds when $b_{ij}=1$ for all $i,j$.
\end{lem}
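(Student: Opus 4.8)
The plan is to trace through a single noncrossing pairing $\pi\in\mathrm{NC}_2([2p])$ and evaluate the sum
$$
\sum_{(\boldsymbol{i},\boldsymbol{j},\boldsymbol{\varepsilon})\sim\pi}
\ntr U_{i_1 j_1}^{\varepsilon_1}\cdots U_{i_{2p}j_{2p}}^{\varepsilon_{2p}}
$$
directly, using the rank-one structure $U_{ij}=b_{ij}e_ie_j^*$ (with the $\tfrac{1}{\sqrt2}$ normalization on the diagonal). The key observation is that $\ntr U_{i_1j_1}^{\varepsilon_1}\cdots U_{i_{2p}j_{2p}}^{\varepsilon_{2p}}$ is nonzero only if the sequence of row/column indices ``matches up'' along a closed walk: writing each $U_{ij}^{\varepsilon}$ as $e_{a}e_{b}^*$ (so $a=i,b=j$ if $\varepsilon=1$ and $a=j,b=i$ if $\varepsilon=*$), the product telescopes to $\frac1n(\prod b)\, \langle e_{b_{2p}},e_{a_1}\rangle\prod_{k}\langle e_{b_k},e_{a_{k+1}}\rangle$, which forces $b_k=a_{k+1}$ for all $k$ (cyclically). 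So the surviving terms are indexed by closed walks $v_1\to v_2\to\cdots\to v_{2p}\to v_1$ on $[n]$, where edge $k$ is traversed in one orientation and its $\pi$-partner edge is traversed in the opposite orientation (this is exactly the content of $\varepsilon_k\ne\varepsilon_l$ together with $i_k=i_l$, $j_k=j_l$), and the contribution of such a walk is $\frac1n\prod_{k=1}^{2p} b_{v_k v_{k+1}}$.

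The main step is then the standard fact that a noncrossing pairing of the $2p$ steps of a closed walk, with partnered steps traversing the same edge in opposite directions, corresponds to a walk on a tree: the quotient graph has $p+1$ vertices and $p$ (geometric) edges forming a tree, and each edge is used exactly twice (once each direction). Hence one may enumerate such walks by choosing the vertex labels along a depth-first traversal of the tree: pick the root $v_1\in[n]$ freely ($n$ choices contributing the $\frac1n$ cancellation), and then at each of the $p$ ``first-visit'' steps pick the new vertex, while the $p$ ``backtracking'' steps are forced. Grouping the product $\prod_{k=1}^{2p} b_{v_kv_{k+1}}$ by the $p$ tree edges, each edge $\{u,w\}$ contributes $b_{uw}^2$, and the sum over the choice of each new vertex is bounded by $\max_u\sum_w b_{uw}^2=\sigma^2$ — performed from the leaves inward so that each bound is applied to a free summation index. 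This yields the bound $n\cdot\frac1n\cdot(\sigma^2)^p=\sigma^{2p}$. A minor point to handle is the diagonal normalization: a step with $i_k=j_k$ carries an extra factor $\tfrac12$ from $U_{ii}=\tfrac{1}{\sqrt2}b_{ii}e_ie_i^*$ appearing twice, which only helps (it can be absorbed or simply dropped).

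For the equality claim when $b_{ij}=1$ for all $i,j$, every term above equals $\frac1n$, every index choice is admissible, and the number of closed walks of the described type with a fixed noncrossing pairing is exactly $n(n-1)\cdots$ — more precisely $n$ times the number of ways to injectively-or-not label the tree; the clean way to see equality is to note that for the full i.i.d.\ matrix $Y$ with $b_{ij}=1$, $\sigma^2=n$ and the same computation gives each noncrossing $\pi$ the value $n^{-1}\cdot(\text{number of admissible }(\boldsymbol i,\boldsymbol j,\boldsymbol\varepsilon))$, which one checks equals $n^p=\sigma^{2p}$ by the same leaves-inward summation now with every bound tight. The part I expect to require the most care is bookkeeping the orientation data $\boldsymbol\varepsilon$ and the constraint $i_k\ge j_k$ in Lemma~\ref{lem:cplxwick}: one must verify that specifying a noncrossing pairing together with a closed walk (equivalently, a labeled plane tree with a root) determines $(\boldsymbol i,\boldsymbol j,\boldsymbol\varepsilon)$ consistently and without double-counting, and that the $i_k\ge j_k$ ordering convention does not lose any admissible walk. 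Once this combinatorial dictionary is pinned down, the estimate itself is the routine leaves-to-root summation sketched above.
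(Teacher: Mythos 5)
Your argument is correct, but it takes a more combinatorial route than the paper. The paper never unpacks the trace into closed walks: it observes that a noncrossing pairing contains a consecutive pair $\{k,k+1\}$, reduces to $\{2p-1,2p\}\in\pi$ by cyclicity of the trace, evaluates $\sum_{i\ge j}(U_{ij}U_{ij}^*+U_{ij}^*U_{ij})=\sum_i\bigl(\sum_j b_{ij}^2\bigr)e_ie_i^*$, bounds this factor by $\sigma^2$ using that all remaining matrix products have nonnegative entries, and iterates on the smaller noncrossing pairing. This operator-level peeling is exactly your leaves-to-root summation (removing a consecutive pair is removing a leaf edge of the plane tree), but it bypasses the bijection between noncrossing pairings and labeled plane trees entirely, so there is no walk/orientation bookkeeping to verify; it also sets up the template reused for the crossing lemmas. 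Your version is closer to the classical moment method and makes the combinatorial meaning of the bound transparent, at the cost of having to pin down the dictionary you flag at the end.

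One bookkeeping point in your sketch is stated too casually and would bite if taken literally: the factor $\tfrac12$ coming from $U_{ii}=\tfrac{1}{\sqrt2}b_{ii}e_ie_i^*$ does \emph{not} ``only help'' and cannot ``simply be dropped.'' For a pair $\{k,l\}\in\pi$ with $i_k=j_k$, the constraint $\varepsilon_k\ne\varepsilon_l$ admits \emph{two} assignments $(\varepsilon_k,\varepsilon_l)\in\{(1,*),(*,1)\}$, each contributing $\tfrac12 b_{ii}^2$, so the net contribution of a diagonal tree edge is exactly $b_{ii}^2$ --- the $\tfrac12$ precisely cancels this double-counting. If you drop the $\tfrac12$ but keep both $\varepsilon$-assignments, a leaf sum becomes $\sum_{w\ne u}b_{uw}^2+2b_{uu}^2$, which can exceed $\sigma^2$, and the equality claim for $b_{ij}\equiv 1$ would fail by a factor $2^{\#\{\text{diagonal edges}\}}$. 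With this cancellation made explicit, both the inequality and the equality case go through as you describe.
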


\begin{proof}
Recall the elementary fact that any noncrossing pairing $\pi$ 
must contain a consecutive pair $\{k,k+1\}\in\pi$. (If not,
choose $\{k,k+l\}\in \pi$ with minimal $l\ge 2$. Then $\{k+1,r\}\in\pi$
must satisfy $|k+1-r|\ge l$. In particular, $\{k,k+l\}$ and $\{k+1,r\}$ 
form a crossing, contradicting the assumption.)

By cyclic permutation of the trace, we may assume without loss of 
generality that $\{2p-1,2p\}\in \pi$. Then we can compute
\begin{multline*}
	\sum_{(\boldsymbol{i},\boldsymbol{j},\boldsymbol{\varepsilon})\sim\pi}
	\ntr U_{i_1 j_1}^{\varepsilon_1}\cdots 
	U_{i_{2p}j_{2p}}^{\varepsilon_{2p}}
	= \\
	\sum_{(\boldsymbol{i},\boldsymbol{j},\boldsymbol{\varepsilon})\sim\pi
	\backslash\{\{2p-1,2p\}\}}
	\ntr 
	U_{i_1 j_1}^{\varepsilon_1}\cdots 
	U_{i_{2p-2}j_{2p-2}}^{\varepsilon_{2p-2}}
	\sum_{i\ge j}(U_{ij}U_{ij}^*+U_{ij}^*U_{ij}).
\end{multline*}
Now note that
$$
	\sum_{i\ge j}(U_{ij}U_{ij}^*+U_{ij}^*U_{ij}) =
	\sum_{i\le n} \bigg(\sum_{j\le n} b_{ij}^2\bigg) e_ie_i^*.
$$
As all $U_{ij}$ have nonnegative entries, we have
$\ntr U_{i_1 j_1}^{\varepsilon_1}\cdots
U_{i_{2p-2}j_{2p-2}}^{\varepsilon_{2p-2}} e_ie_i^* \ge 0$.
Thus
$$
	\sum_{(\boldsymbol{i},\boldsymbol{j},\boldsymbol{\varepsilon})\sim\pi}
	\ntr U_{i_1 j_1}^{\varepsilon_1}\cdots 
	U_{i_{2p}j_{2p}}^{\varepsilon_{2p}}
	\le
	\sigma^2 
	\sum_{(\boldsymbol{i},\boldsymbol{j},\boldsymbol{\varepsilon})\sim\pi
	\backslash\{\{2p-1,2p\}\}}
	\ntr 
	U_{i_1 j_1}^{\varepsilon_1}\cdots 
	U_{i_{2p-2}j_{2p-2}}^{\varepsilon_{2p-2}},
$$
with equality if $b_{ij}=1$ for all $i,j$. As $\pi\backslash 
\{\{2p-1,2p\}\}$ is again a noncrossing pairing, we can 
iterate this argument to conclude the proof.
\end{proof}

Next, we analyze the effect of a single crossing.

\begin{lem}[Crossing inequality]
\label{lem:cplxcr}
Let $M_1,\ldots,M_4$ be any $n\times n$ matrices with nonnegative entries.
Then we have
$$
	\sum_{i\ge j,k\ge l}
	\sum_{\varepsilon,\delta\in \{1,*\}}
	\ntr U_{ij}^\varepsilon M_1 U_{kl}^{\delta} M_2
	U_{ij}^{!\varepsilon} M_3 U_{kl}^{!\delta} M_4 \le
	\sigma_*^4
	\ntr M_3 M_2 M_1 M_4,
$$
with equality when $b_{ij}=1$ for all $i,j$.
Here we denote $!1:=*$ and $!*:=1$.
\end{lem}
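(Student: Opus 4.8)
The plan is to compute the left-hand side explicitly by unfolding the definitions of $U_{ij}$ and $U_{kl}$, exploiting that each is a rank-one matrix. Write $U_{ij} = b_{ij} e_i e_j^*$ for $i > j$, so $U_{ij}^* = b_{ij} e_j e_i^*$, and similarly the diagonal case carries an extra $\frac{1}{\sqrt{2}}$. Since each factor $U_{ij}^{\varepsilon}$, $U_{kl}^{\delta}$, $U_{ij}^{!\varepsilon}$, $U_{kl}^{!\delta}$ is a scalar multiple of a rank-one matrix $e_a e_b^*$, the product $U_{ij}^\varepsilon M_1 U_{kl}^{\delta} M_2 U_{ij}^{!\varepsilon} M_3 U_{kl}^{!\delta} M_4$ collapses: the trace becomes a scalar equal to $b_{ij}^2 b_{kl}^2$ (times a power of $\frac12$ in degenerate index cases) multiplied by a product of four entries of $M_1,\dots,M_4$, with the indices $i,j,k,l$ threaded through in a pattern that depends on $\varepsilon,\delta$. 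First I would carry out this bookkeeping for the generic case $i>j$, $k>l$, summing over the four choices of $(\varepsilon,\delta)$.

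The key observation driving the inequality is the bound $b_{ij}^2 b_{kl}^2 \le \sigma_*^4$, which replaces all variance factors by their maximum; this is where the nonnegativity of the entries of $M_1,\dots,M_4$ is used, since it guarantees every term in the sum is nonnegative so that termwise majorization is legitimate. After this replacement, the remaining sum over $i,j,k,l$ and over $\varepsilon,\delta$ should reassemble — by resolutions of the identity $\sum_i e_i e_i^* = \id$ applied to the freed-up indices — into precisely $\sigma_*^4 \ntr M_3 M_2 M_1 M_4$. The particular cyclic order $M_3 M_2 M_1 M_4$ (rather than $M_1 M_2 M_3 M_4$) is dictated by how the crossing pattern $\{\varepsilon,!\varepsilon\}$ and $\{\delta,!\delta\}$ interleaves the four $M$'s: tracing through $e_i e_j^* M_1 e_k e_l^* M_2 e_j e_i^* M_3 e_l e_k^* M_4$ and reading off which matrix entries appear shows the $M$'s get permuted into this order. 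For the equality claim when $b_{ij}\equiv 1$: then $\sigma_*=1$, and one checks that no majorization was lossy — the only inequality invoked was $b_{ij}^2 b_{kl}^2 \le \sigma_*^4$, which is an equality in that case — provided one also confirms that the diagonal contributions (with their $\frac12$ factors) and the convention $i \ge j$ versus $i > j$ combine correctly. Indeed the two diagonal terms $U_{ii} U_{ii}^* + U_{ii}^* U_{ii}$ each contribute $\frac12 b_{ii}^2$, summing to $b_{ii}^2$, which matches the off-diagonal normalization, so the identity $\sum_{i \ge j}(U_{ij}^{\varepsilon}\otimes U_{ij}^{!\varepsilon})$-type sums behave uniformly.

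The main obstacle I anticipate is purely combinatorial/notational: correctly tracking the four index patterns arising from $(\varepsilon,\delta) \in \{1,*\}^2$ and verifying that they all collapse to the same cyclic word $M_3 M_2 M_1 M_4$ in the trace, rather than to several different orderings. Concretely, one must check that swapping $\varepsilon \mapsto !\varepsilon$ amounts to relabeling $i \leftrightarrow j$ in the summation (and likewise $k \leftrightarrow l$ with $\delta$), so that after summing over all of $i \ge j$ and $k \ge l$ the four cases are genuinely identical and one is not double- or under-counting. Once the rank-one algebra is set up cleanly — writing each trace as $\langle e_{a_1}, M_{?} e_{b_1}\rangle \langle e_{a_2}, M_? e_{b_2}\rangle \cdots$ and matching indices — the rest is a direct computation, and the equality case falls out immediately since the sole inequality used becomes saturated.
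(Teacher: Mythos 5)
Your approach is correct and is essentially the paper's: both expand the trace explicitly using the rank-one structure of $U_{ij}$, observe that summing over $\varepsilon,\delta$ together with $i\ge j$, $k\ge l$ yields an unrestricted sum $\frac{1}{n}\sum_{i,j,k,l}b_{ij}^2 b_{kl}^2(M_3)_{il}(M_2)_{lj}(M_1)_{jk}(M_4)_{ki}$ (the diagonal $\tfrac{1}{\sqrt{2}}$ normalization handling the $i=j$ terms), and then bound $b_{ij}^2b_{kl}^2\le\sigma_*^4$ termwise using nonnegativity. The combinatorial checks you flag as potential obstacles all work out exactly as you anticipate.
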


\begin{proof}
Substituting the definition of $U_{ij}$ into the left-hand side yields
\begin{align*}
	&\sum_{i\ge j,k\ge l}
	\sum_{\varepsilon,\delta\in \{1,*\}}
	\ntr U_{ij}^\varepsilon M_1 U_{kl}^{\delta} M_2
	U_{ij}^{!\varepsilon} M_3 U_{kl}^{!\delta} M_4 
	=\\
	&\qquad\qquad\frac{1}{n}
	\sum_{i,j,k,l\in[n]}
	b_{ij}^2 b_{kl}^2
	(M_3)_{il} (M_2)_{lj} (M_1)_{jk} (M_4)_{ki}
\end{align*}
by an explicit computation. The conclusion follows readily.
\end{proof}

We can now prove Theorem \ref{thm:hermmom}.

\begin{proof}[Proof of Theorem \ref{thm:hermmom}]
Fix a pairing $\pi\in\mathrm{P}_2([2p])$. Suppose that $\{1,l\},\{k,m\}$ 
form a crossing $1<k<l<m$. Then Lemma \ref{lem:cplxcr} yields
$$
	\sum_{(\boldsymbol{i},\boldsymbol{j},\boldsymbol{\varepsilon})\sim\pi}
	\ntr U_{i_1 j_1}^{\varepsilon_1}\cdots 
	U_{i_{2p}j_{2p}}^{\varepsilon_{2p}}
	\le
	\sigma_*^4
	\sum_{(\boldsymbol{i},\boldsymbol{j},\boldsymbol{\varepsilon})\sim\pi'}
	\ntr U_{i_1 j_1}^{\varepsilon_1}\cdots 
	U_{i_{2p-4}j_{2p-4}}^{\varepsilon_{2p-4}}.
$$
where $\pi'\in \mathrm{P}_2([2p-4])$ is obtained from $\pi$ by removing 
$\{1,l\},\{k,m\}$ and reordering the remaining elements as
$l+1,\ldots,m-1,k+1,\ldots,l-1,2,\ldots,k-1,m+1,\ldots,2p$. (Here we used 
that any product of matrices $U_{ij}^\varepsilon$ has nonnegative 
elements.)

We can now iterate this procedure until we arrive at a final pairing 
$\pi'$ that is noncrossing. More precisely, given any pairing $\pi$ that
contains at least one crossing, we choose the smallest crossing in the 
lexicographic order and use cyclic permutation of the trace to make its 
smallest element equal to one. We then apply the above inequality. If 
$\pi'$ still contains a crossing, we repeat this procedure until no 
crossings are left. Denote by $\ell(\pi)$ the number of times this
process is repeated until we reach a noncrossing pairing. Then
Lemmas \ref{lem:cplxwick} and \ref{lem:cplxnc} yield
$$
	\mathbf{E}[\ntr X^{2p}] 
	\le
	\sum_{\pi\in \mathrm{P}_2([2p])}
	\sigma^{2p-4\ell(\pi)}
	\sigma_*^{4\ell(\pi)}.
$$
Note that, by construction, the quantity $\ell(\pi)$ is uniquely 
determined by $\pi$ and does not depend on the random matrix $X$.

We now apply precisely the same argument to the random matrix $Y$. 
However, as every entry of $Y$ has equal variance, Lemmas \ref{lem:cplxnc} 
and \ref{lem:cplxcr} ensure that all the above 
inequalities become equalities in this case. In particular, we obtain
$$
	\mathbf{E}[\ntr Y^{2p}] =
	\sum_{\pi\in \mathrm{P}_2([2p])} d^{p-2\ell(\pi)}.
$$
As we assumed $\sigma^2\le d$ and $\sigma_*^2=1$, the conclusion is 
immediate.
\end{proof}

\begin{rem}
An identity of the above form for $\mathbf{E}[\ntr Y^{2p}]$ is classical
in random matrix theory, where it is known as the \emph{genus expansion}
\cite[Theorem 22.12]{NS06}. In particular, the precise combinatorial 
meaning of $\ell(\pi)$ can be understood in terms of the 
genus of the orientable surface obtained by gluing together the edges
of a regular $2p$-gon corresponding to
each pair of $\pi$. However, this combinatorial intepretation is 
completely irrelevant for our purposes: all we used is that the 
inequalities we apply to the nonhomogeneous matrix $X$ become equality for $Y$.
For the real random matrices investigated in the following sections, 
the combinatorial structure is much more delicate while a
comparison argument remains tractable.
\end{rem}

\subsection{Small deviations}
\label{sec:hermsm}

We can now combine Theorem \ref{thm:hermmom} with a method of Ledoux 
\cite[\S 5.2]{Led07} to obtain small deviations inequalities at the 
Tracy-Widom scale. The basis for this method is an accurate estimate on 
the $p$th moment of $Y$ for moderately large $p$. The following can be 
read off from \cite[pp.\ 210--211]{Led07}.

\begin{lem}[Ledoux]
\label{lem:cplxled}
Define $Y$ as in Theorem \ref{thm:hermmom}. 
For all $p\in\mathbb{N}$, we have
$$
	\EE[\ntr Y^{2p}]  \le 
	\frac{1}{p^{3/2}\sqrt{\pi}}
	\bigg(4d+\frac{p(p-1)}{d} \bigg)^p.
$$
\end{lem}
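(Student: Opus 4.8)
The plan is to establish a tractable recursion for the normalized moments $w_p := \EE[\ntr Y^{2p}]$ and then solve it with a clean ansatz. Since $Y$ is the $d\times d$ GUE-type matrix (entries $N_{\mathbb C}(0,1)$ off-diagonal, $N_{\mathbb R}(0,1)$ on the diagonal), the genus expansion recorded in the remark after Theorem \ref{thm:hermmom} gives the exact classical recursion for the moments of such a matrix. Concretely, I would use the Harer--Zagier / Wigner recursion in the form
\[
	w_{p+1} = \frac{2p+1}{p+2}\, w_p + \frac{p(2p+1)(2p-1)}{(p+2)\,d^2}\, w_{p-1},
\]
with $w_0 = 1$, $w_1 = d$ (the exact normalization will need to be checked against the conventions in Model \ref{mod:herm}, where $\EE|X_{ij}|^2 = b_{ij}^2$). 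First I would record this recursion and confirm it is precisely what the genus expansion produces; this is the place where one must be careful, because the diagonal entries are real rather than complex and the convention $\sigma_*^2 = 1$ must be matched.

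Next I would feed in the ansatz suggested by the target: define $\phi(p) := \frac{1}{p^{3/2}\sqrt\pi}\big(4d + \frac{p(p-1)}{d}\big)^p$ and verify that $w_p \le \phi(p)$ by induction on $p$. Substituting the upper bound into the recursion and using $w_{p-1} \le \phi(p-1)$, $w_p \le \phi(p)$, the inductive step reduces to the elementary inequality
\[
	\frac{2p+1}{p+2}\,\phi(p) + \frac{p(2p+1)(2p-1)}{(p+2)\,d^2}\,\phi(p-1) \le \phi(p+1).
\]
This is a purely analytic claim about the scalar function $\phi$: after dividing through by $\phi(p+1)$ and writing $r_p := \phi(p)/\phi(p+1)$ and $s_p := \phi(p-1)/\phi(p+1)$, one is left with a one-variable estimate in $p$ (with $d$ as a parameter) that can be checked by comparing the relevant ratios $\big(4d + \tfrac{p(p-1)}{d}\big)^p$ against $\big(4d + \tfrac{(p+1)p}{d}\big)^{p+1}$ and absorbing the polynomial factors $p^{-3/2}$ via the monotone behavior of $(1+x)^p$. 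The factor $4d$ is exactly the right leading constant: in the dense regime $p \ll d$ the bracket is $\approx 4d$ and $\phi(p) \approx (4d)^p/(\sqrt\pi\,p^{3/2})$, which matches the known Catalan-number asymptotics $C_p \sim 4^p/(\sqrt\pi\, p^{3/2})$ for $w_p \approx d^p C_p$.

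The main obstacle I anticipate is not the structure of the argument but the bookkeeping in the inductive inequality: one must verify it uniformly in both $p \in \mathbb N$ and $d \in \mathbb N$, including the small-$p$ base cases and the crossover regime $p \sim d$ where neither term of the bracket dominates. A robust way to handle this is to split into the two ranges $p \le d$ and $p > d$: in the first range bound everything by the Catalan-type estimate plus an error controlled by $p(p-1)/d$, and in the second range use that $p(p-1)/d$ dominates so the bracket is $\approx p^2/d$ and the recursion becomes essentially multiplicative. Alternatively — and this is likely the cleanest route, matching Ledoux's own presentation in \cite[pp.\ 210--211]{Led07} — one avoids the induction entirely by writing the generating function $\sum_p w_p z^p$ in closed form via the Harer--Zagier formula, extracting the coefficient as a finite sum $w_p = \sum_{k} \binom{p}{k}\binom{2k}{k}\frac{1}{k+1}\, d^{\,p-k}$ (up to normalization), and then bounding this sum termwise: each term is at most $\frac{1}{\sqrt\pi\,p^{3/2}}$ times a term in the binomial expansion of $\big(4d + \frac{p(p-1)}{d}\big)^p$, using $\binom{2k}{k}\frac{1}{k+1} = C_k \le 4^k$ and $\binom{p}{k} d^{p-k} \cdot 4^k \le \binom{p}{k}(4d)^{p-k}(p^2/d)^k$-type comparisons. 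I would present the proof along these latter lines, citing the Harer--Zagier identity, so that the entire argument collapses to a single termwise majorization followed by the binomial theorem.
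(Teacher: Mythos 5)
The paper does not prove this lemma at all: it is quoted verbatim from Ledoux \cite[pp.~210--211]{Led07}, whose derivation indeed starts from the Harer--Zagier recursion, so your skeleton is the right one. (You are also right that $Y$ is exactly a unit-variance GUE matrix, so the recursion applies; note though that the recursion you display is miswritten — with $w_1=d$ it should read $w_{p+1}=\frac{2(2p+1)}{p+2}\,d\,w_p+\frac{p(2p+1)(2p-1)}{p+2}\,w_{p-1}$.)

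Both of your concrete routes have a genuine gap. The induction with $\phi(p)=\frac{1}{p^{3/2}\sqrt\pi}\bigl(4d+\frac{p(p-1)}{d}\bigr)^p$ does not close: the step requires in particular $\frac{2(2p+1)}{p+2}\,d\,\phi(p)\le\phi(p+1)$, but for $d\gg p$ one has $\phi(p+1)/\phi(p)=4d\,(p/(p+1))^{3/2}(1+O(d^{-2}))$, while $\frac{2(2p+1)}{p+2}=C_{p+1}/C_p$ satisfies $(2p+1)(p+1)^{3/2}>2(p+2)p^{3/2}$ for \emph{every} $p\ge1$ (already at $p=1$, $d=2$ the step fails numerically). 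So this is not a base-case or crossover issue that splitting into ranges can repair: the prefactor $p^{-3/2}$ cannot be carried through the recursion, because the slack in $C_p\le 4^p/(\sqrt\pi\,p^{3/2})$ is a convergent product over all future steps rather than something available at each step. The missing idea is Ledoux's renormalization by the exact Catalan numbers: setting $b_p:=\EE[\ntr Y^{2p}]/(d^pC_p)$, the recursion collapses to $b_{p+1}=b_p+\frac{p(p+1)}{4d^2}\,b_{p-1}$ with $b_0=b_1=1$, so $b_p$ is increasing and $b_p\le\prod_{k<p}\bigl(1+\frac{k(k+1)}{4d^2}\bigr)\le\bigl(1+\frac{p(p-1)}{4d^2}\bigr)^{p}$; only at the very end does one substitute $C_p\le 4^p/(\sqrt\pi\,p^{3/2})$ to obtain the stated bound. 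Your preferred Route B is also not sound as written: the genus expansion is $\sum_g\varepsilon_g(p)\,d^{p-2g}$ (powers of $d$ drop by two), not $\sum_k\binom{p}{k}C_k\,d^{p-k}$ — the latter gives $d^2+2d+2$ instead of the correct $2d^2+1$ at $p=2$ — and the true Harer--Zagier closed form $(2p-1)!!\sum_k2^k\binom{p}{k}\binom{d}{k+1}$ has a different structure for which the proposed termwise majorization is not established.
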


We obtain the following.

\begin{prop}
\label{prop:pchermsm}
For $X$ as in Model \ref{mod:herm} and $0\le\varepsilon\le 1$, we have
$$
	\mathbf{P}\big[
	\|X\|> 2\sqrt{\sigma^2+\sigma_*^2}\,(1+\varepsilon)\big] 
	\le
	\frac{en\sigma_*^2}{\sigma^2}
	e^{-\frac{\sigma^2}{\sigma_*^2}\varepsilon^{3/2}}.
$$
\end{prop}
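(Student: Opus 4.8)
The plan is to combine the extremum principle (Theorem \ref{thm:hermmom}) with the Ledoux moment estimate (Lemma \ref{lem:cplxled}) and a Markov-type argument, optimizing over the moment order $p$. First I would reduce to the normalized setting: after rescaling we may assume $\sigma_*^2 = 1$, and set $d := \lceil \sigma^2 \rceil$, so that $\sigma^2 \le d \le \sigma^2 + 1 = \sigma^2 + \sigma_*^2$. By Theorem \ref{thm:hermmom} and Lemma \ref{lem:cplxled} applied to the $d\times d$ comparison matrix $Y$, we get for every $p\in\mathbb{N}$
$$
	\mathbf{E}[\tr X^{2p}] = n\,\mathbf{E}[\ntr X^{2p}]
	\le n\,\mathbf{E}[\ntr Y^{2p}]
	\le \frac{n}{p^{3/2}\sqrt{\pi}}\bigg(4d + \frac{p(p-1)}{d}\bigg)^{p}.
$$
Since $\|X\|^{2p} \le \tr X^{2p}$, Markov's inequality gives, for any threshold $u>0$,
$$
	\mathbf{P}[\|X\| > u] \le u^{-2p}\,\mathbf{E}[\tr X^{2p}]
	\le \frac{n}{p^{3/2}\sqrt{\pi}}\bigg(\frac{4d + p(p-1)/d}{u^2}\bigg)^{p}.
$$

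Next I would choose $p$ to make the base of the exponential slightly less than $1$ at the target threshold $u = 2\sqrt{\sigma^2+\sigma_*^2}\,(1+\varepsilon)$, for which $u^2 \ge 4d(1+\varepsilon)^2$. The natural choice, as in Ledoux's argument, is $p \approx d\cdot(\text{const}\cdot\varepsilon)^{1/2}$ — more precisely something like $p = \lceil d\sqrt{\varepsilon}\rceil$ (one should check the admissible range $0\le\varepsilon\le1$ so that $p$ is a sensible integer and $p(p-1)/d$ stays of order $d\varepsilon$). With this choice, $4d + p(p-1)/d \le 4d + d\varepsilon \le 4d(1 + \varepsilon/4)$, while $u^2 \ge 4d(1+\varepsilon)^2 \ge 4d(1 + 2\varepsilon)$; hence the base is at most $(1+\varepsilon/4)/(1+2\varepsilon) \le 1 - c\varepsilon$ for a suitable constant, and raising to the power $p \approx d\sqrt\varepsilon$ produces a factor $e^{-c' d\,\varepsilon^{3/2}} = e^{-c'(\sigma^2/\sigma_*^2)\varepsilon^{3/2}}$. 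The prefactor $n/(p^{3/2}\sqrt\pi)$ is then bounded by $n/(\sqrt\pi\, d^{3/2}\varepsilon^{3/4})$; absorbing constants and using $d \ge \sigma^2/\sigma_*^2$, one arranges this to be at most $(en\sigma_*^2/\sigma^2)$ times the exponential, matching the stated bound. The constants $4$ and $e$ in the statement are presumably what falls out of a careful but elementary bookkeeping of these inequalities.

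The main obstacle will be the constant-tracking in the second step: one must verify that the chosen integer $p$ lies in the correct range for all $0\le\varepsilon\le1$, that $p(p-1)/d$ is genuinely controlled by $d\varepsilon$ (not $d$ alone, which would destroy the $\varepsilon^{3/2}$ rate), and that the polynomial prefactor $p^{-3/2}$ exactly compensates to give the clean $en\sigma_*^2/\sigma^2$ factor rather than something with an extra $\varepsilon$-dependent power. A secondary subtlety is that $p$ must be a positive integer while $d\sqrt\varepsilon$ may be less than $1$ for small $\varepsilon$ or small $\sigma$; in that regime the bound is likely vacuous (the right-hand side exceeds $1$) or one takes $p=1$, so this case should be dispatched separately at the outset. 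Apart from that, the argument is the standard Ledoux moment-to-tail passage and should go through routinely once the extremum principle supplies the reduction to the i.i.d.\ matrix $Y$.
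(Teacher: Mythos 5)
Your overall approach matches the paper's: reduce to $\sigma_*=1$, set $d=\lceil\sigma^2\rceil$, invoke Theorem~\ref{thm:hermmom} and Lemma~\ref{lem:cplxled} with Markov's inequality, choose $p\approx d\sqrt{\varepsilon}$, and treat the regime where the right-hand side exceeds $1$ separately via $\sigma^2\le n\sigma_*^2$. That much is exactly what the paper does.

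However, the specific constant-tracking you write down would fall short of the stated bound. You estimate $4d+p(p-1)/d\le 4d(1+\varepsilon/4)$ and $(1+\varepsilon)^2\ge 1+2\varepsilon$, giving a base $\le (1+\varepsilon/4)/(1+2\varepsilon)\le 1-\tfrac{7}{12}\varepsilon$ on $[0,1]$; raising to the power $p\approx d\sqrt{\varepsilon}$ then yields an exponent of roughly $-\tfrac{7}{12}\,d\,\varepsilon^{3/2}$, whereas the statement requires coefficient at least $1$ (in fact $\sigma^2/\sigma_*^2$, with $d\ge\sigma^2/\sigma_*^2$). The linearization $(1+\varepsilon)^2\ge 1+2\varepsilon$ is simply too lossy near $\varepsilon=1$. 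The paper instead uses the sharper inequality $(1+\varepsilon)^{-2p}\le 4^{-\varepsilon p}$ (equivalently $(1+\varepsilon)^2\ge 4^{\varepsilon}$ on $[0,1]$) together with the slightly larger choice $p=\lceil d\sqrt{2\varepsilon}\rceil$; this gives an exponent coefficient $\sqrt{2}\log 4-\tfrac{1}{\sqrt{2}}\approx 1.25>1$, which survives the losses from rounding $p$ to an integer. Once you replace your linear bound with the $4^{-\varepsilon p}$ estimate (and adjust the choice of $p$ if you like more slack), the rest of your bookkeeping — including using the $p^{-3/2}$ prefactor and $d\ge\sigma^2/\sigma_*^2$ to produce the $en\sigma_*^2/\sigma^2$ factor, and dispatching the trivial case — goes through as you describe.
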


\begin{proof}
Suppose first that $\sigma_*=1$, and let
$d=\lceil\sigma^2\rceil$.
Using Markov's inequality, 
$\|X\|^{2p}\le n\ntr X^{2p}$, and Theorem \ref{thm:hermmom}, we obtain
$$
	\mathbf{P}\big[\|X\|> 2\sqrt{d}\,(1+\varepsilon)\big] \le
	\frac{n}{(4d)^p}
	\frac{\mathbf{E}[\ntr Y^{2p}]}{(1+\varepsilon)^{2p}}
$$
for all $p\in\mathbb{N}$. Applying Lemma 
\ref{lem:cplxled} yields
$$
	\mathbf{P}\big[\|X\|> 2\sqrt{d}\,(1+\varepsilon)\big] \le
	\frac{1}{\sqrt{\pi}}
	\frac{n}{p^{3/2}}
	e^{-\varepsilon p\log 4 + p^2(p-1)/4d^2}
$$
using that $(1+\varepsilon)^{-2p} \le 4^{-\varepsilon p}$ for
$0<\varepsilon\le 1$. Choosing $p=\lceil 
d\sqrt{2\varepsilon}\rceil$ yields
$$
	\mathbf{P}\big[\|X\|> 2\sqrt{d}\,(1+\varepsilon)\big] \le
	1.3\,
	\frac{n}{d}
	\frac{1}{ (d \varepsilon^{3/2} )^{1/2}}
	e^{-d\varepsilon^{3/2}}
$$
using $p^2(p-1) \le (d\sqrt{2\varepsilon}+1)^2 
d\sqrt{2\varepsilon} \le 2\sqrt{2}\,d^3\varepsilon^{3/2} + 
4d^2+d\sqrt{2}$ and $\sqrt{2}\log 4 -\frac{1}{\sqrt{2}} \ge 1$.

We now consider two cases. First, if $\sigma^2\varepsilon^{3/2} \ge 1$, 
we can estimate
$$
	\mathbf{P}\big[\|X\|> 2\sqrt{d}\,(1+\varepsilon)\big] \le
	\frac{en}{\sigma^2}
	e^{-\sigma^2\varepsilon^{3/2}}
$$
using $e\ge 1.3$ and $d\ge\sigma^2$.  On the other hand, for 
$\sigma^2\varepsilon^{3/2}<1$, we have
$$
	\mathbf{P}\big[\|X\|> 2\sqrt{d}\,(1+\varepsilon)\big] \le
	1\le 
	\frac{en}{\sigma^2}
	e^{-\sigma^2\varepsilon^{3/2}}
$$
as $\sigma^2 \le n\sigma_*^2 = n$. Combining these bounds, we obtain
$$
	\mathbf{P}\big[\|X\|> 2\sqrt{\sigma^2+1}\,(1+\varepsilon)\big] \le
	\frac{en}{\sigma^2}
	e^{-\sigma^2\varepsilon^{3/2}}.
$$
where we used $d\le \sigma^2+1$. This concludes the proof when 
$\sigma_*=1$. For general $\sigma_*$, it suffices to apply the above bound 
to the random matrix $\frac{X}{\sigma_*}$.
\end{proof}

Theorem \ref{thm:hermsm} follows readily.

\begin{proof}[Proof of Theorem \ref{thm:hermsm}]
Proposition \ref{prop:pchermsm} implies
$$
	\mathbf{P}\big[\|X\|>
	2\sqrt{\sigma^2+\sigma_*^2}
	+ 2\sqrt{\sigma^2+\sigma_*^2} \,
	\sigma_*^{4/3}\sigma^{-4/3} t
	\big] 
	\le
	\frac{en\sigma_*^2}{\sigma^2}
	e^{-t^{3/2}}
$$
by setting $\varepsilon = t \sigma_*^{4/3}\sigma^{-4/3}$. We now bound
$$
	\sqrt{\sigma^2+\sigma_*^2} \le
	\sigma + \frac{\sigma_*^2}{2\sigma} \le
	\frac{3}{2}\sigma
$$
to obtain
$$
	\mathbf{P}\big[\|X\|\ge 
	2\sigma + \tfrac{\sigma_*^2}{\sigma}
	+ 3\sigma_*^{4/3}\sigma^{-1/3} t
	\big] 
	\le
	\frac{en\sigma_*^2}{\sigma^2}
	e^{-t^{3/2}}.
$$
This readily implies the conclusion for  
$t\ge\sigma_*^{2/3}\sigma^{-2/3}$. 
On the other hand, as
$$
	1 \le 
	\frac{n\sigma_*^2}{\sigma^2}
	\le
	\frac{en\sigma_*^2}{\sigma^2}
        e^{-t^{3/2}} 
$$
for $t<\sigma_*^{2/3}\sigma^{-2/3}\le 1$, the conclusion holds
trivially in this case.
\end{proof}

\subsection{Large deviations}
\label{sec:hermlg}

The moment estimate of Lemma \ref{lem:cplxled} is not accurate for large 
$p$: indeed, this estimate yields a subexponential bound $\mathbf{E}[\ntr 
Y^{2p}]^{1/2p} =O(p)$ as $p\to\infty$, while the large deviations of the 
norms of Gaussian random matrices are in fact subgaussian. To obtain a 
large deviation inequality, we will instead employ the following 
estimate of Haagerup and Thorbj{\o}rnsen \cite[Eq.\ (3.5)]{HT03}.

\begin{lem}[Haagerup-Thorbj{\o}rnsen]
\label{lem:ht}
Let $Y$ be as in Theorem \ref{thm:hermmom}.
For all $t\ge 0$
$$
	\mathbf{E}[\ntr e^{t Y}] \le
	e^{2\sqrt{d}\,t + t^2/2}.
$$
\end{lem}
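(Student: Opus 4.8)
The plan is to reduce the claim to a classical computation with Hermite and Laguerre polynomials; this is essentially the route of Haagerup and Thorbj{\o}rnsen, and the statement could simply be invoked from \cite{HT03} after rescaling, but it admits a short self-contained proof. Write $\rho_d$ for the mean empirical spectral distribution of $Y$, so that $\mathbf{E}[\ntr e^{tY}] = \int e^{tx}\,\rho_d(x)\,dx$. The first step is to record the orthogonal-polynomial description of $\rho_d$: since under Model \ref{mod:herm} the joint eigenvalue density of $Y$ is proportional to $\prod_{i<j}(\lambda_i-\lambda_j)^2\prod_i e^{-\lambda_i^2/2}$, the standard determinantal computation gives
$$
	\rho_d(x) = \frac1d\sum_{k=0}^{d-1}\psi_k(x)^2,
$$
where $\psi_k(x)=c_k\,\mathrm{He}_k(x)e^{-x^2/4}$ are the $L^2(\mathbb{R})$-normalized Hermite functions, $\mathrm{He}_k$ denoting the Hermite polynomials orthogonal with respect to the weight $e^{-x^2/2}$.

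The second step is the elementary Gaussian integral $\int e^{tx}\psi_k(x)^2\,dx = e^{t^2/2}L_k(-t^2)$, with $L_k$ the $k$-th Laguerre polynomial; this follows by completing the square and invoking the generating function (or the three-term recursion) for Hermite polynomials, and is immediately verified for small $k$. Summing over $k$ yields the exact identity
$$
	\mathbf{E}[\ntr e^{tY}] = \frac{e^{t^2/2}}{d}\sum_{k=0}^{d-1}L_k(-t^2).
$$
It remains to bound the Laguerre sum by $d\,e^{2\sqrt d\,t}$ for $t\ge 0$.

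The third step is purely combinatorial. Expanding $L_k(-t^2)=\sum_{j=0}^{k}\binom{k}{j}\frac{t^{2j}}{j!}$ and exchanging the order of summation, the hockey-stick identity $\sum_{k=j}^{d-1}\binom{k}{j}=\binom{d}{j+1}$ gives $\sum_{k=0}^{d-1}L_k(-t^2)=\sum_{j=0}^{d-1}\binom{d}{j+1}\frac{t^{2j}}{j!}$. Using $\binom{d}{j+1}\le \frac{d^{j+1}}{(j+1)!}$ together with the Catalan-number bound $\frac{(2j)!}{j!(j+1)!}\le 4^j$, each term is dominated by
$$
	\frac{t^{2j}}{j!}\binom{d}{j+1}\le d\cdot\frac{(dt^2)^j}{j!(j+1)!}\le d\cdot\frac{(2\sqrt d\,t)^{2j}}{(2j)!},
$$
so, since $t\ge 0$ makes all terms nonnegative, $\sum_{k=0}^{d-1}L_k(-t^2)\le d\sum_{m\ge 0}\frac{(2\sqrt d\,t)^m}{m!}=d\,e^{2\sqrt d\,t}$. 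Plugging this into the displayed identity gives $\mathbf{E}[\ntr e^{tY}]\le e^{2\sqrt d\,t+t^2/2}$, as desired.

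The main obstacle is not any single sharp estimate but rather assembling the right classical inputs with the normalization dictated by Model \ref{mod:herm}: unit-variance entries place the spectrum on the scale $2\sqrt d$ rather than the usual $2$, so one must track the rescaling carefully through both the eigenvalue density and the Hermite--Laguerre Laplace transform. Once that bookkeeping is done, the only quantitative ingredient in the final step is the bound $\frac1{j+1}\binom{2j}{j}\le 4^j$ on Catalan numbers, and everything else is exact.
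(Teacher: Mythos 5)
Your derivation is correct in every step: the eigenvalue density $\propto \prod_{i<j}(\lambda_i-\lambda_j)^2\prod_i e^{-\lambda_i^2/2}$ is the right one for $Y$ as defined in Theorem \ref{thm:hermmom}, the Hermite--Laguerre Laplace transform identity $\int e^{tx}\psi_k(x)^2\,dx = e^{t^2/2}L_k(-t^2)$ is a classical fact (verified e.g.\ via the translation formula $\mathrm{He}_k(x+t)=\sum_j\binom{k}{j}\mathrm{He}_j(x)t^{k-j}$ and orthogonality), and the hockey-stick and Catalan estimates deliver exactly $\sum_{k<d} L_k(-t^2)\le d\,e^{2\sqrt d\,t}$. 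The paper itself gives no proof of this lemma but simply invokes \cite[Eq.\ (3.5)]{HT03}, and what you have written is a faithful self-contained reconstruction of that argument (Laguerre sum formula for the GUE Laplace transform, then termwise domination), so this is the same route as the cited source; the only cosmetic slip is the reference to Model~\ref{mod:herm} where you mean the homogeneous matrix $Y$ of Theorem~\ref{thm:hermmom}.
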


We can now prove Theorem \ref{thm:hermlg}.

\begin{proof}[Proof of Theorem \ref{thm:hermlg}]
Suppose first that $\sigma_*=1$, and let $d=\lceil\sigma^2\rceil$.
Note that all odd moments of our random matrices vanish
$\EE[\ntr X^{2p+1}]=\EE[\ntr Y^{2p+1}]=0$ by the symmetry of the Gaussian 
distribution. We can therefore estimate 
$$
	\EE[\ntr e^{t X}] \le
	\EE[\ntr e^{t Y}]
$$
for $t\ge 0$ by Taylor expanding the exponential function and applying
Theorem \ref{thm:hermmom} to the terms of even degree.
As $e^{t\|X\|} \le n \ntr e^{tX} + n\ntr e^{-t X}$, we obtain
$$
	\EE[e^{t\|X\|}] \le
	2n\, e^{2\sqrt{d}\,t+t^2/2}	
$$
using Lemma \ref{lem:ht}.
By Markov's inequality
$$
	\mathbf{P}\big[\|X\| > 2\sqrt{d} + \varepsilon\big] \le
	\frac{\mathbf{E}[e^{t\|X\|}]}{
	e^{(2\sqrt{d} + \varepsilon) t}} \le
	2n\, e^{-\varepsilon t+t^2/2}.
$$
Optimizing over $t\ge 0$ yields
$$
	\mathbf{P}\big[\|X\| > 2\sqrt{\sigma^2+1}+\varepsilon\big] \le
	2n\, e^{-\varepsilon^2/2}
$$
for all $\varepsilon\ge 0$, where we used $d\le \sigma^2+1$.

For general $\sigma_*$, applying the above bound
to the random matrix $\frac{X}{\sigma_*}$ yields
$$
	\mathbf{P}\big[
	\|X\|> 2\sqrt{\sigma^2+\sigma_*^2} + \sigma_*\varepsilon\big] 
	\le
	2n\, e^{-\varepsilon^2/2}
$$
for all $\varepsilon\ge 0$. To conclude, it remains to
use that $\sqrt{\sigma^2+\sigma_*^2} \le \sigma + 
\frac{\sigma_*^2}{2\sigma}\le \sigma+\frac{\sigma_*}{2}$.
\end{proof}

\section{The symmetric case}
\label{sec:real}

We now turn to the case of (real) symmetric matrices. The following model 
is the real symmetric analogue of Model \ref{mod:rectintro}.

\begin{model}
\label{mod:symm}
$X$ is an $n\times n$ real symmetric matrix with $X_{ij}=b_{ij}\xi_{ij}$
for $i\ne j$ and $X_{ii}=\sqrt{2}\,b_{ii}\xi_{ii}$. Here 
$\xi_{ij}=\xi_{ji}$ are 
independent symmetrically distributed real random variables for $i\ge j$ 
with $\mathbf{E}[\xi_{ij}^{2p}]\le
\mathbf{E}[g^{2p}]$ for all $p\in\mathbb{N}$ (here $g\sim N(0,1)$),
and $b_{ij}=b_{ji}\ge 0$ are arbitrary nonnegative scalars.
\end{model}

\begin{rem}
The slightly different scaling of the off-diagonal and 
diagonal entries ensures that $X$ is GOE 
(a real symmetric random matrix whose law is invariant under 
orthogonal conjugation) when $b_{ij}=1$ for all $i,j$ and $\xi_{ij}$ are 
Gaussian.
\end{rem}

In this setting, the parameters $\sigma$ and $\sigma_*$ are defined as in
\eqref{eq:sigmaparm}. The main results of this section are the following 
extremum principle and
tail bounds.

\begin{thm}[Extremum principle]
\label{thm:symmmom}
Define $X$ as in Model \ref{mod:symm}, and assume that
$\sigma_*^2=1$ and that $\sigma^2\le d\in\mathbb{N}$. Then we have
$$
	\mathbf{E}[\ntr X^{2p}] \le \mathbf{E}[\ntr Y^{2p}]
$$
for all $p\in\mathbb{N}$, where $Y$ is the $d\times d$ real symmetric 
matrix whose entries $Y_{ij}=Y_{ji}$ are independent for $i\ge j$ with
$Y_{ij}\sim N(0,1)$ for $i>j$
and $Y_{ii}\sim N(0,2)$.
\end{thm}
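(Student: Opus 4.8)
The plan is to mirror the proof of Theorem~\ref{thm:hermmom}, adapted to real symmetric matrices. First I would reduce to Gaussian entries: expanding $\EE[\ntr X^{2p}]$ as a sum over closed walks $a_0\to a_1\to\cdots\to a_{2p}=a_0$ and grouping by the edge multiplicities $m_{ij}$, each term factors as $\prod_{i\ge j}\EE[\xi_{ij}^{m_{ij}}]$ times a nonnegative quantity built from the $b_{ij}$; since $\EE[\xi_{ij}^{m_{ij}}]\le\EE[g^{m_{ij}}]$ and odd moments vanish, the whole sum is dominated termwise by its value when the $\xi_{ij}$ are i.i.d.\ standard Gaussians. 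It then suffices to treat that case, where I write $X=\sum_{i\ge j}g_{ij}U_{ij}$ with $g_{ij}\sim N(0,1)$ i.i.d., $U_{ij}=b_{ij}(e_ie_j^*+e_je_i^*)$ for $i>j$, and $U_{ii}=\sqrt2\,b_{ii}e_ie_i^*$, and invoke the real Wick formula to obtain
$$
	\EE[\ntr X^{2p}]=\sum_{\pi\in\mathrm{P}_2([2p])}\ \sum_{(\boldsymbol i,\boldsymbol j)\sim\pi}\ntr U_{i_1j_1}\cdots U_{i_{2p}j_{2p}},
$$
where now $(\boldsymbol i,\boldsymbol j)\sim\pi$ only requires $\{i_k,j_k\}=\{i_l,j_l\}$ for all $\{k,l\}\in\pi$, with no orientation constraint.

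Next I would separate the noncrossing and crossing contributions, exactly as in Section~\ref{sec:cplx}. Any noncrossing $\pi$ contains a consecutive pair, and $\sum_{i\ge j}U_{ij}^2$ is the diagonal matrix with entries $\sum_j b_{ij}^2+b_{ii}^2\le\sigma^2+\sigma_*^2$; iterating as in Lemma~\ref{lem:cplxnc} gives $\sum_{(\boldsymbol i,\boldsymbol j)\sim\pi}\ntr U_{i_1j_1}\cdots U_{i_{2p}j_{2p}}\le(\sigma^2+\sigma_*^2)^p$, with value $(d+1)^p$ when $b_{ij}\equiv1$. For a pairing with a crossing, I would select the lexicographically smallest crossing, rotate the trace so that one of its pairs sits at positions $1$ and some $b$, and analyze the resulting expression $\sum_{i\ge j,\,k\ge l}\ntr(U_{ij}M_1U_{kl}M_2U_{ij}M_3U_{kl}M_4)$, where $M_1,\dots,M_4$ are products of the remaining $U$'s. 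Substituting $U_{ij}=b_{ij}(e_ie_j^*+e_je_i^*)$ and carrying out the matrix algebra as in Lemma~\ref{lem:cplxcr} bounds this by $\sigma_*^4$ times a sum of boundedly many traces, each a trace of a product of the $2p-4$ remaining $U$-matrices in some order, again with equality for $b_{ij}\equiv1$; since every $U_{ij}$ is symmetric, the ``reversed block'' terms that appear are themselves honest products of the $U$'s, so each resulting trace has the form $\ntr\prod_k U_{i_kj_k}$ over a pairing of $[2p-4]$.

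Iterating this reduction --- at each stage choosing the smallest crossing in the current term, applying the crossing inequality, and branching over its finitely many outputs --- the process terminates at noncrossing pairings after some number $\ell$ of steps, and, crucially, the entire reduction tree (which terms appear, with what multiplicities, after how many steps) is determined purely combinatorially and does not depend on $(b_{ij})$. One thus obtains $\EE[\ntr X^{2p}]\le\sum_\tau c_\tau\,(\sigma^2+\sigma_*^2)^{p-2\ell(\tau)}\,\sigma_*^{4\ell(\tau)}$, the sum over terminal nodes $\tau$, with $p-2\ell(\tau)\ge0$ (each step removes two pairs) and combinatorial coefficients $c_\tau$. Running the identical argument for $Y$ turns every inequality into an equality, since its entries have equal variance, so $\EE[\ntr Y^{2p}]=\sum_\tau c_\tau\,(d+1)^{p-2\ell(\tau)}$. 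Since $\sigma_*^2=1$ and $\sigma^2+\sigma_*^2\le d+1$, comparing the two bounds concludes the proof.

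The main obstacle is the crossing step. In the Hermitian Wick formula the two occurrences of a paired entry are forced to have opposite orientations (the role of $!\varepsilon$ in Lemma~\ref{lem:cplxcr}), so a crossing reduces to a single trace; in the real case each $U_{ij}$ may be traversed in either direction, and the crossing inequality produces several terms --- morally, the orientable and the non-orientable gluings. The real work is to verify that all of these are again products of the remaining $U$'s over genuine pairings, that each is controlled by $\sigma_*^4$ times such a term with equality for $Y$, and that the branching of the iteration --- hence the coefficients $c_\tau$ and exponents $\ell(\tau)$ in the final bound --- is the same for $X$ and for $Y$. This bookkeeping of the non-orientable contributions is exactly what makes the real symmetric case (and, later, the non-self-adjoint case in Section~\ref{sec:rect}) markedly more delicate than the Hermitian one.
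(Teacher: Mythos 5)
Your outline matches the paper's general strategy (reduce to Gaussians, Wick expansion, peel off noncrossing pairs, resolve crossings), but there is a genuine gap in the crossing step, and it is precisely the central difficulty of the real case. When you substitute $U_{ij}=b_{ij}(e_ie_j^*+e_je_i^*)$ into $\sum_{i\ge j,\,k\ge l}U_{ij}M_1U_{kl}M_2U_{ij}M_3U_{kl}$ and carry out the algebra, you do not get only ``reversed block'' products: one of the four resulting terms is $\mathrm{Tr}[M_1^*M_3]\,M_2^*$, which is an \emph{unnormalized} trace times a matrix, not a product of the remaining $U$'s. After taking the outer normalized trace this contributes $\mathrm{Tr}[M_1^*M_3]\cdot\ntr(M_0M_2^*M_4)$, and the factor $\mathrm{Tr}[M_1^*M_3]$ carries an extra dimension factor $n$. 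For the nonhomogeneous $X$ this is $O(n)$ while for the comparison matrix $Y$ it is $O(d)$, so the naive comparison fails whenever $n\gg d$. Your claim that the crossing reduction yields ``a sum of boundedly many traces, each a trace of a product of the $2p-4$ remaining $U$-matrices'' is therefore false, and the proposed iteration scheme does not close.

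The paper's fix requires two additional ideas that your proposal omits. First, it replaces the objective $\ntr H(\pi)$ by $C(\pi):=\max_r(H(\pi))_{rr}$ (Lemma~\ref{lem:cpi}); the inequality $\ntr H(\pi)\le C(\pi)$ has no dimension loss, while for $b_{ij}\equiv1$ equality holds because $H(\pi)$ is then a multiple of the identity by orthogonal invariance of GOE. Second, it distinguishes \emph{type I} crossings (where some pair $\{e,f\}$ links the inside of the crossing to the outside) from \emph{type II} crossings (where no such pair exists). For type I, the coupling pair lets one apply the second crossing identity (Lemma~\ref{lem:symmcrid}(b)) to re-absorb $\mathrm{Tr}[M_1^*M_3]$ into a genuine $H(\pi')$ term. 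For type II, the sum genuinely factors; the resolution is not to bound $b_{kl}^2\le\sigma_*^2$ a second time but to keep it, absorb $\sum_l b_{kl}^2\le\sigma^2$, and extract a single diagonal entry $\max_l(M_1^*M_3)_{ll}$, yielding a product $\sigma^2\sigma_*^2\,C(\pi_4)C(\pi_5)$ without any factor of $n$. Because of this, the final inequality involves three parameters ($\tilde\sigma^2,\sigma^2,\sigma_*^2$) rather than the two in your posited form $\sum_\tau c_\tau(\sigma^2+\sigma_*^2)^{p-2\ell(\tau)}\sigma_*^{4\ell(\tau)}$, and an argument working only with normalized traces would not reach it.
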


\begin{thm}[Small deviations]
\label{thm:symmsm}
For $X$ as in Model \ref{mod:symm}, we have
$$
	\mathbf{P}\big[\|X\|>
	2\sigma + \sigma_*^{4/3}\sigma^{-1/3} t
	\big] 
	\le
	\frac{n\sigma_*^2}{C\sigma^2}
	e^{-Ct^{3/2}}
$$
for every $0\le t\le \frac{\sigma^{4/3}}{\sigma_*^{4/3}}$,
where $C$ is a universal constant.
\end{thm}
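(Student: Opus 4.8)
The plan is to mirror exactly the strategy used in the Hermitian case (Theorem \ref{thm:hermsm}): combine the extremum principle of Theorem \ref{thm:symmmom} with a sharp moment estimate for the GOE comparison matrix $Y$, and then feed this into Ledoux's exponential-moment / Markov argument. First I would invoke Theorem \ref{thm:symmmom} to reduce everything to the $d\times d$ real symmetric Gaussian matrix $Y$ with $Y_{ii}\sim N(0,2)$, $Y_{ij}\sim N(0,1)$, where $d=\lceil\sigma^2\rceil$ (after normalizing $\sigma_*=1$, which costs nothing by rescaling $X\mapsto X/\sigma_*$). The key external input is a GOE analogue of Ledoux's bound, Lemma \ref{lem:cplxled}: an estimate of the shape
$$
	\EE[\ntr Y^{2p}] \le \frac{C'}{p^{3/2}}\Big(4d+\frac{c\,p(p-1)}{d}\Big)^p
$$
valid for all $p\le$ (something like $d$), which can be read off from Ledoux's GOE moment computations \cite{Led09} — the genus expansion for GOE has the same leading structure as in the GUE/complex case, only the subleading constant $c$ differs. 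With such a bound in hand, the argument is essentially identical to the proof of Proposition \ref{prop:pchermsm}: apply Markov to $\|X\|^{2p}\le n\,\ntr X^{2p}$, insert $(1+\varepsilon)^{-2p}\le 4^{-\varepsilon p}$, and choose $p\asymp d\sqrt{\varepsilon}$ to optimize, producing a bound of the form $\PP[\|X\|>2\sqrt{d}(1+\varepsilon)]\le (Cn/\sigma^2)e^{-c\sigma^2\varepsilon^{3/2}}$.

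The remaining steps are bookkeeping: handle the two regimes $\sigma^2\varepsilon^{3/2}\ge 1$ and $<1$ exactly as in Proposition \ref{prop:pchermsm} (in the small regime the probability is trivially $\le 1$ and $\sigma^2\le n$ does the rest), then convert $2\sqrt{\sigma^2+\sigma_*^2}(1+\varepsilon)$ into the clean form $2\sigma+\sigma_*^{4/3}\sigma^{-1/3}t$ by substituting $\varepsilon = t\,\sigma_*^{4/3}\sigma^{-4/3}$ and using $\sqrt{\sigma^2+\sigma_*^2}\le \sigma+\tfrac{\sigma_*^2}{2\sigma}\le \tfrac32\sigma$, absorbing the lower-order term $\sigma_*^2/\sigma$ into the fluctuation term for $t\gtrsim\sigma_*^{2/3}\sigma^{-2/3}$ and checking the range $t<\sigma_*^{2/3}\sigma^{-2/3}\le 1$ trivially. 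The constants in the final statement are left as an unspecified universal $C$, which gives more slack than in Theorem \ref{thm:hermsm} and absorbs whatever the precise GOE constant $c$ turns out to be; in particular one does not need to track the sharp numerics that appear in the complex case.

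The main obstacle is the GOE moment estimate itself — i.e., producing the analogue of Lemma \ref{lem:cplxled} with the correct power of $d$ in both the leading term ($4d$, not some other multiple) and, crucially, in the correction term ($p^2/d$, the scaling that makes the $t^{3/2}$ exponent come out). Ledoux's GOE analysis in \cite{Led09} works with the Laguerre/Hermite orthogonal polynomial recursions and the Harer–Zagier type formulas, and the $\beta=1$ case carries extra terms (reflecting the nonorientable surfaces in the genus expansion) relative to the $\beta=2$ case; the task is to check that these extra terms do not worsen the $p/d$ scaling and to pin down the implied constants well enough that choosing $p\asymp d\sqrt{\varepsilon}$ still lands inside the regime of validity and still yields $e^{-c\sigma^2\varepsilon^{3/2}}$. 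Everything downstream of that estimate is a routine repetition of the Hermitian argument with $\sigma$ in place of $2\sigma$ suitably adjusted and a universal constant absorbing the differences.
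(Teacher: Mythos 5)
Your plan matches the paper's proof: Theorem \ref{thm:symmmom} reduces to GOE, Ledoux's GOE moment bound (the paper's Lemma \ref{lem:symmled}, from \cite[Theorem 8]{Led09}) replaces the complex estimate, and the Markov/optimization and bookkeeping steps are carried over verbatim from Proposition \ref{prop:pchermsm} and Theorem \ref{thm:hermsm}. The one detail worth noting is that the GOE estimate used in the paper has the slightly different shape $\EE[\ntr Y^{2p}]\lesssim d^{-1}(4d)^p(1+p^2/d^2)^{2p}$, valid only for $p\ge d^{2/3}$, rather than the $p^{-3/2}$-prefactor form you posit by analogy with GUE; this changes nothing downstream since $p\asymp d\sqrt{\varepsilon}$ automatically lies in the valid range for $\varepsilon\gtrsim d^{-2/3}$, and the remaining range is handled trivially exactly as you describe.
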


\begin{thm}[Large deviations]
\label{thm:symmlg}
For $X$ as in Model \ref{mod:symm}, we have
$$
	\mathbf{P}\big[
	\|X\|> 2\sigma + \sigma_*(1+t)\big] 
	\le
	2n\, e^{-t^2/4}
$$
for every $t\ge 0$.
\end{thm}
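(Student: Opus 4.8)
The plan is to follow exactly the same route used for Theorem~\ref{thm:hermlg} in the Hermitian case, replacing the Haagerup--Thorbj{\o}rnsen bound by the analogous moment-generating-function estimate for GOE. First I would recall that the extremum principle of Theorem~\ref{thm:symmmom} lets me pass from the nonhomogeneous matrix $X$ of Model~\ref{mod:symm} (normalized so that $\sigma_*=1$) to the $d\times d$ GOE-type matrix $Y$ with $d=\lceil\sigma^2\rceil$: since all odd moments vanish by symmetry of the $\xi_{ij}$, Taylor expanding $e^{tX}$ and applying $\EE[\ntr X^{2p}]\le\EE[\ntr Y^{2p}]$ termwise gives $\EE[\ntr e^{tX}]\le\EE[\ntr e^{tY}]$ for every $t\ge 0$.

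Next I need a Laplace-transform bound of the form $\EE[\ntr e^{tY}]\le e^{2\sqrt{d}\,t + c t^2}$ for GOE. The Hermitian case used $c=\tfrac12$; for GOE one expects a larger variance constant (consistent with the $e^{-t^2/4}$ in the statement versus $e^{-t^2/2}$ in Theorem~\ref{thm:hermlg}), so I anticipate a bound with $c=1$, available from the same circle of estimates — either a real-symmetric analogue of \cite[Eq.\ (3.5)]{HT03} or the GOE concentration/comparison results of Ledoux \cite{Led09,Led07}. Granting such a bound, the argument is routine: $e^{t\|X\|}\le n\,\ntr e^{tX} + n\,\ntr e^{-tX}\le 2n\,e^{2\sqrt{d}\,t + ct^2}$, so by Markov
$$
	\mathbf{P}\big[\|X\|>2\sqrt d + \varepsilon\big]
	\le 2n\,e^{-\varepsilon t + c t^2},
$$
and optimizing over $t\ge 0$ (take $t=\varepsilon/2c$) gives $2n\,e^{-\varepsilon^2/4c}$. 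Unnormalizing via $X\mapsto X/\sigma_*$, using $d\le\sigma^2+1$ and $\sqrt{\sigma^2+\sigma_*^2}\le\sigma+\tfrac{\sigma_*}{2}$ exactly as in the proof of Theorem~\ref{thm:hermlg}, and setting $\varepsilon=\sigma_*(1+t)$ (or a suitably rescaled version) then yields $\mathbf{P}[\|X\|>2\sigma+\sigma_*(1+t)]\le 2n\,e^{-t^2/4}$ once $c=1$.

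The main obstacle is locating or proving the GOE Laplace-transform estimate with the right constant. The Hermitian proof imported \cite[Eq.\ (3.5)]{HT03} as a black box; the real symmetric analogue is morally the same but one must be careful that the diagonal entries of $Y$ have variance $2$ (the GOE normalization in Model~\ref{mod:symm}) rather than $1$, which is precisely what inflates the subgaussian constant from $\tfrac12$ to $1$. If no off-the-shelf statement is available in this exact normalization, I would derive it from Ledoux's GOE moment bounds \cite{Led09} — or, more robustly, deduce it from the complex Hermitian case together with a comparison inequality $\EE[\ntr Y_{\mathrm{GOE}}^{2p}]\le\EE[\ntr Y_{\mathrm{GUE}}^{2p}]$ after accounting for the variance rescaling. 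Everything downstream of that estimate is a verbatim repetition of the computation in Section~\ref{sec:hermlg}, so I would present it tersely.
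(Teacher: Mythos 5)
Your proposal follows the paper's proof essentially verbatim: the paper establishes exactly the Laplace-transform bound you anticipate, $\mathbf{E}[\ntr e^{tY}]\le e^{2\sqrt{d}\,t+t^2}$ (Lemma~\ref{lem:goeld}, i.e.\ your $c=1$), by combining $\mathbf{E}[\lambda_1(Y)]\le 2\sqrt{d}$ from Davidson--Szarek with Gaussian log-Sobolev concentration (which makes $\lambda_1(Y)$ a $\sqrt{2}$-subgaussian variable), and everything downstream is the computation you describe. One caution about your fallback route: a termwise comparison $\mathbf{E}[\ntr Y_{\mathrm{GOE}}^{2p}]\le\mathbf{E}[\ntr Y_{\mathrm{GUE}}^{2p}]$ goes the wrong way (the real Wick formula admits strictly more contributing pairings, so GOE moments dominate GUE moments at equal variance, and rescaling GUE by $\sqrt{2}$ would ruin the leading term $2\sqrt{d}$), so you should use the expectation-plus-concentration argument rather than that comparison.
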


The remainder of this section is devoted to the proofs of these results.

\begin{rem}
The scale of the fluctuations in the tail bound of Theorem 
\ref{thm:symmlg} is $\sqrt{2}\sigma_*$, while the scale of the 
fluctuations
in Theorem \ref{thm:hermlg} is only $\sigma_*$. The larger tail in the 
real symmetric case is necessary, however: for example, Theorem 
\ref{thm:symmlg} implies that
$$
	\|X\| \le 2\sigma + (1+o(1))\,2\sigma_*\sqrt{\log n}
	\quad\text{w.h.p.}
$$
as $n\to\infty$, while we have
$$
	\|X\| \ge \max_i X_{ii} \ge (1+o(1))\,2\sigma_*\sqrt{\log n}
	\quad\text{w.h.p.}
$$
whenever $b_{ii}=\sigma_*$ and $\xi_{ii}\sim N(0,1)$ for all $i$ (as then 
$X_{ii}=\sqrt{2}\sigma_*\xi_{ii}$). Moreover, it follows from
\cite[Corollary 3.2]{LT91} that the tail bounds of both
Theorems \ref{thm:hermlg} and \ref{thm:symmlg} cannot be improved in 
general for $t\to\infty$ (even in models where $b_{ii}=0$ for all $i$).
\end{rem}

\subsection{Extremum principle}

Let $(g_{ij})_{i\ge j}$ be i.i.d.\ $N(0,1)$ random variables. We first 
show that there is no loss in assuming that $\xi_{ij}=g_{ij}$ in Model 
\ref{mod:symm}.

\begin{lem}
\label{lem:gcomp}
Let $X$ be as in Model \ref{mod:symm}, and let $\tilde X$ be defined
as $X$ where $\xi_{ij}$ is replaced by $g_{ij}$. Then
$\mathbf{E}[\ntr X^{2p}]\le\mathbf{E}[\ntr \tilde X^{2p}]$ for all 
$p\in\mathbb{N}$.
\end{lem}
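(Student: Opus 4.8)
The plan is to reduce the statement to the scalar fact underlying Model \ref{mod:symm}, namely that $\mathbf{E}[\xi_{ij}^{2p}]\le\mathbf{E}[g^{2p}]$, by expanding the trace moment in the entries and checking that every term comes with a nonnegative coefficient. First I would write, exactly as in Lemma \ref{lem:cplxwick}, the combinatorial expansion
$$
	\mathbf{E}[\ntr X^{2p}]
	= \frac{1}{n}\sum_{i_1,\ldots,i_{2p}}
	\mathbf{E}[X_{i_1 i_2} X_{i_2 i_3}\cdots X_{i_{2p} i_1}],
$$
where the sum is over closed walks on $[n]$ of length $2p$. Since the $\xi_{ij}$ are independent and symmetric, the expectation of a product $X_{i_1 i_2}\cdots X_{i_{2p} i_1}$ is zero unless every unordered pair $\{a,b\}$ appears an even number of times among the consecutive pairs $\{i_k,i_{k+1}\}$; and when it is nonzero it equals a product, over the distinct edges $\{a,b\}$ traversed, of a nonnegative power of $b_{ab}$ (or $\sqrt{2}\,b_{aa}$ on the diagonal) times $\mathbf{E}[\xi_{ab}^{2m_{ab}}]$, where $2m_{ab}$ is the multiplicity of that edge. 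The point is that this nonzero value is a \emph{nonnegative} multiple of $\prod_{\{a,b\}}\mathbf{E}[\xi_{ab}^{2m_{ab}}]$, with a coefficient (the power of $b_{ab}$'s, times $2$-powers from the diagonal) that does not depend on the distribution of the $\xi$'s.

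The key step is then termwise domination: for each walk, replacing $\xi_{ab}$ by $g_{ab}$ replaces each factor $\mathbf{E}[\xi_{ab}^{2m_{ab}}]$ by $\mathbf{E}[g^{2m_{ab}}]\ge\mathbf{E}[\xi_{ab}^{2m_{ab}}]$ (using the hypothesis of Model \ref{mod:symm} with $p=m_{ab}$), and independence of the $g_{ab}$'s means the expectation of the product over distinct edges factorizes in exactly the same way. Since all the surviving terms are nonnegative, summing the inequality over all closed walks gives $\mathbf{E}[\ntr X^{2p}]\le\mathbf{E}[\ntr\tilde X^{2p}]$. One small bookkeeping point to handle is that a diagonal step $X_{i_k i_k}=\sqrt{2}\,b_{i_k i_k}\xi_{i_k i_k}$ carries the same scaling in both $X$ and $\tilde X$, so the $\sqrt{2}$ factors cancel out of the comparison and cause no trouble.

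I do not expect a genuine obstacle here — this is the standard ``moments are monotone in the entry distribution'' argument — but the step that needs the most care is verifying that each nonvanishing term in the walk expansion is a product of single-edge even moments with a nonnegative, distribution-independent prefactor; this is precisely where independence and symmetry of the entries are used, and it is what makes the termwise comparison legitimate. Once that is in place, the conclusion is immediate, and in the sequel one may assume $\xi_{ij}=g_{ij}$, i.e.\ work with the Gaussian model, when proving Theorem \ref{thm:symmmom}.
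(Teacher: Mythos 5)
Your proof is correct and is essentially the paper's argument: the paper likewise observes that $\ntr X^{2p}$ is a polynomial in the entries with nonnegative coefficients whose monomials have expectation either zero (by symmetry) or a product of even moments $\mathbf{E}[X_{ij}^{2k}]$ (by independence), and then concludes by termwise domination from $\mathbf{E}[\xi_{ij}^{2k}]\le\mathbf{E}[g^{2k}]$. Your walk expansion just makes the ``nonnegative coefficients'' explicit; no substantive difference.
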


\begin{proof}
Note that $\ntr X^{2p}$ is a polynomial of 
$(X_{ij})_{i\ge j}$ with nonnegative coefficients, and that the 
expectation of each monomial is either zero or a product of terms of the 
form $\mathbf{E}[X_{ij}^{2k}]$ ($k\in\mathbb{N}$)
as $X_{ij}$ are symmetrically distributed and independent.
The conclusion follows from the assumption 
that $\mathbf{E}[\xi_{ij}^{2k}]\le\mathbf{E}[g_{ij}^{2k}]$ for all 
$k\in\mathbb{N}$.
\end{proof}

We will therefore assume in the remainder of the proof of Theorem 
\ref{thm:symmmom} that $X$ is defined as in 
Model \ref{mod:symm} with $\xi_{ij}=g_{ij}$. Then we define
$$
	H_{ij} := b_{ij}(e_ie_j^*+e_je_i^*),\qquad\qquad
	H_{ii} := \sqrt{2}\, b_{ii} e_ie_i^*
$$
for $i>j$, and represent the random matrix $X$ as
$$
	X = \sum_{i\ge j} g_{ij} H_{ij}.
$$
We can now compute the moments of $X$ as follows.

\begin{lem}[Wick formula]
\label{lem:symmwick}
For any $p\in\mathbb{N}$, we have
$$
	\EE[\ntr X^{2p}] =
	\sum_{\pi\in \mathrm{P}_2([2p])}
	\ntr H(\pi)
$$
with
$$
	H(\pi) :=
	\sum_{(\boldsymbol{i},\boldsymbol{j})\sim\pi}
	H_{i_1 j_1}\cdots 
	H_{i_{2p}j_{2p}}.
$$
Here $\boldsymbol{i},\boldsymbol{j}\in [n]^{2p}$, and
$(\boldsymbol{i},\boldsymbol{j})\sim\pi$ denotes
$i_k\ge j_k$ and
$i_k=i_l,j_k=j_l$ for
$\{k,l\}\in\pi$.
\end{lem}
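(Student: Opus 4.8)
The plan is to prove the Wick formula of Lemma \ref{lem:symmwick} by exactly the same route as the Hermitian version (Lemma \ref{lem:cplxwick}), which is entirely mechanical given the representation $X=\sum_{i\ge j} g_{ij}H_{ij}$ already set up just before the statement. First I would expand $\ntr X^{2p}$ multilinearly: since the trace is linear and $X$ is a finite sum, I get
$$
	\EE[\ntr X^{2p}] =
	\sum_{i_1\ge j_1,\ldots,i_{2p}\ge j_{2p}}
	\EE[g_{i_1j_1}\cdots g_{i_{2p}j_{2p}}]\,
	\ntr H_{i_1j_1}\cdots H_{i_{2p}j_{2p}}.
$$
Then I would invoke the classical Wick (Isserlis) formula for moments of jointly Gaussian variables — the same citation \cite[Theorem 22.3]{NS06} used in Lemma \ref{lem:cplxwick} — to write
$$
	\EE[g_{i_1j_1}\cdots g_{i_{2p}j_{2p}}] =
	\sum_{\pi\in\mathrm{P}_2([2p])}
	\prod_{\{k,l\}\in\pi} \EE[g_{i_kj_k}g_{i_lj_l}].
$$
Since the $g_{ij}$ (indexed by $i\ge j$) are i.i.d.\ standard real Gaussians, the covariance $\EE[g_{i_kj_k}g_{i_lj_l}]$ is just $1_{i_k=i_l,\,j_k=j_l}$ (using that we have restricted to $i_k\ge j_k$, so each pair $(i_k,j_k)$ names a unique variable). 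Substituting this in and interchanging the order of summation groups the index tuples according to which pairing $\pi$ they are compatible with, i.e.\ exactly the constraint $(\boldsymbol{i},\boldsymbol{j})\sim\pi$; collecting terms gives $\sum_\pi \ntr H(\pi)$ with $H(\pi)$ as defined.

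One point I would be slightly careful about, compared with the complex case, is that here there is no conjugation structure: in Lemma \ref{lem:cplxwick} the Wick pairing forced $\varepsilon_k\ne\varepsilon_l$ because $\EE[g^2]=0\ne\EE[|g|^2]$ for complex Gaussians, whereas for real Gaussians every pair simply contributes $\EE[g_{i_kj_k}g_{i_lj_l}]$. This is why the real statement has no $\boldsymbol{\varepsilon}$ variable and why the $H_{ij}$ are already symmetric (self-adjoint) matrices rather than the rank-one $U_{ij}$. The only genuine content beyond bookkeeping is checking that the diagonal scaling $H_{ii}=\sqrt 2\, b_{ii}e_ie_i^*$ is consistent: the entry $X_{ii}=\sqrt 2\, b_{ii} g_{ii}$ has $\EE[X_{ii}^2]=2b_{ii}^2$, matching $\EE[g_{ii}^2]\,\|H_{ii}\|^2_{\text{entrywise}}$, so that the single Gaussian $g_{ii}$ with unit variance indeed reproduces the law of the diagonal entry; this is precisely the reason the model was set up with that scaling, and no separate argument is needed.

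I do not expect any real obstacle here — this lemma is the direct analogue of Lemma \ref{lem:cplxwick} and the proof is a two-line computation plus a citation. The actual difficulty of the symmetric case lies entirely in the subsequent analysis of $H(\pi)$ (the noncrossing estimate and the crossing inequality), which is where the combinatorics becomes delicate, as the text's remarks already warn; but that is beyond the statement at hand. So the proof I would write is simply: expand multilinearly, apply the Wick formula, evaluate the pair covariances as indicator functions, and reorganize the sum — identical in structure to the proof of Lemma \ref{lem:cplxwick}, with the $\boldsymbol{\varepsilon}$ bookkeeping deleted.
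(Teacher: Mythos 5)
Your proposal is correct and matches the paper exactly: the paper's proof of Lemma \ref{lem:symmwick} simply states that it is identical to that of Lemma \ref{lem:cplxwick}, i.e.\ multilinear expansion of $\ntr X^{2p}$ followed by the classical Wick formula with the pair covariances $\EE[g_{i_kj_k}g_{i_lj_l}]=1_{i_k=i_l,\,j_k=j_l}$, which is precisely what you wrote. Your remarks on the absence of the $\boldsymbol{\varepsilon}$ bookkeeping and the diagonal scaling are accurate but not needed beyond what you already observed.
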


\begin{proof}
The proof is identical to that of Lemma \ref{lem:cplxwick}.
\end{proof}

The challenge in the real case is that the effect of a crossing is much 
more complicated
than in the complex case (Lemma \ref{lem:cplxcr}): we 
will have to distinguish in the analysis between two different types of 
crossings that contribute in a different way to the Wick formula. In order 
to do so efficiently, it will turn out to be necessary to work with the 
following upper bound on the trace moments.

\begin{lem}
\label{lem:cpi}
For any $k\in\mathbb{N}$ and $\pi\in\mathrm{P}_2([2k])$, define
$$
	C(\pi) := \max_r (H(\pi))_{rr}.
$$
Then
$$
	\mathbf{E}[\ntr X^{2p}] \le
	\sum_{\pi\in \mathrm{P}_2([2p])} C(\pi).
$$
Moreover, when $b_{ij}=1$ for all $i,j$, the matrix
$H(\pi)$ is a multiple of the identity matrix for every $\pi$ and thus the 
above inequality holds with equality.
\end{lem}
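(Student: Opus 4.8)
The plan is to dispatch the inequality at once and then treat the equality assertion through a symmetry argument for the GOE model.

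For the inequality, recall from the Wick formula of Lemma~\ref{lem:symmwick} that $\EE[\ntr X^{2p}]=\sum_{\pi\in\mathrm{P}_2([2p])}\ntr H(\pi)$. For each $\pi$, the number $\ntr H(\pi)=\frac1n\sum_{r=1}^n(H(\pi))_{rr}$ is an average of the diagonal entries of $H(\pi)$, and an average never exceeds a maximum, so $\ntr H(\pi)\le\max_r(H(\pi))_{rr}=C(\pi)$; summing over $\pi$ gives the bound. (Each $H(\pi)$ is in fact entrywise nonnegative, being a sum of products of the entrywise-nonnegative matrices $H_{ij}$, so in particular $C(\pi)\ge 0$; this is not needed for the present estimate but is worth recording for later use.)

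For the equality assertion when $b_{ij}=1$, it suffices to show that every $H(\pi)$ is a scalar multiple of the identity: then $\ntr H(\pi)$ is the common diagonal value of $H(\pi)$, which equals $C(\pi)$, and all the inequalities above become equalities. To prove this I would first recast $H(\pi)$ in a form where its symmetries are manifest. For a subset $S\subseteq[n]$ with $|S|\in\{1,2\}$, put $G_S:=e_ae_b^*+e_be_a^*$ if $S=\{a,b\}$ with $a\ne b$, and $G_{\{a\}}:=\sqrt2\,e_ae_a^*$; then (using $b_{ij}=1$) $H_{ij}=G_{\{i,j\}}$ for $i\ge j$, and the constraint $i_k\ge j_k$ in the definition of $H(\pi)$ merely selects the ordered representative of an unordered pair, while the special diagonal scaling $H_{ii}=\sqrt2\,b_{ii}e_ie_i^*$ is absorbed into $G_{\{i\}}$. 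Writing $\beta(k)$ for the block of $\pi$ containing the index $k$, one obtains $H(\pi)=\sum_f\prod_{k=1}^{2p}G_{f(\beta(k))}$, where $f$ runs over all assignments of a one- or two-element subset of $[n]$ to each block of $\pi$ (the correspondence $(\boldsymbol i,\boldsymbol j)\leftrightarrow f$ being a bijection).

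Two group actions then finish the argument. Conjugation by a permutation matrix $P_\sigma$ carries $G_S$ to $G_{\sigma(S)}$, hence permutes the summands of $H(\pi)$ via $f\mapsto\sigma\circ f$, so $H(\pi)$ commutes with every $P_\sigma$; consequently $H(\pi)$ has constant diagonal entries and constant off-diagonal entries. Conjugation by a diagonal sign matrix $D=\mathrm{diag}(\varepsilon_1,\dots,\varepsilon_n)$ scales $G_{\{a,b\}}$ by $\varepsilon_a\varepsilon_b$ and fixes $G_{\{a\}}$; since each block of $\pi$ is a pair and thus occupies exactly two of the positions $1,\dots,2p$, every subset factor appears squared in $\prod_kG_{f(\beta(k))}$, so these signs cancel and $H(\pi)$ also commutes with every such $D$ — which forces its off-diagonal entries to vanish. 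Hence $H(\pi)=c_\pi\,I$ for a scalar $c_\pi$, as desired. I do not anticipate a genuine obstacle: the inequality is just ``average $\le$ max'', and the equality case is the observation that the GOE model is invariant under conjugation by signed permutations already at the level of each individual Wick term $H(\pi)$. The only point requiring mild care is the bookkeeping in the reformulation $H(\pi)=\sum_f\prod_kG_{f(\beta(k))}$, namely checking that the ordering constraint and the diagonal scaling are correctly absorbed so that conjugation by signed permutations genuinely permutes the terms.
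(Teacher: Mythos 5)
Your inequality step is identical to the paper's (average $\le$ max, applied to the Wick formula of Lemma~\ref{lem:symmwick}), and your equality argument is correct, but it proceeds by a genuinely different route. The paper proves that $H(\pi)$ is a scalar matrix by first rewriting it as $H(\pi)=\mathbf{E}[X_1\cdots X_{2k}]$, where the $X_k$ are independent GOE copies assigned to the blocks of $\pi$, and then invoking the full orthogonal invariance of the GOE distribution to conclude that $H(\pi)$ commutes with every $O\in O(n)$; Schur's lemma then gives $H(\pi)\propto I$. You instead stay at the level of the combinatorial expansion: you recast $H(\pi)=\sum_f\prod_kG_{f(\beta(k))}$ and observe that conjugation by the finite group of signed permutation matrices permutes the summands (the permutation part relabels the subsets $S$, the sign part contributes $\varepsilon_S^2=1$ because each block occurs twice in the product). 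Commuting with all permutation matrices forces constant diagonal and constant off-diagonal; commuting with all diagonal sign matrices then kills the off-diagonal part. Your approach is more elementary and self-contained, avoiding the ``independent GOE copies'' reformulation and the continuous-symmetry appeal entirely, at the cost of a slightly longer computation. The paper's version is shorter and more conceptual, but both arguments are equally valid — the key insight in each case is that the symmetry of the homogeneous model is already visible term by term in the Wick expansion, before summing over pairings. One small remark: your argument implicitly assumes $n\ge2$ when concluding from permutation/sign commutativity, but the case $n=1$ is trivial, so there is no gap.
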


\begin{proof}
The inequality is immediate from Lemma \ref{lem:symmwick}.
It remains to show that $H(\pi)$
is a multiple of the identity matrix 
when $b_{ij}=1$ for all $i,j$.
To this end, note that we may write
$H(\pi)=\mathbf{E}[X_1\cdots X_{2k}]$, where
$X_k=X_l$ is an independent copy of $X$ for each $\{k,l\}\in\pi$. When 
$b_{ij}\equiv 1$, $X$ is a GOE matrix and its distribution is therefore 
orthogonally invariant. Thus
$O\mathbf{E}[X_1\cdots X_{2k}]O^* =
\mathbf{E}[OX_1O^*\cdots OX_{2k}O^*] = \mathbf{E}[X_1\cdots X_{2k}]$
for every $O\in O(n)$, from which the conclusion follows directly.
\end{proof}

We will now analyze the quantity $C(\pi)$. We first consider noncrossing 
pairings.

\begin{lem}
\label{lem:symmnc}
For any $p\in\mathbb{N}$ and noncrossing pairing 
$\pi\in\mathrm{NC}_2([2p])$, we have
$$
	C(\pi) \le \tilde\sigma^{2p},\qquad\quad
	\tilde\sigma^2 := \max_i \bigg(\sum_j b_{ij}^2 + b_{ii}^2\bigg).
$$
Moreover, equality holds when $b_{ij}=1$ for all $i,j$.
\end{lem}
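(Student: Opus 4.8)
The plan is to mimic the argument of Lemma \ref{lem:cplxnc}, peeling off a consecutive pair of the noncrossing pairing one at a time, but now keeping careful track of the quantity $C(\pi) = \max_r (H(\pi))_{rr}$ rather than a normalized trace. First I would recall the elementary fact (already used in the proof of Lemma \ref{lem:cplxnc}) that any noncrossing pairing $\pi\in\mathrm{NC}_2([2p])$ contains a consecutive pair $\{k,k+1\}\in\pi$; unlike in the Hermitian case, here I cannot freely cyclically permute, because $C(\pi)$ picks out a fixed diagonal entry rather than a trace, so I should instead allow the consecutive pair to be anywhere and isolate the corresponding factor $H_{ij}H_{ij}$ summed over $i\ge j$ inside the product defining $H(\pi)$.

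The key computation is to observe that
$$
	\sum_{i\ge j} H_{ij}H_{ij}
	= \sum_{i\le n}\bigg(\sum_{j\le n} b_{ij}^2 + b_{ii}^2\bigg) e_ie_i^*,
$$
which is a nonnegative diagonal matrix with largest entry exactly $\tilde\sigma^2$ (the extra $b_{ii}^2$ arises because $H_{ii}^2 = 2b_{ii}^2 e_ie_i^*$ while $H_{ij}^2 = b_{ij}^2(e_ie_i^*+e_je_j^*)$ for $i>j$, so the diagonal entry in row $i$ receives $\sum_{j\ne i} b_{ij}^2 + 2b_{ii}^2 = \sum_j b_{ij}^2 + b_{ii}^2$). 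Since every $H_{ij}$ has nonnegative entries, every partial product of the matrices $H_{i_1j_1}\cdots H_{i_{2p}j_{2p}}$ has nonnegative entries; inserting a nonnegative diagonal matrix with entries bounded by $\tilde\sigma^2$ at the position of the consecutive pair can only increase each entry of the product by at most a factor $\tilde\sigma^2$ (entrywise). Hence, writing $\pi' = \pi\setminus\{\{k,k+1\}\}\in\mathrm{NC}_2([2p-2])$ (reindexed), I get $C(\pi)\le \tilde\sigma^2\, C(\pi')$ entrywise on the diagonal, and in particular for the maximizing row. Iterating $p$ times down to the empty pairing, for which $H(\varnothing)$ is the $1$ (identity), gives $C(\pi)\le\tilde\sigma^{2p}$.

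For the equality claim, I would invoke Lemma \ref{lem:cpi}: when $b_{ij}=1$ for all $i,j$, each $H(\pi)$ is a multiple of the identity, so $C(\pi) = \ntr H(\pi)$, and the peeling identity becomes an exact equality at every step (with $\tilde\sigma^2 = n$), yielding $C(\pi) = n^p = \tilde\sigma^{2p}$. The main obstacle, and the reason this needs slightly more care than Lemma \ref{lem:cplxnc}, is precisely the diagonal-entry bookkeeping: one must argue that inserting the nonnegative diagonal factor preserves the entrywise-nonnegativity structure and that the maximum over rows is compatible with the iteration, i.e. that $\max_r \big(A\,\mathrm{diag}(v)\,B\big)_{rr} \le \big(\max_s v_s\big)\max_r(AB)_{rr}$ whenever $A,B$ have nonnegative entries and $AB$ is used in the reindexed product — but this is straightforward once one writes $(A\,\mathrm{diag}(v)\,B)_{rr} = \sum_s A_{rs}v_s B_{sr} \le (\max_s v_s)\sum_s A_{rs}B_{sr} = (\max_s v_s)(AB)_{rr}$. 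I would also remark that the consecutive pair need not be $\{2p-1,2p\}$, so the reindexing of $\pi'$ is a cyclic relabeling of the surviving indices of the walk, which leaves it a noncrossing pairing of $[2p-2]$.
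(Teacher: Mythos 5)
Your proof is correct and follows essentially the same approach as the paper: peel off a consecutive pair $\{k,k+1\}$, insert $\Sigma = \sum_{i\ge j}H_{ij}^2$ (a nonnegative diagonal matrix with entries $\sum_j b_{ij}^2 + b_{ii}^2$) in place without cyclic permutation, bound its diagonal entries by $\tilde\sigma^2$ using entrywise nonnegativity, and iterate. One small slip in the equality discussion: when $b_{ij}=1$ for all $i,j$ you have $\tilde\sigma^2 = \max_i\big(\sum_j b_{ij}^2 + b_{ii}^2\big) = n+1$, not $n$, so the chain should read $C(\pi)=(n+1)^p=\tilde\sigma^{2p}$; this does not affect the validity of the argument since the conclusion $C(\pi)=\tilde\sigma^{2p}$ is what matters, but the intermediate value you state is off by one.
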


\begin{proof}
As in the proof of Lemma \ref{lem:cplxnc}, there exists
$\{k,k+1\}\in\pi$. Then
$$
	C(\pi) =
	\max_r
	\sum_{(\boldsymbol{i},\boldsymbol{j})\sim\pi\backslash\{\{k,k+1\}\}} 
	(
	H_{i_1j_1}\cdots
	H_{i_{k-1}j_{k-1}}
	\Sigma
	H_{i_{k+2}j_{k+2}}\cdots
        H_{i_{2p}j_{2p}}
	)_{rr},
$$
where
$$
	\Sigma :=
	\sum_{i\ge j} H_{ij}^2 =
	\sum_{i}
	\bigg(\sum_j b_{ij}^2 + b_{ii}^2\bigg) e_i e_i^*.
$$
But note that $(H_{i_1j_1}\cdots H_{i_{k-1}j_{k-1}} e_i e_i^*
H_{i_{k+2}j_{k+2}}\cdots
H_{i_{2k}j_{2k}})_{rr}\ge 0$
as all $H_{ij}$ have nonnegative entries. We can therefore readily 
estimate
$$
	C(\pi) \le \tilde\sigma^2\,
	C(\pi\backslash\{\{k,k+1\}\}),
$$
with equality if $b_{ij}=1$ for all $i,j$.
As $\pi\backslash\{\{k,k+1\}\}$ is again noncrossing, we can iterate this 
argument to conclude the proof.
\end{proof}

We now consider the effect of a crossing. We will need the
following identities, which follow from somewhat tedious but
straightforward computations.

\begin{lem}[Crossing identities]
\label{lem:symmcrid}
The following hold.
\begin{enumerate}[a.]
\item For any matrices $M_1,M_2,M_3$, we have
\begin{multline*}
	\sum_{i\ge j} \sum_{k\ge l}
	H_{ij} M_1 H_{kl} M_2 H_{ij} M_3 H_{kl} = 
	\sum_{i,j,k,l} b_{ij}^2 b_{kl}^2
	\big\{(M_1)_{jk} (M_2)_{lj} (M_3)_{il} + \mbox{}\\
	(M_1)_{jl} (M_2)_{kj} (M_3)_{il} +
	(M_1)_{jk} (M_2)_{li} (M_3)_{jl} +
	(M_1)_{jl} (M_2)_{ki} (M_3)_{jl}\big\}
	e_i e_k^*.
\end{multline*}
\item For any matrix $M$, we have
$$
	\sum_{i\ge j} \mathrm{Tr}[M H_{ij}] H_{ij} =
	\sum_{i,j} b_{ij}^2 (M_{ij} + M_{ji})e_i e_j^*.
$$
\end{enumerate}
\end{lem}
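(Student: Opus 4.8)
The plan is to prove the two crossing identities by direct expansion of the matrices $H_{ij}$ in terms of the standard basis matrices $e_ie_j^*$, exploiting the multiplication rule $(e_ae_b^*)(e_ce_d^*) = \delta_{bc}\,e_ae_d^*$ and collecting the resulting Kronecker deltas into matrix entries of the $M$'s.

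For part (b), I would first substitute $H_{ij} = b_{ij}(e_ie_j^* + e_je_i^*)$ for $i>j$ (and $H_{ii}=\sqrt2\,b_{ii}e_ie_i^*$, so that $H_{ii}^2$ would carry the factor $2$, but in (b) we only have a single $H_{ij}$). The point is that summing over $i\ge j$ with the symmetrized $H_{ij}$ is the same as summing over all ordered pairs $(i,j)\in[n]^2$ with the single term $b_{ij}e_ie_j^*$: indeed $\sum_{i\ge j}H_{ij}(\cdots) = \sum_{i\ge j}b_{ij}(e_ie_j^*+e_je_i^*)(\cdots)$ for $i>j$ reindexes to $\sum_{i\ne j}b_{ij}e_ie_j^*(\cdots)$, and the diagonal term $\sqrt2 b_{ii}e_ie_i^*$ appears with a compensating factor — here one must be slightly careful, but since $\mathrm{Tr}[M H_{ii}]H_{ii} = 2b_{ii}^2 M_{ii}e_ie_i^*$ matches $\sum_{i,j}b_{ij}^2(M_{ij}+M_{ji})e_ie_j^*$ restricted to $i=j$ (which gives $2b_{ii}^2M_{ii}e_ie_i^*$), the bookkeeping is consistent. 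So $\sum_{i\ge j}\mathrm{Tr}[MH_{ij}]H_{ij} = \sum_{i,j}b_{ij}^2\,\mathrm{Tr}[Me_ie_j^*]\,e_ie_j^*$ with the convention that $\mathrm{Tr}[Me_ie_j^*]=M_{ji}$; re-deriving carefully with the symmetrization one gets $\mathrm{Tr}[MH_{ij}] = b_{ij}(M_{ji}+M_{ij})$ for $i>j$, which then immediately yields the stated formula $\sum_{i,j}b_{ij}^2(M_{ij}+M_{ji})e_ie_j^*$.

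For part (a), the strategy is the same but the computation is longer. Writing each $H_{ij}$ (off-diagonal) as a sum of two rank-one terms, the product $H_{ij}M_1H_{kl}M_2H_{ij}M_3H_{kl}$ expands into $2^4=16$ terms, each of the form $b_{ij}^2b_{kl}^2$ times a rank-one matrix $e_ae_d^*$ weighted by a product of three matrix entries of $M_1,M_2,M_3$; when one telescopes the basis-matrix products, the delta constraints force the outer indices and leave exactly the four surviving patterns listed (the other twelve either vanish or recombine), producing the $e_ie_k^*$ on the right together with the four bracketed monomials. The diagonal contributions ($i=j$ or $k=l$) must be checked to be consistent with the same formula using the $\sqrt2$ normalization, which they are since $H_{ii}^2$ absorbs the factor $2$. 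The cleanest way to organize this is to treat $H_{ij}=\sum_{\{a,b\}}b_{ij}e_ae_b^*$ uniformly over unordered pairs with the appropriate scalar and simply carry out the index contraction.

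The main obstacle is purely bookkeeping: in part (a) one must expand a product of four symmetrized rank-one operators and correctly identify which of the sixteen terms survive the trace-diagonal extraction and how they relabel into the four canonical monomials, while simultaneously verifying that the diagonal ($i=j$, $k=l$) cases are absorbed correctly by the $\sqrt2$ scaling so that the single unified formula holds. There is no conceptual difficulty — the identities follow, as the statement says, from "somewhat tedious but straightforward computations" — so I would present part (b) in a line or two and part (a) by writing out the expansion of $H_{ij}M_1H_{kl}M_2H_{ij}M_3H_{kl}$, grouping terms by the pattern of which $e$-vector sits in each slot, and reading off the coefficients; the only thing worth stating explicitly is the reduction of $\sum_{i\ge j}$ over symmetrized $H_{ij}$ to $\sum_{i,j}$ over $b_{ij}e_ie_j^*$, after which everything is mechanical.
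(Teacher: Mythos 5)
Your plan is sound and is essentially the same approach as the paper's: a direct computation by expanding each $H_{ij}$ in rank-one operators $e_ae_b^*$, telescoping the products, and collecting the surviving Kronecker deltas as matrix entries of the $M$'s, with the $\sqrt2$ on the diagonal ensuring that the sum over $i\ge j$ converts cleanly into an unrestricted sum over $(i,j)$. Part (b) as you describe it is exactly right. The paper merely packages the same calculation more compactly by first recording the one-sum identity $\sum_{i\ge j}H_{ij}MH_{ij}=\sum_{i,j}b_{ij}^2(M_{jj}e_ie_i^*+M_{ji}e_ie_j^*)$ and then iterating it, which reduces part (a) to two applications of a $2\times2$ expansion rather than a $2^4$ expansion — an organizational simplification rather than a different argument.

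One small inaccuracy in your description of part (a): it is not the case that twelve of the sixteen off-diagonal terms ``vanish''. None vanish. Each of the sixteen terms contributes a rank-one matrix $e_xe_y^*$ with $x\in\{i,j\}$, $y\in\{k,l\}$; after converting the restricted sums over $i\ge j$, $k\ge l$ into unrestricted sums over all ordered $(i,j)$ and $(k,l)$ (relabeling $i\leftrightarrow j$ and $k\leftrightarrow l$ as needed and using $b_{ij}=b_{ji}$), each of the sixteen terms becomes one of the four monomials multiplying $e_ie_k^*$, with the count matching because the two relabelings give $4\times4=16$ target slots. This is a matter of wording, not a gap; your plan would go through.
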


\begin{proof}
We readily compute that for any matrix $M$, we have
$$
	\sum_{i\ge j} H_{ij} M H_{ij} =
	\sum_{i,j} b_{ij}^2 
	(M_{jj} e_i e_i^*  + M_{ji} e_i e_j^* ).
$$
The first part of the statement follows by applying this identity 
twice. The second part of the statement is again a straightforward 
computation.
\end{proof}

To proceed, we must distinguish between two types of crossings that play a 
fundamentally different role in the argument.

\begin{defn}
Let $\pi\in \mathrm{P}_2([2p])$, and fix a crossing 
$\{i,k\},\{j,l\}\in\pi$ such that $i<j<k<l$.
Then this crossing is said to be of \emph{type I} if 
there exists $\{a,b\}\in\pi$ with
$a\in (i,j)\cup (k,l)$ and $b\not\in (i,j)\cup (k,l)$, an is said to be
of \emph{type II} otherwise.
\end{defn}

We consider each type of crossing in turn.

\begin{lem}[Type I crossing]
\label{lem:symmi}
Let $p\in\mathbb{N}$, $\pi\in\mathrm{P}_2([2p])$, 
$\{a,c\},\{b,d\},\{e,f\}\in\pi$ with $a<b<c<d$ and
$e\in (a,b)\cup (c,d)$, $f\not\in (a,b)\cup (c,d)$.
Then there exist pairings $\pi_1,\pi_2,\pi_3\in \mathrm{P}_2([2p-4])$ 
and $\pi_4,\pi_5\in \mathrm{P}_2([2p-6])$, whose definition depends only 
on $\pi,a,b,c,d,e,f$ (and not on the matrix $X$), so that\footnote{%
Here $C(\pi_4)=C(\varnothing):=1$ if $2p-6=0$. The analogous convention 
will be used in the sequel.}
$$
	C(\pi) \le 
	\sigma_*^4 \{C(\pi_1)+C(\pi_2)+C(\pi_3)\}+
	\sigma_*^6\, \{C(\pi_4)+C(\pi_5)\}.
$$
Moreover, equality holds when $b_{ij}=1$ for all $i,j$.
\end{lem}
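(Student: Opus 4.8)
\textbf{Proof plan for Lemma \ref{lem:symmi} (Type I crossing).}
The plan is to extract the chosen type I crossing $\{a,c\},\{b,d\}$ from the product defining $H(\pi)$ by applying part (a) of Lemma \ref{lem:symmcrid}, just as the crossing inequality of Lemma \ref{lem:cplxcr} was used in the Hermitian case. Concretely, after a cyclic permutation of the trace we may assume $a=1$, so that $H(\pi)$ is a sum over $(\boldsymbol i,\boldsymbol j)\sim\pi$ of products in which the factors indexed by $a,b,c,d$ appear in positions determined by $1<b<c<d$. Summing over the index pairs attached to $\{a,c\}$ and $\{b,d\}$ first, Lemma \ref{lem:symmcrid}(a) replaces $H_{ij}M_1H_{kl}M_2H_{ij}M_3H_{kl}$ by a sum of four terms, each a product of three scalar matrix entries $(M_r)_{\cdot\cdot}$ times $b_{ij}^2b_{kl}^2\,e_ie_k^*$. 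Bounding $b_{ij}^2b_{kl}^2\le\sigma_*^4$ and using that every product of $H$'s has nonnegative entries (so all the scalar factors we are summing are nonnegative), each of the four terms becomes $\sigma_*^4$ times a diagonal entry of $H(\pi')$ for a pairing $\pi'$ on the remaining $2p-4$ points, obtained by deleting $\{a,c\},\{b,d\}$ and reconnecting the three "free" endpoints $j,k,l$ (the positions that were the inner neighbors of the deleted pairs) according to the four reconnection patterns in the identity.

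The subtlety — and the reason the statement involves five pairings rather than four — is that the index $e$ lies strictly between $i$ and $j$ (or between $k$ and $l$), while its partner $f$ does not; this is exactly the defining feature of a type I crossing. When we perform the reconnection, in some of the four patterns the endpoint attached to $M_1$ gets identified with a matrix index that forces the factor $H_{i_ej_e}$ to be "pinched" — i.e. a diagonal entry $(M)_{jj}$ of the sub-identity $\sum_{i\ge j}H_{ij}MH_{ij}$ appears, cf.\ the auxiliary identity $\sum_{i\ge j}H_{ij}MH_{ij}=\sum_{i,j}b_{ij}^2(M_{jj}e_ie_i^*+M_{ji}e_ie_j^*)$ proved inside Lemma \ref{lem:symmcrid}. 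In those cases one can additionally sum over the index pair attached to $\{e,f\}$ using part (b) of Lemma \ref{lem:symmcrid}, which removes a further pair, costs another factor $\sigma_*^2$, and produces a pairing on $2p-6$ points. Tracking bookkeeping carefully, three of the four patterns yield the $\sigma_*^4 C(\pi_r)$ terms ($r=1,2,3$) and the remaining pattern splits — depending on whether $f$ is or is not pinched together with the surviving structure — into the two $\sigma_*^6$ terms $C(\pi_4),C(\pi_5)$; the precise combinatorial recipe for $\pi_1,\dots,\pi_5$ is read off mechanically from the index identifications, and by construction depends only on $\pi,a,b,c,d,e,f$.

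For the equality claim, when $b_{ij}=1$ for all $i,j$ we have $b_{ij}^2b_{kl}^2=\sigma_*^4=1$ and $b_{ij}^2=\sigma_*^2=1$ identically, so every inequality above (dropping the $b^2$ factors, and the nonnegativity estimates, which were all equalities since we only summed nonnegative quantities and bounded them by their exact values) is in fact an equality; moreover in that case $H(\pi)$ and each $H(\pi_r)$ is a scalar multiple of the identity by Lemma \ref{lem:cpi}, so $C(\pi)$ literally equals the stated linear combination.

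\textbf{Main obstacle.} The hard part is purely organizational rather than conceptual: one must carry out the double application of Lemma \ref{lem:symmcrid}(a) and correctly enumerate which of the four resulting reconnection patterns forces a pinch of the $H_{ef}$ factor — this depends on the relative position of $e$ (whether $e\in(a,b)$ or $e\in(c,d)$) and on the side of the product it falls on — and then, in the pinched patterns, apply part (b) to eliminate $\{e,f\}$ and read off $\pi_4,\pi_5$. Keeping the reordering of the surviving $2p-4$ (resp.\ $2p-6$) indices consistent with the cyclic structure of the trace, so that the resulting objects are genuinely of the form $C(\pi')$ for honest pairings $\pi'$, is the step where care is required; everything else is the same mechanism already used in the proof of Theorem \ref{thm:hermmom}.
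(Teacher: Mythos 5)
Your plan follows essentially the same route as the paper's proof: apply the double crossing identity of Lemma~\ref{lem:symmcrid}(a) to the pair $\{a,c\},\{b,d\}$, which produces four reconnection terms after bounding $b_{ij}^2b_{kl}^2\le\sigma_*^4$; observe that three of these are directly of the form $(H(\pi_r))_{rr}$, while the fourth carries an unnormalized trace factor $\tr[M_1^*M_3]$ and is therefore not of that form; and then use the type~I assumption --- that the pair $\{e,f\}$ has one endpoint inside $(a,b)\cup(c,d)$ and one outside --- to apply Lemma~\ref{lem:symmcrid}(b) and convert that trace term into two further quantities $(H(\pi_4))_{rr},(H(\pi_5))_{rr}$ at the cost of one more factor $\sigma_*^2$. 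The equality claim is handled exactly as in the paper, via the observation from Lemma~\ref{lem:cpi} that each $H(\pi')$ is a scalar multiple of the identity when $b_{ij}\equiv 1$, so that $(H(\pi'))_{rr}\le C(\pi')$ is an equality.

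Two points in the write-up should be tightened, though neither changes the strategy. First, ``after a cyclic permutation of the trace we may assume $a=1$'' is not a legitimate reduction here: the quantity being bounded is $C(\pi)=\max_r(H(\pi))_{rr}$, which is \emph{not} invariant under cyclic permutation (unlike $\tr H(\pi)$). The paper avoids this by simply carrying along a prefix $M_0$ and suffix $M_4$ around the crossing; Lemma~\ref{lem:symmcrid}(a) applies unchanged with these bystanders. Second, the description of why five pairings appear is not quite right as written. You first assert that ``each of the four terms becomes $\sigma_*^4$ times a diagonal entry of $H(\pi')$,'' which is false for the $\tr[M_1^*M_3]M_2^*$ term, and you then attribute the two-way split of the remaining term to ``whether $f$ is or is not pinched together with the surviving structure.'' In fact the two $\sigma_*^6$ terms arise mechanically from the two summands $N_{ij}$ and $N_{ji}$ in Lemma~\ref{lem:symmcrid}(b) (equivalently, from $N$ vs.\ $N^*$ after gluing $H_{ef}$ back into the surviving product); no further case distinction about $f$ is involved. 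With these clarifications the plan aligns with the paper's argument.
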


\begin{proof}
For any $M_1,M_2,M_3$ with nonnegative entries, 
Lemma \ref{lem:symmcrid}(a) yields
\begin{multline*}
	\sum_{i\ge j} \sum_{k\ge l}
	(H_{ij} M_1 H_{kl} M_2 H_{ij} M_3 H_{kl})_{rs} \\ \le
	\sigma_*^4
	\big(M_3M_2M_1 +
	M_3 M_1^* M_2^* +
	M_2^*M_3^*M_1 +
	\mathrm{Tr}[M_1^*M_3] M_2^*\big)_{rs}
\end{multline*}
with equality if $b_{ij}=1$ for all $i,j$. We can therefore write
\begin{multline*}
	C(\pi) \le
	\sigma_*^4
	\max_r
	\sum_{(\boldsymbol{i},\boldsymbol{j})\sim\pi
	\backslash\{\{a,c\},\{b,d\}\}}
	\big\{
	(M_0M_3M_2M_1M_4)_{rr}
	+
	(M_0M_3 M_1^* M_2^*M_4)_{rr}  \\ 
	+ (M_0M_2^*M_2^*M_1M_4)_{rr} +
	\mathrm{Tr}[M_1^*M_3] (M_0M_2^*M_4)_{rr}\big\}
\end{multline*}
with equality if $b_{ij}=1$ for all $i,j$, where
\begin{align*}
	M_0 &:= H_{i_1j_1}\cdots H_{i_{a-1}j_{a-1}},\\
	M_1 &:= H_{i_{a+1}j_{a+1}}\cdots H_{i_{b-1}j_{b-1}},\\
	M_2 &:= H_{i_{b+1}j_{b+1}}\cdots H_{i_{c-1}j_{c-1}},\\
	M_3 &:= H_{i_{c+1}j_{c+1}}\cdots H_{i_{d-1}j_{d-1}},\\
	M_4 &:= H_{i_{d+1}j_{d+1}}\cdots H_{i_{2p}j_{2p}}.
\end{align*}
For simplicity, we consider in the remainder of the proof the case that 
$c<e<d$ and $f>d$; the proof of the five other possible cases is 
completely analogous. Then we can use Lemma \ref{lem:symmcrid}(b) to estimate
\begin{align*}
	&\sum_{(\boldsymbol{i},\boldsymbol{j})\sim\pi
        \backslash\{\{a,c\},\{b,d\}\}}
	\mathrm{Tr}[M_1^*M_3] (M_0M_2^*M_4)_{rr} \\
	&=
	\sum_{(\boldsymbol{i},\boldsymbol{j})\sim\pi
        \backslash\{\{a,c\},\{b,d\},\{e,f\}\}}
	\sum_{i\ge j}
	\mathrm{Tr}[M_1^*M_3^- H_{ij} M_3^+] (M_0M_2^*M_4^- H_{ij} 
	M_4^+)_{rr}
	\\
	&
	\le \sigma_*^2
	\sum_{(\boldsymbol{i},\boldsymbol{j})\sim\pi
        \backslash\{\{a,c\},\{b,d\},\{e,f\}\}}
	\big\{
	(M_0M_2^*M_4^- 
	M_3^+M_1^*M_3^- 
	M_4^+)_{rr} \\
	&
	\qquad
	\qquad
	\qquad
	\qquad
	\qquad
	\qquad
	\qquad
	+
	(M_0M_2^*M_4^- 
	M_3^{-*} M_1M_3^{+*}
	M_4^+)_{rr}\big\}
\end{align*}
with equality if $b_{ij}=1$ for all $i,j$, where 
\begin{align*}
	M_3^- &:= H_{i_{c+1}j_{c+1}}\cdots H_{i_{e-1}j_{e-1}},\\
	M_3^+ &:= H_{i_{e+1}j_{e+1}}\cdots H_{i_{d-1}j_{d-1}},\\
	M_4^- &:= H_{i_{d+1}j_{d+1}}\cdots H_{i_{f-1}j_{f-1}},\\
	M_4^+ &:= H_{i_{f+1}j_{f+1}}\cdots H_{i_{2p}j_{2p}}.
\end{align*}
From the above, we can readily read off the existence of
$\pi_1,\pi_2,\pi_3\in \mathrm{P}_2([2p-4])$
and $\pi_4,\pi_5\in \mathrm{P}_2([2p-6])$, whose definition depends only
on $\pi,a,b,c,d,e,f$, so that
$$
	C(\pi) \le
	\max_r \big\{
	\sigma_*^4\big(H(\pi_1)+H(\pi_2)+H(\pi_3)\big)_{rr} +
	\sigma_*^6\big(H(\pi_4)+H(\pi_5)\big)_{rr}
	\big\}
$$
with equality if $b_{ij}=1$ for all $i,j$, where $H(\pi)$ was defined in 
Lemma \ref{lem:symmwick}. The inequality in the statement follows 
immediately by using $(H(\pi))_{rr}\le C(\pi)$ for all $r,\pi$.
On the other hand, when $b_{ij}=1$ for all $i,j$, Lemma 
\ref{lem:cpi} yields $H(\pi)=C(\pi)\mathbf{1}$ for 
all $\pi$, so that the inequality in the statement holds 
with equality.
\end{proof}

When Lemma \ref{lem:symmcrid}(a) is applied as in the proof of Lemma 
\ref{lem:symmi}, it creates a term that involves an unnormalized trace. 
For crossings of type I, we can subsequently apply Lemma 
\ref{lem:symmcrid}(b) to eliminate the trace and regain a term of the form 
$H(\pi)$. This is not possible, however, for crossings of type II, which 
would cause the trace to factor out of the expression. As this trace is 
unnormalized, that would ultimately yield a dimension-dependent factor in 
the final bound.

To avoid this, we must apply Lemma \ref{lem:symmcrid}(a) in a different 
manner to crossings of type II. It is here that we rely on the fact that 
we work with the quantity $C(\pi)$, rather than the smaller quantity $\ntr 
H(\pi)$ that appears in the Wick formula itself. (This is in contrast to 
the statement and proof of Lemma \ref{lem:symmi}, which would work equally 
well if we replace $C(\pi)$ by $\ntr H(\pi)$ throughout.)

\begin{lem}[Type II crossing]
\label{lem:symmii}
Let $p\in\mathbb{N}$, $\pi\in\mathrm{P}_2([2p])$, and let
$\{a,c\},\{b,d\}\in\pi$ with $a<b<c<d$ be a crossing of type II.
Then there exist pairings $\pi_1,\pi_2,\pi_3\in \mathrm{P}_2([2p-4])$ 
and $\pi_4\in \mathrm{P}_2([b-a+d-c-2])$,
$\pi_5\in \mathrm{P}_2([2p-2 -b+a -d+c])$, whose definition depends only 
on $\pi,a,b,c,d$ (and not on the matrix $X$), so that
$$
	C(\pi) \le 
	\sigma_*^4 \{C(\pi_1)+C(\pi_2)+C(\pi_3)\}+
	\sigma^2\sigma_*^2\, C(\pi_4)\, C(\pi_5).
$$
Moreover, equality holds when $b_{ij}=1$ for all $i,j$.
\end{lem}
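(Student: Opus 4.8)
The plan is to start from exactly the same expansion as in the proof of Lemma \ref{lem:symmi}. Applying Lemma \ref{lem:symmcrid}(a) to the crossing $\{a,c\},\{b,d\}$ produces, after summing over the paired indices, four terms: three of them combine the blocks $M_0,\dots,M_4$ into products of the form $M_0M_3M_2M_1M_4$, $M_0M_3M_1^*M_2^*M_4$, $M_0M_2^*M_2^*M_1M_4$ — each a single product of matrices $H_{ij}^{\varepsilon}$ with nonnegative entries — and these immediately yield $\sigma_*^4 C(\pi_i)$ for suitable $\pi_1,\pi_2,\pi_3\in\mathrm{P}_2([2p-4])$ just as before. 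The point of departure is the fourth term, $\mathrm{Tr}[M_1^*M_3]\,(M_0M_2^*M_4)_{rr}$. In the type I case one used a further pair straddling the intervals to reabsorb the trace via Lemma \ref{lem:symmcrid}(b); by the \emph{definition} of a type II crossing, no such straddling pair exists, so every pair of $\pi$ other than $\{a,c\},\{b,d\}$ lies entirely inside $(a,b)\cup(c,d)$ or entirely outside it. Consequently the matrices $M_1,M_3$ (built from the blocks strictly between $a,b$ and between $c,d$) are \emph{statistically independent} of $M_0,M_2,M_4$, and the whole expression factors.

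The key step is therefore: write $C(\pi)$'s fourth contribution as
$$
\Big(\max_r \sum_{(\boldsymbol i,\boldsymbol j)} (M_0M_2^*M_4)_{rr}\Big)
\cdot
\Big(\sum_{(\boldsymbol i,\boldsymbol j)} \mathrm{Tr}[M_1^*M_3]\Big),
$$
where the first sum runs over the restriction of $\pi$ to the "outside" indices and the second over the restriction to the "inside" indices. The first factor is of the form $C(\pi_5)$ for the pairing $\pi_5\in\mathrm{P}_2([2p-2-(b-a)-(d-c)])$ obtained by deleting $\{a,c\},\{b,d\}$ and all inside pairs and reindexing. For the second factor, $\mathrm{Tr}[M_1^*M_3] = \mathrm{Tr}[H(\pi_4)]$ where $\pi_4\in\mathrm{P}_2([(b-a)-1+(d-c)-1]) = \mathrm{P}_2([b-a+d-c-2])$ is the pairing induced on the inside indices; but $\mathrm{Tr}$ is unnormalized, so $\mathrm{Tr}[H(\pi_4)] = \dim\cdot\ntr H(\pi_4) \le \dim\cdot C(\pi_4)$, and here is where working with $C$ rather than $\ntr$ pays off — actually we bound $\mathrm{Tr}[H(\pi_4)]$ more carefully. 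Note $M_1^*M_3 = \sum_{(\boldsymbol i,\boldsymbol j)}(\cdots)$, and since $H(\pi_4)=\mathbf E[X_1\cdots X_{b-a+d-c-2}]$ has nonnegative entries, $\mathrm{Tr}[H(\pi_4)] = \sum_r (H(\pi_4))_{rr} \le (\text{number of }r\text{ with }(H(\pi_4))_{rr}\neq 0)\cdot C(\pi_4)$; but in fact the correct bound is the cruder $\mathrm{Tr}[H(\pi_4)]\le \sigma^2/\sigma_*^2 \cdot C(\pi_4)$-type estimate — one iterates the consecutive-pair reduction of Lemma \ref{lem:symmnc} on $\pi_4$, which always starts from a consecutive pair and inserts $\Sigma = \sum_i(\sum_j b_{ij}^2+b_{ii}^2)e_ie_i^*$; tracking a single surviving $\Sigma$ gives a factor $\widetilde\sigma^2 \le \sigma^2 + \sigma_*^2$ that one arranges to absorb into the $\sigma^2\sigma_*^2$ prefactor, while the remaining reductions produce $\sigma_*^2$-powers and one final $e_re_r^*$ contributing a factor dominated by the supremum — this yields $\mathrm{Tr}[H(\pi_4)]\lesssim C(\pi_4)$ after the right normalization, giving the claimed $\sigma^2\sigma_*^2\,C(\pi_4)C(\pi_5)$. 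Equality when $b_{ij}=1$ follows because every inequality invoked (Lemma \ref{lem:symmcrid}(a), the nonnegativity bounds, Lemma \ref{lem:cpi}'s identity $H(\pi)=C(\pi)\mathbf 1$) is an equality in that case, and the six positional subcases for the location of the inside blocks relative to the outside are handled identically.

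The main obstacle I expect is \emph{bookkeeping the factorization precisely and controlling the unnormalized trace}: one must verify that the type II hypothesis genuinely forces the inside/outside index sets to be $\pi$-invariant (so that the sums decouple cleanly into $C(\pi_5)$ times a trace over the inside), identify the exact cardinalities $b-a+d-c-2$ and $2p-2-b+a-d+c$ of the two resulting index sets, and extract from $\mathrm{Tr}[H(\pi_4)]$ a bound of the form $\sigma^2\sigma_*^{-2}\cdot C(\pi_4)$ rather than a dimension-dependent one — the latter is exactly the subtlety flagged in the paragraph preceding the lemma, and it is resolved by the same consecutive-pair peeling used for noncrossing pairings, now applied to shed one power of $\sigma^2$ (or $\widetilde\sigma^2$, harmlessly larger by $\sigma_*^2$) together with the trace. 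The remaining work — the six positional cases and the explicit matrix algebra behind Lemma \ref{lem:symmcrid}(a)'s four terms — is routine.
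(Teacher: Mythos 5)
There is a genuine gap at exactly the step your plan singles out as the main obstacle. You bound both $b_{ij}^2$ and $b_{kl}^2$ by $\sigma_*^2$ when applying Lemma \ref{lem:symmcrid}(a) to the fourth (trace) term, just as for type I crossings. This severs the only coupling between the ``inside'' index $l$ and the ``outside'' index $k$, so the inside factor becomes the \emph{unweighted} unnormalized trace $\tr H(\pi_4)$. Your proposed bound $\tr H(\pi_4)\lesssim (\sigma^2/\sigma_*^2)\, C(\pi_4)$ is false: if $\pi_4$ is empty (i.e.\ $b=a+1$, $d=c+1$, a valid type II crossing) then $\tr H(\pi_4)=n$ and $C(\pi_4)=1$, while for a diagonal variance pattern $b_{ij}=1_{i=j}$ one has $\sigma^2/\sigma_*^2=1$ and $n$ arbitrary. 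More generally, the consecutive-pair peeling you suggest reduces $\tr H(\pi_4)$ all the way to a free sum $\sum_i$ over $e_ie_i^*$-type terms, which inevitably costs a factor of $n$, not of $\sigma^2/\sigma_*^2$. No dimension-free bound on $\tr H(\pi_4)$ in terms of $C(\pi_4)$ exists.

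The paper's fix is asymmetric treatment of the two $b^2$ factors in the crossing identity, which your proposal does not use: in the trace term of Lemma \ref{lem:symmcrid}(a), only $b_{ij}^2$ is bounded by $\sigma_*^2$ while $b_{kl}^2$ is deliberately kept. The surviving $b_{kl}^2$ couples the inside index $l$ to the outside index $k$; one then bounds $\sum_l b_{kl}^2 (M_1^*M_3)_{ll} \le \bigl(\sum_l b_{kl}^2\bigr)\max_l (M_1^*M_3)_{ll}\le\sigma^2\,C(\pi_4)$. The $\max_l$ here is precisely why the argument is carried out with $C(\pi)$ rather than $\ntr H(\pi)$, and the $\sum_l b_{kl}^2\le\sigma^2$ produces the $\sigma^2$ factor dimension-independently. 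Your identification of the inside/outside factorization, the cardinalities $b-a+d-c-2$ and $2p-2-b+a-d+c$, and the equality case for $b_{ij}\equiv 1$ are all correct and match the paper, but without keeping $b_{kl}^2$ alive the dimension-dependent trace cannot be avoided and the lemma cannot be proved.
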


\begin{proof}
Define $M_0,\ldots,M_4$ (which depend on $\boldsymbol{i},\boldsymbol{j}$)
and $\pi_1,\pi_2,\pi_3$ as in the proof of Lemma \ref{lem:symmi}.
Then we can estimate using Lemma \ref{lem:symmcrid}(a) 
\begin{multline*}
	C(\pi) \le
	\max_r\bigg\{
	\sigma_*^4\big(H(\pi_1)+H(\pi_2)+H(\pi_3)\big)_{rr} + \\
	\sigma_*^2
	\sum_{(\boldsymbol{i},\boldsymbol{j})\sim\pi
	\backslash\{\{a,c\},\{b,d\}\}}
	\sum_{k,l} b_{kl}^2
	(M_0M_2^*)_{rk}
	(M_1^*M_3)_{ll}
	(M_4)_{kr}
	\bigg\},
\end{multline*}
with equality if $b_{ij}=1$ for all $i,j$. Note that we applied Lemma 
\ref{lem:symmcrid}(a) here exactly as in the proof of Lemma 
\ref{lem:symmi}, except that we only upper bounded $b_{ij}^2\le\sigma_*^2$
in the last term of Lemma
\ref{lem:symmcrid}(a) and avoided upper bounding $b_{kl}^2$ as well.

To proceed more efficiently in the present setting, we 
will now use the special structure of type II crossings.
By the type II assumption, we can decompose
$\pi\backslash\{\{a,c\},\{b,d\}\}=\pi_I\cup\pi_J$ where
$\pi_I\in\mathrm{P}_2(I)$, $\pi_J\in\mathrm{P}_2(J)$ with
$$
	I:=(a,b)\cup(c,d),\qquad\quad
	J:=[1,a)\cup (b,c)\cup (d,2p].
$$
Denote by $\boldsymbol{i}_I := (i_s)_{s\in I}$, with the analogous
notation for $\boldsymbol{j}$ and $J$. Then 
\begin{align*}
	&\sum_{(\boldsymbol{i},\boldsymbol{j})\sim\pi
	\backslash\{\{a,c\},\{b,d\}\}}
	\sum_{k,l} b_{kl}^2
	(M_0M_2^*)_{rk}
	(M_1^*M_3)_{ll}
	(M_4)_{kr}
\\
	&\qquad=
	\sum_{k} 
	\Bigg(
	\sum_{(\boldsymbol{i}_J,\boldsymbol{j}_J)\sim\pi_J}
	(M_0M_2^*)_{rk}
	(M_4)_{kr}
	\Bigg)
	\Bigg(
	\sum_l
	b_{kl}^2
	\sum_{(\boldsymbol{i}_I,\boldsymbol{j}_I)\sim\pi_I}
	(M_1^*M_3)_{ll}
	\Bigg)
\\
	&\qquad\le
	\sigma^2
	\Bigg(
	\sum_{(\boldsymbol{i}_J,\boldsymbol{j}_J)\sim\pi_J}
	(M_0M_2^*M_4)_{rr}
	\Bigg)
	\max_l
	\Bigg(
	\sum_{(\boldsymbol{i}_I,\boldsymbol{j}_I)\sim\pi_I}
	(M_1^*M_3)_{ll}
	\Bigg),
\end{align*}
with equality if $b_{ij}=1$ for all $i,j$ (for the equality case, we used 
the second part of Lemma \ref{lem:cpi}). From the above, we can readily 
read off the existence of pairings
$\pi_4\in \mathrm{P}_2([b-a+d-c-2])$ and
$\pi_5\in \mathrm{P}_2([2p-2 -b+a -d+c])$, whose definition depends only 
on $\pi,a,b,c,d$, such that
$$
	C(\pi) \le
	\max_r\big\{
	\sigma_*^4\big(H(\pi_1)+H(\pi_2)+H(\pi_3)\big)_{rr} +
	\sigma^2\sigma_*^2
	C(\pi_4) (H(\pi_5))_{rr}\big\}
$$
with equality if $b_{ij}=1$ for all $i,j$. The conclusion now follows by
precisely the same argument as at the end of the proof of Lemma 
\ref{lem:symmi}.
\end{proof}

We can now conclude the proof of Theorem \ref{thm:symmmom}.

\begin{proof}[Proof of Theorem \ref{thm:symmmom}]
We first estimate $\mathbf{E}[\ntr X^{2p}]$ as in Lemma 
\ref{lem:cpi}. We then repeatedly apply the following to each quantity 
$C(\pi)$ in the resulting expression:
\begin{enumerate}[$\bullet$]
\itemsep\medskipamount
\item If $\pi$ contains a crossing, we apply either Lemma \ref{lem:symmi}
or Lemma \ref{lem:symmii} to the smallest crossing in the lexicographic 
order, depending on whether that crossing is of type I or type II, 
respectively. In the former case, we choose the smallest 
pair $\{e,f\}$ in the lexicographic order that satisfies the assumption of 
Lemma \ref{lem:symmi}.
\item If $\pi$ is noncrossing, we apply Lemma \ref{lem:symmnc}.
\end{enumerate}
This algorithm gives rise to an inequality of the form
$$
	\mathbf{E}[\ntr X^{2p}] \le
	\sum_{k,l\ge 0:k+l\le p}
	\varkappa_p(k,l)
	\tilde\sigma^{2k}\sigma^{2l}\sigma_*^{2p-2k-2l},
$$
where the coefficients $\varkappa_p(k,l)\in\mathbb{Z}_+$ are independent 
of the matrix $X$.

Moreover, as all the inequalities used above become equalities when 
$b_{ij}=1$ for all $i,j$, we can apply the same algorithm to compute
$$
	\mathbf{E}[\ntr Y^{2p}] =
	\sum_{k,l\ge 0:k+l\le p}
	\varkappa_p(k,l)
	(d+1)^{k}d^{l}.
$$
The conclusion readily follows from the assumptions that
$\sigma_*=1$ and $\sigma^2\le d$, and as we can estimate
$\tilde\sigma^2 \le \sigma^2+1 \le d+1$ using $\sigma_*=1$.
\end{proof}

\subsection{Small deviations}

With the extremum principle in hand, we can now repeat the arguments of
section \ref{sec:hermsm} in the symmetric case. The main difference is 
that now $Y$ is a GOE matrix rather than a GUE matrix. However, a 
suitable moment estimate for GOE matrices was also obtained by Ledoux
\cite[Theorem 8]{Led09}.

\begin{lem}[Ledoux]
\label{lem:symmled}
Define $Y$ as in Theorem \ref{thm:symmmom}. Then for
$p\ge d^{2/3}$, we have
$$
	\mathbf{E}[\ntr Y^{2p}] \lesssim
	\frac{1}{d}
	(4d)^p
	\bigg(
	1+\frac{p^2}{d^2}
	\bigg)^{2p}.
$$
\end{lem}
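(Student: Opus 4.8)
The estimate is, in essence, \cite[Theorem 8]{Led09}, and the task is to reconcile the normalization used there with that of Model \ref{mod:symm} and to recast the resulting bound in the stated form. The first step is to observe that the matrix $Y$ of Theorem \ref{thm:symmmom} (off-diagonal variance $1$, diagonal variance $2$) is exactly a standard $d\times d$ GOE matrix, so that $\tilde Y := d^{-1/2}Y$ is the GOE in the normalization for which the spectral edge sits at $\pm 2$. Hence $\mathbf{E}[\ntr Y^{2p}] = d^{p}\,\mathbf{E}[\ntr \tilde Y^{2p}]$, and it suffices to bound $\mathbf{E}[\ntr \tilde Y^{2p}]$.

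The plan is then to feed in Ledoux's recursion-based moment estimate, which controls $\mathbf{E}[\ntr \tilde Y^{2p}]$ by the Catalan asymptotics $4^{p}$ multiplied by a polynomially small prefactor of order $p^{-3/2}$ and an explicit edge-correction factor of the shape $(1+c\,p^{2}/d^{2})^{\Theta(p)}$. Multiplying through by $d^{p}$ replaces $4^{p}$ by $(4d)^{p}$, as required. Two elementary reductions remain. First, in the stated regime $p\ge d^{2/3}$ one has $p^{3/2}\ge d$, so the $p^{-3/2}$ prefactor is at most $\lesssim 1/d$; this is exactly why the threshold $p\ge d^{2/3}$ appears, and for smaller $p$ one would instead retain the sharper $p^{-3/2}$ (compare Lemma \ref{lem:cplxled} in the Hermitian case). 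Second, Bernoulli's inequality $(1+x)^{2}\ge 1+2x$ lets one absorb the edge-correction factor, together with any lower-order-in-$d$ terms produced by the recursion, into the single factor $(1+p^{2}/d^{2})^{2p}$ of the statement.

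The content of the argument is entirely contained in \cite{Led09}; what remains is bookkeeping, and I expect the only place that requires any care to be the verification that the explicit constants in Ledoux's bound are compatible with the clean exponents in the statement (together with the harmless matching of Ledoux's diagonal-scaling convention to the $N(0,2)$ diagonal of $Y$, which affects only lower-order terms in $d$). Should one prefer a self-contained route, the bound can also be obtained directly by writing out the Ledoux recursion for the GOE moments and running an induction on $p$, in complete analogy with the way the GUE estimate of Lemma \ref{lem:cplxled} follows from the Harer--Zagier recursion; but since the estimate is already available in the literature, invoking it is the natural course.
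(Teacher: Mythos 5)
Your proposal is correct and takes the same route as the paper, which gives no proof of this lemma beyond the attribution to \cite[Theorem 8]{Led09}; the rescaling bookkeeping you describe (identifying $Y$ as a standard $d\times d$ GOE matrix, pulling out $d^p$, and using $p^{-3/2}\le 1/d$ for $p\ge d^{2/3}$) is exactly what is needed to read off the stated form. One small caveat: Ledoux's GOE estimate carries a genuine extra additive $1/d$ contribution relative to the GUE prefactor $p^{-3/2}$ (the real-vs-complex discrepancy visible in the $1/n$ term of Theorem \ref{thm:rwish}), so the threshold $p\ge d^{2/3}$ is not purely cosmetic — but since that term is itself $\lesssim 1/d$, your conclusion is unaffected.
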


This yields the following.

\begin{prop}
\label{prop:ledsymm}
For $X$ as in Model \ref{mod:symm}, we have
$$
	\mathbf{P}\big[
	\|X\| > 2\sqrt{\sigma^2+\sigma_*^2}\,(1+\varepsilon)
	\big]
	\le
	\frac{n\sigma_*^2}{C\sigma^2} e^{-\frac{1}{4}\frac{\sigma^2}{\sigma_*^2}
	\varepsilon^{3/2}}
$$
for all $0\le\varepsilon\le 1$, where $C$ is a universal constant.
\end{prop}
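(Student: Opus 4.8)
The plan is to follow the proof of Proposition~\ref{prop:pchermsm} essentially verbatim, with Theorem~\ref{thm:symmmom} in place of Theorem~\ref{thm:hermmom} and the GOE moment bound of Lemma~\ref{lem:symmled} in place of the GUE bound of Lemma~\ref{lem:cplxled}. As in that proof, it suffices to treat the case $\sigma_*=1$: the general statement follows by applying the result to $X/\sigma_*$, which is again of the form in Model~\ref{mod:symm} with parameters $\sigma/\sigma_*$ and $1$, and then rescaling the radius and parameters back. Note that $\sigma\ge\sigma_*$ always (since $\sigma^2\ge b_{ij}^2$ for all $i,j$), so after normalization $\sigma^2\ge 1$; set $d:=\lceil\sigma^2\rceil$, so that $\sigma^2\le d\le 2\sigma^2$ and Theorem~\ref{thm:symmmom} applies with $Y$ the $d\times d$ GOE matrix of Lemma~\ref{lem:symmled}.

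For any $p\in\mathbb{N}$, Markov's inequality together with $\|X\|^{2p}\le n\,\ntr X^{2p}$ and Theorem~\ref{thm:symmmom} gives
$$
	\mathbf{P}\big[\|X\|>2\sqrt d\,(1+\varepsilon)\big]\le
	\frac{n}{(4d)^p(1+\varepsilon)^{2p}}\,\mathbf{E}[\ntr Y^{2p}].
$$
If $p\ge d^{2/3}$, Lemma~\ref{lem:symmled} bounds $\mathbf{E}[\ntr Y^{2p}]\lesssim d^{-1}(4d)^p(1+p^2/d^2)^{2p}$, and using $(1+\varepsilon)^{-2p}\le 4^{-\varepsilon p}$ for $0\le\varepsilon\le 1$ together with $(1+p^2/d^2)^{2p}\le e^{2p^3/d^2}$ I obtain
$$
	\mathbf{P}\big[\|X\|>2\sqrt d\,(1+\varepsilon)\big]\lesssim
	\frac nd\,\exp\!\Big(\tfrac{2p^3}{d^2}-(\log 4)\varepsilon p\Big).
$$

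It remains to choose $p$ near the minimizer of the exponent. I would take $p=\lceil\tfrac12 d\sqrt\varepsilon\rceil$ and distinguish two ranges according to a sufficiently large universal constant $K$ (one checks $K=20$ works). If $d\varepsilon^{3/2}\ge K$, then $\tfrac12 d\sqrt\varepsilon\ge d^{2/3}$ (this is equivalent to $d\varepsilon^{3/2}\ge 8$), so $p\ge d^{2/3}$ and the above display applies; moreover the rounding is negligible compared to $\tfrac12 d\sqrt\varepsilon\ge\tfrac12 d\varepsilon^{3/2}\ge K/2$, and a direct computation gives $\tfrac{2p^3}{d^2}-(\log 4)\varepsilon p\le-\tfrac14 d\varepsilon^{3/2}$, whence $\mathbf{P}[\,\cdot\,]\le \tfrac n{Cd}e^{-\frac14 d\varepsilon^{3/2}}$ once $C$ is small enough to absorb the implicit constant of Lemma~\ref{lem:symmled}. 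If $d\varepsilon^{3/2}<K$, there is nothing to prove: since $n\ge\sigma^2\ge d/2$, the right-hand side $\tfrac n{Cd}e^{-\frac14 d\varepsilon^{3/2}}$ is at least $\tfrac1{2C}e^{-K/4}$, which is $\ge 1$ once $C\le\tfrac12 e^{-K/4}$, so the bound holds trivially. Combining the two ranges yields
$$
	\mathbf{P}\big[\|X\|>2\sqrt d\,(1+\varepsilon)\big]\le
	\frac n{Cd}\,e^{-\frac14 d\varepsilon^{3/2}}\qquad(0\le\varepsilon\le 1)
$$
for a universal $C$; since $\sigma^2\le d\le\sigma^2+1$, replacing $d$ by $\sigma^2$ in both factors only weakens the bound, while enlarging the radius from $2\sqrt d$ to $2\sqrt{\sigma^2+1}$ only shrinks the event, so the assertion for $\sigma_*=1$ follows, and the general case by undoing the rescaling.

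The one genuine departure from the Hermitian case—and the main point requiring care—is the restriction $p\ge d^{2/3}$ in Lemma~\ref{lem:symmled}, which has no analogue in Lemma~\ref{lem:cplxled}. It is precisely this restriction that forces the case split above: the sharp Tracy--Widom estimate is available only when $d\varepsilon^{3/2}$ exceeds a universal constant, and in the complementary bounded range one must fall back on the trivial bound, at the harmless cost of a small universal $C$. A secondary bookkeeping point is that the factor $(1+p^2/d^2)^{2p}$ is weaker than the $(1+p(p-1)/4d^2)^p$ available for GUE, which is what degrades the exponent constant from the $1$ of Proposition~\ref{prop:pchermsm} to $\tfrac14$ here; one should verify that $\lambda=\tfrac12$—or indeed any $\lambda$ below $\sqrt{\log 2}$ and near the maximizer of $\lambda\log 4-2\lambda^3$—produces a coefficient at least $\tfrac14$ once the rounding in $p=\lceil\lambda d\sqrt\varepsilon\rceil$ is taken into account.
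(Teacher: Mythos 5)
Your proposal is correct and follows essentially the same route as the paper's proof: normalize to $\sigma_*=1$, set $d=\lceil\sigma^2\rceil$, combine Markov's inequality with Theorem~\ref{thm:symmmom} and Ledoux's GOE moment bound (Lemma~\ref{lem:symmled}), choose $p\approx\frac12 d\sqrt{\varepsilon}$, and split into the regime where $d\varepsilon^{3/2}$ exceeds a universal constant (so that $p\ge d^{2/3}$ and the exponent computation goes through) versus the complementary regime where the trivial bound suffices because $n/d\gtrsim1$. The only differences from the paper are cosmetic (ceiling versus floor in the choice of $p$, the bound $4^{-\varepsilon p}$ versus $e^{-\varepsilon p}$, and the precise threshold for the case split), and your numerical verifications check out.
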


\begin{proof}
Suppose first that $\sigma_*=1$, and let $d=\lceil\sigma^2\rceil$.
Using Markov's inequality, $\|X\|^{2p}\le n\ntr X^{2p}$, 
Theorem \ref{thm:symmmom}, and Lemma \ref{lem:symmled}, we obtain
$$
	\mathbf{P}\big[\|X\|>
	2\sqrt{d}\,(1+\varepsilon)\big]
	\lesssim
	\frac{n}{d} (1+\varepsilon)^{-2p}
	\bigg(1+\frac{p^2}{d^2}\bigg)^{2p}
$$
for all $p\ge d^{2/3}$. In particular, $(1+\varepsilon)^{-2p}\le 
e^{-\varepsilon p}$ for $0<\varepsilon\le 1$ and $1+x\le e^x$ yield
$$
	\mathbf{P}\big[\|X\|>
	2\sqrt{d}\,(1+\varepsilon)\big]
	\lesssim
	\frac{n}{d}\, e^{-\varepsilon p + 2p^3/d^2}
$$
for all $p\ge d^{2/3}$.

We now consider two cases. If $\varepsilon\ge 16d^{-2/3}$, choose
$p=\lfloor \frac{1}{2}d\sqrt{\varepsilon}\rfloor\ge d^{2/3}$ to obtain
$$
	\mathbf{P}\big[\|X\|>
	2\sqrt{d}\,(1+\varepsilon)\big]
	\lesssim
	\frac{n}{d}\, e^{-\frac{1}{4}d\varepsilon^{3/2}
	+\varepsilon}
	\le
	\frac{en}{d}\, e^{-\frac{1}{4}d\varepsilon^{3/2}}.
$$
On the other hand, if $\varepsilon<16d^{-2/3}$ we can estimate
$$
	\mathbf{P}\big[\|X\|>
	2\sqrt{d}\,(1+\varepsilon)\big]
	\le
	1
	\lesssim
	\frac{n}{d}\,
	e^{-\frac{1}{4}d\varepsilon^{3/2}}
$$
as $\sigma^2\le n\sigma_*^2=n$ implies $\frac{n}{d}\ge 1$.

Combining the above bounds and using $\sigma^2\le d\le \sigma^2+1$ yields
$$
	\mathbf{P}\big[\|X\|>
	2\sqrt{\sigma^2+1}\,(1+\varepsilon)\big]
	\lesssim
	\frac{n}{\sigma^2}\, e^{-\frac{1}{4}\sigma^2\varepsilon^{3/2}}
$$
for $0\le\varepsilon\le 1$. This concludes the proof when $\sigma_*=1$. 
For general $\sigma_*$, it suffices to apply the above bound to the
random matrix $\frac{X}{\sigma_*}$.
\end{proof}

The rest of the proof of Theorem \ref{thm:symmsm} is now identical to that 
of Theorem \ref{thm:hermsm}.

\subsection{Large deviations}

To obtain a large deviations estimate, we proceed as in section 
\ref{sec:hermlg}. To this end, we require an analogue of Lemma 
\ref{lem:ht} for GOE matrices.

\begin{lem}
\label{lem:goeld}
Let $Y$ be as in Theorem \ref{thm:symmmom}.
For all $t\ge 0$
$$
	\mathbf{E}[\ntr e^{t Y}] \le
	e^{2\sqrt{d}\,t + t^2}.
$$
\end{lem}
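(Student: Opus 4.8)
This estimate is the orthogonal-ensemble counterpart of the Haagerup--Thorbj{\o}rnsen bound of Lemma~\ref{lem:ht}; the larger constant ($t^2$ in place of $t^2/2$) reflects the fact that the spectral edge of a GOE matrix fluctuates on a larger scale than that of a GUE matrix. The plan is to reproduce the integration-by-parts argument that underlies Lemma~\ref{lem:ht} in the orthogonal setting. Since all odd moments of $Y$ vanish, $g(t):=\mathbf{E}[\tr e^{tY}]$ is an entire even function with $g(0)=d$, and it suffices to show $g(t)\le d\,e^{2\sqrt d\,t+t^2}$ for all $t\ge 0$.

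The first step is the GOE loop equation. Differentiating under the expectation and integrating by parts with respect to the Gaussian covariance $\mathbf{E}[Y_{ab}Y_{cd}]=\delta_{ac}\delta_{bd}+\delta_{ad}\delta_{bc}$ of the GOE matrix, using the Duhamel identity $\partial_{Y_{ab}}(e^{tY})_{cd}=\int_0^t(e^{sY})_{ca}(e^{(t-s)Y})_{bd}\,ds$, one obtains after a short computation the exact identity
\begin{equation*}
	g'(t) \;=\; t\,g(t) \;+\; \int_0^t \mathbf{E}\big[\tr e^{sY}\;\tr e^{(t-s)Y}\big]\,ds .
\end{equation*}
The term $t\,g(t)$ comes from the contraction $\partial_{Y_{ab}}(e^{tY})_{ba}$, that is, precisely from the symmetry $Y=Y^*$; for a GUE matrix the covariance carries only the other contraction, so only the integral term is present, and this is what leads to Lemma~\ref{lem:ht}. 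Decomposing $\mathbf{E}[\tr e^{sY}\,\tr e^{(t-s)Y}]=g(s)g(t-s)+\mathrm{Cov}(\tr e^{sY},\tr e^{(t-s)Y})$ then turns the identity into a closed integral inequality for $g$, once the trace covariance is controlled.

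From here I would follow \cite{HT03}: bound the trace covariance by a further integration by parts (or by the Gaussian Poincar\'e inequality), feed the resulting inequality into a Gronwall/comparison argument, and compare with the semicircle exponential moment $\frac{1}{\sqrt d\,t}I_1(2\sqrt d\,t)\le e^{2\sqrt d\,t}$, which solves the mean-field convolution equation obtained by dropping the covariance and the term $t\,g(t)$. The only genuinely new ingredient is the bookkeeping of the orthogonal symmetry: one must track the extra term $t\,g(t)$ above, together with the analogous symmetry terms generated when the covariance estimate is iterated, and verify that together they contribute exactly an additional factor $e^{t^2/2}$ on top of the $e^{t^2/2}$ already present in the unitary case, producing $e^{t^2}$ in total. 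Alternatively, one may simply invoke a version of the Haagerup--Thorbj{\o}rnsen estimate for the orthogonal ensemble, or obtain the bound by summing the GOE moment estimates that underlie \cite{Led09}.

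The main obstacle is quantitative rather than conceptual: one has to reproduce the sharp constant in the Haagerup--Thorbj{\o}rnsen argument uniformly in the dimension $d$, and check that the orthogonal symmetry enlarges the correction term in the exponent by exactly a factor two. The remainder of the proof is identical to the unitary computation.
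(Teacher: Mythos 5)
Your route is genuinely different from the one the paper takes, and it is far from complete. The paper's own proof is short and entirely avoids loop equations: it observes the deterministic bound $\ntr e^{tY}\le e^{t\lambda_1(Y)}$, invokes the Davidson--Szarek inequality to get $\mathbf{E}[\lambda_1(Y)]\le 2\sqrt{d}$, and then uses Gaussian concentration (log-Sobolev) for the Lipschitz function $\lambda_1$. In the GOE normalization of Theorem~\ref{thm:symmmom} the map from the underlying standard Gaussian vector $g$ to $Y$ satisfies $\|Y(g)-Y(g')\|_F=\sqrt{2}\,\|g-g'\|_2$, so $\lambda_1(Y)$ is $\sqrt{2}$-Lipschitz in $g$, whence $\mathbf{E}[e^{t(\lambda_1-\mathbf{E}\lambda_1)}]\le e^{t^2}$. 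Multiplying the two bounds gives the lemma in a few lines, with no moment recursion or differential inequality.

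Your sketch, in contrast, is not yet a proof. Your GOE loop equation $g'(t)=t\,g(t)+\int_0^t\mathbf{E}[\tr e^{sY}\,\tr e^{(t-s)Y}]\,ds$ is indeed correct (the extra $t\,g(t)$ term is the contraction $\delta_{ad}\delta_{bc}$ coming from the symmetric part of the GOE covariance), but everything after that is deferred. You assert that the orthogonal symmetry ``enlarges the correction term in the exponent by exactly a factor two,'' but you do not verify this; the Haagerup--Thorbj{\o}rnsen derivation already requires careful control of the trace covariance to obtain the clean $e^{t^2/2}$ factor in the unitary case, and adding the $t\,g(t)$ term changes the structure of the Gronwall comparison. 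It is not evident from the loop equation alone that the two new effects (the $t\,g(t)$ contraction and the covariance, itself enlarged in the orthogonal ensemble) combine to yield exactly $e^{t^2}$ with no dimension-dependent loss; proving this uniformly in $d$ is precisely the nontrivial content that your proposal postpones. The fallback suggestions at the end (``invoke a version of the Haagerup--Thorbj{\o}rnsen estimate for the orthogonal ensemble,'' or ``sum the GOE moment estimates underlying \cite{Led09}'') are references to results you have not produced. If you want to pursue the loop-equation route you would need to actually carry out the estimates; but given the lemma's role in the paper, the much shorter concentration argument above is the more economical path and is what the authors use.
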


\begin{proof}
We clearly have $\ntr e^{tY}\le e^{t\lambda_1(Y)}$, where $\lambda_1(Y)$ 
denotes the largest eigenvalue of $Y$. It is shown in \cite[Theorem 
2.11]{DS01} that $\mathbf{E}[\lambda_1(Y)]\le 2\sqrt{d}$.
On the other hand, the Gaussian log-Sobolev inequality implies
by \cite[eq.\ (5.8)]{Led01} and \cite[\S 2.2]{DS01} that
$\lambda_1(Y)$ is a $\sqrt{2}$-subgaussian random variable, that is, that
$$
	\mathbf{E}\big[e^{t\{\lambda_1(Y)-\mathbf{E}[\lambda_1(Y)]\}}\big]
	\le e^{t^2}
$$
for all $t$. Combining these facts yields the conclusion.
\end{proof}

The rest of the proof of Theorem \ref{thm:symmlg} is now identical to that 
of Theorem \ref{thm:hermlg}.

\section{The independent case}
\label{sec:rect}

The aim of this section is to prove the results for the independent entry 
Model~\ref{mod:rectintro} that were formulated in the introduction. The 
main result of this section is the following, where we 
recall that $\sigma_1,\sigma_2,\sigma_*$ were defined in 
\eqref{eq:sigmarect}.

\begin{thm}[Extremum principle]
\label{thm:rectmom}
Define $X$ as in Model \ref{mod:rectintro}. Assume that
$\sigma_*^2=1$, and that $\sigma_1^2\le d_1\in\mathbb{N}$ and
$\sigma_2^2\le d_2\in\mathbb{N}$. Then
$$
	\mathbf{E}[\ntr (XX^*)^{p}] \le \mathbf{E}[\ntr (YY^*)^{p}]
$$
for all $p\in\mathbb{N}$, where $Y$ is the $d_1\times d_2$  matrix with
independent entries $Y_{ij}\sim N(0,1)$.
\end{thm}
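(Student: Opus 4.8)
The plan is to mirror the structure of the proofs of Theorems \ref{thm:hermmom} and \ref{thm:symmmom}, adapting the crossing-reduction machinery to the bipartite (singular value) setting. First I would reduce to the Gaussian case exactly as in Lemma \ref{lem:gcomp}: since $\ntr (XX^*)^p$ is a polynomial in the $X_{ij}$ with nonnegative coefficients and its expectation factors into products of even moments $\mathbf{E}[X_{ij}^{2k}]$ by independence and symmetry, the hypothesis $\mathbf{E}[\xi_{ij}^{2k}]\le\mathbf{E}[g^{2k}]$ lets us assume $X_{ij}=b_{ij}g_{ij}$ with $g_{ij}$ i.i.d.\ $N(0,1)$. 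Then I would write $X=\sum_{i\le n,j\le m} g_{ij} b_{ij} e_i e_j^*$ and expand $\mathbf{E}[\ntr(XX^*)^p]$ by the Wick formula: this gives a sum over pairings $\pi\in\mathrm{P}_2([2p])$ (pairing the $2p$ matrix factors $X, X^*, X, X^*,\dots$) of traces $\ntr$ of products of the rank-one building blocks $U_{ij}=b_{ij}e_ie_j^*$ (here the factors alternate between a ``left'' index in $[n]$ and a ``right'' index in $[m]$, so a pair $\{k,l\}\in\pi$ forces $k,l$ to have opposite parity — the bipartite analogue of the $\varepsilon_k\ne\varepsilon_l$ condition in Lemma \ref{lem:cplxwick}).

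The core of the argument is the same two-step reduction: handle noncrossing pairings directly, and reduce crossing pairings one crossing at a time. For the noncrossing case I would use that every noncrossing $\pi$ has a consecutive pair $\{k,k+1\}$; summing out that pair collapses $U_{ij}U_{ij}^*$ or $U_{ij}^*U_{ij}$ to $\sum_i(\sum_j b_{ij}^2)e_ie_i^*$ (norm $\le\sigma_1^2$) or $\sum_j(\sum_i b_{ij}^2)e_je_j^*$ (norm $\le\sigma_2^2$), depending on the parity, and since all $U_{ij}$ have nonnegative entries the trace of the remaining nonnegative product only decreases; iterating, a noncrossing pairing contributes at most $\sigma_1^{2a}\sigma_2^{2(p-a)}$ for some $a$ determined combinatorially by $\pi$, with equality when all $b_{ij}=1$. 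For crossings I would prove the rectangular analogue of Lemma \ref{lem:cplxcr}: for a crossing $\{1,l\},\{k,m\}$, substituting $U_{ij}=b_{ij}e_ie_j^*$ into the relevant four-factor pattern and summing over the shared indices produces $\sigma_*^4$ times the trace of a shorter product (with the remaining $U$'s permuted into a new pairing $\pi'\in\mathrm{P}_2([2p-4])$), again with equality in the homogeneous case — crucially the bipartite structure means the shared summation index $\sum_{i,j}b_{ij}^2b_{kl}^2\le\sigma_*^4$ with no leftover trace, so this is genuinely the ``easy'' Hermitian-type situation rather than the harder symmetric one. Iterating the crossing inequality, using cyclic invariance of the trace to normalize the smallest crossing, drives every $\pi$ down to a noncrossing pairing in a deterministic number $\ell(\pi)$ of steps, yielding
$$
	\mathbf{E}[\ntr(XX^*)^p] \le \sum_{\pi\in\mathrm{P}_2([2p])} \sigma_1^{2a(\pi)}\sigma_2^{2(p-a(\pi)-2\ell(\pi))}\sigma_*^{4\ell(\pi)},
$$
where $a(\pi)$ counts the ``left-type'' reductions among the noncrossing collapses.

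Finally, since every inequality above is an equality when $b_{ij}\equiv 1$, running the identical algorithm on $Y$ gives $\mathbf{E}[\ntr(YY^*)^p]=\sum_\pi d_1^{a(\pi)}d_2^{p-a(\pi)-2\ell(\pi)}$ with the \emph{same} combinatorial exponents; the hypotheses $\sigma_*^2=1$, $\sigma_1^2\le d_1$, $\sigma_2^2\le d_2$ then give the claim term by term. The main obstacle I anticipate is bookkeeping the bipartite index structure through the crossing reduction: unlike the square Hermitian case, the ``left'' ($[n]$-valued) and ``right'' ($[m]$-valued) indices alternate strictly, so I must check that (i) the parity constraint from the Wick formula is compatible with the reordering of factors after removing a crossing, and (ii) the noncrossing-collapse step correctly distinguishes $UU^*$ from $U^*U$ so that the exponent $a(\pi)$ of $\sigma_1$ versus $\sigma_2$ is well-defined and matches between $X$ and $Y$. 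A clean way to manage this is to track not just the pairing $\pi$ but also which of the $2p$ slots is a ``$U$'' slot and which a ``$U^*$'' slot, and verify this data is preserved (up to relabeling) by both reduction moves — essentially the rectangular genus expansion — after which the comparison is immediate.
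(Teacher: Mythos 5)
Your proposal has a genuine gap, and it is precisely the point you dismissed as ``easy.'' You claim that in the rectangular case every Wick pair $\{k,l\}\in\pi$ must have $k,l$ of opposite parity, calling this ``the bipartite analogue of the $\varepsilon_k\ne\varepsilon_l$ condition in Lemma \ref{lem:cplxwick}.'' That constraint in the Hermitian section comes from the \emph{complex} Gaussian structure ($\mathbf{E}[g^2]=0$, $\mathbf{E}[|g|^2]=1$), not from the bipartite (left/right index) structure. In Model \ref{mod:rectintro} the $\xi_{ij}$ are \emph{real}, so after the reduction to Gaussian the Wick formula for $\mathbf{E}[\ntr(XX^*)^p]$ sums over \emph{all} pairings of $[2p]$ with only the constraint $i_k=i_l$, $j_k=j_l$; in particular a pair can match two $X$-slots (both odd) or two $X^*$-slots (both even). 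Concretely, for $p=2$ the crossing pairing $\{\{1,3\},\{2,4\}\}$ pairs odd with odd and even with even and contributes $\frac{1}{n}\sum_{i,j}b_{ij}^4 \neq 0$, so these terms cannot be dropped. The paper states this explicitly just before its crossing-type definition: a noncrossing pairing can only pair even with odd, but a pairing with crossings can pair same-parity indices.

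This is exactly what destroys your ``Hermitian-type'' crossing reduction. When the crossing pairs same-parity slots, the analogue of your four-factor computation produces a leftover \emph{unnormalized} trace $\tr[M_1^*M_3]$ factoring out (see the first display of Lemma \ref{lem:rectcrid}(a) with $b_{ij}\equiv 1$), just as in the real symmetric case; the resulting expression is not a single $\sigma_*^4 D(\pi')$ but needs additional work to avoid a spurious dimension factor. The paper handles this with the $D(\pi)=\max_r(E(\pi))_{rr}$ quantity from Lemma \ref{lem:dpi} and a three-way classification of crossings (types 1, 2, 3), where type 1 behaves as you expect (opposite-parity pair in the crossing, no leftover trace, Lemma \ref{lem:rectcrid}(c)), type 2 eliminates the leftover trace by absorbing an extra pair that straddles the crossing (Lemma \ref{lem:rectcrid}(b), analogue of Lemma \ref{lem:symmi}), and type 3 factors $D(\pi)$ as a product $D(\pi_1)D(\pi_2)$ (analogue of Lemma \ref{lem:symmii}). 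Your noncrossing and type-1 analyses are fine, and the ``same algorithm gives equality for $Y$'' endgame is exactly right; but without the type 2/3 machinery the argument does not close. In short: the rectangular real case is a sibling of section \ref{sec:real}, not of section \ref{sec:cplx}, contrary to your claim.
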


Theorem \ref{thm:rectext} and Corollary \ref{cor:rexn} follow immediately 
from Theorem \ref{thm:rectmom}. To deduce Corollary \ref{cor:rexu}, note 
that the block-diagonal matrix $\tilde Y$ as illustrated in 
Figure \ref{fig:blockd} has $\frac{n}{d_1}$ blocks, each of which is an 
independent copy of $Y$. Therefore 
$$
	\mathbf{E}[\tr (XX^*)^p] \le 
	\frac{n}{d_1}\,\mathbf{E}[\tr (YY^*)^p]
	=\mathbf{E}[\tr (\tilde Y\tilde Y^*)^p]
$$
is merely another reformulation of Theorem \ref{thm:rectmom}.

Theorem \ref{thm:rectmom} is proved in section \ref{sec:pfrectext} below, 
while Theorems \ref{thm:smrect} and \ref{thm:lgrect} will be proved in 
sections \ref{sec:pfrectsm} and \ref{sec:pfrectlg}, respectively.

\subsection{Extremum principle}
\label{sec:pfrectext}

Let $(g_{ij})$ be independent $N(0,1)$ random variables. By repeating the 
proof of Lemma \ref{lem:gcomp} verbatim, we may assume without loss 
of generality in the remainder of this section that $X$ is defined as in
Model \ref{mod:rectintro} with $\xi_{ij}=g_{ij}$. Defining
$E_{ij} := b_{ij}e_ie_j^*$ for $i\le n$, $j\le m$, we can then write
$$
	X = \sum_{i,j} g_{ij} E_{ij}.
$$
The moments of $X$ are computed as follows.

\begin{lem}[Wick formula]
\label{lem:rectwick}
For any $p\in\mathbb{N}$, we have
$$
	\mathbf{E}[\ntr (XX^*)^{p}] = 
	\sum_{\pi\in\mathrm{P}_2([2p])}
	\ntr E(\pi)
$$
with
$$
	E(\pi) := \sum_{(\boldsymbol{i},\boldsymbol{j})\sim\pi}
	E_{i_1j_1} E_{i_2j_2}^* \cdots
	E_{i_{2p-1}j_{2p-1}} E_{i_{2p}j_{2p}}^*.
$$
Here $\boldsymbol{i}\in[n]^{2p},\boldsymbol{j}\in[m]^{2p}$, and
$(\boldsymbol{i},\boldsymbol{j})\sim\pi$ denotes
$i_k=i_l,j_k=j_l$ for $\{k,l\}\in\pi$.
\end{lem}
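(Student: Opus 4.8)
The plan is to follow the same route as the proof of Lemma~\ref{lem:cplxwick}. First I would expand $(XX^*)^p = XX^*XX^*\cdots XX^*$ as an alternating product of $2p$ factors and substitute the representation $X=\sum_{i,j}g_{ij}E_{ij}$ (so that $X^*=\sum_{i,j}g_{ij}E_{ij}^*$, using that the $g_{ij}$ are real). By multilinearity of the trace this yields
\begin{equation*}
	\mathbf{E}[\ntr(XX^*)^p]
	=\sum_{\boldsymbol{i}\in[n]^{2p},\ \boldsymbol{j}\in[m]^{2p}}
	\mathbf{E}[g_{i_1j_1}\cdots g_{i_{2p}j_{2p}}]\,
	\ntr E_{i_1j_1}E_{i_2j_2}^*\cdots E_{i_{2p-1}j_{2p-1}}E_{i_{2p}j_{2p}}^*,
\end{equation*}
where all sums are finite, so the interchange with the expectation is trivial.

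The key step is then to evaluate the scalar Gaussian moment by the classical Wick formula \cite[Theorem 22.3 and Remark 22.5]{NS06}:
\begin{equation*}
	\mathbf{E}[g_{i_1j_1}\cdots g_{i_{2p}j_{2p}}]
	=\sum_{\pi\in\mathrm{P}_2([2p])}\ \prod_{\{k,l\}\in\pi}
	\mathbf{E}[g_{i_kj_k}g_{i_lj_l}]
	=\sum_{\pi\in\mathrm{P}_2([2p])}\ \prod_{\{k,l\}\in\pi}
	1_{i_k=i_l,\,j_k=j_l},
\end{equation*}
where the last equality uses that the entries are i.i.d.\ $N(0,1)$, so $\mathbf{E}[g_{ij}g_{kl}]=1_{i=k,\,j=l}$. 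Substituting this back, interchanging the two finite sums (over $\pi$ and over $(\boldsymbol{i},\boldsymbol{j})$), and noting that for fixed $\pi$ the index tuples with $\prod_{\{k,l\}\in\pi}1_{i_k=i_l,j_k=j_l}=1$ are precisely those satisfying $(\boldsymbol{i},\boldsymbol{j})\sim\pi$, one obtains $\mathbf{E}[\ntr(XX^*)^p]=\sum_{\pi}\ntr E(\pi)$, as claimed.

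There is essentially no obstacle here: the argument is a verbatim transcription of the proof of Lemma~\ref{lem:cplxwick}, and is in fact marginally simpler, since the alternating structure of $XX^*XX^*\cdots$ fixes which factors carry a $*$ and which do not, so there is no analogue of the labels $\boldsymbol{\varepsilon}\in\{1,*\}^{2p}$ that appear in the Hermitian case. The only point worth noting is that $X$ and $X^*$ are built from the same real Gaussian family $(g_{ij})$, so a pair $\{k,l\}\in\pi$ may link two positions carrying the identical index $(i,j)$; this is automatically accommodated by the Wick formula and causes no difficulty.
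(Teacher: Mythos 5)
Your proof is correct and is exactly the argument the paper has in mind when it states that the proof is identical to that of Lemma~\ref{lem:cplxwick}: expand the alternating product, substitute $X=\sum_{i,j}g_{ij}E_{ij}$, and apply the real Wick formula, with the $\boldsymbol{\varepsilon}$ labels now redundant because the alternation $X,X^*,X,X^*,\ldots$ is fixed by position. Nothing to add.
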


\begin{proof}
The proof is identical to that of Lemma \ref{lem:cplxwick}.
\end{proof}

The main steps of the proof in this setting are similar to those in the
real symmetric case, which we follow with the appropriate 
modifications. We begin by upper bounding the trace moments by the maximal 
diagonal entry.

\begin{lem}
\label{lem:dpi}
For any $k\in\mathbb{N}$ and $\pi\in\mathrm{P}_2([2k])$, define
$$
	D(\pi) := \max_r (E(\pi))_{rr}.
$$
Then 
$$
	\mathbf{E}[\ntr (XX^*)^{p}] \le
	\sum_{\pi\in\mathrm{P}_2([2p])} D(\pi).
$$
Moreover, when $b_{ij}=1$ for all $i,j$, the matrix $E(\pi)$ is a multiple 
of the identity matrix for every $\pi$ and thus the above inequality holds 
with equality.
\end{lem}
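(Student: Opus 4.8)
The plan is to follow the template of Lemma~\ref{lem:cpi}, with the orthogonal invariance of GOE replaced by the two-sided orthogonal invariance of a standard rectangular Gaussian matrix.

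For the inequality, I would start from the Wick formula of Lemma~\ref{lem:rectwick}, which gives $\mathbf{E}[\ntr(XX^*)^p] = \sum_{\pi}\ntr E(\pi)$. Since each $E_{ij} = b_{ij}e_ie_j^*$ has nonnegative entries, so does $E_{ij}^*$, and hence so does every product appearing in the definition of $E(\pi)$; thus $E(\pi)$ has nonnegative entries, in particular nonnegative diagonal entries. Therefore $\ntr E(\pi) = \frac{1}{n}\sum_r (E(\pi))_{rr}\le\max_r(E(\pi))_{rr} = D(\pi)$, and summing over $\pi$ gives the claimed bound.

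For the equality statement when $b_{ij}=1$ for all $i,j$, I would first rewrite $E(\pi)$ as an expectation, exactly as in the proof of Lemma~\ref{lem:cpi}. Introduce, for each pair $P\in\pi$, an independent $n\times m$ matrix $X^{(P)}$ with i.i.d.\ $N(0,1)$ entries, and set $Z_m := X^{(P)}$ when $m\in P$ is odd and $Z_m := (X^{(P)})^*$ when $m\in P$ is even. Expanding each $Z_m$ in the standard basis and applying the Wick formula to the scalar Gaussians reproduces precisely $E(\pi) = \mathbf{E}[Z_1 Z_2\cdots Z_{2p}]$. When $b_{ij}\equiv 1$ each $X^{(P)}$ is bi-orthogonally invariant, i.e.\ $U X^{(P)}V$ has the same law as $X^{(P)}$ for every $U\in O(n)$ and $V\in O(m)$, and the family $(X^{(P)})_{P\in\pi}$ remains jointly independent under this transformation. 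Applying $X^{(P)}\mapsto UX^{(P)}V$ simultaneously to all copies, the inner factors $VV^* = \mathbf{1}$ and $U^*U = \mathbf{1}$ cancel in the alternating product $Z_1 Z_2\cdots Z_{2p}$, leaving $U(Z_1\cdots Z_{2p})U^*$. Taking expectations yields $U E(\pi) U^* = E(\pi)$ for every $U\in O(n)$, so $E(\pi)$ commutes with every orthogonal matrix and is therefore a scalar multiple of $\mathbf{1}$. In that case $\ntr E(\pi) = \max_r(E(\pi))_{rr} = D(\pi)$ for every $\pi$, and the inequality above becomes an equality.

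The only slightly delicate point — and the closest thing here to an obstacle — is the bookkeeping in the last step: one must keep track of which of the $Z_m$ carry an adjoint (the even ones) in order to verify that the conjugating matrices on the inner side of the product telescope away in pairs, so that only a single conjugation by $U\in O(n)$ survives and the $O(m)$ action plays no role in the conclusion. This is where the rectangular setting differs from the symmetric case of Lemma~\ref{lem:cpi}, in which one orthogonal group acts on both sides at once; otherwise the argument is a routine transcription.
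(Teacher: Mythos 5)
Your proof is correct and follows essentially the same route as the paper, which likewise reduces the equality case to the rotation-invariance of the i.i.d.\ Gaussian matrix acting by conjugation on the alternating product $\mathbf{E}[Z_1Z_2\cdots Z_{2p}]$. The only cosmetic difference is that the paper invokes just the one-sided invariance $X\mapsto OX$ for $O\in O(n)$ (under which $XX^*\mapsto OXX^*O^*$ directly), which already makes the product transform as $U(\cdot)U^*$ and renders the $O(m)$ bookkeeping you flag at the end unnecessary.
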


\begin{proof}
The proof is identical to that of Lemma \ref{lem:cpi}, where we use
that when $b_{ij}=1$ the distributions of $X$ and $OX$ coincide for every
$O\in O(n)$.
\end{proof}

We first consider noncrossing pairings.

\begin{lem}
\label{lem:rectnc}
For any $p\in\mathbb{N}$ and noncrossing pairing 
$\pi\in\mathrm{NC}_2([2p])$, we have
$$
	D(\pi) \le \sigma_1^{2\ell(\pi)}\sigma_2^{2(p-\ell(\pi))}
$$
where
$$
	\ell(\pi):=|\{\{i,j\}\in\pi:i\wedge j\text{ is even},~
	i\vee j\text{ is odd}\}|.
$$
Moreover, equality holds when $b_{ij}=1$ for all $i,j$.
\end{lem}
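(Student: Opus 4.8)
The plan is to argue by induction on $p$, peeling off one consecutive pair at a time, exactly as in the proofs of Lemma~\ref{lem:cplxnc} and Lemma~\ref{lem:symmnc}. The base case $p=0$ is trivial, since $E(\varnothing)=\mathbf{1}$ and $\ell(\varnothing)=0$. For the inductive step I will use the elementary fact established inside the proof of Lemma~\ref{lem:cplxnc}: any $\pi\in\mathrm{NC}_2([2p])$ contains a consecutive pair $\{k,k+1\}\in\pi$ for some $k\in[2p-1]$. Crucially, this holds \emph{without} any cyclic rearrangement, which is essential here because $D(\pi)=\max_r(E(\pi))_{rr}$ is not a trace and cannot be cyclically permuted; this is precisely the reason the proof is carried out at the level of $D(\pi)$ rather than $\ntr E(\pi)$.

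The key computation is what happens when this consecutive pair is contracted, and here the parity of $k$ matters because the factors in $E(\pi)$ alternate between $E$ (odd positions) and $E^*$ (even positions). If $k$ is odd, positions $k,k+1$ carry $E_{i_kj_k}$ and $E_{i_kj_k}^*$, and summing over the now-free indices $i_k\in[n]$, $j_k\in[m]$ produces $\sum_{i,j}b_{ij}^2\,e_ie_i^* = \sum_i\big(\sum_j b_{ij}^2\big)e_ie_i^*$, a diagonal $n\times n$ matrix bounded by $\sigma_2^2\,\mathbf{1}$. If $k$ is even, positions $k,k+1$ carry $E_{i_kj_k}^*$ and $E_{i_kj_k}$, and the sum produces $\sum_j\big(\sum_i b_{ij}^2\big)e_je_j^*$, a diagonal $m\times m$ matrix bounded by $\sigma_1^2\,\mathbf{1}$. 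In either case, since every $E_{ij}^{(*)}$ and hence every partial product of such factors has nonnegative entries, replacing this diagonal matrix by $\sigma_2^2\mathbf{1}$ (resp.\ $\sigma_1^2\mathbf{1}$) only increases each diagonal entry; because the inserted identity has the correct size ($n\times n$ when $k$ is odd, $m\times m$ when $k$ is even) to fuse the two neighbouring factors, the remaining product is precisely $E(\pi')$ for the noncrossing pairing $\pi'\in\mathrm{NC}_2([2p-2])$ obtained by deleting $\{k,k+1\}$ and relabeling the surviving positions in increasing order (subtracting $2$ from each position $>k+1$, which preserves all parities). Thus $D(\pi)\le\sigma_2^2\,D(\pi')$ when $k$ is odd and $D(\pi)\le\sigma_1^2\,D(\pi')$ when $k$ is even.

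It then remains to match the exponents of $\sigma_1,\sigma_2$ with the combinatorics of $\ell$. The deleted pair $\{k,k+1\}$ has minimum $k$ and maximum $k+1$, so it contributes to $\ell(\pi)$ precisely when $k$ is even; since relabeling preserves parities, every other pair contributes to $\ell(\pi')$ if and only if it contributed to $\ell(\pi)$, so $\ell(\pi')=\ell(\pi)$ when $k$ is odd and $\ell(\pi')=\ell(\pi)-1$ when $k$ is even. Feeding the induction hypothesis $D(\pi')\le\sigma_1^{2\ell(\pi')}\sigma_2^{2(p-1-\ell(\pi'))}$ into the two bounds above yields $D(\pi)\le\sigma_1^{2\ell(\pi)}\sigma_2^{2(p-\ell(\pi))}$ in both cases. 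For the equality assertion, when $b_{ij}=1$ for all $i,j$ one has $\sum_j b_{ij}^2=\sigma_2^2$ and $\sum_i b_{ij}^2=\sigma_1^2$ for every $i,j$, so the contracted matrices equal $\sigma_2^2\mathbf{1}$ and $\sigma_1^2\mathbf{1}$ exactly and every inequality above becomes an equality; this is consistent with $E(\pi)$ being a multiple of the identity as noted in Lemma~\ref{lem:dpi}.

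The main obstacle is not a genuine difficulty but a bookkeeping point: one must verify that after deleting the consecutive pair one really recovers $E(\pi')$ for an honest noncrossing pairing of $[2p-2]$, which requires checking both that the inserted identity has the correct dimension to restore the alternating $E,E^*,E,\dots$ pattern and that the relabeling leaves all parities unchanged (it does, since positions are shifted by $2$). The boundary cases $k=1$ and $k=2p-1$, where one flanking partial product is empty, are handled by the same estimate with that factor taken to be the identity. The only place the somewhat unusual definition of $\ell(\pi)$ enters is in identifying that a consecutive pair contributes to $\ell$ exactly when its smaller index is even, which falls out immediately.
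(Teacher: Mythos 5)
Your proof is correct and follows essentially the same route as the paper's: locate a consecutive pair $\{k,k+1\}$ (no cyclic shift needed or available at the level of $D(\pi)$), contract it to a diagonal matrix bounded by $\sigma_1^2\mathbf{1}$ or $\sigma_2^2\mathbf{1}$ according to the parity of $k$, use nonnegativity of the entries, and iterate; your explicit bookkeeping of $\ell(\pi')$ versus $\ell(\pi)$ just spells out what the paper summarizes as ``removing a consecutive pair doesn't change the parity of the indices of the remaining pairs.'' The equality case is handled the same way in both arguments.
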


\begin{proof}
As in the proof of Lemma \ref{lem:cplxnc}, there exists
$\{k,k+1\}\in\pi$. If $k$ is even, then
$$
        D(\pi) =
        \max_r
        \sum_{(\boldsymbol{i},\boldsymbol{j})\sim\pi\backslash\{\{k,k+1\}\}}
        (
        E_{i_1j_1}E_{i_2j_2}^*\cdots
        E_{i_{k-1}j_{k-1}}
        \Sigma
        E_{i_{k+2}j_{k+2}}^*
	\cdots
        E_{i_{2k}j_{2k}}^*
        )_{rr},
$$
where
$$
        \Sigma =
        \sum_{i,j} E_{ij}^*E_{ij} =
	\sum_{j} \bigg(\sum_i b_{ij}^2\bigg) e_j e_j^*.
$$
As all $E_{ij}$ have nonnegative entries, we readily estimate
$$
        D(\pi) \le \sigma_1^2\,
        D(\pi\backslash\{\{k,k+1\}\})
$$
with equality if $b_{ij}=1$ for all $i,j$.

On the other hand, if $k$ is odd, we can repeat the same procedure with
$$
        \Sigma =
        \sum_{i,j} E_{ij}E_{ij}^* =
	\sum_{i} \bigg(\sum_j b_{ij}^2\bigg) e_i e_i^*,
$$
and we obtain
$$
        D(\pi) \le \sigma_2^2\,
        D(\pi\backslash\{\{k,k+1\}\})
$$
with equality if $b_{ij}=1$ for all $i,j$.

As $\pi\backslash\{\{k,k+1\}\}$ is again a noncrossing pairing, and as 
removing a consecutive pair from $\pi$ doesn't change the parity of the 
indices of the remaining pairs, we can iterate the above argument
to conclude the proof.
\end{proof}

Before we can analyze the effect of a crossing, we must first obtain the 
appropriate crossing identities. Let us emphasize that while a noncrossing 
pairing can only pair even indices with odd indices, a pairing that 
contains crossings can also pair even indices with each other and odd 
indices with each other. The appropriate crossing identity depends on the 
nature of the pairs in the crossing. Here, $M\leent N$ denotes entrywise
inequality of matrices, i.e., $M_{ij}\le N_{ij}$ for all $i,j$.

\begin{lem}[Crossing identities and inequalities]
\label{lem:rectcrid}
The following hold.
\begin{enumerate}[a.]
\item For any matrices $M_1,M_2,M_3$ of the appropriate dimensions
\begin{align*}
	&\sum_{i,j,k,l}
	E_{ij} M_1 E_{kl} M_2 
	E_{ij} M_3 E_{kl} =  
	\sum_{i,j,k,l} b_{ij}^2 b_{kl}^2 \,
	(M_1)_{jk} (M_2)_{li} (M_3)_{jk} \, e_i e_l^*,
\\
	&\sum_{i,j,k,l}
	E_{ij} M_1 E_{kl}^* M_2 
	E_{ij} M_3 E_{kl}^* =  
	\sum_{i,j,k,l} b_{ij}^2 b_{kl}^2 \,
	(M_1)_{jl} (M_2)_{ki} (M_3)_{jl} \, e_i e_k^*.
\end{align*}
\item For any $m\times n$ matrix $M$
$$
	\sum_{i,j} \tr[M E_{ij}] E_{ij} =
	\sum_{i,j} b_{ij}^2\, M_{ji}\, e_i e_j^*.
$$
\item Let $\varepsilon_1,\ldots,\varepsilon_4\in\{1,*\}$. Then
for any nonnegative matrices
$M_1,M_2,M_3\geent 0$ of the appropriate dimensions, we have
$$
	\sum_{i,j,k,l}
	E_{ij}^{\varepsilon_1} M_1 E_{kl}^{\varepsilon_2} M_2 
	E_{ij}^{\varepsilon_3} M_3 E_{kl}^{\varepsilon_4} 
	\leent
	\sigma_*^4
	\begin{cases}
	M_2^*M_3^*M_1
	& \text{ if }\varepsilon_1=\varepsilon_3,\varepsilon_2\ne\varepsilon_4,
	\\
	M_3M_1^*M_2^*
	& \text{ if }\varepsilon_1\ne\varepsilon_3,\varepsilon_2=\varepsilon_4,
	\\
	M_3M_2M_1
	& \text{ if }\varepsilon_1\ne\varepsilon_3,\varepsilon_2\ne\varepsilon_4,
	\end{cases}
$$
with equality if $b_{ij}=1$ for all $i,j$.
\end{enumerate}
\end{lem}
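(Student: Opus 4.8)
The plan is to prove all three statements by direct computation, relying only on the elementary facts $E_{ij}=b_{ij}e_ie_j^*$, $E_{ij}^*=b_{ij}e_je_i^*$, $e_p^*Me_q=M_{pq}$, and $(e_pe_q^*)(e_re_s^*)=\delta_{qr}\,e_pe_s^*$. Substituting the definition of $E_{ij}$ into any of the products appearing in the statement and collapsing adjacent rank-one factors in this way leaves a single scalar --- a product of a few entries of $M_1,M_2,M_3$ (or of $M$) together with some $b_{\bullet\bullet}$'s --- multiplying a single rank-one matrix $e_ie_l^*$ or $e_ie_k^*$. Summing over the free indices then produces the claimed expressions.

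For part (a) one collapses from the left: $E_{ij}M_1E_{kl}=b_{ij}b_{kl}(M_1)_{jk}e_ie_l^*$, then $E_{ij}M_1E_{kl}M_2E_{ij}=b_{ij}^2b_{kl}(M_1)_{jk}(M_2)_{li}e_ie_j^*$, and finally $E_{ij}M_1E_{kl}M_2E_{ij}M_3E_{kl}=b_{ij}^2b_{kl}^2(M_1)_{jk}(M_2)_{li}(M_3)_{jk}e_ie_l^*$; summing over $i,j,k,l$ gives the first identity, and the second follows identically after replacing the second and fourth factors by $E_{kl}^*$, which only changes which index of $M_1$ and $M_3$ is contracted. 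Part (b) is immediate: $\tr[ME_{ij}]=b_{ij}\,e_j^*Me_i=b_{ij}M_{ji}$, hence $\tr[ME_{ij}]\,E_{ij}=b_{ij}^2M_{ji}\,e_ie_j^*$.

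Part (c) is the same computation carried out for $E_{ij}^{\varepsilon_1}M_1E_{kl}^{\varepsilon_2}M_2E_{ij}^{\varepsilon_3}M_3E_{kl}^{\varepsilon_4}$: for each admissible choice of $\varepsilon_1,\dots,\varepsilon_4$ it produces an expression $\sum_{i,j,k,l}b_{ij}^2b_{kl}^2\,(M_1)_{\bullet\bullet}(M_2)_{\bullet\bullet}(M_3)_{\bullet\bullet}\,e_ie_r^*$ with $r\in\{k,l\}$, and whether $\varepsilon_1=\varepsilon_3$ and whether $\varepsilon_2=\varepsilon_4$ determine exactly which pair of indices each $M_s$ is evaluated at (the remaining possibility $\varepsilon_1=\varepsilon_3$, $\varepsilon_2=\varepsilon_4$ being precisely part (a), whose index pattern $(M_1)_{jk}(M_3)_{jk}$ does not collapse to a matrix product). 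Since $M_1,M_2,M_3\geent 0$, every summand is nonnegative, so replacing $b_{ij}^2b_{kl}^2$ by $\sigma_*^4$ only increases each entry --- this is the asserted entrywise inequality --- with equality when all $b_{ij}=1$, as then $b_{ij}^2b_{kl}^2=\sigma_*^4=1$. It then remains to recognize the surviving index sums: one obtains $(M_2^*M_3^*M_1)$ in the case $\varepsilon_1=\varepsilon_3$, $\varepsilon_2\ne\varepsilon_4$; $(M_3M_1^*M_2^*)$ in the case $\varepsilon_1\ne\varepsilon_3$, $\varepsilon_2=\varepsilon_4$; and $(M_3M_2M_1)$ in the case $\varepsilon_1\ne\varepsilon_3$, $\varepsilon_2\ne\varepsilon_4$ (in each case up to the rank-one factor $e_ie_k^*$ or $e_ie_l^*$).

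The only genuine labor is bookkeeping, and this is also the step I expect to be the main (mild) obstacle: part (c) really splits into several sub-cases indexed by the positions --- hence the parities, hence the pattern of stars --- of the four legs of the crossing, and one must check both that the alternating structure of the Wick product forces $M_1,M_2,M_3$ to have compatible dimensions in each sub-case, and that every sub-case with a given equality pattern collapses to the \emph{same} one of the three matrix products above (with equality throughout when $b_{ij}\equiv 1$). None of the individual computations is more than one line, so there is no conceptual difficulty; this is simply the non-self-adjoint analogue of the crossing identities of Lemma \ref{lem:symmcrid}, organized so as to isolate the three reduction patterns that are used in the sequel.
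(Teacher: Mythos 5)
Your proposal is correct and is exactly the argument the paper intends: the paper's proof of this lemma consists of the single sentence that all parts follow immediately from the definition of $E_{ij}$, and your computation (collapsing adjacent rank-one factors $e_ie_j^*$, bounding $b_{ij}^2b_{kl}^2\le\sigma_*^4$ termwise using nonnegativity, and reading off the surviving index contractions as matrix products) is the direct verification being referred to. The case analysis and sample computations you give all check out.
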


\begin{proof}
All parts follow immediately from the definition of $E_{ij}$.
\end{proof}

We now introduce the relevant crossing types in this setting.

\begin{defn}
Let $\pi\in\mathrm{P}_2([2p])$, and fix a crossing $\{i,k\},\{j,l\}\in 
\pi$ such that $i<j<k<l$. Then this crossing is said to be of
\begin{enumerate}[$\bullet$]
\itemsep\abovedisplayskip
\item \emph{type 1} if $i,k$ have opposite parity or $j,l$ 
have opposite parity.
\item \emph{type 2} if $i,k$ have the same parity, $j,l$ have the same 
parity, and there exists 
a pair $\{a,b\}\in\pi$ with $a\in(i,j)\cup(k,l)$ and 
$b\not\in(i,j)\cup(k,l)$;
\item \emph{type 3} otherwise.
\end{enumerate}
\end{defn}

We now consider each crossing type in turn.

\begin{lem}[Type 1 crossing]
\label{lem:rect1cr}
Let $p\in\mathbb{N}$, $\pi\in\mathrm{P}_2([2p])$, and 
$\{a,c\},\{b,d\}\in\pi$ with $a<b<c<d$ be a crossing of type 1. Then 
there exists a pairing $\pi'\in\mathrm{P}_2([2p-4])$, whose definition
depends only on $\pi,a,b,c,d$, so that
$$
	D(\pi)\le \sigma_*^4 D(\pi').
$$
Moreover, equality holds when $b_{ij}=1$ for all $i,j$.
\end{lem}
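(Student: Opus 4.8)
The plan is to mimic the proof of the Hermitian extremum principle (proof of Theorem~\ref{thm:hermmom}), with the extra parity bookkeeping forced by the rectangular structure. Write the crossing as $\{a,c\},\{b,d\}\in\pi$ with $a<b<c<d$, and set $\varepsilon_s:=1$ for odd $s$ and $\varepsilon_s:=\,*$ for even $s$, so that the $s$-th factor in the Wick product of Lemma~\ref{lem:rectwick} is $E_{i_sj_s}^{\varepsilon_s}$ (with $E^1:=E$). Splitting this product at the positions $a,b,c,d$ and using $i_a=i_c=:i$, $j_a=j_c=:j$, $i_b=i_d=:k$, $j_b=j_d=:l$, we get
$$
	(E(\pi))_{rr}=\sum_{(\boldsymbol i,\boldsymbol j)\sim\pi\backslash\{\{a,c\},\{b,d\}\}}\ \sum_{i,j,k,l}
	\big(M_0\,E_{ij}^{\varepsilon_a}\,M_1\,E_{kl}^{\varepsilon_b}\,M_2\,E_{ij}^{\varepsilon_c}\,M_3\,E_{kl}^{\varepsilon_d}\,M_4\big)_{rr},
$$
where $M_0,\dots,M_4$ collect the factors $E_{i_sj_s}^{\varepsilon_s}$ over the five blocks of positions $s<a$, $a<s<b$, $b<s<c$, $c<s<d$, $s>d$ respectively; each $M_t$ has nonnegative entries, being a product of the nonnegative matrices $E_{i_sj_s}^{\varepsilon_s}$.

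The crucial observation is that \emph{because the crossing is of type 1}, we are never in the case ``$\varepsilon_a=\varepsilon_c$ and $\varepsilon_b=\varepsilon_d$'' that is absent from Lemma~\ref{lem:rectcrid}(c): that case means ``$a,c$ have the same parity and $b,d$ have the same parity'', which is exactly the negation of type 1. Hence one of the three cases of Lemma~\ref{lem:rectcrid}(c) applies to the inner sum over $i,j,k,l$, and since $M_0,M_4\geent0$ entrywise domination survives left/right multiplication by them, so
$$
	(E(\pi))_{rr}\ \le\ \sigma_*^4\sum_{(\boldsymbol i,\boldsymbol j)\sim\pi\backslash\{\{a,c\},\{b,d\}\}}(M_0\,N\,M_4)_{rr},
$$
where $N$ is one of $M_2^*M_3^*M_1$, $M_3M_1^*M_2^*$, $M_3M_2M_1$ according to which case holds.

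It remains to identify the right-hand side with $\sigma_*^4D(\pi')$ for a suitable $\pi'\in\mathrm{P}_2([2p-4])$. Expanding the $M_t$ and the adjoints $M_t^*$ — which reverse the order of the factors and replace each sign $\varepsilon_s$ by $!\varepsilon_s$ (with $!1:=*$, $!*:=1$) — the matrix $M_0NM_4$ is a product, in some new order, of the factors $E_{i_sj_s}^{\eta_s}$ over all $s\notin\{a,b,c,d\}$. The key point to verify is that, in each of the three cases, the parity conditions defining that case force the signs $\eta_s$ to alternate $1,*,1,*,\dots$ along this new order: at every junction between two consecutive blocks this reduces to a comparison of the parities of two indices, which holds precisely under the hypothesis of the relevant case. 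Therefore $M_0NM_4$ has the alternating $E$/$E^*$ structure of the Wick formula, so summing over the index assignments yields $E(\pi')$, where $\pi'$ is obtained from $\pi$ by deleting $\{a,c\},\{b,d\}$ and relabeling the surviving positions according to their order in $M_0NM_4$ — a pairing depending only on $\pi,a,b,c,d$. This gives $(E(\pi))_{rr}\le\sigma_*^4(E(\pi'))_{rr}\le\sigma_*^4D(\pi')$ for every $r$, hence $D(\pi)\le\sigma_*^4D(\pi')$. When $b_{ij}=1$ for all $i,j$, every inequality above is an equality by Lemma~\ref{lem:rectcrid}(c), while $E(\pi)$ and $E(\pi')$ are scalar multiples of the identity by Lemma~\ref{lem:dpi}; thus $D(\pi)=(E(\pi))_{rr}=\sigma_*^4(E(\pi'))_{rr}=\sigma_*^4D(\pi')$.

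The main obstacle is precisely this sign-alternation check: it is what lets a type 1 crossing be removed in a single clean step, collapsing $2p$ to $2p-4$ with the factor $\sigma_*^4$. For type 2 and type 3 crossings the analogous reordered expression fails to be a single $E(\pi')$ — an (unnormalized) trace factors out instead — which is why those cases demand the separate, more delicate arguments that follow.
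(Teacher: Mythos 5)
Your proof is correct and takes essentially the same approach as the paper: split the Wick word at $a,b,c,d$, observe that the type~1 condition is exactly the negation of the case $\varepsilon_a=\varepsilon_c$, $\varepsilon_b=\varepsilon_d$ excluded from Lemma~\ref{lem:rectcrid}(c), apply that lemma entrywise (with nonnegativity of $M_0,M_4$), and read off $\pi'$ from the resulting alternating word. Your explicit verification that the junction parities force the $E/E^*$ alternation is in fact slightly more careful than the paper's ``readily read off,'' and is a correct justification of that step.
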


\begin{proof}
For simplicity, consider the case that $a,b,c$ are odd and $d$ is even; 
the proof in the remaining cases is identical. Then Lemma 
\ref{lem:rectcrid}(c) allows us to estimate
\begin{align*}
	D(\pi) &=
	\max_r 
	\sum_{(\boldsymbol{i},\boldsymbol{j})\sim
	\pi\backslash\{\{a,c\},\{b,d\}\}}
	\sum_{i,j,k,l}
	(M_0 E_{ij} M_1 E_{kl} M_2 E_{ij} M_3 E_{kl}^* M_4)_{rr}
\\	&\le
	\sigma_*^4\,
	\max_r 
	\sum_{(\boldsymbol{i},\boldsymbol{j})\sim
	\pi\backslash\{\{a,c\},\{b,d\}\}}
	(M_0M_2^*M_3^*M_1 M_4)_{rr}
\end{align*}
with
$M_0 := E_{i_1j_1}E_{i_2j_2}^*\cdots E_{i_{a-1}j_{a-1}}^*$,
$M_1 := E_{i_{a+1}j_{a+1}}^*E_{i_{a+2}j_{a+2}}\cdots 
E_{i_{b-1}j_{b-1}}^*$, etc. We readily read off the existence of
$\pi'\in\mathrm{P}_2([2p-4])$ so that the right-hand side equals
$\sigma_*^4D(\pi')$. Moreover, Lemma
\ref{lem:rectcrid}(c) ensures equality when $b_{ij}=1$ for all $i,j$.
\end{proof}

\begin{lem}[Type 2 crossing]
\label{lem:rect2cr}
Let $p\in\mathbb{N}$, $\pi\in\mathrm{P}_2([2p])$, 
$\{a,c\},\{b,d\},\{e,f\}\in\pi$ where $a<b<c<d$, 
$a,c$ have the same parity,
$b,d$ have the same parity, and
$e\in (a,b)\cup (c,d)$,
$f\not\in (a,b)\cup(c,d)$.
Then 
there exists a pairing $\pi'\in\mathrm{P}_2([2p-6])$, whose definition
depends only on $\pi,a,b,c,d,e,f$, so that
$$
	D(\pi)\le \sigma_*^6 D(\pi').
$$
Moreover, equality holds when $b_{ij}=1$ for all $i,j$.
\end{lem}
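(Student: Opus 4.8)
The plan is to mirror the argument for type 2 crossings in the symmetric case (Lemma \ref{lem:symmii}), but exploiting the additional structure available here. First I would apply Lemma \ref{lem:rectcrid}(a): since $\{a,c\}$ and $\{b,d\}$ each pair indices of the same parity, the product $E_{i_aj_a}^{\varepsilon_a}M_1E_{i_bj_b}^{\varepsilon_b}M_2E_{i_cj_c}^{\varepsilon_c}M_3E_{i_dj_d}^{\varepsilon_d}$ has $\varepsilon_a=\varepsilon_c$ and $\varepsilon_b=\varepsilon_d$ (up to which of the two identities in part (a) applies, depending on whether the outer pair is even-even or odd-odd). Either identity produces a single term of the shape $b_{ij}^2b_{kl}^2(M_1)_{\bullet\bullet}(M_2)_{\bullet\bullet}(M_3)_{\bullet\bullet}\,e_\bullet e_\bullet^*$, where one of the three matrix factors contributes a diagonal-type entry $(M)_{ss}$ — this is the analogue of the trace term in the symmetric case, and it is exactly where the sparsity of the crossing will be used.

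Next I would invoke the type 2 hypothesis to split $\pi\backslash\{\{a,c\},\{b,d\}\}$ into $\pi_I\in\mathrm{P}_2(I)$ and $\pi_J\in\mathrm{P}_2(J)$ with $I:=(a,b)\cup(c,d)$ and $J:=(1,a)\cup(b,c)\cup(d,2p)$, just as in Lemma \ref{lem:symmii}. The factor carrying the ``internal'' diagonal entry $(M)_{ll}$ depends only on $\boldsymbol{i}_I,\boldsymbol{j}_I$, while the remaining factors depend only on $\boldsymbol{i}_J,\boldsymbol{j}_J$; this lets me factor the sum as a product of an $I$-sum and a $J$-sum, with the coupling index summed against $b_{kl}^2$. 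One of $\sum_i b_{ij}^2\le\sigma_1^2$ or $\sum_j b_{ij}^2\le\sigma_2^2$ then bounds that coupling sum — but crucially the statement claims the clean bound $\sigma_*^6 D(\pi')$ rather than $\sigma^2\sigma_*^4$, so here I must use the one remaining free parameter: after applying Lemma \ref{lem:rectcrid}(a), only three of the four $E$-factors were consumed into matrix entries; the fourth, $b_{kl}^2$, I bound by $\sigma_*^2$ as well, and then fold the leftover $E_{ij}$ in the internal factor back in by reassembling the internal sum as $(E(\pi_I^{\text{internal}}))_{ll}$, which is $\le D$ of a smaller pairing. Collecting the three $\sigma_*^2$ from the identity together with three more from treating the two outer $E$'s plus one internal $E$, I obtain the factor $\sigma_*^6$.

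The definition of $\pi'$ is then read off by concatenating $\pi_I$ (restricted appropriately, after deleting $\{e,f\}$'s role if $e$ lies in $I$) with $\pi_J$ and relabeling the surviving $2p-6$ indices in the induced cyclic order — and this relabeling depends only on the combinatorial data $\pi,a,b,c,d,e,f$, not on $X$, exactly as required. Finally, the equality case when $b_{ij}=1$ follows because every inequality used is an equality in that case: Lemma \ref{lem:rectcrid}(a) is an identity, each $b_{ij}^2\le\sigma_*^2$ becomes $1=1$, and Lemma \ref{lem:dpi} gives $E(\pi)=D(\pi)\mathbf{1}$ so that passing from trace/diagonal quantities to $D(\cdot)$ loses nothing.

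The main obstacle I anticipate is bookkeeping: correctly tracking which of the several parity cases of $\{a,c\},\{b,d\}$ (even-even vs.\ odd-odd, and the position of $e$ in $(a,b)$ vs.\ $(c,d)$, and $f$ before $a$ vs.\ between $b,c$ vs.\ after $d$) leads to which of the two identities in Lemma \ref{lem:rectcrid}(a), and verifying that in every case exactly one surplus $E$-factor remains that can be absorbed to yield $\sigma_*^6$ rather than a dimension-dependent $\sigma_1^2$ or $\sigma_2^2$. As in Lemma \ref{lem:symmi}, I would carry out one representative case in detail and assert that the remaining cases are ``completely analogous.''
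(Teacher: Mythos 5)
Your proposal has the crossing-type logic inverted. The type 2 hypothesis asserts the \emph{existence} of a pair $\{e,f\}\in\pi$ with $e\in I:=(a,b)\cup(c,d)$ and $f\in J:=(1,a)\cup(b,c)\cup(d,2p)$. Such a pair is precisely the obstruction to decomposing $\pi\backslash\{\{a,c\},\{b,d\}\}$ as $\pi_I\cup\pi_J$ with $\pi_I\in\mathrm{P}_2(I)$, $\pi_J\in\mathrm{P}_2(J)$: the block $\{e,f\}$ straddles $I$ and $J$, so the split you propose does not exist. That factorization is the mechanism of Lemma~\ref{lem:rect3cr} (type 3, no linking pair), not of this lemma. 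Correspondingly, you have misidentified the symmetric-case template: the analogue of Lemma~\ref{lem:rect2cr} is Lemma~\ref{lem:symmi} (type I, where a linking pair $\{e,f\}$ is used to absorb a trace via Lemma~\ref{lem:symmcrid}(b)), not Lemma~\ref{lem:symmii} (type II, where no linking pair exists and the sum genuinely factors). The subsequent ``reassembling'' step, meant to convert an unwanted $\sigma^2\sigma_*^4$ into $\sigma_*^6$, cannot be salvaged because it sits on top of a factorization that is impossible under the hypotheses.

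The correct route is: apply the appropriate line of Lemma~\ref{lem:rectcrid}(a) (exploiting $\varepsilon_a=\varepsilon_c$, $\varepsilon_b=\varepsilon_d$, which you did note) and bound $b_{ij}^2b_{kl}^2\le\sigma_*^4$ to obtain a term of the form $\sigma_*^4\,\tr[M_1^*M_3]\,(M_0M_2^*M_4)_{rr}$. The unnormalized trace would produce a dimension-dependent factor if left alone. By the type 2 assumption, $\{e,f\}$ pairs an $E$-factor inside $\tr[M_1^*M_3]$ with an $E$-factor inside $(M_0M_2^*M_4)_{rr}$; applying Lemma~\ref{lem:rectcrid}(b) (or its adjoint, depending on the parities of $e,f$) to that pair eliminates the trace, contributes one more $\sigma_*^2$ upon bounding $b_{ij}^2\le\sigma_*^2$, and yields $\sigma_*^6 D(\pi')$ with $\pi'\in\mathrm{P}_2([2p-6])$ determined by $\pi,a,b,c,d,e,f$. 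Your tally ``three $\sigma_*^2$ from the identity together with three more'' does not match this accounting either: the correct count is two from (a) and one from (b).
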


\begin{proof}
Let us first assume that $a,b,c,d$ are all odd. Then the first 
equation display of Lemma \ref{lem:rectcrid}(a) enables us to estimate
$$
	D(\pi)\le \sigma_*^4 \,
	\max_r \sum_{(\boldsymbol{i},\boldsymbol{j})\sim
        \pi\backslash\{\{a,c\},\{b,d\}\}}
	\tr[M_1^*M_3]
	(M_0M_2^*M_4)_{rr}
$$
with equality if $b_{ij}=1$ for all $i,j$, where $M_k$
are as in the proof of Lemma \ref{lem:rect1cr}. Now note that, by 
assumption, $\{e,f\}$ pairs a term inside the trace with a term inside the 
matrix element on the right-hand side. We can therefore use Lemma 
\ref{lem:rectcrid}(b) or its adjoint (depending on the parities of $e$ and 
$f$) to estimate the right-hand side by $\sigma_*^6D(\pi')$ for some
$\pi'\in\mathrm{P}_2([2p-6])$ that depends only on $\pi,a,b,c,d,e,f$, with
equality if $b_{ij}=1$ for all $i,j$. We omit the details
which are completely analogous to the corresponding argument in the proof 
of Lemma \ref{lem:symmi}.

The other possible parities of $a,b,c,d$ are treated in a completely 
analogous 
way: if $a,c$ are odd and $b,d$ are even we use the second equation
display of Lemma \ref{lem:rectcrid}(a) instead of the first, while the 
remaining two cases use the adjoint of the first or second equation 
display of Lemma \ref{lem:rectcrid}(a), respectively.
\end{proof}

\begin{lem}[Type 3 crossing]
\label{lem:rect3cr}
Let $p\in\mathbb{N}$, $\pi\in\mathrm{P}_2([2p])$, and let
$\{a,c\},\{b,d\}\in\pi$ with $a<b<c<d$ be a crossing of type 3. Then
there exist $\pi_1\in\mathrm{P}_2([b-a+d-c-2])$ and
$\pi_2\in\mathrm{P}_2([2p-2-b+a-d+c])$, which depend
only on $\pi,a,b,c,d$, so that
$$
	D(\pi)\le 
	\begin{cases}
	\sigma_1^2\sigma_*^2 D(\pi_1)D(\pi_2)
	&\text{ if }d\text{ is odd},\\
	\sigma_2^2\sigma_*^2 D(\pi_1)D(\pi_2)
	&\text{ if }d\text{ is even}.
	\end{cases}
$$
Moreover, equality holds when $b_{ij}=1$ for all $i,j$.
\end{lem}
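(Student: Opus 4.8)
The plan is to mirror the structure of the proof of the type II crossing lemma in the symmetric case (Lemma \ref{lem:symmii}), adapting it to the rectangular setting where we must track parities. Recall that a type 3 crossing $\{a,c\},\{b,d\}$ with $a<b<c<d$ has $a,c$ of the same parity, $b,d$ of the same parity, and no pair $\{e,f\}\in\pi$ straddles the set $I:=(a,b)\cup(c,d)$; hence $\pi\backslash\{\{a,c\},\{b,d\}\}$ splits as $\pi_I\cup\pi_J$ with $\pi_I\in\mathrm{P}_2(I)$ and $\pi_J\in\mathrm{P}_2(J)$ where $J:=(1,a)\cup(b,c)\cup(d,2p)$. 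First I would apply the appropriate equation display from Lemma \ref{lem:rectcrid}(a) to the innermost crossing pair, exactly as in Lemma \ref{lem:rect2cr}, but \emph{without} bounding the $b_{kl}^2$ factor in the term that produces the $e_ie_k^*$ (resp.\ $e_ie_l^*$) structure — only bounding $b_{ij}^2\le\sigma_*^2$ for the doubled pair. This leaves an expression of the shape
$$
	D(\pi)\le \sigma_*^2\,\max_r
	\sum_{(\boldsymbol{i},\boldsymbol{j})\sim\pi\backslash\{\{a,c\},\{b,d\}\}}
	\sum_{k,l} b_{kl}^2\,(M_0M_2^\#)_{r*}\,(M_1^\#M_3^\#)_{**}\,(M_4)_{*r},
$$
where the adjoints $\#$ and the precise index placement depend on the four parity cases; the point is that the free index $l$ of $b_{kl}^2$ ends up contracted against a diagonal matrix element coming entirely from $M_1,M_3$ (the factors strictly inside the crossing), while the other index $k$ lives in the $M_0,M_2,M_4$ part (the factors outside).

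Next I would use the type 3 structure to factor the sum over $(\boldsymbol{i},\boldsymbol{j})$ as a product of a sum over $(\boldsymbol{i}_I,\boldsymbol{j}_I)\sim\pi_I$ and a sum over $(\boldsymbol{i}_J,\boldsymbol{j}_J)\sim\pi_J$, precisely as in the display chain in the proof of Lemma \ref{lem:symmii}. The inner sum produces a diagonal matrix element of $E(\pi_I)$-type; bounding $b_{kl}^2$ against either $\sum_i b_{ij}^2\le\sigma_1^2$ or $\sum_j b_{ij}^2\le\sigma_2^2$ — and this is exactly where the parity of $d$ (equivalently, whether the surviving index of $E_{kl}$ is a row or column index) decides between $\sigma_1^2$ and $\sigma_2^2$ — collapses the sum over $k$ or $l$ and leaves $\sigma_{1\text{ or }2}^2 \cdot D(\pi_1)\cdot(E(\pi_2))_{rr}$ inside the max. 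Here $\pi_1\in\mathrm{P}_2([b-a+d-c-2])$ is the relabelling of $\pi_I$ and $\pi_2\in\mathrm{P}_2([2p-2-b+a-d+c])$ the relabelling of $\pi_J$ together with the deleted crossing slots, both manifestly independent of $X$ (they are combinatorial data of $\pi,a,b,c,d$ only). Taking the max over $r$ and using $(E(\pi_2))_{rr}\le D(\pi_2)$ gives the stated bound $D(\pi)\le\sigma_{1\text{ or }2}^2\sigma_*^2 D(\pi_1)D(\pi_2)$. For the equality claim when $b_{ij}\equiv 1$: every inequality used is an equality in that case by Lemma \ref{lem:rectcrid}, and by Lemma \ref{lem:dpi} each $E(\pi)$ is then a scalar multiple of the identity, so $(E(\pi_2))_{rr}=D(\pi_2)$ and the factorization of diagonal entries is exact — this is the same endgame as in Lemmas \ref{lem:symmi} and \ref{lem:symmii}.

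I expect the main obstacle to be bookkeeping rather than conceptual difficulty: one must carefully enumerate the parity cases (there are essentially two inequivalent ones — $a,b,c,d$ all of one parity, and $a,c$ one parity with $b,d$ the other — each with an adjoint variant, matching the four equation displays available from Lemma \ref{lem:rectcrid}(a) and their adjoints), and in each case verify that the surviving free index of the un-bounded $b_{kl}^2$ is correctly a row index (forcing $\sigma_1^2$, which happens exactly when $d$ is odd in the indexing convention $E_{ij}=b_{ij}e_ie_j^*$) or a column index (forcing $\sigma_2^2$, when $d$ is even). Since the text explicitly permits omitting "details which are completely analogous to the corresponding argument in the proof of Lemma \ref{lem:symmi}," I would write out one representative parity case in full — say $a,b,c,d$ all odd — and remark that the others follow by the identical argument using the second display of Lemma \ref{lem:rectcrid}(a) or the relevant adjoint, with the $\sigma_1^2$ versus $\sigma_2^2$ dichotomy governed by the parity of $d$ as stated.
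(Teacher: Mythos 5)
Your proposal follows the paper's proof of this lemma essentially verbatim: apply the appropriate display of Lemma~\ref{lem:rectcrid}(a), bound only $b_{ij}^2\le\sigma_*^2$ while retaining $b_{kl}^2$, factor $\pi\backslash\{\{a,c\},\{b,d\}\}=\pi_I\cup\pi_J$ using the type~3 hypothesis, absorb $b_{kl}^2$ into a $\sigma_1^2$ or $\sigma_2^2$ bound while pulling out $D(\pi_1)$ from the trace-like inner factor, recognize the outer factor as $(E(\pi_2))_{rr}\le D(\pi_2)$, and close the equality case via Lemmas~\ref{lem:rectcrid} and~\ref{lem:dpi}. This is the same argument, with the same handling of the four parity cases, and I am satisfied it works.

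One small bookkeeping flag worth fixing before you write it out in full: the parenthetical ``the surviving free index of the un-bounded $b_{kl}^2$ is a row index (forcing $\sigma_1^2$, $d$ odd) or a column index (forcing $\sigma_2^2$, $d$ even)'' has the row/column labels inverted. In the all-odd case, for instance, the inner factor is $\sum_k b_{kl}^2\,(M_1^*M_3)_{kk}$: the index $k\in[n]$ (a row index) is \emph{summed} against $b_{kl}^2$ and bounded via $\sum_k b_{kl}^2\le\sigma_1^2$, while $l\in[m]$ (a column index) is the one that \emph{survives} to contract with $(M_0M_2^*)_{rl}(M_4)_{lr}$ in the outer product. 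So $\sigma_1^2$ (hence $d$ odd) corresponds to a \emph{summed} row index and a \emph{surviving} column index, not the other way around. Your stated conclusion ($\sigma_1^2$ if $d$ odd, $\sigma_2^2$ if $d$ even) is of course the correct one from the lemma statement; only the intermediate heuristic needs the row/column swap. The phrase ``relabelling of $\pi_J$ together with the deleted crossing slots'' for $\pi_2$ is also slightly off (the crossing slots are what is \emph{removed}; $\pi_2$ is simply the relabelling of $\pi_J$ to $[2p-2-b+a-d+c]$), but neither slip affects the validity of the argument.
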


\begin{proof}
Let us first assume that $a,b,c,d$ are all odd. Then the first 
equation display of Lemma \ref{lem:rectcrid}(a) enables us to estimate
\begin{align*}
	D(\pi) &=
	\max_r 
	\sum_{(\boldsymbol{i},\boldsymbol{j})\sim
	\pi\backslash\{\{a,c\},\{b,d\}\}}
	\sum_{i,j,k,l}
	b_{ij}^2 b_{kl}^2
	(M_0)_{ri}(M_1)_{jk}(M_2)_{li}(M_3)_{jk}(M_4)_{lr}
\\
	&\le
	\sigma_*^2\,
	\max_r 
	\sum_l
	\Bigg(
	\sum_{(\boldsymbol{i}_J,\boldsymbol{j}_J)\sim \pi_J}
	(M_0M_2^*)_{rl}(M_4)_{lr}
	\Bigg)
	\Bigg(
	\sum_k
	b_{kl}^2 
	\sum_{(\boldsymbol{i}_I,\boldsymbol{j}_I)\sim \pi_I}
	(M_1^*M_3)_{kk}
	\Bigg)
\end{align*}
with equality when $b_{ij}=1$ for all $i,j$, 
where $M_k$ are as in the proof of Lemma \ref{lem:rect1cr} and
$\pi\backslash\{\{a,c\},\{b,d\}\}=\pi_I\cup \pi_J$ as in the proof of
Lemma \ref{lem:symmii}. The conclusion now follows readily by the same 
argument as in the proof
of Lemma \ref{lem:symmii}.

The other possible parities of $a,b,c,d$ are treated in a completely
analogous way, where we must take care to observe that we gain a factor
$\sigma_1^2$ or $\sigma_2^2$ depending on whether $b,d$ are odd or even, 
respectively.
\end{proof}

We can now complete the proof of Theorem \ref{thm:rectmom}.

\begin{proof}[Proof of Theorem \ref{thm:rectmom}]
We first estimate $\mathbf{E}[\ntr (XX^*)^p]$ as in Lemma \ref{lem:dpi}. 
We then repeatedly apply the following to each quantity $D(\pi)$ in
the resulting expression:
\begin{enumerate}[$\bullet$]
\itemsep\medskipamount
\item If $\pi$ contains a crossing, we apply either Lemma 
\ref{lem:rect1cr}, \ref{lem:rect2cr}, or \ref{lem:rect3cr}
to the smallest crossing in the lexicographic 
order, depending on whether that crossing is of type 1, 2, or 3,
respectively. In the type 2 case, we choose the smallest 
pair $\{e,f\}$ in the lexicographic order that satisfies the assumption of 
Lemma \ref{lem:rect2cr}.
\item If $\pi$ is noncrossing, we apply Lemma \ref{lem:rectnc}.
\end{enumerate}
This algorithm gives rise to an inequality of the form
$$
	\mathbf{E}[\ntr (XX^*)^{p}] \le
	\sum_{k,l\ge 0:k+l\le p}
	\tilde\varkappa_p(k,l)
	\sigma_1^{2k}\sigma_2^{2l}\sigma_*^{2p-2k-2l},
$$
where the coefficients $\tilde\varkappa_p(k,l)\in\mathbb{Z}_+$ are 
independent of the matrix $X$.

Moreover, as all the inequalities used above become equalities when 
$b_{ij}=1$ for all $i,j$, we can apply the same algorithm to compute
$$
	\mathbf{E}[\ntr (YY^*)^{p}] =
	\sum_{k,l\ge 0:k+l\le p}
	\tilde\varkappa_p(k,l)
	d_1^kd_2^l.
$$
The conclusion readily follows from the assumptions 
$\sigma_*=1$, $\sigma_1^2\le d_1$, $\sigma_2^2\le d_2$.
\end{proof}

\subsection{Small deviations}
\label{sec:pfrectsm}

The difficulty as compared with the self-adjoint models discussed in the 
previous sections is to obtain the following moment estimate.

\begin{lem}
\label{lem:rectled}
Define $Y$ as in Theorem \ref{thm:rectmom}, and assume that $d_1\le d_2$.
Then
$$
	\mathbf{E}[\ntr (YY^*)^{p}] \lesssim
	\frac{1}{d_1}
	\big(d_1^{1/2}+d_2^{1/2}\big)^{2p}
	\bigg(
	1+\frac{8p^2}{d_1^{1/2}d_2^{3/2}}
	\bigg)^p
$$
for all $p\ge d_1^{1/6} d_2^{1/2}$.
\end{lem}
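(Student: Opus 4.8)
The plan is to compute $\mathbf{E}[\ntr(YY^*)^p]$ exactly in combinatorial terms, identify the dominant contribution, and bound the rest. By the Wick formula (Lemma~\ref{lem:rectwick}) applied to $Y$ with $b_{ij}=1$, we have $\mathbf{E}[\ntr(YY^*)^p]=\sum_{\pi\in\mathrm{P}_2([2p])}\ntr E(\pi)$, and for the i.i.d.\ matrix $Y$ each $E(\pi)$ is a multiple of the identity, so $\ntr E(\pi)=D(\pi)=d_1^{\#\text{odd-odd pairs of }\pi}\,d_2^{\#\text{even-even pairs}}\cdots$. More precisely, the standard topological bookkeeping for rectangular (Wishart) moments expresses this sum as a generating function: grouping pairings by genus, $\mathbf{E}[\ntr(YY^*)^p]=\sum_{g\ge0} d_1^{\,?}\,d_2^{\,?}\,(\text{number of genus-}g\text{ maps})$, with the leading ($g=0$, noncrossing) term given by the Narayana/Fuss–Catalan sum $\sum_{\ell}N(p,\ell)\,d_1^\ell d_2^{p-\ell}$, which is exactly what one gets by iterating Lemma~\ref{lem:rectnc}. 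The first step, then, is to write $\mathbf{E}[\ntr(YY^*)^p]$ via this genus expansion, so that it takes the shape $\sum_{g}c_g(p)\,(d_1d_2)^{-g}\,(\text{leading term})$ with explicit control on $c_g(p)$.

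Second, I would estimate the $g=0$ term. The noncrossing contribution is $\sum_{\ell=0}^{p}N(p,\ell)\,d_1^\ell d_2^{p-\ell}$ where $N(p,\ell)=\frac1p\binom{p}{\ell}\binom{p}{\ell-1}$ are Narayana numbers; this is bounded above by $\frac{1}{d_1}(\sqrt{d_1}+\sqrt{d_2})^{2p}$ up to the harmless prefactor, using the classical fact that $\sum_\ell N(p,\ell)x^\ell y^{p-\ell}\le \frac{1}{\min(x,y)}(\sqrt x+\sqrt y)^{2p}$ (this is essentially the statement that the Marchenko–Pastur law has right edge $(\sqrt{d_1}+\sqrt{d_2})^2/(\text{something})$ — concretely one uses $\binom{p}{\ell}\binom{p}{\ell-1}\le\binom{2p}{2\ell}$-type estimates or a direct Stirling bound). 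This gives the factor $\frac{1}{d_1}(d_1^{1/2}+d_2^{1/2})^{2p}$ appearing in the statement.

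Third — and this is the crux — I need to show the higher-genus terms only inflate the bound by the factor $(1+8p^2/(d_1^{1/2}d_2^{3/2}))^p$, \emph{uniformly in the aspect ratio} $d_2/d_1$ (with $d_1\le d_2$), and for $p$ as large as $d_1^{1/6}d_2^{1/2}$. The natural mechanism, following Ledoux's treatment of GUE/GOE, is a recursion: each added handle (each crossing eliminated) costs a factor controlled by Lemma~\ref{lem:rect3cr}, which in the homogeneous case contributes $\sigma_1^2\sigma_*^2$ or $\sigma_2^2\sigma_*^2=d_1$ or $d_2$ against a combinatorial count of roughly $p^2$ ways to place the handle. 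Thus heuristically the genus-$g$ term is at most $\binom{p^2}{?}^g$-ish times $d_1^{p-g}d_2^{?}/(\text{leading})$; one wants to see that the per-handle cost relative to the leading term is $O(p^2 d_1^{-1/2}d_2^{-3/2})$. The asymmetry $d_1^{1/2}d_2^{3/2}$ (rather than $d_1d_2$) is the subtle point: it reflects that a handle joining the two "sides" of the bipartite surface costs differently depending on orientation, and one must be careful that the worst case is the one where the extra factor is $d_1$ (the smaller dimension) against the leading $\sqrt{d_1}\sqrt{d_2}$ per step, giving $d_1/(\sqrt{d_1}\sqrt{d_2})^2\cdot p^2 = p^2/d_2$, balanced against the other configuration — I expect the bookkeeping to land on $p^2/(d_1^{1/2}d_2^{3/2})$ after tracking which of $d_1,d_2$ is picked up at each of the $2g$ (or so) "defect" edges. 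The restriction $p\ge d_1^{1/6}d_2^{1/2}$ is presumably where the geometric series in $g$ converges and the whole sum is comparable to $(1+8p^2 d_1^{-1/2}d_2^{-3/2})^p$; for smaller $p$ one would instead use the trivial bound $\mathbf{E}[\ntr(YY^*)^p]\le(\sqrt{d_1}+\sqrt{d_2})^{2p}$ directly, but that regime is not needed here.

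The main obstacle is therefore the uniform-in-dimension genus bound in step three: unlike the square GUE case where there is a single parameter, here the two-parameter generating function must be controlled so that the error is measured against the correct mixed power $d_1^{1/2}d_2^{3/2}$, and one cannot afford to lose the aspect ratio. I would carry this out either by a careful recursion on the topological recursion / loop-equation for rectangular Gaussian matrices, tracking the number of each type of pair, or — more in the spirit of this paper — by directly iterating Lemmas~\ref{lem:rectnc}, \ref{lem:rect1cr}, \ref{lem:rect2cr}, \ref{lem:rect3cr} applied to $Y$ (where all inequalities are equalities) and bounding the resulting sum $\sum_{k,l}\tilde\varkappa_p(k,l)d_1^kd_2^l$ by a generating-function argument, exploiting that each non-noncrossing step either removes $4$ indices at cost $\sigma_*^4=1$ (type 1, harmless) or splits off a factor $d_1$ or $d_2$ at cost of reducing the index count (types 2, 3), with the combinatorial multiplicity of such steps bounded by a power of $p$. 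I would then defer the self-contained derivation of this moment bound to section~\ref{sec:wishart}, as the paper announces.
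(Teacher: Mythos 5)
The paper's proof of this lemma is a one-line reduction: it simply substitutes $n=d_1$, $c=d_2/d_1$ into Theorem~\ref{thm:rwish} and observes that the restriction $p\ge d_1^{1/6}d_2^{1/2}$ makes $c^{3/4}/p^{3/2}\le 1/n$, so the prefactor $(c^{3/4}/p^{3/2}+1/n)$ collapses to $\lesssim 1/d_1$. The real content is thus Theorem~\ref{thm:rwish}, and the paper proves it \emph{analytically}: it imports the three-term moment recursion of Haagerup--Thorbj{\o}rnsen for complex Wishart matrices (Theorem~\ref{thm:ap}) and the recursion of Cunden et al.\ expressing real Wishart moments in terms of complex ones (Theorem~\ref{thm:bp}), then runs a delicate inductive analysis of the ratios $K_p=A_{p+1}\chi_p/(A_p\chi_{p+1})$ and of the difference $D_p=B_p-A_p'$. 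No genus expansion appears.

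Your proposal takes a genuinely different route -- a Wick/genus expansion with per-handle bookkeeping -- but it is not carried through, and the parts you do specify contain two concrete problems. First, you attribute the restriction $p\ge d_1^{1/6}d_2^{1/2}$ to convergence of a geometric series in the genus $g$. That is not where it is used: the genus-$0$ (Marchenko--Pastur) contribution itself behaves like $(\sqrt{c}+1)^{2p}\,c^{3/4}/p^{3/2}$, as recorded in the remark after Theorem~\ref{thm:rwish}, and the inequality $c^{3/4}/p^{3/2}\lesssim 1/d_1$ is \emph{exactly} equivalent to $p\gtrsim d_1^{1/6}d_2^{1/2}$. Your ``classical fact'' $\sum_\ell N(p,\ell)x^\ell y^{p-\ell}\le \tfrac{1}{\min(x,y)}(\sqrt x+\sqrt y)^{2p}$, stated without reference to $p$, is therefore not the right mechanism -- the $1/d_1$ prefactor does not come from $\min(d_1,d_2)$, it comes from the interplay of the $p^{-3/2}$ decay with the hypothesis on $p$. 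Second, your step three is acknowledged as unfinished, and the difficulty you flag (uniformity in the aspect ratio, the mixed power $d_1^{1/2}d_2^{3/2}$) is precisely the difficulty the paper identifies and resolves by a non-combinatorial route; the footnote to Theorem~\ref{thm:rwish} even points out that Ledoux's earlier attempt at a recursion-based Wishart bound had constants depending on $c$ and a gap in the argument, which is why the two-regime analysis in section~\ref{sec:pfcw} (with Lemmas~\ref{lem:clgk} and~\ref{lem:csmk}, one effective for large $k$ and one for small $k$) is needed to extract the sharp $c^{3/4}$. Your observation that Lemma~\ref{lem:rect3cr} produces factors of $d_1$ or $d_2$ against a leading $(\sqrt{d_1}+\sqrt{d_2})^2$ per step is a reasonable heuristic, but summing the resulting coefficients $\tilde\varkappa_p(k,l)$ with the required uniformity in $c$ is open in your write-up -- you say yourself it is the obstacle -- so the proposal as written is a plan for a result you have not proved, rather than a proof.

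In short: correct leading idea that this is a sharp Wishart moment bound and that the genus-$0$ term governs the $(\sqrt{d_1}+\sqrt{d_2})^{2p}/d_1$ prefactor, but the role of the hypothesis $p\ge d_1^{1/6}d_2^{1/2}$ is misdiagnosed, and the crux (the uniform control of higher-order contributions) is deferred rather than solved. The paper's actual mechanism is an analytic moment recursion, not a combinatorial genus sum.
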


\begin{proof}
The result follows immediately from Theorem \ref{thm:rwish} in
section \ref{sec:wishart} below.
\end{proof}

We can now 
essentially repeat the proof of Proposition \ref{prop:ledsymm}.

\begin{prop}
\label{prop:ledrect}
For $X$ as in Model \ref{mod:rectintro} with $\sigma_1\le\sigma_2$, we 
have
$$
        \mathbf{P}\big[
        \|X\| > \big({\textstyle
	\sqrt{\sigma_1^2+\sigma_*^2}+
	\sqrt{\sigma_2^2+\sigma_*^2}}\big)(1+\varepsilon)
        \big]
        \le
        \frac{n\sigma_*^2}{C\sigma_1^2} 
	e^{-\frac{1}{8}\frac{\sigma_1^{1/2}\sigma_2^{3/2}}{\sigma_*^2}
        \varepsilon^{3/2}}
$$
for all $0\le\varepsilon\le 1$, where $C$ is a universal constant.
\end{prop}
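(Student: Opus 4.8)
The plan is to mimic the proof of Proposition~\ref{prop:ledsymm} almost verbatim, replacing the GOE moment estimate of Lemma~\ref{lem:symmled} by the Wishart moment estimate of Lemma~\ref{lem:rectled} and the self-adjoint extremum principle by Theorem~\ref{thm:rectmom}. First I would reduce to the case $\sigma_*=1$ by applying the final bound to $X/\sigma_*$; this is legitimate since all the quantities $\sigma_1,\sigma_2,\sigma_*$ scale homogeneously. Set $d_1=\lceil\sigma_1^2\rceil$ and $d_2=\lceil\sigma_2^2\rceil$, so that $d_1\le d_2$ (using $\sigma_1\le\sigma_2$, after possibly noting $\sigma_1^2\le\sigma_2^2$ forces $\lceil\sigma_1^2\rceil\le\lceil\sigma_2^2\rceil$), and $\sigma_i^2\le d_i\le\sigma_i^2+1$.

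Next I would run the Markov/moment argument: for every $p\in\mathbb{N}$,
$$
	\mathbf{P}\big[\|X\|>(\sqrt{d_1}+\sqrt{d_2})(1+\varepsilon)\big]
	\le \frac{n\,\mathbf{E}[\ntr(XX^*)^p]}{(\sqrt{d_1}+\sqrt{d_2})^{2p}(1+\varepsilon)^{2p}}
	\le \frac{n\,\mathbf{E}[\ntr(YY^*)^p]}{(\sqrt{d_1}+\sqrt{d_2})^{2p}(1+\varepsilon)^{2p}},
$$
using $\|X\|^{2p}=\|XX^*\|^p\le n\,\ntr(XX^*)^p$ and Theorem~\ref{thm:rectmom}. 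Then insert Lemma~\ref{lem:rectled}, valid for $p\ge d_1^{1/6}d_2^{1/2}$, to obtain
$$
	\mathbf{P}\big[\|X\|>(\sqrt{d_1}+\sqrt{d_2})(1+\varepsilon)\big]
	\lesssim \frac{1}{d_1}(1+\varepsilon)^{-2p}\Big(1+\frac{8p^2}{d_1^{1/2}d_2^{3/2}}\Big)^p
	\lesssim \frac{1}{d_1}\,e^{-\varepsilon p + 8p^3/(d_1^{1/2}d_2^{3/2})},
$$
where I used $(1+\varepsilon)^{-2p}\le e^{-\varepsilon p}$ for $0<\varepsilon\le 1$ and $1+x\le e^x$. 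I would then split into two cases, paralleling Proposition~\ref{prop:ledsymm}. In the main case $\varepsilon\gtrsim (d_1^{1/2}d_2^{3/2})^{-1/3}\cdot(\text{const})$ one chooses $p\approx \tfrac{1}{c}\,(d_1^{1/2}d_2^{3/2})^{1/2}\sqrt{\varepsilon}$ — balanced so that $\varepsilon p$ and $p^3/(d_1^{1/2}d_2^{3/2})$ are comparable — while checking that this $p$ indeed satisfies $p\ge d_1^{1/6}d_2^{1/2}$ (this is where the lower threshold on $\varepsilon$ comes from). This yields a bound of the form $\tfrac{1}{d_1}e^{-c'\sigma_1^{1/2}\sigma_2^{3/2}\varepsilon^{3/2}}$; tracking constants should give the exponent $\tfrac18$ as stated. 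In the complementary small-$\varepsilon$ case, the probability is trivially $\le 1$, and one absorbs this into the claimed right-hand side using $\sigma_1^2\le n\sigma_*^2=n$ so that $n/d_1\ge 1$ (hence $n/d_1$ times the exponential, which is $\gtrsim 1$, dominates $1$).

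Finally I would convert $d_i$ back to $\sigma_i$: since $\sqrt{d_i}\le\sqrt{\sigma_i^2+1}$, the event $\{\|X\|>(\sqrt{\sigma_1^2+1}+\sqrt{\sigma_2^2+1})(1+\varepsilon)\}$ is contained in $\{\|X\|>(\sqrt{d_1}+\sqrt{d_2})(1+\varepsilon)\}$, and $d_1\ge\sigma_1^2$ turns $\tfrac{1}{d_1}$ into $\tfrac{1}{\sigma_1^2}$ and sharpens the exponent; undoing the normalization $\sigma_*=1$ restores the stated $\sigma_*$-dependence. The main obstacle is the bookkeeping in the case split: one must verify that the choice $p\approx(d_1^{1/2}d_2^{3/2})^{1/2}\sqrt{\varepsilon}$ meets the constraint $p\ge d_1^{1/6}d_2^{1/2}$ precisely on the range of $\varepsilon$ where this case is invoked, and that the leftover error terms (the $e^{+\varepsilon}$-type factor from rounding $p$ to an integer, and the constant losses in Lemma~\ref{lem:rectled}) are harmless — exactly as in the estimate $p^2(p-1)\le\cdots$ and the absorption of $e^{\varepsilon}$ into $e$ in Propositions~\ref{prop:pchermsm} and~\ref{prop:ledsymm}. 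Everything else is a routine transcription of the self-adjoint argument, with the only genuinely new input being the Wishart moment bound, which is quoted from Lemma~\ref{lem:rectled} (i.e.\ Theorem~\ref{thm:rwish}).
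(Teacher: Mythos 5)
Your proposal follows the paper's proof essentially verbatim: normalize to $\sigma_*=1$, set $d_i=\lceil\sigma_i^2\rceil$, combine Markov, $\|X\|^{2p}\le n\ntr(XX^*)^p$, Theorem~\ref{thm:rectmom}, and Lemma~\ref{lem:rectled}, choose $p=\lfloor\tfrac14 d_1^{1/4}d_2^{3/4}\sqrt{\varepsilon}\rfloor$ in the main regime, and bound the probability trivially by $1$ when $\varepsilon$ is small using $n/d_1\ge 1$. The case-split threshold and the verification that $p\ge d_1^{1/6}d_2^{1/2}$ work out just as you indicate, matching the paper's choice $\varepsilon\ge 64\,d_1^{-1/6}d_2^{-1/2}$.
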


\begin{proof}
Suppose first that $\sigma_*=1$, and let $d_1=\lceil \sigma_1^2\rceil$
and $d_2=\lceil \sigma_2^2\rceil$. Using Markov's inequality,
$\|X\|^{2p}\le n\ntr (XX^*)^p$, Theorem \ref{thm:rectmom}, and Lemma 
\ref{lem:rectled}, we obtain
$$
	\mathbf{P}\big[\|X\| \ge
	(d_1^{1/2}+d_2^{1/2})(1+\varepsilon)\big] \lesssim
	\frac{n}{d_1}  e^{-\varepsilon p+\frac{8p^3}{d_1^{1/2}d_2^{3/2}}}
$$
for all $p\ge d_1^{1/6} d_2^{1/2}$, where we used
$(1+\varepsilon)^{-2p}\le 
e^{-\varepsilon p}$ for $0<\varepsilon\le 1$ and $1+x\le e^x$.
If $\varepsilon\ge 64 d_1^{-1/6} d_2^{-1/2}$,
we may choose
$p = \lfloor \frac{1}{4}d_1^{1/4}d_2^{3/4}\sqrt{\varepsilon}\rfloor
\ge d_1^{1/6} d_2^{1/2}$ to obtain
$$
	\mathbf{P}\big[\|X\| \ge
	(d_1^{1/2}+d_2^{1/2})(1+\varepsilon)\big] \lesssim
	\frac{en}{d_1}  e^{-\frac{1}{8}d_1^{1/4}d_2^{3/4}\varepsilon^{3/2}}.
$$
If $\varepsilon< 64 d_1^{-1/6} d_2^{-1/2}$, the same bound is valid as
$$
	\mathbf{P}\big[\|X\| \ge
	(d_1^{1/2}+d_2^{1/2})(1+\varepsilon)\big] \le
	1\lesssim
	\frac{n}{d_1}  e^{-\frac{1}{8}d_1^{1/4}d_2^{3/4}\varepsilon^{3/2}}
$$
using that $\sigma_1^2 \le n\sigma_*^2$ implies $\frac{n}{d_1}\ge 1$.

Combining the above bounds readily yields
$$
	\mathbf{P}\big[\|X\| \ge
	\big({\textstyle \sqrt{\sigma_1^2+1} + \sqrt{\sigma_2^2+1}}\big)
	(1+\varepsilon)\big] \lesssim
	\frac{n}{\sigma_1^2}  
	e^{-\frac{1}{8}\sigma_1^{1/2}\sigma_2^{3/2}\varepsilon^{3/2}}
$$
for $0\le\varepsilon\le 1$. This concludes the proof when $\sigma_*=1$.
For general $\sigma_*$, it suffices to apply the above bound to the
random matrix $\frac{X}{\sigma_*}$.
\end{proof}

We now complete the proof of Theorem \ref{thm:smrect}.

\begin{proof}[Proof of Theorem \ref{thm:smrect}]
Proposition \ref{prop:ledrect} yields
$$
        \mathbf{P}\big[
        \|X\| > 
	\sigma_1+\sigma_2+\tfrac{\sigma_*^2}{\sigma_1}+
	\tfrac{3}{4}\sigma_*^{4/3} \sigma_1^{-1/3}t
        \big]
        \le
        \frac{n\sigma_*^2}{C\sigma_1^2} 
	e^{-\frac{1}{64}t^{3/2}}
$$
by setting $\varepsilon = \frac{1}{4} t 
\sigma_*^{4/3} \sigma_1^{-1/3}\sigma_2^{-1}$,
where we used that $\sigma_1\le\sigma_2$ so that
$$
	{\textstyle
	\sqrt{\sigma_1^2+\sigma_*^2}+\sqrt{\sigma_2^2+\sigma_*^2}} \le
	\sigma_1+\sigma_2+\frac{\sigma_*^2}{\sigma_1} \le
	3\sigma_2.
$$
The conclusion is immediate for
$t\ge \tfrac{4\sigma_*^{2/3}}{\sigma_1^{2/3}}$, and
follows for $t<\tfrac{4\sigma_*^{2/3}}{\sigma_1^{2/3}}\le 4$ as then
$$
	1\le \frac{n\sigma_*^2}{\sigma_1^2} \le
	\frac{en\sigma_*^2}{\sigma_1^2} e^{-\frac{1}{64}t^{3/2}}.
$$
This concludes the proof.
\end{proof}

\subsection{Large deviations}
\label{sec:pfrectlg}

We require the following analogue of Lemma \ref{lem:goeld}.

\begin{lem}
\label{lem:wishld}
Let $Y$ be as in Theorem \ref{thm:rectmom}.
For all $t\ge 0$
$$
	\mathbf{E}\big[\ntr e^{t (YY^*)^{1/2}}\big] \le
	e^{(\sqrt{d_1}+\sqrt{d_2})t + t^2/2}.
$$
\end{lem}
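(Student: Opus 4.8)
The plan is to mimic the proof of Lemma \ref{lem:goeld}: reduce the statement about the normalized trace to a one-sided bound on the largest singular value $\|Y\|$, and then control $\|Y\|$ by combining a bound on its mean with Gaussian concentration.

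First I would observe that $(YY^*)^{1/2}$ is positive semidefinite with largest eigenvalue $\|Y\|$, so for $t\ge 0$ every eigenvalue $\lambda$ obeys $e^{t\lambda}\le e^{t\|Y\|}$; averaging over the $d_1$ eigenvalues gives the deterministic bound $\ntr e^{t(YY^*)^{1/2}}\le e^{t\|Y\|}$. It therefore suffices to prove $\mathbf{E}\big[e^{t\|Y\|}\big]\le e^{(\sqrt{d_1}+\sqrt{d_2})t+t^2/2}$. For the mean, I would invoke the classical estimate $\mathbf{E}[\|Y\|]\le\sqrt{d_1}+\sqrt{d_2}$ for a $d_1\times d_2$ matrix with i.i.d.\ $N(0,1)$ entries (e.g.\ via Gordon's comparison inequality; see \cite[Theorem II.13]{DS01}).

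Next I would note that $Y\mapsto\|Y\|$ is $1$-Lipschitz with respect to the Euclidean norm on the entries: writing $\|Y\|=\sup_{\|u\|=\|v\|=1}\sum_{i,j}Y_{ij}u_iv_j$, the gradient in $(Y_{ij})$ at a maximizing pair $(u,v)$ equals $(u_iv_j)_{i,j}$, whose Euclidean norm is $(\sum_i u_i^2)^{1/2}(\sum_j v_j^2)^{1/2}=1$. Hence the Gaussian log-Sobolev inequality, as used in \cite[eq.\ (5.8)]{Led01}, shows that $\|Y\|$ is $1$-subgaussian around its mean, i.e.\ $\mathbf{E}\big[e^{t\{\|Y\|-\mathbf{E}[\|Y\|]\}}\big]\le e^{t^2/2}$ for all $t$. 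Combining these facts, $\mathbf{E}\big[e^{t\|Y\|}\big]=e^{t\mathbf{E}[\|Y\|]}\,\mathbf{E}\big[e^{t\{\|Y\|-\mathbf{E}[\|Y\|]\}}\big]\le e^{(\sqrt{d_1}+\sqrt{d_2})t+t^2/2}$, which together with the first step yields the conclusion.

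The only point requiring genuine care — and hence the closest thing here to an obstacle — is getting the constant in the subgaussian estimate exactly right: one needs the Lipschitz constant of $\|Y\|$ to be precisely $1$ (not merely $O(1)$) in order to obtain the factor $t^2/2$ in the exponent rather than a larger quadratic term, which is why the explicit gradient computation is worth recording. Everything else is a direct transcription of the GOE argument underlying Lemma \ref{lem:goeld}.
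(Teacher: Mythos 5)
Your proposal is correct and follows exactly the same route as the paper: bound $\ntr e^{t(YY^*)^{1/2}}$ by $e^{t\|Y\|}$, invoke $\mathbf{E}\|Y\|\le\sqrt{d_1}+\sqrt{d_2}$ from Davidson--Szarek, and use Gaussian log-Sobolev concentration with Lipschitz constant $1$ to get the subgaussian factor $e^{t^2/2}$. The only difference is that you spell out the $1$-Lipschitz computation explicitly, which the paper delegates to a citation; this is a harmless elaboration.
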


\begin{proof}
We clearly have $\ntr e^{t(YY^*)^{1/2}}\le e^{t\|Y\|}$. It is shown in 
\cite[Theorem 2.13]{DS01} that $\mathbf{E}\|Y\|\le \sqrt{d_1}+\sqrt{d_2}$.
The Gaussian log-Sobolev inequality implies by \cite[eq.\ 
(5.8)]{Led01} and \cite[\S 2.2]{DS01} that $\|Y\|$ is a 
$1$-subgaussian random variable, that is, that
$$
	\mathbf{E}\big[e^{t\{\|Y\|-\mathbf{E}\|Y\|\}}\big]
	\le e^{t^2/2}
$$
for all $t$. 
Combining these facts yields the conclusion.
\end{proof}

We can now prove Theorem \ref{thm:lgrect}.

\begin{proof}[Proof of Theorem \ref{thm:lgrect}]
Suppose that $\sigma_*=1$, and let 
$d_1=\lceil\sigma_1^2\rceil$,
$d_2=\lceil\sigma_2^2\rceil$.
Then
\begin{align*}
	\frac{1}{2}\,\EE\big[\ntr e^{t(XX^*)^{1/2}}\big] &\le
	\EE[\ntr \cosh(t(XX^*)^{1/2})] \\ &\le
	\EE[\ntr \cosh(t(YY^*)^{1/2})] \le
	\EE\big[\ntr e^{t(YY^*)^{1/2}}\big]
\end{align*}
by Theorem \ref{thm:rectmom},
where we used that the Taylor expansion of the hyperbolic cosine only
has terms of even degree. As $e^{t\|X\|} \le n\ntr e^{t(XX^*)^{1/2}}$, 
we obtain
$$
	\EE[e^{t\|X\|}] \le
	2n\, e^{(\sqrt{d_1}+\sqrt{d_2})t + t^2/2}
$$
using Lemma \ref{lem:wishld}.
By Markov's inequality
$$
	\mathbf{P}\big[\|X\| >
	\sqrt{d_1}+\sqrt{d_2}+ \varepsilon\big] \le
	\frac{\mathbf{E}[e^{t\|X\|}]}{
	e^{(\sqrt{d_1}+\sqrt{d_2} + \varepsilon) t}} \le
	2n\, e^{-\varepsilon t+t^2/2}.
$$
Optimizing over $t\ge 0$ yields
$$
	\mathbf{P}\big[\|X\| > 
	{\textstyle \sqrt{\sigma_1^2+1}+\sqrt{\sigma_2^2+1}}
	+\varepsilon\big] \le
	2n\, e^{-\varepsilon^2/2}
$$
for all $\varepsilon\ge 0$.

For general $\sigma_*$, applying the above bound
to the random matrix $\frac{X}{\sigma_*}$ yields
$$
	\mathbf{P}\big[
	\|X\|> {\textstyle\sqrt{\sigma_1^2+\sigma_*^2} +
	\sqrt{\sigma_2^2+\sigma_*^2}}
	+ \sigma_*\varepsilon\big] 
	\le
	2n\, e^{-\varepsilon^2/2}
$$
for all $\varepsilon\ge 0$.
To conclude the proof, it remains to use that 
$\sqrt{\sigma_1^2+\sigma_*^2}+
\sqrt{\sigma_2^2+\sigma_*^2}
\le
\sigma_1+\sigma_2+\frac{\sigma_*^2}{2\sigma_1}+\frac{\sigma_*^2}{2\sigma_2}
\le
\sigma_1+\sigma_2+\sigma_*$.
\end{proof}

\section{Moment estimates for Wishart matrices}
\label{sec:wishart}

The aim of this section is to prove the moment estimate for Wishart 
matrices that was used in Lemma \ref{lem:rectled} above. Throughout this 
section, we fix
$$
	n\le m,\qquad\qquad
	c:=\frac{m}{n}\ge 1.
$$
We let $Y$ be an $n\times m$ matrix with i.i.d.\ $N_{\mathbb{R}}(0,1)$ 
entries, and $Z$ be an $n\times m$ matrix with i.i.d.\ 
$N_{\mathbb{C}}(0,1)$ entries. Our main results are as follows.

\begin{thm}[Complex Wishart moments]
\label{thm:cwish}
For $p\in\mathbb{N}$, we have
$$
	\EE[\ntr (ZZ^*)^p] \lesssim
	n^p (\sqrt{c}+1)^{2p}
	\bigg(
	1+
	\frac{2p^2}{c^{3/2}n^2}
	\bigg)^p
	\frac{c^{3/4}}{p^{3/2}}.
$$
\end{thm}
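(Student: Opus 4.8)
The plan is to follow the method pioneered by Ledoux for the GUE and GOE ensembles: first obtain an exact three-term recurrence for the Wishart moments, and then extract the sharp bound from it by a careful induction. Write $W:=ZZ^*$ and $L_p:=\mathbf{E}[\tr W^p]$, so that the assertion is equivalent to $L_p\lesssim n^{p+1}(\sqrt c+1)^{2p}(1+2p^2c^{-3/2}n^{-2})^p\,c^{3/4}p^{-3/2}$. First I would establish, via Gaussian integration by parts and the loop equations for the complex Wishart model (handling the covariance terms $\mathrm{Cov}(\tr W^j,\tr W^{p-1-j})$ that arise), or equivalently from the three-term recurrence of the generalized Laguerre orthogonal polynomials, the Laguerre analogue of the Harer--Zagier formula,
$$
(p+1)L_p=(2p-1)(n+m)L_{p-1}+(p-2)\,[(p-1)^2-(m-n)^2]\,L_{p-2},\qquad p\ge 2,
$$
with $L_0=n$ and $L_1=nm$. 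The decisive difference from the GUE/GOE recursions is that the coefficient of $L_{p-2}$ contains the term $-(m-n)^2$ and is therefore \emph{negative} when $p-1<m-n$; this is the feature responsible for the extra difficulty flagged in the introduction, and the reason the argument must be uniform over all $n\le m$.

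Next I would pass to $\ell_p:=L_p/n^{p+1}$, so that with $c=m/n$ the recurrence reads $(p+1)\ell_p=(2p-1)(c+1)\ell_{p-1}+(p-2)[(p-1)^2n^{-2}-(c-1)^2]\ell_{p-2}$ and the target is $\ell_p\lesssim(\sqrt c+1)^{2p}(1+2p^2c^{-3/2}n^{-2})^p\,c^{3/4}p^{-3/2}$. The heuristic driving the choice of ansatz is that $\ell_p$ grows like $\rho^p$, where $\rho$ is the larger root of $\rho^2-2(c+1)\rho+(c-1)^2-(p/n)^2=0$, i.e. $\rho=(c+1)+\sqrt{4c+(p/n)^2}$; this equals $(\sqrt c+1)^2$ when $p\ll n$ and otherwise produces a correction factor of the form $(1+\Theta(p^2c^{-3/2}n^{-2}))^p$, while the universal factor $p^{-3/2}$ and the prefactor $c^{3/4}$ reflect the square-root vanishing of the limiting rescaled Marchenko--Pastur density at its upper edge $(\sqrt c+1)^2$, where the edge coefficient is of order $c^{1/4}(\sqrt c+1)^{-2}$. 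Accordingly I would posit an inductive bound of the shape $\ell_p\le K\,c^{3/4}p^{-3/2}(\sqrt c+1)^{2p}(1+\kappa p^2c^{-3/2}n^{-2})^p$ for suitable universal constants $K,\kappa$ and verify that it propagates through the recurrence: the identity $\rho^2=2(c+1)\rho+(p/n)^2-(c-1)^2$ is exactly what balances the leading $(\sqrt c+1)^{2p}$ contributions; the ratios $(p/(p-1))^{3/2}$ and $(p/(p-2))^{3/2}$ contribute only lower-order corrections absorbed into $K$; the constant $\kappa$ is fixed by the elementary inequality bounding $(1+\kappa(p-1)^2c^{-3/2}n^{-2})^{p-1}$ in terms of $(1+\kappa p^2c^{-3/2}n^{-2})^p$; and the small values of $p$ are disposed of directly from the explicit $L_0,L_1,L_2$.

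The hard part will be the intermediate range $3\le p\lesssim n(c-1)$, where the coefficient of $L_{p-2}$ is negative. One cannot merely discard that term: doing so yields only the growth rate $\frac{2p-1}{p+1}(c+1)\to 2(c+1)$, which for $c>1$ and $p$ not too small strictly exceeds the required rate $(\sqrt c+1)^2$. Instead the negative term must be actively used to pull the growth rate down to $(\sqrt c+1)^2$, which forces the induction to carry a matching \emph{lower} bound $\ell_{p-2}\ge\psi_{p-2}$ alongside the upper bound --- a coupled two-sided induction $\psi_p\le\ell_p\le\phi_p$ --- and the delicate task is to show that both inequalities propagate simultaneously even as the sign of the $L_{p-2}$-coefficient varies with $p,n,m$. (A variant would instead work with the generating function $\sum_p L_p z^p$, which satisfies a second-order linear ODE, via singularity and saddle-point analysis; but obtaining $n$-uniform bounds this way meets the same obstacle, since the location of the dominant singularity depends on $n$.) Finally, since $\mathbf{E}[\ntr(ZZ^*)^p]=n^p\ell_p$, the bound on $\ell_p$ together with $c^{1/4}(\sqrt c+1)\asymp c^{3/4}$ for $c\ge 1$ gives the stated inequality, the universal constant in $\lesssim$ absorbing the slack accumulated in these estimates; the real Wishart estimate required elsewhere follows along the same lines from its own three-term recurrence.
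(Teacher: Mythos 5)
Your recurrence is exactly the Haagerup--Thorbj{\o}rnsen recursion that the paper uses (Theorem~\ref{thm:ap}), after clearing denominators and shifting the index, and you correctly put your finger on the central obstacle: for $p<(c-1)n$ the coefficient of $L_{p-2}$ is negative, so discarding it overshoots the growth rate to $2(c+1)>(\sqrt c+1)^2$, and some form of lower-bound information on $\ell_{p-2}$ seems necessary. But your proposed fix --- a coupled two-sided induction carrying a lower bound $\psi_p$ alongside the upper bound $\phi_p$ --- is not what the paper does, and it would be substantially harder to close. The paper instead passes to the Catalan-normalized ratio $K_p:=\frac{A_{p+1}}{A_p}\frac{\chi_p}{\chi_{p+1}}$ (with $\chi_p=4^{-p}C_p$), which satisfies
$$
K_p=2(c+1)-\Big(1-\tfrac{3}{4p^2-1}\Big)\Big((c-1)^2-\tfrac{p^2}{n^2}\Big)\frac{1}{K_{p-1}}.
$$
Because $K_{p-1}$ now sits in a denominator with a minus sign, an \emph{upper} bound on $K_{p-1}$ directly propagates to an upper bound on $K_p$ --- the lower-bound information you wanted is supplied automatically. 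This is the key structural observation your plan is missing; without it, the sign problem doesn't dissolve, it just gets pushed into a harder two-variable induction.

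There is a second, quieter gap. You posit a single inductive ansatz of the form $\ell_p\le K\,c^{3/4}p^{-3/2}(\sqrt c+1)^{2p}(1+\kappa p^2 c^{-3/2}n^{-2})^p$, but this cannot be pushed through the recursion uniformly in $k$: near $k\sim 1$ such a bound would force $K_k/\lambda$ to be $O(c^{3/4})$, which is false. The paper gets the $c^{3/4}p^{-3/2}$ prefactor by proving \emph{two distinct} bounds on $K_k$ (Lemmas~\ref{lem:csmk} and~\ref{lem:clgk}), namely $K_k\le(1+\tfrac{3}{2k})\lambda$ used for $k\lesssim\sqrt c$ (whose telescoping product produces the $c^{3/4}$) and $K_k\le(1+\tfrac{2\sqrt c}{k^2})\lambda$ used for $k\gtrsim\sqrt c$ (whose product is bounded). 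Neither regime alone suffices, and the optimal constant $\tfrac{3}{2}$ in the first bound is load-bearing, as the paper remarks. Your heuristic for where $c^{3/4}p^{-3/2}$ comes from (the square-root edge of Marchenko--Pastur) is sound, and your treatment of the regime $p\gtrsim(c-1)n$ and the small-$p$ base cases would go through; but as written, the core induction for $1\ll p\ll(c-1)n$ is not yet a proof.
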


\begin{thm}[Real Wishart moments]
\label{thm:rwish}
For $p\in\mathbb{N}$, we have
$$
	\EE[\ntr (YY^*)^p] \lesssim
	n^p (\sqrt{c}+1)^{2p}
	\bigg(
	1+
	\frac{8p^2}{c^{3/2}n^2}
	\bigg)^p
	\bigg(
	\frac{c^{3/4}}{p^{3/2}}
	+\frac{1}{n}\bigg).
$$
\end{thm}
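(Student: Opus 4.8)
The plan is to prove Theorem~\ref{thm:rwish} by first establishing an exact recursion for the real Wishart moments via Gaussian integration by parts, and then extracting the stated bound by strong induction on $p$, in the spirit of Ledoux's treatment of the GOE. Write $T_p := \EE[\tr(YY^*)^p]$, so that $\EE[\ntr(YY^*)^p] = T_p/n$. The first step is the identity
\[
	T_p = (m+p-1)\,T_{p-1} + \sum_{s=0}^{p-2} T_s\,T_{p-1-s},
	\qquad T_0 = n ,
\]
obtained by writing $\tr(YY^*)^p = \sum_{i,j}\big((YY^*)^{p-1}Y\big)_{ij}\,Y_{ij}$ and integrating by parts in each variable $Y_{ij}$: differentiating one of the $p$ factors $Y$ in $(YY^*)^{p-1}Y$ produces either $m\,T_{p-1}$ (the last factor) or one of the splitting terms $T_sT_{p-1-s}$, while differentiating one of the $p-1$ factors $Y^*$ --- a contribution absent in the complex case --- produces the extra linear term $(p-1)\,T_{p-1}$. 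The same computation for the complex model gives $\tilde T_p := \EE[\tr(ZZ^*)^p] = m\,\tilde T_{p-1} + \sum_{s=0}^{p-2}\tilde T_s\,\tilde T_{p-1-s}$, so the two recursions (with equal initial condition $\tilde T_0 = T_0 = n$) differ \emph{only} in the term $(p-1)T_{p-1}$. This is exactly the origin of the extra $\tfrac1n$ summand in Theorem~\ref{thm:rwish}, as opposed to the pure $\tfrac{c^{3/4}}{p^{3/2}}$ decay of Theorem~\ref{thm:cwish}, which can itself be proved from the complex recursion.

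The second step is the inductive estimate. With $\rho := (\sqrt m+\sqrt n)^2 = n(\sqrt c+1)^2$, I would carry the hypothesis $T_{p'} \le \rho^{p'}\big(1+\tfrac{8(p')^2}{c^{3/2}n^2}\big)^{p'}\,\Phi(p')$ for all $p' < p$, where $\Phi(q) := C\big(\tfrac{n c^{3/4}}{q^{3/2}} + 1\big)$ is a constant multiple of the target envelope. Substituting into the recursion: the linear term $m\,T_{p-1}$ together with the near-endpoint terms of the convolution ($T_0 T_{p-1} = n\,T_{p-1}$, then $T_1 T_{p-2}$, etc.) regenerates the growth factor $\rho = m + n + 2\sqrt{mn}$ to the power $p$; the interior of the convolution $\sum_{1\le s\le p-2}T_sT_{p-1-s}$, after inserting the hypothesis, reduces to a weighted Vandermonde-type sum $\sum_s\big(\tfrac{c^{3/4}}{s^{3/2}}+\tfrac1n\big)\big(\tfrac{c^{3/4}}{(p-1-s)^{3/2}}+\tfrac1n\big)$ times a product of two exponential factors, and is what produces the correction $\big(1+\tfrac{8p^2}{c^{3/2}n^2}\big)^p$; finally the extra term $(p-1)T_{p-1}$, acting on the $\tfrac1n$-part of $T_{p-1}$, lands inside the $\tfrac1n$-part of $T_p$ once the factor $p-1$ is absorbed by the slack in the exponential factor, and acting on the $\tfrac{c^{3/4}}{(p-1)^{3/2}}$-part is dominated by the convolution contribution. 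One then checks that $C$ and the constant in the exponent can be chosen so the hypothesis reproduces itself, and that the small-$p$ cases ($T_1 = mn$, $T_2 = mn(m+n+1)$, \dots) satisfy the bound directly; Theorem~\ref{thm:cwish} can be invoked to supply the complex part, leaving only $T_p - \tilde T_p$ to be controlled by hand.

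The step I expect to be the main obstacle is the bookkeeping in this induction, and in particular making every estimate uniform over the whole range $1\le n\le m$ rather than the proportional regime $m\asymp n$ alone: the parameter $c = m/n$ enters the envelope through the competing powers $c^{3/4}$ and $c^{3/2}$, and the recursion mixes $n$-scalings (the splitting terms and $T_0=n$) with $m$-scalings (the term $m\,T_{p-1}$), so one must balance them so the bound degrades gracefully as $c\to\infty$ (where $T_p\approx m\,T_{p-1}$ and the corrections are $O(m^{-3/2})$) while remaining sharp near $c=1$. A second delicate point is the convolution: the $p^{-3/2}$ weights must be combined carefully --- the sum is dominated by its two ends, which is exactly what keeps the $p^{-3/2}$ decay alive across the recursion --- and the exponential factors $\big(1+\tfrac{8s^2}{c^{3/2}n^2}\big)^s$ and $\big(1+\tfrac{8(p-1-s)^2}{c^{3/2}n^2}\big)^{p-1-s}$ must be shown to multiply up to at most $\big(1+\tfrac{8p^2}{c^{3/2}n^2}\big)^p$; this is where the constant $8$, rather than the $2$ that suffices in the complex case, is needed, since it leaves room to also absorb the extra $(p-1)T_{p-1}$ term.
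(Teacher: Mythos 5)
Your integration-by-parts recursion is not correct, and this is fatal to the whole plan. Differentiating an interior $Y$-factor of the word $(YY^*)^{p-1}Y$ produces a product of two traces $\tr(W^s)\cdot\tr(W^{p-1-s})$ with $W=YY^*$ \emph{before} taking expectation; since both factors are functions of the \emph{same} matrix $Y$, the resulting contribution is the mixed moment $\EE\big[\tr(W^s)\,\tr(W^{p-1-s})\big]$, not the product of expectations $T_sT_{p-1-s}$. These differ by $\Cov\big(\tr W^s,\tr W^{p-1-s}\big)$, which does not vanish. Already at $p=3$ the error is visible: your recursion gives $T_3=(m+2)T_2+T_0T_2+T_1^2 = nm^3+n^3m+3n^2m^2+3nm^2+3n^2m+2nm$, whereas the true value (by a direct Wick computation, or from Theorem~\ref{thm:bp}) is $nm^3+n^3m+3n^2m^2+3nm^2+3n^2m+4nm$; the missing $2nm$ is precisely $\mathrm{Var}(\tr YY^*)$, i.e.\ the covariance dropped from the $s=1$ summand. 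The same defect is in your complex recursion, whose $\tilde T_3$ comes out short by $nm=\mathrm{Var}(\tr ZZ^*)$. This is the standard loop-equation obstruction: integration by parts on a one-trace moment does not close on the one-trace sequence but pulls in two-trace correlators, and a naive factorization $T_sT_{p-1-s}$ is only valid asymptotically, not at the level of exact identities.

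The paper avoids this entirely by using \emph{linear} three-term recursions in $p$ alone -- Theorem~\ref{thm:ap} (Haagerup--Thorbj{\o}rnsen) for the complex moments $A_p$ and Theorem~\ref{thm:bp} (Cunden et al.) for the real moments $B_p$, together with $D_p=B_p-A_p'$ -- and then analyzes the ratio sequence $K_p=(A_{p+1}/A_p)(\chi_p/\chi_{p+1})$ against the Catalan recursion \eqref{eq:catalan}, splitting on the sign of $(c-1)^2-p^2/n^2$. Even if one could repair the recursion, a convolution-based induction with an envelope $\sim p^{-3/2}$ faces a second structural obstacle: since $\sum_{s=1}^{p-2} s^{-3/2}(p-1-s)^{-3/2}\sim 2\zeta(3/2)(p-1)^{-3/2}$ and $2\zeta(3/2)/\sqrt{\pi}>2$, feeding the asymptotic envelope $4^p/(\sqrt\pi p^{3/2})$ into a Catalan-type convolution overshoots the target $4/p^{3/2}$, so the hypothesis does not self-propagate. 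Closing that induction would require exact small-$s$ values or a near-endpoint correction to the envelope -- a real delicacy that the linear-recursion route simply does not encounter.
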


A moment estimate for complex Wishart matrices was previously obtained by 
Ledoux in \cite[p.\ 201]{Led04}. However, the constants in Ledoux' 
estimate depend in an unspecified manner on the aspect ratio $c$, making 
it unsuitable for the purposes of this paper.\footnote{The argument of 
\cite[p.\ 201]{Led04} also contains a further issue, that the recursion in 
eq.\ (29) of that paper does not imply the inequality for $b_p$ claimed 
subsequently.} The difficulty in the proofs of Theorems 
\ref{thm:cwish} and \ref{thm:rwish} is to obtain bounds that have optimal
dependence on $c$, which requires a more delicate understanding of the 
structure of the associated moment recursions.

\begin{rem}
To illustrate the sharpness of Theorems \ref{thm:cwish} and \ref{thm:rwish}, 
let us make two observations. First, the argument of section 
\ref{sec:pfrectsm} shows that these moment estimates yield tail bounds 
as in Theorem \ref{thm:smrect}, which match the exact Tracy-Widom 
asymptotics \eqref{eq:wishart} with the optimal order of the fluctuations 
and tail behavior.

On the other hand, if we let $n\to\infty$ with
$p,c$ fixed, it is classical \cite[p.\ 368]{NS06}
that both $n^{-p}\EE[\ntr (YY^*)^p]$ and
$n^{-p}\EE[\ntr (ZZ^*)^p]$ converge to the $p$-moment $\chi_p^c$
of the Marchenko–Pastur distribution. By using the 
explicit formula for its generating function \cite[p.\ 205]{NS06},
the Darboux method \cite[Theorem 5.11]{Wil06} yields 
$$
	\chi_p^c = (1+o(1))\, (\sqrt{c}+1)^{2p} \,
	\frac{c^{1/4}(\sqrt{c}+1)}{2\sqrt{\pi}\,p^{3/2}}
$$
as $p\to\infty$. The estimates of Theorems \ref{thm:cwish} and 
\ref{thm:rwish} reproduce the exact asymptotics of $\chi_p^c$
precisely up to a universal constant.
\end{rem}

The remainder of this section is organized as follows. In section 
\ref{sec:wrec}, we recall the recursive formulas of \cite{HT03,CMSV16} for 
the moments of complex and real Wishart matrices. Let us emphasize at the 
outset that the recursive formula for real Wishart moments involves 
complex Wishart moments, so that we must consider the latter even if one 
is ultimately only interested in real Wishart matrices. We then prove 
Theorem \ref{thm:cwish} in section \ref{sec:pfcw}, and finally prove 
Theorem \ref{thm:rwish} in section \ref{sec:pfrw}.

\subsection{Moment recursions}
\label{sec:wrec}

In addition to the random matrices $Y$ and $Z$ defined above, we also 
introduce an $(n-1)\times (m-1)$ matrix $Z'$ with i.i.d.\ 
$N_{\mathbb{C}}(0,1)$ entries. Throughout the remainder of this section,
we define
$$
	A_p := \frac{\EE[\tr(ZZ^*)^p]}{n^{p+1}},\qquad
	A_p' := \frac{\EE[\tr(Z'Z^{\prime *})^p]}{n^{p+1}},\qquad
	B_p := \frac{\EE[\tr(YY^*)^p]}{n^{p+1}},
$$
so that $\EE[\ntr (ZZ^*)^p]=n^pA_p$ and $\EE[\ntr (YY^*)^p]=n^pB_p$.

The proofs of the main results of this section are based on recursive 
formulas for $A_p,A_p',B_p$ that we presently recall. The following 
recursive formula for $A_p$ was obtained by Haagerup and Thorbj{\o}rnsen 
\cite[Theorem 8.2]{HT03}.

\begin{thm}[Haagerup-Thorbj{\o}rnsen]
\label{thm:ap}
For $p\ge 1$
$$
	A_{p+1} = 2(c+1)\,\frac{2p+1}{2p+4}\,A_p -
	\bigg(
	(c-1)^2 - \frac{p^2}{n^2}
	\bigg)\,
	\frac{p-1}{p+2}\,A_{p-1},
$$
with the initial conditions $A_0=1$ and $A_1=c$.
\end{thm}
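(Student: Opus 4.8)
The recursion is classical and goes back to \cite{HT03}; the plan is to derive it from the orthogonal-polynomial structure of the complex Wishart ensemble. Since $n\le m$, the nonzero eigenvalues of $ZZ^*$ form a determinantal point process (the Laguerre unitary ensemble) whose one-point density is $\rho_n(x)=\sum_{k=0}^{n-1}\varphi_k(x)^2$, where $\varphi_k$ are the $L^2(dx)$-orthonormal Laguerre functions with parameter $\gamma:=m-n\ge 0$, that is, $\varphi_k(x)=\sqrt{k!/\Gamma(k+\gamma+1)}\,L_k^{(\gamma)}(x)\,x^{\gamma/2}e^{-x/2}$. Hence
$$
\EE[\tr (ZZ^*)^p]=\int_0^\infty x^p\rho_n(x)\,dx=\sum_{k=0}^{n-1}T_k(p),\qquad T_k(p):=\int_0^\infty x^p\varphi_k(x)^2\,dx,
$$
and it suffices to produce a three-term recursion in $p$ for $\sum_{k<n}T_k(p)$ whose coefficients have the stated form.

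The mechanism is to combine the two standard structural relations for $\varphi_k$: the three-term recurrence $x\varphi_k=b_k\varphi_k-a_k\varphi_{k+1}-a_{k-1}\varphi_{k-1}$ with $b_k=2k+1+\gamma$ and $a_k=\sqrt{(k+1)(k+1+\gamma)}$, together with the second-order ODE satisfied by $\varphi_k$ (equivalently, the raising and lowering relations for $L_k^{(\gamma)}$). Concretely, I would integrate $x^p$ against the derivative of a suitable bilinear expression in $\varphi_k,\varphi_{k-1}$ (a Wronskian-type quantity), using the ODE to eliminate all second derivatives and the three-term recurrence to collapse the $\varphi_k\varphi_{k\pm1}$ cross-terms; after integration by parts every surviving integral reduces to $T_k(p-1),T_k(p),T_k(p+1)$ and the neighbouring quantities $T_{k\pm1}(\cdot)$. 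Summing this single-index identity over $k=0,\dots,n-1$, the terms coupling adjacent indices telescope and only a boundary contribution at $k=n$ remains, governed by $a_{n-1}^2=n(n+\gamma)=nm$ and $b_{n-1}=n+m-1$; meanwhile the genuinely $k$-independent part of the per-$k$ identity contributes the combination $b_k^2-2a_{k-1}^2-2a_k^2=\gamma^2-1=(m-n)^2-1$, and the power $x^p$ in the integration by parts supplies the further shift $1-p^2$, producing the coefficient $(m-n)^2-p^2$. One is left with
$$
(p+2)\,\EE[\tr (ZZ^*)^{p+1}]=(m+n)(2p+1)\,\EE[\tr (ZZ^*)^{p}]-(p-1)\big((m-n)^2-p^2\big)\,\EE[\tr (ZZ^*)^{p-1}],
$$
and dividing by $n^{p+2}$ and recalling $A_p=n^{-p-1}\EE[\tr(ZZ^*)^p]$ gives exactly the asserted recursion; the base cases follow from $\EE[\tr (ZZ^*)^0]=n$ (so $A_0=1$) and $\EE[\tr ZZ^*]=nm$ (so $A_1=m/n=c$).

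The main obstacle is the bookkeeping in the middle step: one must choose the bilinear form and the integration by parts so that the recursion genuinely closes \emph{in $p$} (a naive choice also shifts the Laguerre index $k$ and does not telescope), and then track the boundary term at $k=n$ and all the $p$-dependent constants without sign errors, since it is precisely here that the delicate $p^2/n^2$ correction is generated. It is safest to first record the closed recursion satisfied by the individual integrals $T_k(p)$ and only afterwards sum over $k$; as an independent check one can verify the final recursion at $p=1$ against the direct computation $\EE[\tr(ZZ^*)^2]=nm(n+m)$. An alternative, more self-contained route is the loop-equation method: Gaussian integration by parts in the entries of $Z$ gives $\EE[\tr(ZZ^*)^{p+1}]=m\,\EE[\tr(ZZ^*)^p]+\sum_{r=0}^{p-1}\EE[\tr(ZZ^*)^r\,\tr(ZZ^*)^{p-r}]$, after which one evaluates the disconnected and connected parts of the product of traces; for the complex Wishart ensemble the connected two-point function is explicit, so a second integration by parts closes the recursion, but controlling that covariance term carefully is at least as delicate as the telescoping argument above.
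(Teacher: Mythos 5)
The paper does not prove Theorem \ref{thm:ap}: it is stated as a citation of \cite[Theorem 8.2]{HT03}, so there is no ``paper's own proof'' to compare against. Your outline follows the same orthogonal-polynomial route that Haagerup--Thorbj{\o}rnsen themselves use: represent $\EE[\tr(ZZ^*)^p]$ via the Laguerre unitary ensemble density $\rho_n=\sum_{k<n}\varphi_k^2$, use the three-term recurrence and the ODE for the Laguerre functions together with integration by parts of a Wronskian-type bilinear form, and let the index-$k$ sum telescope. Your normalization check is right (dividing by $n^{p+2}$ and using $A_p=n^{-p-1}\EE[\tr(ZZ^*)^p]$, $\frac{m+n}{n}=c+1$, $\frac{(m-n)^2}{n^2}=(c-1)^2$, and $2\frac{2p+1}{2p+4}=\frac{2p+1}{p+2}$ recovers exactly the stated recursion), the initial conditions $A_0=1$, $A_1=c$ are correct, and the small identity $b_k^2-2a_{k-1}^2-2a_k^2=\gamma^2-1$ you quote does hold.

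However, what you have written is a program, not a proof. The decisive step --- choosing the bilinear in $\varphi_k,\varphi_{k-1}$, performing the integration by parts of $x^p$ against it, eliminating second derivatives via the ODE, and checking that the per-$k$ identity closes in $p$ and telescopes in $k$ with precisely the boundary term at $k=n$ and the shift from $\gamma^2-1$ to $(m-n)^2-p^2$ --- is described but not carried out, and you explicitly concede as much (``the main obstacle is the bookkeeping in the middle step''). It is exactly in this bookkeeping that the delicate $p^2/n^2$ term and the factors $(2p+1)/(p+2)$ and $(p-1)/(p+2)$ arise, so the argument is not verified. The alternative loop-equation route you mention has the same status: plausible and standard, but the covariance term is not controlled in your sketch. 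As a self-contained proof this has a genuine gap; as a pointer to the mechanism by which \cite{HT03} establish the recursion, it is accurate and well-aimed.
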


A direct consequence is the following.

\begin{cor}
\label{cor:appr}
For $p\ge 1$
$$
	A_{p+1}' = 2\bigg(c+1-\frac{2}{n}\bigg)\,\frac{2p+1}{2p+4}\,A_p' -
	\bigg(
	(c-1)^2 - \frac{p^2}{n^2}
	\bigg)\,
	\frac{p-1}{p+2}\,A_{p-1}',
$$
with $A_0'=\frac{n-1}{n}$ and
$A_1'=\frac{n-1}{n}(c-\frac{1}{n})$. Moreover,
$A_p'\le A_p$ for all $p$.
\end{cor}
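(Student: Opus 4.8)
The plan is to apply the Haagerup--Thorbj{\o}rnsen recursion of Theorem \ref{thm:ap} to the matrix $Z'$ itself and then rescale. Since $Z'$ is an $(n-1)\times(m-1)$ matrix with i.i.d.\ $N_{\mathbb{C}}(0,1)$ entries, and $n-1\le m-1$ with aspect ratio $c':=\frac{m-1}{n-1}\ge 1$, Theorem \ref{thm:ap} applies verbatim with $(n,m,c)$ replaced by $(n-1,m-1,c')$ (the case $n=1$ being trivial, as then $A_p'\equiv 0$). Writing $a_p:=\frac{\EE[\tr(Z'Z^{\prime *})^p]}{(n-1)^{p+1}}$, this yields
$$
	a_{p+1} = 2(c'+1)\,\tfrac{2p+1}{2p+4}\,a_p - \Big((c'-1)^2-\tfrac{p^2}{(n-1)^2}\Big)\tfrac{p-1}{p+2}\,a_{p-1},\qquad a_0=1,\quad a_1=c'.
$$

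First I would substitute $A_p' = (\tfrac{n-1}{n})^{p+1}a_p$, that is $a_p = (\tfrac{n}{n-1})^{p+1}A_p'$, into this recursion and multiply through by $(\tfrac{n-1}{n})^{p+2}$; this multiplies the first coefficient $2(c'+1)\tfrac{2p+1}{2p+4}$ by $\tfrac{n-1}{n}$ and the second coefficient by $(\tfrac{n-1}{n})^2$. The claimed recursion then follows from the elementary identities $(c'+1)\tfrac{n-1}{n}=c+1-\tfrac2n$, $(c'-1)\tfrac{n-1}{n}=c-1$, and $\tfrac{1}{n-1}\cdot\tfrac{n-1}{n}=\tfrac1n$, each immediate from $c=\tfrac mn$ and $c'=\tfrac{m-1}{n-1}$; in particular the factor $(c'-1)^2-\tfrac{p^2}{(n-1)^2}$, carrying a $(\tfrac{n-1}{n})^2$, becomes exactly $(c-1)^2-\tfrac{p^2}{n^2}$. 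The only mildly delicate point is keeping track of the powers of $\tfrac{n-1}{n}$, which I would sanity-check by verifying the stated recursion directly at $p=1$. The initial conditions are immediate: $A_0'=\tfrac{\EE[\tr I_{n-1}]}{n}=\tfrac{n-1}{n}$ and $A_1'=\tfrac{\EE[\tr Z'Z^{\prime *}]}{n^2}=\tfrac{(n-1)(m-1)}{n^2}=\tfrac{n-1}{n}(c-\tfrac1n)$ using $\EE|Z'_{ij}|^2=1$.

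For the inequality $A_p'\le A_p$ I would use a coupling: realize $Z'$ as the top-left $(n-1)\times(m-1)$ submatrix of $Z$, which has the correct distribution, so that it suffices to prove the pointwise bound $\tr(Z'Z^{\prime *})^p\le\tr(ZZ^*)^p$. Writing $Z'=PZQ$ with $P$ and $Q$ the coordinate projections onto the first $n-1$ rows and the first $m-1$ columns, the bound $s_k(AB)\le s_k(A)\,\|B\|$ (Weyl) applied twice gives $s_k(PZQ)\le s_k(Z)$ for every $k$, since $\|P\|,\|Q\|\le 1$. Hence $\tr(Z'Z^{\prime *})^p=\sum_k s_k(PZQ)^{2p}\le\sum_k s_k(Z)^{2p}=\tr(ZZ^*)^p$, and taking expectations gives $A_p'\le A_p$ for $p\ge 1$ (while $A_0'=\tfrac{n-1}{n}\le 1=A_0$). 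I expect the whole argument to be routine; the re-indexing in the rescaling step is the only place where care is genuinely needed.
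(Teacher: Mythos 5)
Your proposal is correct, and the rescaling argument for the recursion is essentially the same calculation the paper has in mind when it says the recursion ``follows readily'' from Theorem~\ref{thm:ap}; you simply supply the details (apply the Haagerup--Thorbj{\o}rnsen recursion to the $(n-1)\times(m-1)$ matrix $Z'$ with aspect ratio $c'=\frac{m-1}{n-1}$, substitute $a_p=(\tfrac{n}{n-1})^{p+1}A_p'$, and observe that the factor $(\tfrac{n-1}{n})^2$ exactly converts $(c'-1)^2-\tfrac{p^2}{(n-1)^2}$ into $(c-1)^2-\tfrac{p^2}{n^2}$ while the single factor $\tfrac{n-1}{n}$ turns $c'+1$ into $c+1-\tfrac{2}{n}$). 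For the inequality $A_p'\le A_p$, however, you take a genuinely different route from the paper. The paper couples $Z'$ to the principal $(n-1)\times(m-1)$ submatrix of $Z$ and then applies Jensen's inequality to the convex map $Z\mapsto\tr(ZZ^*)^p$ after conditioning on that block, using that the remaining entries have mean zero so that $\EE[Z\mid Z']$ is $Z'$ padded with zeros. You use the same coupling but replace the Jensen step with a deterministic singular-value interlacing bound: writing $Z'=PZQ$ with coordinate projections of norm one, the Weyl-type inequality $s_k(AB)\le s_k(A)\,\|B\|$ gives $s_k(Z')\le s_k(Z)$ for every $k$, whence $\tr(Z'Z'^*)^p\le\tr(ZZ^*)^p$ pointwise. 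Your argument is somewhat more elementary in that it yields a deterministic comparison valid for any submatrix and does not invoke the mean-zero structure of the off-block entries; the paper's Jensen argument is a one-liner given standard convexity facts. Both are correct and of comparable length.
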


\begin{proof}
The recursion for $A_p'$ follows readily from Theorem \ref{thm:ap}.
The inequality $A_p'\le A_p$ follows from Jensen's inequality by the 
convexity of $Z\mapsto \tr (ZZ^*)^p$ by conditioning on the
$(n-1)\times (m-1)$ principal submatrix of $Z$.
\end{proof}

The reason that we consider the modified moments $A_p'$ is that they 
appear in the following moment recursion for $B_p$ due to Cunden et al.\ 
\cite[Theorem 3.5]{CMSV16} (the values of $B_1,B_2$ stated here are 
readily obtained by a straightforward computation).

\begin{thm}[Cunden et al.]
\label{thm:bp}
For $p\ge 2$
\begin{multline*}
	B_{p+1} = 2\bigg(c+1-\frac{1}{n}\bigg)B_p
	-\bigg(
	(c-1)^2 - \frac{4p(p-1)+1}{n^2}
	\bigg)B_{p-1}
\\
	+
	\frac{3}{p-1}
	\bigg[
	\bigg(c+1-\frac{p+1}{n}\bigg) A_p'
	- A_{p+1}'
	\bigg],
\end{multline*}
with $B_0=1$, $B_1=c$, and $B_2=(c+1+\frac{1}{n})c$.
\end{thm}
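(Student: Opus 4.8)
The recursion for $p\ge 2$ is exactly \cite[Theorem 3.5]{CMSV16}, so my plan is to invoke that result verbatim (after translating to the normalization $B_p=\EE[\tr(YY^*)^p]/n^{p+1}$ used here) and to supply only the verification of the three initial conditions $B_0=1$, $B_1=c$, $B_2=(c+1+\tfrac1n)c$. A self-contained derivation of the recursion itself is not something I would attempt: it rests on the loop-equation / orthogonal-polynomial analysis of the Laguerre orthogonal ensemble in \cite{CMSV16}, whose genuinely delicate feature is that the real recursion couples to the \emph{complex} Wishart moments $A_p'$ of dimension $(n-1)\times(m-1)$ --- a reflection of the classical $\beta=1$/$\beta=2$ relation between Laguerre ensembles, and precisely the reason we must also track $A_p'$ via Corollary \ref{cor:appr}. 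I would treat this coupling as a black box; note that, modulo the inhomogeneous term $\frac{3}{p-1}[(c+1-\frac{p+1}{n})A_p'-A_{p+1}']$ and the $O(1/n)$ corrections, the homogeneous part of the recursion for $B_p$ has the same three-term shape as the complex recursion of Theorem \ref{thm:ap}.

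For the initial conditions, recall $Y$ is $n\times m$ with i.i.d.\ $N(0,1)$ entries. The case $p=0$ is immediate since $\tr I_n=n$. For $p=1$, $\tr(YY^*)=\sum_{i\le n}\sum_{j\le m}Y_{ij}^2$ has expectation $nm$, hence $B_1=m/n=c$. For $p=2$, I would expand
$$
\tr\big((YY^*)^2\big)=\sum_{i,i'\le n}\ \sum_{j,j'\le m}Y_{ij}Y_{i'j}Y_{i'j'}Y_{ij'}
$$
and apply the Wick formula to each summand: $\EE[Y_{ij}Y_{i'j}Y_{i'j'}Y_{ij'}]$ is the sum over the three pairings of the four factors, contributing $\mathbf{1}_{i=i'}$, $\mathbf{1}_{i=i'}\mathbf{1}_{j=j'}$, and $\mathbf{1}_{j=j'}$, respectively. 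Summing over all indices gives $nm^2+nm+n^2m$, so $B_2=(nm^2+nm+n^2m)/n^3=c^2+\tfrac{c}{n}+c=(c+1+\tfrac1n)c$, as stated.

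The only place any care is needed is the bookkeeping of the three Wick pairings; everything else is a direct substitution. I therefore do not expect any substantive obstacle within this statement --- its purpose is to record, in the present normalization, the external input that will drive the moment analysis of Section \ref{sec:pfrw}. (If the ultimate goal were instead to prove Theorem \ref{thm:bp} from first principles rather than cite it, the main obstacle would be establishing the loop equations for the Laguerre orthogonal ensemble with all $1/n$ corrections made explicit, since the proof of Theorem \ref{thm:rwish} genuinely uses the exact $n$-dependence and not merely the $n\to\infty$ limit.)
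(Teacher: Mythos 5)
Your proposal matches the paper's treatment exactly: the paper also invokes \cite[Theorem 3.5]{CMSV16} for the recursion and merely notes that the initial values $B_0,B_1,B_2$ follow by direct computation, which you carry out correctly via the three Wick pairings. Nothing to add.
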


Finally, we will define in the sequel $\chi_p := 4^{-p}C_p$, where $C_p$ 
denotes the $p$th Catalan number. We recall the well known Catalan 
recursion
\begin{equation}
\label{eq:catalan}
	\chi_{p+1} = \frac{2p+1}{2p+4}\,\chi_p,\qquad\quad
	\chi_0 = 1
\end{equation}
for $p\ge 0$, as well as the standard estimate
$\chi_p \lesssim p^{-3/2}$ by Stirling's formula.

\subsection{Complex Wishart moments}
\label{sec:pfcw}

The aim of this section is to bound the complex Wishart moments $A_p$.
To this end, it will be convenient to define
$$
	K_p := \frac{A_{p+1}}{A_p} \frac{\chi_p}{\chi_{p+1}},
$$
so that
$$
	A_p = 4c\,K_1 K_2\cdots K_{p-1}\chi_p.
$$
It follows readily from Theorem \ref{thm:ap} and \eqref{eq:catalan} that
\begin{equation}
\label{eq:kp}
	K_p = 2(c+1) -
	\bigg(1-\frac{3}{4p^2-1}
	\bigg)
	\bigg(
	(c-1)^2 - \frac{p^2}{n^2}
	\bigg)
	\frac{1}{K_{p-1}}
\end{equation}
for $p\ge 1$. Note that this recursion does not require an initial 
condition, as the second term on the right-hand side vanishes for $p=1$.

The analysis of this equation depends on the sign of the second term on 
the right-hand side. We consider the two cases
$p<(c-1)n$ and $p\ge (c-1)n$ separately.

\subsubsection{The case $p<(c-1)n$}

In this subsection, we fix $1\le p<(c-1)n$ and let
$$
	\lambda := c+1+\sqrt{4c+\frac{p^2}{n^2}},\qquad\qquad
	\bar\lambda := c+1-\sqrt{4c+\frac{p^2}{n^2}}.
$$
Note that $2(c+1)=\lambda+\bar\lambda$ and $(c-1)^2-\frac{p^2}{n^2}=
\lambda\bar\lambda$. In particular, the latter implies that 
$\bar\lambda>0$ as we assumed that $p<(c-1)n$. We can therefore estimate
\begin{equation}
\label{eq:kpineq}
	K_k \le \lambda+\bar\lambda - 
	\bigg(1-\frac{3}{4k^2-1}
	\bigg)
	\frac{\lambda\bar\lambda}{K_{k-1}}
\end{equation}
for all $1\le k\le p$ using \eqref{eq:kp}.

At the core of the proof lie two distinct bounds on $K_k$ for $1\le k\le 
p$. The first bound will be used for large $k$, while the second bound 
will be used for small $k$.

\begin{lem}
\label{lem:clgk}
For $1\le k\le p$, we have
$$
	K_k \le \bigg(1+\frac{2\sqrt{c}}{k^2}\bigg)\lambda.
$$
\end{lem}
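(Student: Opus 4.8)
We want to show $K_k \le (1 + 2\sqrt c/k^2)\lambda$ for all $1 \le k \le p$, given the recursive inequality \eqref{eq:kpineq}. The natural approach is induction on $k$, using the monotonicity of the map $x \mapsto \lambda + \bar\lambda - (1 - \frac{3}{4k^2-1})\frac{\lambda\bar\lambda}{x}$ (which is increasing in $x > 0$): if we have an upper bound $K_{k-1} \le \phi_{k-1}$, then \eqref{eq:kpineq} gives $K_k \le \lambda + \bar\lambda - (1 - \frac{3}{4k^2-1})\frac{\lambda\bar\lambda}{\phi_{k-1}}$, and it suffices to check that this right-hand side is at most $\phi_k := (1 + 2\sqrt c/k^2)\lambda$.

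\smallskip

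\emph{Base case.} For $k = 1$ the second term in \eqref{eq:kpineq} vanishes, so $K_1 \le \lambda + \bar\lambda = 2(c+1)$. We must check $2(c+1) \le (1 + 2\sqrt c)\lambda$. Since $\lambda = c + 1 + \sqrt{4c + p^2/n^2} \ge c + 1 + 2\sqrt c = (\sqrt c + 1)^2$, and $2(c+1) = 2(\sqrt c+1)^2 - 4\sqrt c \le 2(\sqrt c + 1)^2$, it is enough that $2 \le 1 + 2\sqrt c$, i.e. $\sqrt c \ge \tfrac12$, which holds since $c \ge 1$. (One should double-check the inequality is comfortably true; this is routine.)

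\smallskip

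\emph{Inductive step.} Assume $K_{k-1} \le (1 + 2\sqrt c/(k-1)^2)\lambda$. Plugging into \eqref{eq:kpineq} and using $\bar\lambda = \lambda\bar\lambda/\lambda$, after dividing through by $\lambda$ it suffices to show
$$
1 + \frac{\bar\lambda}{\lambda} - \Big(1 - \frac{3}{4k^2-1}\Big)\frac{\bar\lambda/\lambda}{1 + 2\sqrt c/(k-1)^2} \le 1 + \frac{2\sqrt c}{k^2}.
$$
Write $\beta := \bar\lambda/\lambda \in (0,1)$. The left side minus $1$ equals $\beta\big(1 - \frac{1 - 3/(4k^2-1)}{1 + 2\sqrt c/(k-1)^2}\big)$; expanding the bracket, this is $\beta\cdot\frac{2\sqrt c/(k-1)^2 + 3/(4k^2-1) + (\text{l.o.t.})}{1 + 2\sqrt c/(k-1)^2}$. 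The key point is that $\bar\lambda\lambda = (c-1)^2 - p^2/n^2 \le (c-1)^2 \le (\sqrt c - 1)^2(\sqrt c+1)^2$, while $\lambda^2 \ge (\sqrt c+1)^4$, so $\beta \le (\sqrt c - 1)^2/(\sqrt c+1)^2 < 1$; more usefully $\beta\lambda = \bar\lambda \le (c-1)^2/\lambda \le (c-1)^2/(\sqrt c+1)^2 = (\sqrt c - 1)^2$, which gives $\beta \cdot 2\sqrt c \le 2\sqrt c(\sqrt c-1)^2/\lambda$. One then needs the numerator of the above, after multiplying by $\beta$, to be bounded by $2\sqrt c/k^2$ (times the positive denominator), which amounts to comparing terms of order $\sqrt c/k^2$ on both sides. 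The main technical point is to control the cross terms carefully: the factor $3/(4k^2-1)$ contributes favorably, and the transition from $(k-1)^2$ to $k^2$ loses only a bounded multiplicative factor since $k^2/(k-1)^2 \le 4$ for $k \ge 2$.

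\smallskip

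\textbf{Anticipated obstacle.} The arithmetic in the inductive step is the only real work: one must verify that the "bad" term $\beta\cdot 2\sqrt c/(k-1)^2$ — which wants to grow the bound faster than the target allows when passing from $k-1$ to $k$ — is absorbed both by the explicit gain $3/(4k^2-1)$ coming from the Catalan correction and by the smallness of $\beta$ (equivalently, of $\bar\lambda$). I expect this requires splitting into the regimes "$k$ small" (where $3/(4k^2-1)$ is a genuine constant-order gain) and "$k$ large" (where $2\sqrt c/k^2$ is already tiny and crude bounds suffice), or alternatively a single clean estimate using $\bar\lambda \le (\sqrt c - 1)^2$ throughout. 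A secondary subtlety is that the bound must be uniform in $c \ge 1$ and in $n$, so every estimate must avoid hidden $c$- or $n$-dependence; the inequality $\bar\lambda\lambda \le (c-1)^2$, valid precisely because $p < (c-1)n$, is what makes $\beta$ small enough and is the crucial input.
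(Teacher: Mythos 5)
Your setup is correct and matches the paper's approach in outline: induction on $k$, base case $K_1 = 2(c+1)$, plugging the inductive hypothesis into \eqref{eq:kpineq} (using the monotonicity of $x\mapsto \lambda+\bar\lambda - (1-\frac{3}{4k^2-1})\lambda\bar\lambda/x$), and dividing by $\lambda$ to reduce to showing
$$\frac{\bar\lambda}{\lambda}\cdot\frac{\frac{2\sqrt c}{(k-1)^2}+\frac{3}{4k^2-1}}{1+\frac{2\sqrt c}{(k-1)^2}}\le\frac{2\sqrt c}{k^2}.$$
You also correctly identify the two ingredients that must conspire (the smallness of $\bar\lambda/\lambda$ and the Catalan gain $3/(4k^2-1)$), and the correct structural observation that $\bar\lambda\lambda = (c-1)^2 - p^2/n^2$ is what keeps $\bar\lambda/\lambda$ away from $1$.

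However, there is a genuine gap: you never actually close the induction. The paragraph beginning ``One then needs the numerator\ldots'' and the ``Anticipated obstacle'' paragraph describe what needs to happen rather than verifying it. This final verification is the whole content of the lemma; the setup itself is straightforward. The paper's resolution is not to case-split on $k$ as you propose, but to rearrange the target inequality (after subtracting $\frac{2\sqrt c}{k^2}$ from both sides and clearing denominators) into
$$2\sqrt c(2k-1) + \frac{3k^2(k-1)^2}{4k^2-1}\le 2\sqrt c(k-1)^2\,\frac{\lambda-\bar\lambda}{\bar\lambda}+4c\,\frac{\lambda}{\bar\lambda},$$
and then to use the three elementary bounds $\frac{\lambda-\bar\lambda}{\bar\lambda}\ge\frac{4}{\sqrt c}$, $\frac{\lambda}{\bar\lambda}\ge 1$, and $4k^2-1\ge 4k(k-1)$ (the first two precisely encoding the ``$\bar\lambda$ is small'' information via the identity $\bar\lambda\le(c-1)^2/\lambda\le(\sqrt c-1)^2$) to reduce to a single quadratic in $k-1$,
$$7(k-1)^2 - \Big(4\sqrt c+\tfrac34\Big)(k-1) + 4c-2\sqrt c\ge 0,$$
whose discriminant is negative for all $c\ge 1$. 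Your suggested route via $\bar\lambda\le(\sqrt c-1)^2$ is in the right direction, but as written, it is a plan rather than a proof; in particular, you have not shown that the induction survives the dangerous regime where $k^2\sim\sqrt c$ (where both sides of the target inequality are of order $1$ and the conclusion is tight). Until the arithmetic is carried through, the proposal does not establish the lemma.
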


\begin{proof}
First, note that
$K_1 = 2(c+1) \le 2\lambda \le (1+2\sqrt{c})\lambda$ as $c\ge 1$.
For $k>1$, we proceed by induction. Assuming we have proved the result
for $k\leftarrow k-1$, we have
$$
	K_k \le 
	\lambda+\bar\lambda - 
	\frac{1-\frac{3}{4k^2-1}}{1+\frac{2\sqrt{c}}{(k-1)^2}}
	\bar\lambda.
$$
using \eqref{eq:kpineq}. To conclude the result, we must therefore show 
that
$$
	\frac{2\sqrt{c}}{(k-1)^2}
	+\frac{3}{4k^2-1}
	\le
	\frac{2\sqrt{c}}{k^2}
	\bigg(1+\frac{2\sqrt{c}}{(k-1)^2}\bigg)
	\frac{\lambda}{\bar\lambda}.	
$$
Subtracting $\frac{2\sqrt{c}}{k^2}$ on both sides and rearranging yields
$$
	2\sqrt{c}\,(2k-1)
	+\frac{3k^2(k-1)^2}{4k^2-1} 
	\le
	2\sqrt{c}\,
	(k-1)^2\,
	\frac{\lambda-\bar\lambda}{\bar\lambda}
	+
	4c\,
	\frac{\lambda}{\bar\lambda}.	
$$
But as
$$
	\frac{\lambda-\bar\lambda}{\bar\lambda} \ge \frac{4}{\sqrt{c}},
	\qquad\quad
	\frac{\lambda}{\bar\lambda}\ge 1,\qquad\quad
	4k^2-1\ge 4k(k-1),
$$
it suffices to prove the quadratic inequality for $k-1$
$$
	7(k-1)^2
	-\bigg(4\sqrt{c}+\frac{3}{4}\bigg) (k-1) 
	+4c-2\sqrt{c} \ge 0.
$$
Thus it suffices to check that the quadratic function has nonpositive 
discriminant, which is readily verified using that $c\ge 1$.
\end{proof}

\begin{lem}
\label{lem:csmk}
For $1\le k\le p$, we have
$$
	K_k \le \bigg(1+\frac{3}{2k}\bigg)\lambda.
$$
\end{lem}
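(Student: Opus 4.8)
The plan is to prove the bound by induction on $k$, mirroring the proof of Lemma~\ref{lem:clgk} but with the ansatz $K_k\le(1+\tfrac{3}{2k})\lambda$ in place of $(1+\tfrac{2\sqrt c}{k^2})\lambda$. First I would record that $K_k>0$ for every $k$, which is immediate from the definition $K_k=\tfrac{A_{k+1}}{A_k}\tfrac{\chi_k}{\chi_{k+1}}$ since $A_k=\EE[\tr(ZZ^*)^k]/n^{k+1}>0$ and $\chi_k>0$; this positivity is what makes it legitimate to substitute an upper bound for $K_{k-1}$ into the recursion. For the base case $k=1$, the second term on the right-hand side of \eqref{eq:kp} vanishes, so $K_1=2(c+1)=\lambda+\bar\lambda\le 2\lambda\le\tfrac52\lambda=(1+\tfrac32)\lambda$, which is the asserted bound.

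For the inductive step, fix $2\le k\le p$ and assume $K_{k-1}\le(1+\tfrac{3}{2(k-1)})\lambda$. Since $K_{k-1}>0$ and $1-\tfrac{3}{4k^2-1}\ge0$, plugging the inductive hypothesis into \eqref{eq:kpineq} gives
$$
	K_k \;\le\; \lambda+\bar\lambda-\Bigl(1-\tfrac{3}{4k^2-1}\Bigr)\frac{\bar\lambda}{1+\tfrac{3}{2(k-1)}}
	\;=\; \lambda+(1-B')\bar\lambda, \qquad
	B':=\frac{1-\tfrac{3}{4k^2-1}}{1+\tfrac{3}{2(k-1)}}.
$$
Since $0<\bar\lambda\le\lambda$ in the regime $p<(c-1)n$, it suffices to establish $0\le 1-B'\le\tfrac{3}{2k}$, for then $(1-B')\bar\lambda\le\tfrac{3}{2k}\lambda$ and hence $K_k\le(1+\tfrac{3}{2k})\lambda$. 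Using the elementary identities $1-\tfrac{3}{4k^2-1}=\tfrac{4(k-1)(k+1)}{(2k-1)(2k+1)}$ and $1+\tfrac{3}{2(k-1)}=\tfrac{2k+1}{2(k-1)}$ one computes $B'=\tfrac{8(k-1)^2(k+1)}{(2k-1)(2k+1)^2}$, so (clearing the positive denominators, noting $2k-3\ge1$ for $k\ge2$) the two required bounds reduce to the polynomial inequalities
$$
	(2k-3)(2k-1)(2k+1)^2 \;\le\; 16k(k-1)^2(k+1), \qquad
	8(k-1)^2(k+1) \;\le\; (2k-1)(2k+1)^2,
$$
which follow at once by expanding: the differences of the right- and left-hand sides equal $3(4k-1)>0$ and $3(4k^2+2k-3)>0$, respectively. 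This closes the induction.

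I do not expect a substantial obstacle here: once the ansatz $(1+\tfrac{3}{2k})\lambda$ is chosen, the argument is a routine induction that reduces to elementary polynomial inequalities. The only conceptual point — and the reason this bound is recorded separately from Lemma~\ref{lem:clgk} — is that the two estimates for $K_k$ are sharp in complementary ranges: $(1+\tfrac{3}{2k})\lambda$ correctly reflects the true $O(1/k)$ correction and is the stronger bound when $k\lesssim\sqrt c$, whereas $(1+\tfrac{2\sqrt c}{k^2})\lambda$ takes over once $k\gtrsim\sqrt c$; both will be needed downstream to control the product $K_1\cdots K_{p-1}$ with the correct dependence on $c$.
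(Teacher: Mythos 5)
Your proof is correct and follows the paper's argument essentially verbatim: the same induction on $k$ via the recursion \eqref{eq:kpineq}, the same base case $K_1=2(c+1)\le 2\lambda$, and the same reduction to a polynomial inequality in $k$ (you clear denominators and expand, while the paper massages the same target using $\lambda/\bar\lambda\ge 1$ down to the equivalent inequality $\tfrac{1}{2k(k-1)}+\tfrac{1}{4k^2-1}\le\tfrac{3}{4k(k-1)}$). Your explicit checks that $K_{k-1}>0$ and $1-B'\ge 0$ are sound bits of care that the paper leaves implicit.
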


\begin{proof}
Clearly $K_1\le 2\lambda \le (1+\frac{3}{2})\lambda$. For $k>1$ we proceed 
again by induction. Assuming we have proved the result for
$k\leftarrow k-1$, we have
$$
	K_k \le 
	\lambda+\bar\lambda - 
	\frac{1-\frac{3}{4k^2-1}}{1+\frac{3}{2(k-1)}}
	\bar\lambda
$$
using \eqref{eq:kpineq}. We must therefore show that
$$
	\frac{3}{2(k-1)} +
	\frac{3}{4k^2-1}
	\le
	\frac{3}{2k}
	\bigg(1+\frac{3}{2(k-1)}\bigg)
	\frac{\lambda}{\bar\lambda}.
$$
Using that $\lambda\ge\bar\lambda$ and rearranging, it suffices to show 
that
$$
	\frac{1}{2k(k-1)} +
	\frac{1}{4k^2-1}
	\le
	\frac{3}{4k(k-1)}.
$$
This is always true as $4k^2-1\ge 4k(k-1)$.
\end{proof}

Combining the above bounds yields the following conclusion.

\begin{prop}
\label{prop:apsmall}
For $1\le p<(c-1)n$, we have
$$
	A_p \lesssim 
	(\sqrt{c}+1)^{2p} \bigg(1+\frac{p^2}{4c^{3/2}n^2}\bigg)^p
	\frac{c^{3/4}}{p^{3/2}}.
$$
\end{prop}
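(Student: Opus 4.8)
The plan is to exploit the product representation $A_p = 4c\,K_1K_2\cdots K_{p-1}\chi_p$ together with the two bounds on $K_k$ already in hand and the elementary estimate $\chi_p\lesssim p^{-3/2}$. Since $4c\chi_p\lesssim c/p^{3/2}$, it suffices to show that $\prod_{k=1}^{p-1}K_k\lesssim c^{3/4}(\sqrt c+1)^{2(p-1)}(1+\frac{p^2}{4c^{3/2}n^2})^{p-1}$; the proposition will then follow, because $c^{7/4}\le c^{3/4}(\sqrt c+1)^2$ lets us replace $(\sqrt c+1)^{2(p-1)}$ by $(\sqrt c+1)^{2p}$, and the base $1+\frac{p^2}{4c^{3/2}n^2}\ge1$ only grows when its exponent is raised from $p-1$ to $p$. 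The first step is to combine Lemmas \ref{lem:clgk} and \ref{lem:csmk} into the single bound $K_k\le\big(1+\min\{2\sqrt c/k^2,\,3/(2k)\}\big)\lambda$ valid for all $1\le k\le p-1$, so that $1+x\le e^x$ gives $\prod_{k=1}^{p-1}K_k\le\lambda^{p-1}\exp\!\big(\sum_{k=1}^{p-1}\min\{2\sqrt c/k^2,\,3/(2k)\}\big)$.

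The key step, which I expect to be the main obstacle, is to show that this exponential factor is $\lesssim c^{3/4}$ uniformly in $p$. The point is that the two terms in the minimum cross over at $k\asymp\sqrt c$: for $k\lesssim\sqrt c$ the term $3/(2k)$ is smaller and contributes a harmonic sum of size $\tfrac32\log\sqrt c+O(1)=\tfrac34\log c+O(1)$, while for $k\gtrsim\sqrt c$ the term $2\sqrt c/k^2$ is smaller and its tail sums to $O(1)$. Concretely, I would set $k_0:=\lceil\tfrac43\sqrt c\rceil$, use $\sum_{k\le k_0}\tfrac{3}{2k}\le\tfrac32(1+\log k_0)$ and $\sum_{k>k_0}\tfrac{2\sqrt c}{k^2}\le\tfrac{2\sqrt c}{k_0}\le\tfrac32$, and conclude that the exponent is at most $\tfrac32\log k_0+3$, so that the exponential is $\lesssim k_0^{3/2}\lesssim c^{3/4}$ (using $k_0\le\tfrac73\sqrt c$, as $c\ge1$). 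It is exactly this crossover at scale $\sqrt c$ that produces the power $c^{3/4}$ and no larger power: Lemma \ref{lem:csmk} used for all $k$ would instead yield an unacceptable $p^{3/2}$, and Lemma \ref{lem:clgk} is too weak for small $k$.

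The remaining step is to put $\lambda$ into the precise form appearing in the statement. From $2(c+1)=\lambda+\bar\lambda$ one gets $\lambda=(c+1)+\sqrt{4c+p^2/n^2}=(\sqrt c+1)^2+\big(\sqrt{4c+p^2/n^2}-2\sqrt c\big)$, and rationalizing the last difference gives $\sqrt{4c+p^2/n^2}-2\sqrt c=\frac{p^2/n^2}{\sqrt{4c+p^2/n^2}+2\sqrt c}\le\frac{p^2}{4\sqrt c\,n^2}$; factoring out $(\sqrt c+1)^2$ and using $\sqrt c\,(\sqrt c+1)^2\ge c^{3/2}$ then yields $\lambda\le(\sqrt c+1)^2\big(1+\frac{p^2}{4c^{3/2}n^2}\big)$. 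Raising this to the power $p-1$, multiplying by the bound $\lesssim c^{3/4}$ for the exponential factor and by $4c\chi_p\lesssim c/p^{3/2}$, and performing the trivial absorptions described in the first paragraph gives the claimed inequality. Beyond the crossover estimate, no step requires more than routine algebra.
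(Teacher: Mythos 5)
Your proof is correct and takes essentially the same approach as the paper: both arguments split the range of $k$ at scale $\sqrt{c}$, using Lemma \ref{lem:csmk} for small $k$ (contributing $\lesssim c^{3/4}$) and Lemma \ref{lem:clgk} for large $k$ (contributing $O(1)$), before applying the same estimate on $\lambda$. The only cosmetic differences are that you combine the two lemmas via a $\min$ and pass through $1+x\le e^x$ while the paper directly bounds the two subproducts, and that you absorb the prefactor $4c$ via $c^{7/4}\le c^{3/4}(\sqrt{c}+1)^2$ while the paper uses $\lambda\ge c$; both are equivalent routine manipulations.
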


\begin{proof}
Note that Lemma \ref{lem:csmk} implies
$$
	K_1\cdots K_{\lfloor\sqrt{c}\rfloor} \le
	\lambda^{\lfloor\sqrt{c}\rfloor}
	\prod_{k=1}^{\lfloor\sqrt{c}\rfloor}
	\bigg(1+\frac{3}{2k}\bigg)
	\lesssim
	c^{3/4}\lambda^{\lfloor\sqrt{c}\rfloor},
$$
while Lemma \ref{lem:clgk} implies
$$
	K_{\lfloor\sqrt{c}\rfloor+1}\cdots K_{p-1} \le
	\lambda^{p-1-\lfloor\sqrt{c}\rfloor}
	\prod_{k=\lfloor\sqrt{c}\rfloor+1}^\infty
	\bigg(1+\frac{2\sqrt{c}}{k^2}\bigg)
	\lesssim 
	\lambda^{p-1-\lfloor\sqrt{c}\rfloor}.
$$
We therefore have
$$
	A_p = 4c\,K_1\cdots K_{p-1}\chi_p \lesssim
	c^{3/4}\lambda^p\chi_p,
$$
where we used $\lambda\ge c$. It remains to note that
$$
	\lambda-(\sqrt{c}+1)^2 =
	\sqrt{4c+\frac{p^2}{n^2}} -\sqrt{4c} \le
	\frac{1}{4\sqrt{c}}\frac{p^2}{n^2} \le
	\frac{p^2}{4c^{3/2}n^2}(\sqrt{c}+1)^2
$$
and that $\chi_p \lesssim p^{-3/2}$.
\end{proof}

\begin{rem}
It is instructive to note the features of the analysis 
that were needed to obtain a sharp bound. In Lemma \ref{lem:clgk}, the 
constant $2$ is unimportant but the correct dependence on $c$ is key. In 
contrast, the optimal constant $\frac{3}{2}$ in Lemma \ref{lem:csmk} 
is used in a crucial way to obtain the correct exponent 
$c^{3/4}$ in the final bound.
\end{rem}

\subsubsection{The case $p\ge(c-1)n$}

This case is much easier and yields a qualitatively better bound (the 
latter is however irrelevant for our purposes).

\begin{prop}
\label{prop:aplarge}
For $p\ge (c-1)n$, we have
$$
	A_p \lesssim
	(\sqrt{c}+1)^{2p}\bigg(1+\frac{2p^2}{c^2n^2}\bigg)^p
	\frac{1}{p^{3/2}}.
$$
\end{prop}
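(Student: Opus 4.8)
The plan is to analyze the recursion \eqref{eq:kp} for the ratios $K_k$ in the regime $p\ge(c-1)n$, which is much easier than the case $p<(c-1)n$: here the quadratic coefficient $(c-1)^2-\frac{k^2}{n^2}$ is nonpositive for all large indices, so the ``bad'' term in \eqref{eq:kp} is \emph{added} rather than subtracted, and no cancellation has to be tracked. Since $A_p=4c\,K_1K_2\cdots K_{p-1}\chi_p$ and $\chi_p\lesssim p^{-3/2}$, it will be enough to prove a clean per-factor bound on $K_k$ and multiply up.

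I would first prove the easy lower bound $K_k\ge c+1$ for every $k\ge1$, by induction on \eqref{eq:kp}. We have $K_1=2(c+1)$; and if $K_{k-1}\ge c+1$, then when $(c-1)^2-\frac{k^2}{n^2}\le0$ the subtracted term is nonnegative so $K_k\ge2(c+1)$, while when $(c-1)^2-\frac{k^2}{n^2}>0$ the bounds $1-\frac{3}{4k^2-1}\le1$ and $\frac1{K_{k-1}}\le\frac1{c+1}$ give $K_k\ge2(c+1)-\frac{(c-1)^2}{c+1}\ge c+1$, the last step because $2(c+1)^2-(c-1)^2\ge(c+1)^2$.

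Next I would use $K_{k-1}\ge c+1$ and the same two crude estimates in \eqref{eq:kp} to get, for every $k$, $K_k\le2(c+1)+\frac{1}{c+1}\bigl(\frac{k^2}{n^2}-(c-1)^2\bigr)$ when $\frac{k^2}{n^2}\ge(c-1)^2$ and $K_k\le2(c+1)$ otherwise; in both cases
$$
	K_k \;\le\; 2(c+1)+\frac{k^2}{(c+1)n^2}\;\le\;2(c+1)+\frac{p^2}{(c+1)n^2}\qquad (1\le k\le p-1).
$$
The hypothesis $p\ge(c-1)n$ enters exactly here, to turn this into the right shape: I claim $2(c+1)+\frac{p^2}{(c+1)n^2}\le(\sqrt c+1)^2\bigl(1+\frac{2p^2}{c^2n^2}\bigr)$ whenever $p\ge(c-1)n$. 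Rearranging, and using $2(c+1)-(\sqrt c+1)^2=(\sqrt c-1)^2$ together with $\frac{2(\sqrt c+1)^2}{c^2}-\frac1{c+1}\ge\frac{c+2}{c(c+1)}$ (which follows from $(\sqrt c+1)^2\ge c$), the claim reduces to $(\sqrt c-1)^2\le\frac{p^2}{n^2}\cdot\frac{c+2}{c(c+1)}$; and since $\frac{p^2}{n^2}\ge(c-1)^2=(\sqrt c-1)^2(\sqrt c+1)^2$ this follows (for $c>1$, the case $c=1$ being trivial) from the elementary inequality $(\sqrt c+1)^2(c+2)\ge c(c+1)$.

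Finally, multiplying up, $A_p=4c\,K_1\cdots K_{p-1}\chi_p\le4c\bigl[(\sqrt c+1)^2(1+\tfrac{2p^2}{c^2n^2})\bigr]^{p-1}\chi_p$; since $4c\le4(\sqrt c+1)^2(1+\frac{2p^2}{c^2n^2})$, the prefactor is absorbed to give $A_p\le4\bigl[(\sqrt c+1)^2(1+\tfrac{2p^2}{c^2n^2})\bigr]^{p}\chi_p\lesssim(\sqrt c+1)^{2p}(1+\frac{2p^2}{c^2n^2})^p\,p^{-3/2}$, which is the assertion. I do not anticipate a real obstacle here — this is the ``easy'' case; the only delicate points are the sign bookkeeping in the induction $K_k\ge c+1$ and the elementary inequality at the end, which is the unique place the hypothesis $p\ge(c-1)n$ gets used.
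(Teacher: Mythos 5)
Your argument is correct, and it takes a genuinely different route from the paper's proof. The paper changes variables to $N_k := A_k/\chi_k$, converts Theorem~\ref{thm:ap} into the three-term inequality $N_{k+1} \le (\sqrt{c}+1)^2 N_k + \frac{p^2}{cn^2}N_k + 1_{k>1}\frac{p^2}{n^2}N_{k-1}$ (using $p\ge(c-1)n$ to replace $2(c+1)$ by $(\sqrt{c}+1)^2 + \frac{p^2}{cn^2}$), and then verifies by a two-step induction that $N_k \le 4(\sqrt{c}+1)^{2k}(1+\frac{2p^2}{c^2n^2})^k$. You instead stay with the ratio recursion \eqref{eq:kp}: you first prove the uniform lower bound $K_k\ge c+1$ by a simple sign analysis, feed it back in to get the per-factor upper bound $K_k\le 2(c+1)+\frac{p^2}{(c+1)n^2}$, and only then use $p\ge(c-1)n$ to recast this as $K_k\le(\sqrt c+1)^2(1+\frac{2p^2}{c^2n^2})$, multiplying up at the end. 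The two routes are of comparable length, but yours has the advantage of decoupling the argument into independent per-factor estimates, so the only induction is the simple one for the lower bound $K_k\ge c+1$; it also keeps the same $K_k$-formalism used for the harder case $p<(c-1)n$ in Lemmas~\ref{lem:clgk}--\ref{lem:csmk}, whereas the paper's proof switches to a new normalization for this proposition. The paper's route avoids having to establish a lower bound on $K_k$ at all, at the cost of a slightly less transparent three-term induction. Both arguments use the hypothesis $p\ge(c-1)n$ in precisely the same place, namely to control $2(c+1)$ by $(\sqrt c+1)^2$ plus an $O(p^2/n^2)$ correction, and both produce the same constant $2$ in the $\frac{2p^2}{c^2n^2}$ term.
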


\begin{proof}
The assumption $p\ge (c-1)n$ implies
$$
	2(c+1) =
	(\sqrt{c}+1)^2 + \frac{(c-1)^2}{(\sqrt{c}+1)^2}
	\le
	(\sqrt{c}+1)^2 + \frac{p^2}{cn^2}.
$$
Define $N_k := \frac{A_k}{\chi_k}$. Then we can crudely estimate
for $1\le k\le p$
$$
	N_{k+1} \le
	(\sqrt{c}+1)^2 N_k +
	\frac{p^2}{cn^2}\,N_k
	+
	1_{k>1}\,
	\frac{p^2}{n^2}\,
	N_{k-1}
$$
using Theorem \ref{thm:ap} and \eqref{eq:catalan}. To conclude the proof,
it suffices to show this implies
$$
	N_k \le
	4(\sqrt{c}+1)^{2k}\bigg(1+\frac{2p^2}{c^2n^2}\bigg)^k
$$
for all $1\le k\le p$. The claim is trivial for $k=1$ as $N_1=4c$,
while the claim is readily verified to hold for $k>1$ by induction.
\end{proof}

The proof of Theorem \ref{thm:cwish} is now immediate.

\begin{proof}[Proof of Theorem \ref{thm:cwish}]
Combining Propositions \ref{prop:apsmall} and \ref{prop:aplarge} yields
$$
	A_p \lesssim 
	(\sqrt{c}+1)^{2p} \bigg(1+\frac{2p^2}{c^{3/2}n^2}\bigg)^p
	\frac{c^{3/4}}{p^{3/2}}	
$$
using $c\ge 1$. The conclusion follows from the definition of $A_p$.
\end{proof}

\subsection{Real Wishart moments}
\label{sec:pfrw}

Taking inspiration from \cite{Led09}, we define
$$
	D_p := B_p - A_p'.
$$
Then we have the following.

\begin{lem}
\label{lem:dp}
For all $p\ge 1$, we have $D_p\ge 0$ and
\begin{multline*}
	D_{p+1} = 
	2\bigg(c+1-\frac{1}{n}\bigg)D_p -
	\bigg(
	(c-1)^2 - \frac{4p(p-1)+1}{n^2}
	\bigg)D_{p-1} \\
	-\frac{1}{n}A_p'
	+\frac{(3p-1)(p-1)}{n^2}A_{p-1}',
\end{multline*}
with the initial conditions $D_0=\frac{1}{n}$ and 
$D_1=\frac{1}{n}(c+1-\frac{1}{n})$.
\end{lem}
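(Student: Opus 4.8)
The plan is to derive the recursion for $D_{p+1}$ by combining the recursions of Theorem~\ref{thm:bp} and Corollary~\ref{cor:appr}, and to prove $D_p\ge 0$ by comparing real and complex Wishart moments. For the nonnegativity I would write $D_p=(B_p-A_p)+(A_p-A_p')$, where $A_p=\EE[\tr(ZZ^*)^p]/n^{p+1}$ is the unmodified complex Wishart moment: the second term is nonnegative by Corollary~\ref{cor:appr}. For the first term, expanding $\EE[\tr(YY^*)^p]$ and $\EE[\tr(ZZ^*)^p]$ by the Wick formula (as in Lemma~\ref{lem:rectwick}, the complex case being entirely analogous) expresses each as a sum over pairings of the $2p$ Gaussian factors occurring in the product, of a quantity $\ntr E(\pi)\ge 0$ (cf.\ Lemma~\ref{lem:dpi}) that depends only on the pairing $\pi$ and not on whether the underlying Gaussians are real or complex. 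The difference is that for the complex matrix a pairing contributes only if it matches each factor $Z$ with a factor $Z^*$, i.e.\ only odd--even pairings survive; as these form a subset of all pairings and every term is nonnegative, $B_p\ge A_p$, and hence $D_p\ge 0$.

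For the recursion, fix $p\ge 2$ and start from Theorem~\ref{thm:bp} for $B_{p+1}$; substituting $B_p=D_p+A_p'$ and $B_{p-1}=D_{p-1}+A_{p-1}'$ and then subtracting $A_{p+1}'$ gives
\[
D_{p+1}=2\Big(c+1-\tfrac1n\Big)D_p-\Big((c-1)^2-\tfrac{4p(p-1)+1}{n^2}\Big)D_{p-1}+R_p,
\]
where $R_p$ collects the leftover terms, all involving only $A_p',A_{p-1}',A_{p+1}'$ (the $-\tfrac{3}{p-1}A_{p+1}'$ coming from Theorem~\ref{thm:bp} and the $-A_{p+1}'$ combining into $-\tfrac{p+2}{p-1}A_{p+1}'$). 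Next I would use Corollary~\ref{cor:appr} to eliminate $A_{p+1}'$ in favour of $A_p'$ and $A_{p-1}'$, using $\tfrac{p+2}{p-1}\cdot\tfrac{2p+1}{2p+4}=\tfrac{2p+1}{2(p-1)}$. Collecting the coefficient of $A_p'$ in $R_p$ gives $2(c+1-\tfrac1n)+\tfrac{3}{p-1}(c+1-\tfrac{p+1}{n})-(c+1-\tfrac2n)\tfrac{2p+1}{p-1}$; the contributions proportional to $c+1$ cancel identically, and the terms proportional to $\tfrac1n$ sum to $-\tfrac1n$. The coefficient of $A_{p-1}'$ becomes $-\big((c-1)^2-\tfrac{4p(p-1)+1}{n^2}\big)+\big((c-1)^2-\tfrac{p^2}{n^2}\big)=\tfrac{4p(p-1)+1-p^2}{n^2}=\tfrac{(3p-1)(p-1)}{n^2}$, using the identity $3p^2-4p+1=(3p-1)(p-1)$. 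This is exactly the claimed recursion.

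Finally, the initial conditions follow from $D_0=B_0-A_0'=1-\tfrac{n-1}{n}=\tfrac1n$ and $D_1=B_1-A_1'=c-\tfrac{n-1}{n}(c-\tfrac1n)=\tfrac1n(c+1-\tfrac1n)$, while the $p=1$ case of the recursion (the formula for $D_2$) must be verified directly, since Theorem~\ref{thm:bp} is only stated for $p\ge 2$: using $A_2'=(c+1-\tfrac2n)A_1'$ from Corollary~\ref{cor:appr}, one checks that $B_2-A_2'$ and the right-hand side $2(c+1-\tfrac1n)D_1-((c-1)^2-\tfrac1{n^2})D_0-\tfrac1n A_1'$ reduce to the same rational expression in $c$ and $\tfrac1n$. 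The only nontrivial part of the argument is the coefficient bookkeeping in the recursion step (in particular the cancellation of the $c+1$ terms and the factorization $4p(p-1)+1-p^2=(3p-1)(p-1)$); everything else is routine.
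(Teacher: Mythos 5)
Your proposal is correct and follows essentially the same route as the paper: nonnegativity via $D_p \ge B_p - A_p \ge 0$ (using $A_p' \le A_p$ and the Wick-formula comparison of real and complex expansions), the recursion for $p\ge 2$ by substituting $B_k = D_k + A_k'$ into Theorem~\ref{thm:bp} and eliminating $A_{p+1}'$ via Corollary~\ref{cor:appr}, and direct verification of the $p=1$ case and the initial conditions. The paper leaves the coefficient bookkeeping as ``a tedious but straightforward simplification''; your explicit cancellation of the $c+1$ terms and the factorization $4p(p-1)+1-p^2=(3p-1)(p-1)$ fill in exactly that computation and are correct.
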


\begin{proof}
To show $D_p\ge 0$, it suffices by Corollary \ref{cor:appr} to show 
that $D_p\ge B_p-A_p\ge 0$, that is, that $\mathbf{E}[\tr (ZZ^*)^p]\le
\mathbf{E}[\tr (YY^*)^p]$. This follows from the Wick 
formula, as $\mathbf{E}[\tr (YY^*)^p]$ may be expressed as a sum over all 
pairings as in Lemma~\ref{lem:rectwick} while in the corresponding 
expression for $\mathbf{E}[\tr (ZZ^*)^p]$ the sum is taken only over those 
pairings that pair even with odd indices (cf.\ Lemma \ref{lem:cplxwick}).

The recursion follows for $p\ge 2$ by applying Corollary 
\ref{cor:appr} to $A_{p+1}'$ on the right-hand side of the recursion
of Theorem \ref{thm:bp}, and a tedious but straightforward simplification
of the resulting expression. That the same expression remains valid for 
$p=1$ can be verified directly using the explicit values for 
$B_0,B_1,B_2,A_0',A_1',A_2'$ that are given in Theorem \ref{thm:bp} and
Corollary \ref{cor:appr}, respectively.
\end{proof}

We will always use Lemma \ref{lem:dp} 
for $1\le k\le p$ in the simplified form
\begin{equation}
\label{eq:dp}
	D_{k+1} \le
	2(c+1)D_k -
	\bigg(
	(c-1)^2 - \frac{4p^2}{n^2}
	\bigg)D_{k-1} +\frac{5(k-1)^2}{n^2}A_{k-1},
\end{equation}
where we used that $4k(k-1)+1\le 4p^2$, $(3k-1)(k-1)\le 5(k-1)^2$,
and $A_{k-1}'\le A_{k-1}$.
As in our analysis of the complex Wishart moments, the following analysis 
will depend on the sign of the second term on the right-hand side.

\subsubsection{The case $2p<(c-1)n$}

In this subsection, we fix $1\le p< \frac{1}{2}(c-1)n$ and let
$$
	\mu := c+1+2\sqrt{c+\frac{p^2}{n^2}},\qquad\qquad
	\bar\mu := c+1-2\sqrt{c+\frac{p^2}{n^2}}.
$$
Note that $2(c+1)=\mu+\bar\mu$ and $(c-1)^2-\frac{4p^2}{n^2}=\mu\bar\mu$.
Thus $\bar\mu>0$ as $2p<(c-1)n$.

\begin{lem}
\label{lem:dppert}
We have
$$
	D_p \le \frac{\mu^p}{n} +
	\sum_{l=1}^{p-1}
	\frac{\mu^{p-l}-\bar\mu^{p-l}}{\mu-\bar\mu}\,
	\frac{5(l-1)^2}{n^2}A_{l-1}.
$$
\end{lem}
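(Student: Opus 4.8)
The plan is to regard \eqref{eq:dp} as an inhomogeneous second-order linear recursive inequality whose characteristic roots are exactly $\mu$ and $\bar\mu$, and to solve it by a discrete variation-of-constants argument. Since $2(c+1)=\mu+\bar\mu$ and $(c-1)^2-\frac{4p^2}{n^2}=\mu\bar\mu$, the bound \eqref{eq:dp} reads
$$
D_{k+1}\le(\mu+\bar\mu)D_k-\mu\bar\mu D_{k-1}+f_k,\qquad f_k:=\frac{5(k-1)^2}{n^2}A_{k-1},
$$
for $1\le k\le p$. The key algebraic step is the factorization
$$
D_{k+1}-\mu D_k\le\bar\mu\,(D_k-\mu D_{k-1})+f_k .
$$
Setting $u_k:=D_k-\mu D_{k-1}$ collapses this to the \emph{first}-order inequality $u_{k+1}\le\bar\mu u_k+f_k$, and it is here that the hypothesis $2p<(c-1)n$ enters essentially: it ensures $\mu\bar\mu=(c-1)^2-\frac{4p^2}{n^2}>0$, hence (as $\mu>0$) that $\bar\mu>0$, so that this first-order recursion has a nonnegative multiplier.

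First I would record that the starting term is negative: from $D_0=\frac1n$ and $D_1=\frac1n(c+1-\frac1n)$ one gets $u_1=\frac1n\big(c+1-\tfrac1n-\mu\big)=-\frac1n\big(2\sqrt{c+p^2/n^2}+\tfrac1n\big)<0$. Iterating $u_{k+1}\le\bar\mu u_k+f_k$ and discarding the term $\bar\mu^{k-1}u_1\le0$ then gives, by a trivial induction, for $1\le k\le p$,
$$
u_k\le\sum_{j=1}^{k-1}\bar\mu^{\,k-1-j}f_j .
$$
Next, from $D_k=\mu D_{k-1}+u_k$ and $D_0=\frac1n$, iterating downward yields $D_p=\mu^pD_0+\sum_{k=1}^{p}\mu^{\,p-k}u_k$, so that
$$
D_p\le\frac{\mu^p}{n}+\sum_{k=1}^{p}\mu^{\,p-k}\sum_{j=1}^{k-1}\bar\mu^{\,k-1-j}f_j .
$$
Finally I would interchange the two sums and evaluate the inner geometric series: for fixed $j$ the coefficient is $\sum_{k=j+1}^{p}\mu^{\,p-k}\bar\mu^{\,k-1-j}=\sum_{i=0}^{p-j-1}\mu^{\,(p-j-1)-i}\bar\mu^{\,i}=\frac{\mu^{p-j}-\bar\mu^{p-j}}{\mu-\bar\mu}$, which is legitimate since $\mu-\bar\mu=4\sqrt{c+p^2/n^2}>0$. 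Substituting $f_j=\frac{5(j-1)^2}{n^2}A_{j-1}$ and renaming $j$ to $l$ gives exactly the asserted inequality.

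Every computation here is elementary, and I do not expect a genuine obstacle; the one point requiring care is to resist running a naive two-step induction directly on $D_k$, which fails because the coefficient $-\mu\bar\mu$ of $D_{k-1}$ is negative and would force one to control $D_{k-1}$ from \emph{below}. Passing to $u_k$ is precisely what converts the problem into a monotone first-order recursion, and the positivity of $\bar\mu$ — equivalently the regime $2p<(c-1)n$ — is what makes that recursion behave well; the remaining work is the bookkeeping in the double sum.
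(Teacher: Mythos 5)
Your proof is correct and takes essentially the same route as the paper: both rewrite \eqref{eq:dp} via the factorization $D_{k+1}-\mu D_k\le\bar\mu(D_k-\mu D_{k-1})+f_k$, iterate the resulting first-order recursion for $D_k-\mu D_{k-1}$ using $\bar\mu>0$ and $D_1-\mu D_0<0$, then unwind the $\mu$-iteration and sum the geometric series. The only (cosmetic) difference is that you unwind all the way to $D_0=\tfrac1n$, whereas the paper stops at $D_1$ and invokes $D_1/\mu\le\tfrac1n$ at the very end.
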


\begin{proof}
We can equivalently write \eqref{eq:dp} as
$$
	D_{k+1}-\mu D_k \le
	\bar\mu (D_k-\mu D_{k-1}) 
	+\frac{5(k-1)^2}{n^2}A_{k-1}
$$
for $1\le k\le p$. Iterating this inequality yields
$$
	D_{k+1}-\mu D_k \le
	\sum_{l=1}^k \bar\mu^{k-l}\, \frac{5(l-1)^2}{n^2}A_{l-1}
$$
for $1\le k\le p$, where we used that $\bar\mu>0$ and
$D_1-\mu D_0<0$. Iterating again yields
\begin{align*}
	D_{k+1} &\le
	\mu^k D_1 + 
	\sum_{r=1}^k \mu^{k-r}\sum_{l=1}^r 
	\bar\mu^{r-l}\,\frac{5(l-1)^2}{n^2}A_{l-1}
	\\
	&=
	\mu^k D_1 +
	\sum_{l=1}^k \frac{\mu^{k-l+1}-\bar\mu^{k-l+1}}{\mu-\bar\mu}
	\,\frac{5(l-1)^2}{n^2}A_{l-1}
\end{align*}
for $1\le k\le p$. The conclusion follows as
$\frac{D_1}{\mu}\le \frac{1}{n}$.
\end{proof}

We can now use Proposition \ref{prop:apsmall} to estimate $D_p$.

\begin{prop}
\label{prop:dpsmall}
For $1\le p< \frac{1}{2}(c-1)n$, we have
$$
	D_p \lesssim
	\frac{1}{n} (\sqrt{c}+1)^{2p}\bigg(
	1+\frac{p^2}{c^{3/2}n^2}
	\bigg)^p.
$$
\end{prop}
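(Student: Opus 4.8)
The plan is to feed the complex Wishart estimate of Proposition~\ref{prop:apsmall} into the perturbative bound of Lemma~\ref{lem:dppert}. Write $\Lambda:=(\sqrt c+1)^2$. First I would dispose of the leading term of Lemma~\ref{lem:dppert}: since $\mu-\Lambda=2(\sqrt{c+p^2/n^2}-\sqrt c)\le\frac{p^2}{\sqrt c\,n^2}\le\Lambda\,\frac{p^2}{c^{3/2}n^2}$, we have $\mu\le\Lambda\big(1+\frac{p^2}{c^{3/2}n^2}\big)$, hence $\frac{\mu^p}{n}\le\frac1n\Lambda^p\big(1+\frac{p^2}{c^{3/2}n^2}\big)^p$, which is already of the desired form. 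The same inequality $\mu^p\le\Lambda^p\big(1+\frac{p^2}{c^{3/2}n^2}\big)^p$ will be used again below.

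The work is in bounding $S:=\sum_{l=2}^{p-1}\frac{\mu^{p-l}-\bar\mu^{p-l}}{\mu-\bar\mu}\cdot\frac{5(l-1)^2}{n^2}A_{l-1}$ (the $l=1$ term vanishes). I would use $\bar\mu\ge0$ and $\mu-\bar\mu=4\sqrt{c+p^2/n^2}\ge4\sqrt c$ to get $\frac{\mu^{p-l}-\bar\mu^{p-l}}{\mu-\bar\mu}\le\frac{\mu^{p-l}}{4\sqrt c}$, and insert the bound $A_{l-1}\lesssim c^{3/4}\lambda_{l-1}^{\,l-1}\chi_{l-1}\lesssim c^{3/4}\lambda^{l-1}(l-1)^{-3/2}$ obtained exactly as in the proof of Proposition~\ref{prop:apsmall} (with $p$ replaced by $l-1$, using $\lambda_{l-1}:=c+1+\sqrt{4c+(l-1)^2/n^2}\le\lambda$ and $\chi_k\lesssim k^{-3/2}$; this is legitimate since $l-1<\tfrac12(c-1)n<(c-1)n$). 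Collecting powers of $\mu$ and $\lambda$ collapses $S$ into a single geometric-type sum,
\[
	S\;\lesssim\;\frac{c^{1/4}\,\mu^p}{n^2\,\lambda}\sum_{l=2}^{p-1}(l-1)^{1/2}\Big(\tfrac{\lambda}{\mu}\Big)^{l}.
\]

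Now I would split at $p=c^{1/2}n^{2/3}$. For $p\le c^{1/2}n^{2/3}$ one simply bounds $(\lambda/\mu)^l\le1$, so the inner sum is $\le\tfrac23p^{3/2}$, and with $\lambda\ge c$ and $p^{3/2}\le c^{3/4}n$ this gives $S\lesssim\frac{p^{3/2}\mu^p}{c^{3/4}n^2}\le\frac{\mu^p}{n}$, which suffices. For $p>c^{1/2}n^{2/3}$ the crude bound is too weak, and here lies the main difficulty: one must invoke the sharp tail estimate $\sum_{m\ge1}m^{1/2}x^m\lesssim(1-x)^{-3/2}$ together with a matching lower bound on the gap $\mu-\lambda=\frac{3p^2/n^2}{\sqrt{4c+4p^2/n^2}+\sqrt{4c+p^2/n^2}}\gtrsim\frac{p^2}{(\sqrt c+p/n)\,n^2}$; since also $\mu\lesssim c$, this yields
\[
	S\;\lesssim\;\frac{c^{3/4}\mu^p}{n^2(\mu-\lambda)^{3/2}}\;\lesssim\;\Big(\frac{c^{3/2}n}{p^{3}}+\frac{c^{3/4}}{n^{1/2}p^{3/2}}\Big)\mu^p,
\]
and both terms in parentheses are $\le\frac1n$ once $p>c^{1/2}n^{2/3}$. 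In either case $S\lesssim\frac1n\Lambda^p\big(1+\frac{p^2}{c^{3/2}n^2}\big)^p$ after using $\mu^p\le\Lambda^p\big(1+\frac{p^2}{c^{3/2}n^2}\big)^p$, and adding the leading term finishes the proof.

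I expect the genuinely delicate point to be the second case: replacing $\sum(l-1)^{1/2}(\lambda/\mu)^l$ by the crude $p^{3/2}$ loses a polynomial factor in $p$ that cannot be recovered, so the argument truly needs both the $(1-x)^{-3/2}$ decay of the tail sum and the lower bound $\mu-\lambda\gtrsim p^2/(cn^2)$ in order to make the $n^{-2}$ in front of $S$ match the required $n^{-1}$. This mirrors, at the level of the $D_p$-recursion, the constant-sensitive estimates of Lemmas~\ref{lem:clgk}--\ref{lem:csmk} for the complex moments, and it is the reason the statement is confined to the regime $2p<(c-1)n$ in which $\bar\mu>0$.
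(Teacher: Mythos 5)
Your proof is correct, and it follows the paper's skeleton exactly: bound the leading term of Lemma~\ref{lem:dppert} via $\mu\le(\sqrt c+1)^2\bigl(1+\tfrac{p^2}{c^{3/2}n^2}\bigr)$, then control the convolution sum by inserting the complex moment bound of Proposition~\ref{prop:apsmall} and $\mu-\bar\mu\ge 4\sqrt c$. The one place where you genuinely diverge is the treatment of the resulting sum. The paper rewrites each summand so that a factor $(\sqrt c+1)^{2(p-1)}\bigl(1+\tfrac{p^2}{c^{3/2}n^2}\bigr)^{p-1}$ comes out front, leaving $\sum_r \rho^r r^{1/2}n^{-2}$ with the explicit ratio $\rho=\bigl(1+\tfrac{p^2}{4c^{3/2}n^2}\bigr)/\bigl(1+\tfrac{p^2}{c^{3/2}n^2}\bigr)$; it then splits on the \emph{summation index} at $r\approx\sqrt c\,n^{2/3}$, bounding the head by $\sum r^{1/2}/n^2$ and the tail by upgrading $r^{1/2}$ to $r^2/(c^{3/4}n)$ and summing a pure geometric series. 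You instead keep the sum in the form $\sum_l(l-1)^{1/2}(\lambda/\mu)^l$ and split on $p$ itself at $p\approx c^{1/2}n^{2/3}$, invoking in the large-$p$ regime the tail asymptotic $\sum_m m^{1/2}x^m\lesssim(1-x)^{-3/2}$ together with the gap estimate $\mu-\lambda\gtrsim p^2/((\sqrt c+p/n)n^2)$. Both devices extract the same essential gain --- the ratio of the inner to the outer growth rate is $1-\Theta\bigl(\tfrac{p^2/(c^{3/2}n^2)}{1+p^2/(c^{3/2}n^2)}\bigr)$, which converts the $n^{-2}$ prefactor into $n^{-1}$ --- so the two arguments are interchangeable; yours requires the slightly less elementary polylog-tail bound but avoids the paper's reindexing and the somewhat opaque upgrade $r^{1/2}\le c^{3/4}n^{-1}\cdot r^2/(c^{3/2}n^2)\cdot n^2$ on the tail. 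All the auxiliary inequalities you use ($\lambda\le\mu$, $\mu\lesssim c$, $\lambda\ge c$, validity of Proposition~\ref{prop:apsmall} at $l-1<(c-1)n$) check out, so no repair is needed.
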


\begin{proof}
We can estimate as in the proof of Proposition \ref{prop:apsmall}
$$
	\mu \le (\sqrt{c}+1)^2\bigg(
	1+\frac{p^2}{c^{3/2}n^2}
	\bigg)
$$
Thus the only difficulty is to bound the second term on the right-hand 
side of the inequality of Lemma \ref{lem:dppert}. To this end, we estimate
\begin{multline*}
	\sum_{l=1}^{p-1}
	\frac{\mu^{p-l}-\bar\mu^{p-l}}{\mu-\bar\mu}\,
	\frac{5(l-1)^2}{n^2}A_{l-1}
\\
	\lesssim
	(\sqrt{c}+1)^{2(p-1)}
	\bigg(
        1+\frac{p^2}{c^{3/2}n^2}
        \bigg)^{p-1}
	c^{1/4}
	\sum_{r=1}^{p-2}
	\bigg(\frac{
		1+\frac{p^2}{4c^{3/2}n^2}
	}{
		1+\frac{p^2}{c^{3/2}n^2}
	}
	\bigg)^r
	\frac{r^{1/2}}{n^2}
\end{multline*}
using $\mu-\bar\mu \ge 4\sqrt{c}$, the above inequality for $\mu$, and
Proposition \ref{prop:apsmall}.
We now split the sum into parts with $r\le n^{2/3}\sqrt{c}$ and
$r>n^{2/3}\sqrt{c}$. For the first part, we have
$$
	\sum_{r=1}^{\lfloor n^{2/3}\sqrt{c}\rfloor}
	\bigg(\frac{
		1+\frac{p^2}{4c^{3/2}n^2}
	}{
		1+\frac{p^2}{c^{3/2}n^2}
	}
	\bigg)^r
	\frac{r^{1/2}}{n^2}
	\le
	\frac{1}{n^2}
	\sum_{r=1}^{\lfloor n^{2/3}\sqrt{c}\rfloor}
	r^{1/2}
	\lesssim
	\frac{c^{3/4}}{n}.
$$
For the second part, we have
\begin{align*}
	\sum_{r=\lfloor n^{2/3}\sqrt{c}\rfloor+1}^{p-2}
	\bigg(\frac{
		1+\frac{p^2}{4c^{3/2}n^2}
	}{
		1+\frac{p^2}{c^{3/2}n^2}
	}
	\bigg)^r
	\frac{r^{1/2}}{n^2}
	&\le
	\frac{c^{3/4}}{n}
	\sum_{r=\lfloor n^{2/3}\sqrt{c}\rfloor+1}^{p-2}
	\bigg(\frac{
		1+\frac{p^2}{4c^{3/2}n^2}
	}{
		1+\frac{p^2}{c^{3/2}n^2}
	}
	\bigg)^r
	\frac{r^2}{c^{3/2}n^2}  
\\
	&\le
	\frac{c^{3/4}}{n}
	\frac{p^2}{c^{3/2}n^2}
	\sum_{r=0}^\infty
	\bigg(
	\frac{
		1+\frac{p^2}{4c^{3/2}n^2}
	}{
		1+\frac{p^2}{c^{3/2}n^2}
	}
	\bigg)^r
\\
	&=
	\frac{4}{3}\frac{c^{3/4}}{n}
	\bigg(1+\frac{p^2}{c^{3/2}n^2}\bigg).
\end{align*}
Combining the above estimates with Lemma \ref{lem:dppert}
readily yields the conclusion.
\end{proof}

\subsubsection{The case $2p\ge(c-1)n$}

We need the following counterpart of Lemma \ref{lem:dppert}.

\begin{lem}
\label{lem:dpbigpert}
For $p\ge 1$ such that $2p\ge (c-1)n$, we have
$$
	D_p \le C^{p-1} D_1 + \sum_{l=1}^{p-1} C^{p-1-l}\,
	\frac{5(l-1)^2}{n^2}A_{l-1}
$$
with
$$
	C := (\sqrt{c}+1)^2\bigg(1+\frac{8p^2}{c^2n^2}\bigg).
$$
\end{lem}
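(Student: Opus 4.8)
The plan is to follow the template of Lemma \ref{lem:dppert}, but since $2p\ge (c-1)n$ forces the analogue of $\bar\mu$ to be nonpositive, the quadratic factorization used there is no longer available; instead I will collapse the second-order recursion \eqref{eq:dp} into a first-order one. Fixing the target $p$ with $2p\ge(c-1)n$, introduce
$$E_k := C^{p-1}D_1 \text{ with } E_k := C^{k-1}D_1 + \sum_{l=1}^{k-1} C^{k-1-l}\,\frac{5(l-1)^2}{n^2}\,A_{l-1}, \qquad 1\le k\le p,$$
which is exactly the claimed right-hand side; it satisfies $E_1=D_1$ and the first-order recursion $E_{k+1}=CE_k+\frac{5(k-1)^2}{n^2}A_{k-1}$. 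It then suffices to prove $D_k\le E_k$ for $1\le k\le p$ by induction on $k$.

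For the inductive step at index $k\ge 3$, apply \eqref{eq:dp} with $k\leftarrow k-1$. The key simplification is that the middle term can be bounded crudely: since $D_{k-2}\ge 0$ (Lemma \ref{lem:dp}) and $(c-1)^2\ge 0$, one has $-\big((c-1)^2-\frac{4p^2}{n^2}\big)D_{k-2}\le \frac{4p^2}{n^2}D_{k-2}$, so the induction hypothesis gives $D_k\le 2(c+1)E_{k-1}+\frac{4p^2}{n^2}E_{k-2}+\frac{5(k-2)^2}{n^2}A_{k-2}$. Because $E_{k-1}\ge CE_{k-2}$, this is at most $\big(2(c+1)+\frac{4p^2}{Cn^2}\big)E_{k-1}+\frac{5(k-2)^2}{n^2}A_{k-2}$, and the step reduces to the numerical inequality $2(c+1)+\frac{4p^2}{Cn^2}\le C$. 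This in turn follows from $2p\ge(c-1)n$: writing $2(c+1)=(\sqrt c+1)^2+\frac{(c-1)^2}{(\sqrt c+1)^2}$ and using $(c-1)^2\le \frac{4p^2}{n^2}$ together with $C\ge(\sqrt c+1)^2\ge c$, one obtains $2(c+1)+\frac{4p^2}{Cn^2}\le(\sqrt c+1)^2+\frac{8p^2}{cn^2}\le(\sqrt c+1)^2\big(1+\frac{8p^2}{c^2n^2}\big)=C$.

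The base cases must be handled by hand, because $D_0=\frac1n$ is too large to fit the geometric pattern. The case $k=1$ is the equality $D_1=E_1$. For $k=2$, \eqref{eq:dp} at $k\leftarrow 1$ has vanishing source term and yields $D_2\le 2(c+1)D_1+\frac{4p^2}{n^3}$; since $nD_1=c+1-\frac1n\ge c$ gives $\frac1n\le\frac{D_1}{c}$, this is at most $\big(2(c+1)+\frac{4p^2}{cn^2}\big)D_1\le CD_1=E_2$ by the same numerical bound. I expect no serious obstacle here: the only real subtlety is the bookkeeping that makes the constant $8$ in $C$ large enough to absorb simultaneously the replacement of $2(c+1)$ by $(\sqrt c+1)^2+\frac{4p^2}{cn^2}$ and the extra second-order contribution $\frac{4p^2}{Cn^2}$, which is exactly why both $(c-1)^2\le 4p^2/n^2$ and $(\sqrt c+1)^2\ge c$ are invoked.
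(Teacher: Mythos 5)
Your proof is correct and follows essentially the same route as the paper: both arguments bound the middle term of \eqref{eq:dp} using $(c-1)^2\ge 0$ and $2p\ge(c-1)n$, reduce to the single inequality $2(c+1)+\frac{4p^2}{Cn^2}\le C$ (the paper phrases this as $\alpha C+\beta\le C^2$ with $\alpha,\beta$ slightly enlarged), and handle $k=2$ separately via $cD_0\le D_1$. The only blemish is the garbled first display defining $E_k$ (the stray ``$E_k:=C^{p-1}D_1$'' should be deleted); the intended definition is clear and used correctly thereafter.
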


\begin{proof}
The assumption $2p\ge (c-1)n$ implies
$$
	2(c+1) 
	\le
	(\sqrt{c}+1)^2 + \frac{4p^2}{cn^2}
	\le
	(\sqrt{c}+1)^2\bigg(1+ \frac{4p^2}{c^2n^2}\bigg)
$$
as in the proof of Proposition \ref{prop:aplarge}.
Thus \eqref{eq:dp} yields
\begin{equation}
\label{eq:dkplonebig}
	D_{k+1} \le
	(\sqrt{c}+1)^2\bigg(1+ \frac{4p^2}{c^2n^2}\bigg)
	D_k 
	+
	(\sqrt{c}+1)^2\frac{4p^2}{cn^2} D_{k-1} +\frac{5(k-1)^2}{n^2}A_{k-1}
\end{equation}
for $1\le k\le p$. We will show by induction that for $1\le k\le p$
$$
	D_k \le 
	C^{k-1}D_1 + \sum_{l=1}^{k-1} C^{k-1-l}\,
	\frac{5(l-1)^2}{n^2}A_{l-1}.
$$
The claim is trivial for $k=1$, and follows for $k=2$ from 
\eqref{eq:dkplonebig} using $cD_0\le D_1$. On the other hand, if the claim 
has been verified up to $k$, then \eqref{eq:dkplonebig} yields
\begin{align*}
	&D_{k+1} \le
	(\sqrt{c}+1)^2
	\bigg[
	\bigg(1+ \frac{4p^2}{c^2n^2}\bigg)C
	+
	\frac{4p^2}{cn^2} 
	\bigg]
	\bigg[
	C^{k-2}D_1
	+
	\sum_{l=1}^{k-2} C^{k-2-l}\,
        \frac{5(l-1)^2}{n^2}A_{l-1}
	\bigg]
	\\
	&\qquad\qquad\qquad
	+
	(\sqrt{c}+1)^2\bigg(1+ \frac{4p^2}{c^2n^2}\bigg)
        \frac{5(k-2)^2}{n^2}A_{k-2}
	+\frac{5(k-1)^2}{n^2}A_{k-1}
\\
	&\le
	C^2
	\bigg[
	C^{k-2}D_1
	+
	\sum_{l=1}^{k-2} C^{k-2-l}\,
        \frac{5(l-1)^2}{n^2}A_{l-1}
	\bigg]
	+
        C\,\frac{5(k-2)^2}{n^2}A_{k-2}
	+\frac{5(k-1)^2}{n^2}A_{k-1}
\\
	&= 
	C^{k}D_1 + \sum_{l=1}^{k} C^{k-l}\,
	\frac{5(l-1)^2}{n^2}A_{l-1},
\end{align*}
concluding the proof of the claim for $k+1$. The case $k=p$ yields the 
result.
\end{proof}

We now proceed as in the proof of Proposition \ref{prop:dpsmall}.

\begin{prop}
\label{prop:dplg}
For $p\ge\frac{1}{2}(c-1)n$, we have
$$
	D_p \lesssim
	\frac{1}{n} (\sqrt{c}+1)^{2p}\bigg(
	1+\frac{8p^2}{c^{3/2}n^2}
	\bigg)^p.
$$
\end{prop}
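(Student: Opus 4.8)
The plan is to repeat the argument of Proposition~\ref{prop:dpsmall}, with Lemma~\ref{lem:dpbigpert} playing the role that Lemma~\ref{lem:dppert} played there. Write $\alpha := 1+\frac{8p^2}{c^{3/2}n^2}$ for the target base. Since $c\ge 1$ gives $c^{3/2}\le c^2$, the constant $C$ of Lemma~\ref{lem:dpbigpert} satisfies $C\le \bar C := (\sqrt c+1)^2\alpha$, so
$$
	D_p \le \bar C^{\,p-1}D_1 + \sum_{l=1}^{p-1}\bar C^{\,p-1-l}\,\frac{5(l-1)^2}{n^2}A_{l-1}.
$$
The first term is already of the desired form: from $D_1=\frac1n(c+1-\frac1n)\le\frac{(\sqrt c+1)^2}{n}$ one obtains $\bar C^{\,p-1}D_1\le \frac1n(\sqrt c+1)^{2p}\alpha^{p-1}$.

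For the sum, I would bound $A_{l-1}$ by Theorem~\ref{thm:cwish} (with $p$ replaced by $l-1\le p$), which gives $A_{l-1}\lesssim (\sqrt c+1)^{2(l-1)}\beta^{\,l-1}\,c^{3/4}(l-1)^{-3/2}$ with $\beta:=1+\frac{2p^2}{c^{3/2}n^2}$, the $l=1$ term being zero in any case. Substituting and collecting factors, the power $\bar C^{\,p-1-l}$ contributes $(\sqrt c+1)^{2(p-1-l)}\alpha^{\,p-1-l}$ while $A_{l-1}$ contributes $(\sqrt c+1)^{2(l-1)}\beta^{\,l-1}$; the powers of $(\sqrt c+1)$ multiply to $(\sqrt c+1)^{2(p-2)}$, and since $\alpha^{\,p-1-l}\beta^{\,l-1}=\alpha^{\,p-2}\rho^{\,l-1}$ with $\rho:=\beta/\alpha<1$, the sum is, up to a universal constant,
$$
	(\sqrt c+1)^{2(p-2)}\alpha^{\,p-2}\,\frac{c^{3/4}}{n^2}\sum_{r=1}^{p-2}\rho^{\,r}r^{1/2}.
$$

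Exactly as in Proposition~\ref{prop:dpsmall}, I would split the last sum according to whether $r\le c^{1/2}n^{2/3}$ or $r>c^{1/2}n^{2/3}$. On the first range bound $\rho^r\le 1$ and use $\sum_{r\le c^{1/2}n^{2/3}}r^{1/2}\lesssim (c^{1/2}n^{2/3})^{3/2}=c^{3/4}n$. On the second range use $r^{1/2}\le r^2/(c^{3/4}n)$ together with $r\le p$ and the geometric series $\sum_{r\ge 0}\rho^r=(1-\rho)^{-1}$, where $1-\rho=\frac{6p^2/(c^{3/2}n^2)}{\alpha}$ gives $\frac{p^2}{c^{3/2}n^2}(1-\rho)^{-1}=\frac16\alpha$. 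Together these yield $\frac{c^{3/4}}{n^2}\sum_r\rho^r r^{1/2}\lesssim \frac{c^{3/2}}{n}\alpha$, hence the sum in Lemma~\ref{lem:dpbigpert} is $\lesssim \frac{c^{3/2}}{n}(\sqrt c+1)^{2(p-2)}\alpha^{\,p-1}$. This is $\lesssim \frac1n(\sqrt c+1)^{2p}\alpha^{p}$ because $(\sqrt c+1)^4\ge c^2\ge c^{3/2}$ and $\alpha\ge 1$; combined with the bound on the first term this proves the proposition.

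I expect the only genuine difficulty to be the bookkeeping in the middle step: tracking the three bases $(\sqrt c+1)^2$, $\alpha$ and $\beta$ so that they telescope to $(\sqrt c+1)^{2(p-2)}\alpha^{p-2}$, and then verifying that the residual factor $\frac{c^{3/2}}{n}\alpha$ is exactly absorbed by the two ``missing'' powers $(\sqrt c+1)^4\alpha$ — the place where $c\ge 1$ is used. The degenerate cases $p\le 2$, where the sum in Lemma~\ref{lem:dpbigpert} is empty, should be dispatched directly from $D_1\le\frac{(\sqrt c+1)^2}{n}$ and $D_2\le C D_1$.
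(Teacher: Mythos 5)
Your proposal is correct and essentially reproduces the paper's argument: you invoke Lemma \ref{lem:dpbigpert} with the enlarged constant $C\le(\sqrt c+1)^2\bigl(1+\tfrac{8p^2}{c^{3/2}n^2}\bigr)$, bound $A_{l-1}$ via Theorem \ref{thm:cwish}, and then split the resulting sum at $r=c^{1/2}n^{2/3}$ exactly as in the proof of Proposition \ref{prop:dpsmall}, absorbing the residual $c^{3/2}\alpha/n$ factor using $(\sqrt c+1)^4\ge c^{3/2}$ and $\alpha\ge1$. The only cosmetic difference is that you bound $D_1\le(\sqrt c+1)^2/n$ directly rather than via $D_1/C\le 1/n$ as in the paper; both give the same conclusion.
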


\begin{proof}
The conclusion is trivial if $p=0$. For $p\ge 1$, we obtain
$$
	D_p \lesssim \frac{C^p}{n} + 
	(\sqrt{c}+1)^{2(p-2)} 
	\bigg(1+\frac{8p^2}{c^{3/2}n^2}\bigg)^{p-2}
	c^{3/4}
	\sum_{r=1}^{p-2} 
	\bigg(\frac{1+\frac{2p^2}{c^{3/2}n^2}}{
	1+\frac{8p^2}{c^{3/2}n^2}}\bigg)^r
	\frac{r^{1/2}}{n^2}
$$
using Lemma \ref{lem:dpbigpert}, $\frac{D_1}{C}\le \frac{1}{n}$,
$C\le (\sqrt{c}+1)^2\big(1+\frac{8p^2}{c^{3/2}n^2}\big)$, and
Theorem \ref{thm:cwish}.
The conclusion now follows exactly as in the proof of
Proposition \ref{prop:dpsmall}.
\end{proof}

The proof of Theorem \ref{thm:rwish} is now immediate.

\begin{proof}[Proof of Theorem \ref{thm:rwish}]
Combining Propositions \ref{prop:dpsmall} and \ref{prop:dplg} yields
$$
	B_p \le
	D_p + A_p \lesssim
	(\sqrt{c}+1)^{2p}\bigg(
	1+\frac{8p^2}{c^{3/2}n^2}
	\bigg)^p
	\bigg(\frac{c^{3/4}}{p^{3/2}}+\frac{1}{n}\bigg),
$$
using $A_p'\le A_p$ and Theorem \ref{thm:cwish}.
The result follows from the definition of $B_p$.
\end{proof}

\subsection*{Acknowledgment}

This work was supported in part by NSF grants DMS-1856221, 
DMS-2054565, and DMS-2347954. We thank the referee for helpful 
suggestions.

\bibliographystyle{abbrv}
\bibliography{ref}

\begin{thebibliography}{10}

\bibitem{Aub05}
G.~Aubrun.
\newblock A sharp small deviation inequality for the largest eigenvalue of a
  random matrix.
\newblock In {\em S\'eminaire de {P}robabilit\'es {XXXVIII}}, volume 1857 of
  {\em Lecture Notes in Math.}, pages 320--337. Springer, Berlin, 2005.

\bibitem{BBV23}
A.~S. Bandeira, M.~T. Boedihardjo, and R.~van Handel.
\newblock Matrix concentration inequalities and free probability.
\newblock {\em Invent. Math.}, 234(1):419--487, 2023.

\bibitem{BCSV23}
A.~S. Bandeira, G.~Cipolloni, D.~Schr\"oder, and R.~{van Handel}.
\newblock Matrix concentration inequalities and free probability {II}.
  {T}wo-sided bounds and applications, 2024.
\newblock Preprint arxiv:2406.11453.

\bibitem{BvH16}
A.~S. Bandeira and R.~van Handel.
\newblock Sharp nonasymptotic bounds on the norm of random matrices with
  independent entries.
\newblock {\em Ann. Probab.}, 44(4):2479--2506, 2016.

\bibitem{BBK20}
F.~Benaych-Georges, C.~Bordenave, and A.~Knowles.
\newblock Spectral radii of sparse random matrices.
\newblock {\em Ann. Inst. Henri Poincar\'{e} Probab. Stat.}, 56(3):2141--2161,
  2020.

\bibitem{BV23}
T.~Brailovskaya and R.~{van Handel}.
\newblock Universality and sharp matrix concentration inequalities.
\newblock {\em Geom. Funct. Anal.}, 34(6):1734--1838, 2024.

\bibitem{Cha14}
S.~Chatterjee.
\newblock {\em Superconcentration and related topics}.
\newblock Springer Monographs in Mathematics. Springer, Cham, 2014.

\bibitem{CMSV16}
F.~D. Cunden, F.~Mezzadri, N.~Simm, and P.~Vivo.
\newblock Large-{$N$} expansion for the time-delay matrix of ballistic chaotic
  cavities.
\newblock {\em J. Math. Phys.}, 57(11):111901, 16, 2016.

\bibitem{DS01}
K.~R. Davidson and S.~J. Szarek.
\newblock Local operator theory, random matrices and {B}anach spaces.
\newblock In {\em Handbook of the geometry of {B}anach spaces, {V}ol. {I}},
  pages 317--366. North-Holland, Amsterdam, 2001.

\bibitem{EK11}
L.~Erd\H{o}s and A.~Knowles.
\newblock Quantum diffusion and delocalization for band matrices with general
  distribution.
\newblock {\em Ann. Henri Poincar\'{e}}, 12(7):1227--1319, 2011.

\bibitem{HT03}
U.~Haagerup and S.~Thorbj{\o}rnsen.
\newblock Random matrices with complex {G}aussian entries.
\newblock {\em Expo. Math.}, 21(4):293--337, 2003.

\bibitem{Joh01}
I.~M. Johnstone.
\newblock On the distribution of the largest eigenvalue in principal components
  analysis.
\newblock {\em Ann. Statist.}, 29(2):295--327, 2001.

\bibitem{LVY18}
R.~Lata{\l}a, R.~van Handel, and P.~Youssef.
\newblock The dimension-free structure of nonhomogeneous random matrices.
\newblock {\em Invent. Math.}, 214(3):1031--1080, 2018.

\bibitem{Led01}
M.~Ledoux.
\newblock {\em The concentration of measure phenomenon}, volume~89 of {\em
  Mathematical Surveys and Monographs}.
\newblock American Mathematical Society, Providence, RI, 2001.

\bibitem{Led04}
M.~Ledoux.
\newblock Differential operators and spectral distributions of invariant
  ensembles from the classical orthogonal polynomials. {T}he continuous case.
\newblock {\em Electron. J. Probab.}, 9:no. 7, 177--208, 2004.

\bibitem{Led07}
M.~Ledoux.
\newblock Deviation inequalities on largest eigenvalues.
\newblock In {\em Geometric aspects of functional analysis}, volume 1910 of
  {\em Lecture Notes in Math.}, pages 167--219. Springer, Berlin, 2007.

\bibitem{Led09}
M.~Ledoux.
\newblock A recursion formula for the moments of the {G}aussian orthogonal
  ensemble.
\newblock {\em Ann. Inst. Henri Poincar\'{e} Probab. Stat.}, 45(3):754--769,
  2009.

\bibitem{LR10}
M.~Ledoux and B.~Rider.
\newblock Small deviations for beta ensembles.
\newblock {\em Electron. J. Probab.}, 15:no. 41, 1319--1343, 2010.

\bibitem{LT91}
M.~Ledoux and M.~Talagrand.
\newblock {\em Probability in {B}anach spaces}, volume~23 of {\em Ergebnisse
  der Mathematik und ihrer Grenzgebiete (3) [Results in Mathematics and Related
  Areas (3)]}.
\newblock Springer-Verlag, Berlin, 1991.
\newblock Isoperimetry and processes.

\bibitem{NS06}
A.~Nica and R.~Speicher.
\newblock {\em Lectures on the combinatorics of free probability}, volume 335
  of {\em London Mathematical Society Lecture Note Series}.
\newblock Cambridge University Press, Cambridge, 2006.

\bibitem{Pau07}
D.~Paul.
\newblock Asymptotics of sample eigenstructure for a large dimensional spiked
  covariance model.
\newblock {\em Statist. Sinica}, 17(4):1617--1642, 2007.

\bibitem{PY14}
N.~S. Pillai and J.~Yin.
\newblock Universality of covariance matrices.
\newblock {\em Ann. Appl. Probab.}, 24(3):935--1001, 2014.

\bibitem{Sod10}
S.~Sodin.
\newblock The spectral edge of some random band matrices.
\newblock {\em Ann. of Math. (2)}, 172(3):2223--2251, 2010.

\bibitem{Tro18}
J.~A. Tropp.
\newblock Second-order matrix concentration inequalities.
\newblock {\em Appl. Comput. Harmon. Anal.}, 44(3):700--736, 2018.

\bibitem{Wil06}
H.~S. Wilf.
\newblock {\em generatingfunctionology}.
\newblock A K Peters, Ltd., Wellesley, MA, third edition, 2006.

\end{thebibliography}

\end{document}